\newcommand{\std}{{\mbox{\scriptsize{std}}}}
\apptocmd{\lim}{\limits}{}{}
\newcommand{\acl}{\textrm{acl}}
\newcommand{\model}{\models}
\theoremstyle{definition}
\newtheorem{thm}{Theorem}[section]
\newtheorem{theorem}[thm]{Theorem}
\newtheorem{lemma}[thm]{Lemma}
\numberwithin{subcase}{case}
\theoremstyle{definition}
\newtheorem{definition}[thm]{Definition}
\newtheorem{corollary}[thm]{Corollary}
\newtheorem{remark}[thm]{Remark}
\def\forkindep{\mathrel{\raise0.2ex\hbox{\ooalign{\hidewidth$\vert$\hidewidth\cr\raise-0.9ex\hbox{$\smile$}}}}}
\def\Ind{\setbox0=\hbox{$x$}\kern\wd0\hbox to 0pt{\hss$\mid$\hss}
	\lower.9\ht0\hbox to 0pt{\hss$\smile$\hss}\kern\wd0}
\def\Notind{\setbox0=\hbox{$x$}\kern\wd0\hbox to 0pt{\mathchardef
		\nn=12854\hss$\nn$\kern1.4\wd0\hss}\hbox to 0pt{\hss$\mid$\hss}\lower.9\ht0
	\hbox to 0pt{\hss$\smile$\hss}\kern\wd0}
\def\phi{\varphi}
\def\<{\langle}
\def\>{\rangle}
\def\acl{\textrm{acl}}
\def\blfootnote{\xdef\@thefnmark{}\@footnotetext}
\begin{document}	

	\bibliographystyle{plain}
	
	\author{Douglas Ulrich\!\!\
	\thanks{Partially supported
by Laskowski's NSF grant DMS-1308546.}\\
Department of Mathematics\\University of California, Irvine}
	\title{Cardinal Characteristics of Models of Set Theory}
	\date{\today} 
	
	\blfootnote{2010 \emph{Mathematics Subject Classification:} 03C55.}
	
	\maketitle
	
	
\begin{abstract}
We continue our investigation from \cite{InterpOrdersUlrich} of Shelah's interpretability orders $\trianglelefteq^*_\kappa$ as well as the new orders $\trianglelefteq^\times_\kappa$. In particular, we give streamlined proofs of the existence of minimal unstable, unsimple and nonlow theories in these orders, and we give a similar analysis of the hypergraph examples $T_{n, k}$ of Hrushovski \cite{Hrush}. Using the technology of \cite{BVModelsUlrich}, we prove that if  $\mathcal{B}$ is a complete Boolean algebra with the $\lambda$-c.c., then no nonprincipal ultrafilter on $\mathcal{U}$ $\lambda^+$-saturates any unsimple theory.
\end{abstract}

\section{Introduction}

In \cite{SH500}, Shelah introduced the interpretabillity orders $\trianglelefteq^*_\kappa$ as abstractions of Keisler's order $\trianglelefteq$. In \cite{InterpOrdersUlrich}, we cast these orders in terms of cardinal characteristics of nonstandard models of $ZFC^-$. Furthermore, we introduced new interpretability orders $\trianglelefteq^\times_\kappa$, which refine $\trianglelefteq^*_\kappa$ and which smooth out many technical difficulties. In this paper, we demonstrate the power of these notions in several applications. 

To begin, we review the setup of \cite{InterpOrdersUlrich}.

\begin{definition} $ZFC^-$ is $ZFC$ without powerset, and with replacement strengthened to collection, and with choice strengthened to the well-ordering principle; this is as in \cite{ZFCminus}.

	Say that $\hat{V} \models ZFC^-$ is an $\omega$-model, or is $\omega$-standard, if every natural number of $\hat{V}$ is standard (i.e. has finitely many $\hat{\in}$-predecessors).
	
	$V$ will denote a transitive model of $ZFC^-$. $\hat{V}$ will denote an $\omega$-nonstandard model of $ZFC^-$. Frequently $\hat{V}$ will come from an embedding $\mathbf{j}:V \preceq \hat{V}$, where $V$ is transitive. Whenever $\hat{V} \models ZFC^-$, we will identify $HF$ (the hereditarily finite sets) with its copy in $\hat{V}$. Other elements of $\hat{V}$ will usually be decorated with a hat, for instance we write $\hat{\omega}$ rather than $(\omega)^{\hat{V}}$. Given $X \subseteq \hat{V}$, we say that $X$ is an internal subset of $\hat{V}$ if there is some $\hat{X} \in \hat{V}$ such that $X = \{\hat{y} \in \hat{V}: \hat{y} \hat{\in} \hat{X}\}$. In this case, we usually identify $X$ with $\hat{X}$ and will write that $X \in \hat{V}$. 
	
	Suppose $V \models ZFC^-$ is transitive, and $\mathbf{j}: V \preceq \hat{V}$. Say that $X \subseteq \hat{V}$ is pseudofinite (with respect to $\hat{V}$) if there is some $\hat{X} \in \hat{V}$ finite in the sense of $\hat{V}$, with $X \subseteq \hat{X}$. So if $\hat{X} \in \hat{V}$, then $\hat{X}$ is pseudofinite if and only if it is finite in the sense of $\hat{V}$.
	
	Suppose $M \in V$ is an $\mathcal{L}$-structure (it follows that $\mathcal{L} \in V$). Note that $\mathbf{j}(M)$ is a $\mathbf{j}(\mathcal{L})$-structure, where possibly some of the symbols of $\mathbf{j}(\mathcal{L})$ are nonstandard; let $\mathbf{j}_{\std}(M)$ be the ``reduct" to $\mathcal{L}$.  Say that $\mathbf{j}_{\std}(M)$ is $\kappa$-pseudosaturated if for every pseudofinite $A \subseteq \mathbf{j}_{\std}(M)$ with $|A| < \kappa$, and for every $n < \omega$: every type $p(\overline{x}) \in S^n(A)$ is realized in $\mathbf{j}_{\std}(M)$. (It is enough to take $n=1$.)
	\end{definition}

\vspace{2 mm}

\noindent \textbf{Convention.} We operate entirely in $ZFC$; thus everything is a set, including formulas. Whenever $T$ is a complete countable theory, we suppose $T$ comes equipped with an an injection from the symbols of $T$ into $\omega$. In particular, whenever $T \in V \models ZFC^-$, $T$ is countable in $V$.

\vspace{1 mm}

\begin{remark}
As an exercise in terminology (Lemma 3.10 of \cite{InterpOrdersUlrich}), note that when $\kappa > \aleph_0$, we have that $\mathbf{j}_{\std}(M)$ is $\kappa$-pseudosaturated if and only if every pseudofinite partial type over $\mathbf{j}_{\std}(M)$ of cardinality less than $\kappa$ is realized in $\mathbf{j}_{\std}(M)$. To parse this: a partial type over $\mathbf{j}_{\std}(M)$ is pseudofinite if it is contained in some set $\hat{X} \in \hat{V}$ which is finite in the sense of $\hat{V}$; we can suppose by the Separation schema that $\hat{X}$ is a finite set of $\mathbf{j}(\mathcal{L})$-formulas over $\mathbf{j}(M)$, but we cannot arrange for all the formulas in $\hat{X}$ to be standard (they may involve nonstandard symbols, and may be of nonstandard length), and the crux of the issue is whether we can arrange that $\hat{V} \models ``\hat{X}$ is consistent$"$.
\end{remark}

The reader may take the following as the definition of $\trianglelefteq^*_{\lambda \kappa}$, for our purposes; this is Lemma 3.6 of \cite{InterpOrdersUlrich}. Following \cite{InterpOrders}, we consider every structure to be $1$-saturated.

\begin{lemma}\label{SatAndPseudo0}
	Suppose $\lambda$ is an infinite cardinal, $\kappa$ is an infinite cardinal or $1$, and $T_0, T_1$ are complete countable theories. Then the following are equivalent:
	\begin{itemize}
		\item[(A)] $T_0 \trianglelefteq^*_{\lambda \kappa} T_1$;
		\item[(B)] There is some countable transitive $V \models ZFC^-$ with $T_0, T_1 \in V$, and some $M_i \models T_i$ both in $V$, such that whenever $\mathbf{j}:V \preceq \hat{V}$, if $\hat{V}$ is $\kappa$-saturated and $\omega$-nonstandard and if $\mathbf{j}_{\std}(M_1)$ is $\lambda^+$-saturated, then $\mathbf{j}_{\std}(M_0)$ is $\lambda^+$-saturated.
	\end{itemize}
\end{lemma}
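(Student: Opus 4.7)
The plan is to prove the two directions separately, translating between the external language of elementary embeddings $\mathbf{j}:V\preceq\hat V$ and the internal language of ultrafilters on complete Boolean algebras used in Shelah's original definition of $\trianglelefteq^*_{\lambda\kappa}$ in \cite{SH500}.

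For (B) $\Rightarrow$ (A), fix witnesses $V,M_0,M_1$ as in (B). Given an ultrafilter $\mathcal{U}$ on a complete Boolean algebra $\mathcal{B}$ of the kind controlling $\trianglelefteq^*_{\lambda\kappa}$ which $\lambda^+$-saturates $T_1$, form the Boolean-valued ultrapower $\hat V := V^{\mathcal{B}}/\mathcal{U}$ together with the canonical $\mathbf{j}:V\preceq\hat V$. The completeness of $\mathcal{B}$ and the goodness/regularity built into the definition of $\trianglelefteq^*_{\lambda\kappa}$ translate into $\hat V$ being $\omega$-nonstandard and $\kappa$-saturated. A pointwise computation shows that the $\lambda^+$-saturation of $T_1$ by $\mathcal{U}$ is equivalent to $\lambda^+$-pseudosaturation of $\mathbf{j}_{\std}(M_1)$. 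Hypothesis (B) then yields that $\mathbf{j}_{\std}(M_0)$ is $\lambda^+$-pseudosaturated, and the same equivalence in reverse gives that $\mathcal{U}$ $\lambda^+$-saturates $T_0$; since $\mathcal{U}$ was arbitrary, $T_0\trianglelefteq^*_{\lambda\kappa} T_1$.

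For (A) $\Rightarrow$ (B), fix any countable transitive $V\models ZFC^-$ with $T_0,T_1\in V$ and countable $M_i\models T_i$ in $V$; I claim these witness (B). Let $\mathbf{j}:V\preceq\hat V$ be as in (B), assume $\mathbf{j}_{\std}(M_1)$ is $\lambda^+$-saturated, and take a partial type $p(x)$ over a small $A\subseteq\mathbf{j}_{\std}(M_0)$. By the Remark above, $p$ may be replaced by a pseudofinite type $\hat X\in\hat V$, and the task reduces to seeing that $\hat V$ internally believes $\hat X$ is consistent. Inside $\hat V$ the data defining $\hat X$ determine a complete Boolean algebra and an ultrafilter on it of exactly the form Shelah's definition of $\trianglelefteq^*_{\lambda\kappa}$ analyzes; the $\lambda^+$-saturation of $\mathbf{j}_{\std}(M_1)$ supplies, in the corresponding $T_1$-shape, an internal realization, and (A) transfers this to the $T_0$-shape, producing the required realization of $p$.

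The step I expect to be hardest is the careful correspondence in both directions between the external hypotheses on $\hat V$ — $\omega$-nonstandardness, $\kappa$-saturation, and $\lambda^+$-saturation of $\mathbf{j}_{\std}(M_1)$ — and the internal Boolean-valued data (a $\kappa$-good ultrafilter on a complete Boolean algebra that $\lambda^+$-saturates $T_1$). The delicate point is the absoluteness bookkeeping ensuring that internally-computed satisfaction of pseudofinite type-fragments matches the external Boolean-valued semantics; once this is set up, each direction is a short compactness-style unraveling, with the Remark doing the work of passing between ordinary and pseudofinite types.
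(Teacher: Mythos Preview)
The paper does not prove this lemma at all: it is quoted verbatim from \cite{InterpOrdersUlrich} (Lemma~3.6 there), and the paper explicitly invites the reader to take condition~(B) as the working \emph{definition} of $\trianglelefteq^*_{\lambda\kappa}$. So there is no proof in this paper to compare your proposal against.

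That said, your proposal has a genuine conceptual gap. Shelah's original definition of $\trianglelefteq^*_{\lambda\kappa}$ in \cite{SH500} is \emph{not} in terms of ultrafilters on complete Boolean algebras; it is an interpretability order, phrased in terms of interpretation schemes $\bar{\varphi}$ such that $\bar{\varphi}(N)\models T_0$ is $\lambda^+$-saturated whenever $N\models T_1$ is suitably saturated. Your (B)~$\Rightarrow$~(A) direction assumes you can test $\trianglelefteq^*_{\lambda\kappa}$ by checking all Boolean-valued ultrapowers $V^{\mathcal{B}}/\mathcal{U}$, but that is closer to the characterization of Keisler's order $\trianglelefteq$ given later in the paper (Theorem~\ref{KeislerChar}), not to Shelah's definition of $\trianglelefteq^*$. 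The actual equivalence proved in \cite{InterpOrdersUlrich} has to go through the interpretation-scheme definition, producing an interpretation from the data in $V$.

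Your (A)~$\Rightarrow$~(B) direction is more seriously underspecified. The sentence ``inside $\hat V$ the data defining $\hat X$ determine a complete Boolean algebra and an ultrafilter on it of exactly the form Shelah's definition analyzes'' does not correspond to any construction I can identify: a pseudofinite partial type over $\mathbf{j}_{\std}(M_0)$ does not naturally produce a Boolean algebra and ultrafilter, and even if it did, Shelah's hypothesis (A) gives you an interpretation scheme, not a saturation transfer for ultrafilters. The real argument must instead use the interpretation scheme witnessing (A) to build, inside $V$, the models $M_0, M_1$ so that $M_0$ is literally interpreted in $M_1$; then saturation of $\mathbf{j}_{\std}(M_1)$ transfers to $\mathbf{j}_{\std}(M_0)$ directly through the interpretation.
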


We say that $T_0 \trianglelefteq^*_\kappa T_1$ if $T_0 \trianglelefteq^*_{\lambda \kappa} T_1$ for all $\lambda$. Note that we follow the original indexing system of Shelah \cite{SH500}, and so $\trianglelefteq^*_{\aleph_1}$ refines Keisler's order $\trianglelefteq$.

It is a serious annoyance that the choice of $M_i \in V$ matters, and indeed is often delicate. The following remedy is the conjunction of Theorem 3.13 ($\kappa> \aleph_0$) and Corollary 7.7 ($\kappa = \aleph_0$) from \cite{InterpOrdersUlrich}.

\begin{theorem}\label{ModKeislerOrder}
	Suppose $V \models ZFC^-$ is transitive and $\mathbf{j}: V \preceq \hat{V}$ is $\omega$-nonstandard, and $T \in V$ is a complete countable theory, and $M_0, M_1 \in V$ are two models of $T$. Suppose $\kappa$ is an infinite cardinal. Then $\mathbf{j}_{\std}(M_0)$ is $\kappa$-pseudosaturated if and only if $\mathbf{j}_{\std}(M_1)$ is.
\end{theorem}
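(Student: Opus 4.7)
The plan is to reduce the theorem to the case of an elementary inclusion $M \preceq N$ in $V$ via the joint embedding property, and then to show that $\kappa$-pseudosaturation transfers both ways across $M \preceq N$ by transporting pseudofinite types using internal $\mathbf{j}(\mathcal{L})$-elementary data inside $\hat{V}$.

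Since $T$ is complete and countable and $V \models ZFC^-$, a Henkin-style compactness argument inside $V$ produces $N \in V$ with elementary embeddings $e_i : M_i \preceq N$ for $i = 0, 1$. Applying $\mathbf{j}$ and passing to standard reducts yields elementary $\mathcal{L}$-inclusions $\mathbf{j}_{\std}(M_i) \preceq \mathbf{j}_{\std}(N)$. It thus suffices to establish, for any elementary inclusion $M \preceq N$ in $V$ of models of $T$, that $\mathbf{j}_{\std}(M)$ is $\kappa$-pseudosaturated iff $\mathbf{j}_{\std}(N)$ is. To gain flexibility, I would further enlarge $N$ inside $V$ to an $\aleph_1$-saturated $N^* \succeq N$ and handle the pairs $M \preceq N^*$ and $N \preceq N^*$ separately; the internal $\aleph_1$-saturation of $\mathbf{j}(N^*)$ in $\hat{V}$ as a $\mathbf{j}(\mathcal{L})$-structure provides the ability to realize internal $\mathbf{j}(\mathcal{L})$-types of countable parameter sets, which is the main internal tool.

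The main obstacle is the bidirectional transfer in this special case. For the forward direction, given a pseudofinite $\mathcal{L}$-type $p$ over a pseudofinite $B \subseteq \mathbf{j}_{\std}(N^*)$ with $|p|, |B| < \kappa$, one needs an internal partial $\mathbf{j}(\mathcal{L})$-elementary map $\sigma$ taking an enumeration of $B$ into $\mathbf{j}(M)$; then transporting $p$ by $\sigma$ and applying $\kappa$-pseudosaturation of $\mathbf{j}_{\std}(M)$ gives a realization, which pulls back to a realization of $p$. The existence of $\sigma$ uses the elementarity of $\mathbf{j}(M) \preceq \mathbf{j}(N^*)$ and internal saturation, together with a careful account of pseudofinite (nonstandard) enumerations of $B$. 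The backward direction is symmetric, using $\sigma$ to push a realization from $\mathbf{j}_{\std}(N^*)$ down into $\mathbf{j}_{\std}(M)$.

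The case $\kappa = \aleph_0$, handled by Corollary~7.7 of \cite{InterpOrdersUlrich}, requires a separate argument: pseudofinite parameter sets of external size $< \aleph_0$ are simply finite, so the combinatorial content of pseudosaturation shifts to the countable structure of $\mathbf{j}(T)$ inside $\hat{V}$ and must be handled via a direct coding of types, e.g., by internal Skolem data, rather than via the internal-saturation machinery that powers the $\kappa > \aleph_0$ argument.
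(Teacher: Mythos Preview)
The paper does not itself prove this theorem; it is imported as the conjunction of Theorem~3.13 ($\kappa>\aleph_0$) and Corollary~7.7 ($\kappa=\aleph_0$) of \cite{InterpOrdersUlrich}, so there is no in-paper argument to compare against directly.

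Your outline has a genuine gap at the central step. In the forward direction you need an internal $\mathbf{j}(\mathcal{L})$-elementary map $\sigma$ carrying a $\hat V$-finite $\hat B\subseteq \mathbf{j}(N^*)$ into $\mathbf{j}(M)$, and you say this follows from elementarity of $\mathbf{j}(M)\preceq\mathbf{j}(N^*)$ together with internal saturation. But internal $\aleph_1$-saturation of $\mathbf{j}(N^*)$ only produces realizations \emph{inside} $\mathbf{j}(N^*)$. To build $\sigma$ you must realize the internal $\mathbf{j}(\mathcal{L})$-type $\tp(\hat B/\emptyset)$ in $\mathbf{j}(M)$, and although $\hat B$ is finite in $\hat V$, that type need not be isolated there; elementarity transfers single formulas, not complete types. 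So the existence of $\sigma$ would require $\mathbf{j}(M)$ to be internally $\aleph_1$-saturated, i.e.\ $M$ to be $\aleph_1$-saturated in $V$, which is exactly what you do not have for an arbitrary $M_i$. The claimed ``symmetric'' backward direction has the same defect: pushing a realization $c\in\mathbf{j}(N^*)$ down to $\mathbf{j}(M)$ amounts to extending an internal partial elementary map by one point with target $\mathbf{j}(M)$, which again demands saturation on the $M$ side. Consequently your argument only establishes the equivalence between two $\aleph_1$-saturated models of $T$; reducing from an arbitrary $M_i$ to an $\aleph_1$-saturated one is precisely the nontrivial content of the theorem, and your joint-embedding step together with enlarging $N$ does not accomplish it.
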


This gives a new way of viewing Keisler's fundamental theorem on saturation of ultrapowers, since if $\mathcal{U}$ is an ultrafilter on $\mathcal{P}(\lambda)$, and if we let $\mathbf{j}: V \preceq \hat{V} := V^\lambda/\mathcal{U}$ be the {\L}o{\'s} embedding, then for every $M \in V$, $M^\lambda/\mathcal{U} \cong \mathbf{j}_{\std}(M)$. Further, if $\mathcal{U}$ is $\lambda$-regular then every subset of $V^\lambda/\mathcal{U}$ of size at most $\lambda$ is pseudofinite, and hence $\mathbf{j}_{\std}(M)$ is $\lambda^+$-saturated if and only if it is $\lambda^+$-pseudosaturated.

Theorem~\ref{ModKeislerOrder} suggests the following tweak to the interpretability orders:

\begin{definition}
	Suppose $V \models ZFC^-$ is transitive, $\mathbf{j}: V \preceq \hat{V}$ is $\omega$-nonstandard, and suppose $T$ is a complete countable theory with $T \in V$. Suppose $\kappa$ is an infinite cardinal. Then say that $\hat{V}$ $\kappa$-pseudosaturates $T$ if for some or every $M \models T$ with $M \in V$, $\mathbf{j}_{\std}(M)$ is $\kappa$-pseudosaturated. 
	
	Suppose $\lambda$ is infinite and $\kappa$ is infinite or $1$. Then say that $T_0 \trianglelefteq^\times_{\lambda \kappa} T_1$ if there is some countable transitive $V \models ZFC^-$ containing $T_0, T_1$ such that whenever $\mathbf{j}: V \preceq \hat{V}$, if $\hat{V}$ is $\kappa$-saturated and $\omega$-nonstandard, and if $\hat{V}$ $\lambda^+$-pseudosaturates $T_1$, then also it $\lambda^+$-pseudosaturates $T_0$. Say that $T_0 \trianglelefteq^\times_\kappa T_1$ if $T_0 \trianglelefteq^\times_{\lambda \kappa} T_1$ for all infinite $\lambda$.
\end{definition}

Theorem 10.6 of \cite{InterpOrdersUlrich} states that $\trianglelefteq^\times_\kappa \subseteq \trianglelefteq^*_\kappa$, in other words: to prove $T_0 \trianglelefteq^*_\kappa T_1$ it is enough to show $T_0 \trianglelefteq^\times_\kappa T_1$. In practice, this turns out to be much cleaner. In any case, we suspect that $\trianglelefteq^*_\kappa = \trianglelefteq^\times_\kappa$.

In Section~\ref{SurveyLocalSec}, we lift Malliaris's theorem that Keisler's order is local \cite{PhiTypes} to the context of $\trianglelefteq^\times_{\aleph_1}$, and introduce the notion of patterns. Patterns have been studied under different notation by Shelah \cite{SH702} and then Malliaris \cite{CharSequence}, although Malliaris was the first to connect them to Keisler's order.

In Sections~\ref{RandGraphSec}, \ref{StarRandGraphSec} and \ref{TCasSec}, we use this machinery to give streamlined proofs of the existence of minimal unstable, unsimple and nonlow theories in $\trianglelefteq^\times_1$ (which are thus minimal in all of the other orders $\trianglelefteq^\times_\kappa, \trianglelefteq^*_\kappa$ and $\trianglelefteq$ as well). In particular, we prove the following, where $T_{rg}$ is the theory of the random graph, and $T_{nlow}$ is the supersimple nonlow theory introduced by Casanovas and Kim \cite{SupersimpleNonlow}, and where $T_{rf}$ is the theory of the random binary function.\footnote{We will define low in Section~\ref{Prelim}, but for now, note---there are two definitions of ``low" in circulation. We follow the original definition of Buechler \cite{Buechler}, so in particular low implies simple.}

\begin{theorem}\label{SummaryTheorem}
Suppose $V \models ZFC^-$ is transitive, and $\mathbf{j}: V \preceq \hat{V}$ is $\omega$-nonstandard, and $\lambda$ is an infinite cardinal, and $T \in V$ is a complete countable theory. Then:

\begin{itemize} 
	\item[(A)] If $\hat{V}$ $\lambda^+$-pseudosaturates $T$ and $T$ is unstable, then for all disjoint, pseudofinite $X_0, X_1 \subseteq \hat{V}$ with $|X_i| \leq \lambda$, there exist disjoint $\hat{X}_0, \hat{X}_1 \in \hat{V}$ with $X_0 \subseteq \hat{X}_0$ and $X_1 \subseteq \hat{X}_1$. If $T = T_{rg}$ then the converse holds.
	\item[(B)] If $\hat{V}$ $\lambda^+$-pseudosaturates $T$ and $T$ is unsimple, then every pseudofinite partial function from $\hat{V}$ to $\hat{V}$ of cardinality at most $\lambda$ can be extended to some internal partial function $\hat{f}$ from $\hat{V}$ to $\hat{V}$. If $T = T_{rf}$ then the converse holds.
	
	\item[(C)] If $\hat{V}$ $\lambda^+$-pseudosaturates $T$ and $T$ is nonlow, then the conclusion in (A) holds, and further: for every $X \subseteq \hat{V}$ with $|X| \leq \lambda$, and for every $\hat{n} < \hat{\omega}$ nonstandard, there exists some $\hat{X} \in \hat{V}$ such that $\hat{V} \models ``|\hat{X}| = \hat{n}"$ and such that $X \subseteq \hat{X}$. If $T= T_{nlow}$ then the converse holds.
	\end{itemize}
\end{theorem}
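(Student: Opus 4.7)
Theorem~\ref{SummaryTheorem} has three parallel parts $(A), (B), (C)$; each couples a forward direction (pseudosaturation of any theory in the class implies a combinatorial extension property in $\hat{V}$) with a converse saying that for a specific canonical theory ($T_{rg}$, $T_{rf}$, $T_{nlow}$) the extension property suffices. My overall plan is to handle the three converses uniformly via quantifier elimination of the canonical theory, and the three forward directions via a dichotomy on the corresponding syntactic witness (order property, tree property, nonlow-equivalence-relation family).

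For the converses, fix in $V$ a model $M$ of the canonical theory; in each case, quantifier elimination says that a pseudofinite partial $1$-type over $\mathbf{j}_{\std}(M)$ is transparently encoded by precisely the combinatorial data named in the theorem. For $T_{rg}$, such a type is captured by a disjoint pair $(X_0, X_1)$ of pseudofinite subsets of $\mathbf{j}_{\std}(M)$, namely the ``$x$ adjacent to'' and ``$x$ non-adjacent to'' parameters. The extension hypothesis enlarges these to disjoint internal $\hat{X}_0, \hat{X}_1$, and then the internal partial type ``$x$ adjacent to every element of $\hat{X}_0$ and to no element of $\hat{X}_1$'' is $\hat{V}$-consistent by the random-graph axioms holding internally, and so realizes the original type. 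The $T_{rf}$ converse runs identically with ``partial function'' in place of ``disjoint pair'', while the $T_{nlow}$ converse uses the internal-cardinality condition in (C) to realize the Casanovas--Kim equivalence-class types.

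For the forward direction of (A), take disjoint pseudofinite $X_0, X_1 \subseteq \hat{V}$ with $|X_i| \leq \lambda$; since $X_0 \cup X_1$ is pseudofinite, fix an internal finite $\hat{X}$ containing it. Choose a countable $M \models T$ in $V$ carrying an infinite $\phi$-indiscernible sequence witnessing the order property, inject $\hat{X}$ internally into $\mathbf{j}_{\std}(M)$ along that sequence, and use $\lambda^+$-pseudosaturation of $T$ to realize a pseudofinite type that forces a $\phi$-instance to separate the images of $X_0$ from those of $X_1$; pulling back the internal definable set given by this $\phi$-instance yields the required internal separator. The clean organization is by the IP/SOP dichotomy: in the IP case, a single $\phi$-parameter separates arbitrary disjoint sets directly, while in the SOP case the separator is assembled by gathering $\leq \lambda$ individual cut-conditions (one per pair in $X_0 \times X_1$) into a single pseudofinite type realized by a pseudofinite tuple, then taking the internal Boolean combination determined by that realization. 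Parts (B) and (C) run in parallel: for (B), a tree-property witness in any unsimple $T$ codes arbitrary pseudofinite partial functions as pseudofinite branch-choices; for (C), the nonlow-equivalence-relation family supplies, on top of an (A)-style separator, supersets of prescribed nonstandard internal cardinality.

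The main obstacle I anticipate is the SOP subcase of the forward direction of (A) and its analogues in (B), (C): the syntactic witness only directly furnishes separations respecting the order (or tree) structure, whereas the theorem demands them for arbitrary disjoint pseudofinite $X_0, X_1$. The essential move is to package $\lambda$-many individual order-respecting conditions as one pseudofinite partial type and realize it by a single pseudofinite tuple, using $\lambda^+$-pseudosaturation in an essential way; this is exactly why true pseudosaturation of $T$, rather than realization of a single fixed type, is the right hypothesis, and is also where the uniformity promised by the $\trianglelefteq^\times$ framework pays off over $\trianglelefteq^*$. Once (A) is pinned down, (B) and (C) should reduce to the same template with the corresponding substitutions of witness and type.
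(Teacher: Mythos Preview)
Your plan for the converses is broadly right, though the $T_{rf}$ case is more delicate than a one-line QE argument: a complete $1$-type over $A$ in $T_{rf}$ may stipulate that $F(x,a)$ equals an element \emph{outside} $A$, so one cannot simply encode the type by a partial function on $A\cup\{x\}$. The paper first enlarges $A$ to a pseudofinite $Y$ with $|Y\setminus A|=\lambda$ (which in turn requires the side claim that $|\hat n|>\lambda$ in $\mathbb V$, itself a consequence of the partial-function hypothesis), then recasts the type as one governed entirely by a function $(Y\cup\{x\})^2\to Y\cup\{x\}$ before invoking the internal-function extension.

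The genuine gap is in your forward direction, specifically the non-IP branch. The paper does not use the IP/SOP dichotomy but rather IP/$SOP_2$ (Theorem~\ref{NonStableDichotomy}), and in the $SOP_2$ case it does \emph{not} build a separator by hand: it invokes the Malliaris--Shelah result (Theorem~\ref{SOP2max}) that any $SOP_2$ theory has $\lambda_{\hat V}(T)=\mathfrak p_{\hat V}$, together with the general fact that $\mathfrak p_{\hat V}\le\lambda_{\hat V}(\Delta)$ for every pattern $\Delta$. Your sketch instead proposes to ``gather $\le\lambda$ individual cut-conditions into a single pseudofinite type realized by a pseudofinite tuple'', but pseudosaturation only realizes types in a \emph{fixed finite} tuple of variables over pseudofinite parameter sets, not types in $\lambda$-many or pseudofinitely many variables; and there is no evident mechanism by which a strict-order witness in $T$ produces an internal set separating \emph{arbitrary} disjoint pseudofinite $X_0,X_1\subseteq\hat V$. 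The identical issue recurs in (B) and (C): the paper disposes of the $SOP_2$ branch of the $TP_2$/$SOP_2$ dichotomy (Theorem~\ref{NonSimpleDichotomy}), and of the unsimple branch of nonlowness, by the same appeal to $\mathfrak p_{\hat V}$. Without Theorem~\ref{SOP2max} or a substitute for it, your forward direction does not close.
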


We can cast this theorem more systematically as follows.

\begin{definition}
	Suppose $V \models ZFC^-$ is transitive, and $T \in V$ is a complete countable theory, and $\mathbf{j}:V \preceq \hat{V}$ is $\omega$-nonstandard. Then let $\lambda_{\hat{V}}(T)$ be the least infinite cardinal such that $\hat{V}$ does not $\lambda_{\hat{V}}(T)^+$-pseudosaturate $T$; possibly $\lambda_{\hat{V}}(T) = \infty$ (if $\hat{V}$ $\lambda$-pseudosaturates $T$ for all $\lambda$).
\end{definition}

In other words, each complete countable theory $T$ induces a cardinal characteristic of models of $ZFC^-$; we are interested in determining which of these cardinal characteristics can be separated. When $\lambda_{\hat{V}}(T) > \aleph_0$, then it follows from the definition that $\hat{V}$ does $\lambda_{\hat{V}}(T)$-pseudosaturate $T$; when $\lambda_{\hat{V}}(T) = \aleph_0$, then $\hat{V}$ may or may not $\aleph_0$-pseudosaturate $T$, but the situation is understood (see Corollary 7.7 of \cite{InterpOrdersUlrich}). 
\vspace{1 mm}

\noindent \textbf{Question.} Suppose $T$ is a complete countable theory. What is the function $\hat{V} \mapsto \lambda_{\hat{V}}(T)$?

\vspace{1 mm}

To reprhase, Theorem~\ref{SummaryTheorem} determines $\lambda_{\hat{V}}(T_{rg})$, $\lambda_{\hat{V}}(T_{rf})$ and $\lambda_{\hat{V}}(T_{nlow})$, and shows that these are the maximal possible values for unstable, unsimple and nonlow theories, respectively.

%
%
%
%
%

In Section~\ref{KeislerNewTNK}, we give a similar treatment of the hypergraph examples $T_{n, k}$. Namely, for $n > k \geq 2$, let $T_{n, k}$ be the random $k$-ary $n$-clique free hypergraph. These are due to Hrushovksi \cite{Hrush}; Malliaris and Shelah use them in \cite{InfManyClass} to prove that Keisler's order has infinitely many classes.

In Section~\ref{FullBVModelsSec} we recall the setup of full Boolean-valued models from \cite{BVModelsUlrich}, and recast our results in terms of ultrafilters. In particular, for every ultrafilter $\mathcal{U}$ on a complete Boolean algebra $\mathcal{B}$, and for every complete countable theory $T$, we define what it means for $\mathcal{U}$ to $\lambda^+$-saturate $T$. Keisler's order can be framed as follows: $T_0 \trianglelefteq_\lambda T_1$ if and only if for every complete Boolean algebra $\mathcal{B}$ with the $\lambda^+$-c.c. and for every ultrafilter $\mathcal{U}$ on $\mathcal{B}$, if $\mathcal{U}$ $\lambda^+$-saturates $T_1$, then $\mathcal{U}$ $\lambda^+$-saturates $T_0$; and $T_0 \trianglelefteq T_1$ if and only if $T \trianglelefteq_\lambda T_1$ for all $\lambda$. 

In Section~\ref{SurveyCCSec} we tie off several strands of non-saturation arguments. To give the reader context, we quote the following theorems. (B) is due to Malliaris and Shelah  \cite{Optimals}. I prove (C) and (D) in \cite{LowDividingLine}, and I prove (A) in \cite{BVModelsUlrich}. As notation, if $\mathcal{B}$ is a complete Boolean algebra, then $\mathcal{B}$ has the $\kappa$-c.c. (chain condition) if $\mathcal{B}$ has an antichain of size $\lambda$, and $\mbox{c.c.}(\mathcal{B})$ is the least $\kappa$ for which this fails, i.e. the least cardinality $\kappa$ for which $\mathcal{B}$ has no antichain of size $\kappa$.

\begin{theorem}\label{OptimalsTheorem} The following are all true.
	
	\begin{itemize}
		
		\item[(A)] Suppose $\lambda$ is an infinite cardinal, and $\mathcal{B}$ is a complete Boolean algebra with an antichain of size $\lambda$ (i.e. failing the $\lambda$-c.c.). Then there is a $\lambda^+$-good ultrafilter $\mathcal{U}$ on $\mathcal{B}$ (i.e. an ultrafilter $\mathcal{U}$ which $\lambda^+$-saturates every complete countable theory $T$).
		\item[(B)] If there is a supercompact cardinal $\lambda$, then there is a complete Boolean algebra $\mathcal{B}$ with the $\lambda$-c.c. and an ultrafilter $\mathcal{U}$ on $\mathcal{B}$, such that $\mathcal{U}$ $\lambda^+$-saturates exactly the simple theories.
		\item[(C)] There is some complete Boolean algebra $\mathcal{B}$ and some $\aleph_1$-incomplete ultrafilter $\mathcal{U}$ on $\mathcal{B}$, such that $\mathcal{U}$ $\mbox{c.c.}(\mathcal{B})^+$-saturates exactly the low theories.
		
		\item[(D)] Suppose $\mathcal{B}$ is a complete Boolean algebra, and $\mathcal{U}$ is an $\aleph_1$-incomplete ultrafilter on $\mathcal{B}$. Then $\mathcal{U}$ does not $\mbox{c.c.}(\mathcal{B})^+$-saturate any nonlow theory.

	\end{itemize}
\end{theorem}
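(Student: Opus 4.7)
The statement bundles four known results whose proofs use quite different techniques, so the plan is to address them separately.

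For (A), I would adapt the Keisler--Kunen construction of a $\lambda^+$-good ultrafilter on $\mathcal{P}(\lambda)$ to the Boolean algebra setting. Starting from an antichain $\{a_i : i < \lambda\} \subseteq \mathcal{B}$, first build a $\lambda$-independent family of size $2^\lambda$ inside $\mathcal{B}$, using completeness to form the appropriate Hausdorff-style joins of the $a_i$. Then run a transfinite induction of length $2^\lambda$: enumerate all monotone $f: [\lambda]^{<\omega} \to \mathcal{B}$ and, at each stage, use the independence of the untouched portion of the family to refine $f$ to a multiplicative $g \leq f$. The resulting filter is $\lambda^+$-good and extends to the desired ultrafilter. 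For (B), I would follow the Malliaris--Shelah strategy: from a supercompact $\lambda$ and its elementary embedding $j: V \to M$ with critical point $\lambda$, construct a complete Boolean algebra $\mathcal{B}$ with the $\lambda$-c.c.\ together with an ultrafilter $\mathcal{U}$ derived from $j$. The $\lambda$-c.c.\ is built into the forcing by the critical point, the saturation of every simple theory uses the bounded-amalgamation characterization of simplicity together with absoluteness properties of $j$, and the failure of saturation on unsimple theories follows from (D) (or directly from the statement advertised in the abstract).

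For (C) and (D), the plan is to pass through Theorem~\ref{ModKeislerOrder} and translate saturation into pseudosaturation inside a nonstandard $\hat{V}$. Recall that a theory $T$ is low iff for every $\phi(x,y)$ the $k$-dividing ranks of instances of $\phi$ are bounded by a standard integer; nonlowness produces, for some $\phi$, dividing patterns of unbounded (nonstandard) complexity. For (D), given $T$ nonlow with witness $\phi(x,y)$, extract from $\aleph_1$-incompleteness of $\mathcal{U}$ a descending $(b_n)_{n<\omega} \subseteq \mathcal{U}$ with $\bigwedge_n b_n = 0$, and then, using antichains of size approaching $\mbox{c.c.}(\mathcal{B})$, code up a pseudofinite partial $\phi$-type of cardinality $\mbox{c.c.}(\mathcal{B})$ indexed by a nonstandard integer. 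By the $\aleph_1$-incompleteness the type is internally consistent, but it cannot be fully realized because its index set is wider than any internal witness to its consistency. For (C), build $\mathcal{B}$ and $\mathcal{U}$ matched precisely to this obstruction: arrange that every pseudofinite $\phi$-type of standard-bounded width is realized, which is exactly enough to saturate every low theory while still permitting the non-realization argument of (D) on nonlow theories.

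The main obstacle I anticipate is in (D), where the cardinality count must come out sharply at $\mbox{c.c.}(\mathcal{B})^+$: the encoded type must be assembled from antichains actually available in $\mathcal{B}$ (hence of size strictly below $\mbox{c.c.}(\mathcal{B})$) while being unrealizable for cardinality reasons alone, which forces one to align the dividing geometry of $\phi$ very carefully with the antichain structure of $\mathcal{B}$. In (B), the parallel delicate step is choosing the forcing so that the $\lambda$-c.c.\ is preserved even as all simple theories are saturated simultaneously, and it is here that genuine supercompactness (rather than a weaker large cardinal hypothesis) is needed.
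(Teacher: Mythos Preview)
The paper does not prove this theorem: it is explicitly quoted as background, with (A) attributed to \cite{BVModelsUlrich}, (B) to Malliaris--Shelah \cite{Optimals}, and (C), (D) to \cite{LowDividingLine}. So for (A)--(C) there is nothing in the paper to compare against; your sketches are plausible outlines of the cited arguments, but they are not relevant to the present paper.

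The one exception is (D), which the paper \emph{does} reprove as Theorem~\ref{LowNonSat}, and here your proposal diverges from the paper's argument in a way worth noting. The paper's proof is very short: from the descending sequence $(\mathbf{c}_n)$ with $\bigwedge_n \mathbf{c}_n = 0$, set $\mathbf{A}(s) = \mathbf{c}_{|s|}$; this is a $(\lambda, \Delta(FDP))$-distribution in $\mathcal{U}$. Any multiplicative refinement $\mathbf{B}$ would, by Theorem~\ref{FlexEquiv}, be $\lambda$-regular, but a Boolean algebra with the $\lambda$-c.c.\ admits no $\lambda$-regular family. That is the whole argument. Your plan instead tries to build a concrete $\phi$-type by ``coding up'' dividing patterns against antichains, and you correctly identify as the main obstacle the need to align the dividing geometry of $\phi$ with the antichain structure at the exact cardinality $\mbox{c.c.}(\mathcal{B})$. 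The paper's approach sidesteps this obstacle entirely: by passing to the abstract pattern $\Delta(FDP)$ (which every nonlow theory admits, via Lemmas~\ref{nlowLemma0} and~\ref{nlowLemma1}) and then invoking the purely combinatorial fact that flexibility forces regularity, no theory-specific alignment is ever needed. Your route might be made to work, but it is strictly harder than what the paper actually does.
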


We complete the picture with the following. Here, we say that the ultrafilter $\mathcal{U}$ on $\mathcal{B}$ is principal if $\bigwedge \mathcal{U}$ is nonzero (this coincides with the usual definition if $\mathcal{B} =\mathcal{P}(\lambda)$.)

\begin{theorem}\label{ExistenceConverseFirst}
	Suppose $\mathcal{B}$ is a complete Boolean algebra and $\mathcal{U}$ is a nonprincipal ultrafilter on $\mathcal{B}$. Then $\mathcal{U}$ does not $\mbox{c.c.}(\mathcal{B})^+$-saturate any unsimple theory. 
\end{theorem}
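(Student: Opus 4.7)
The plan is to apply Theorem~\ref{SummaryTheorem}(B) contrapositively in the Boolean-valued ultrapower of Section~\ref{FullBVModelsSec}: it will suffice to exhibit a pseudofinite partial function of cardinality at most $\kappa:=\mbox{c.c.}(\mathcal{B})$ that admits no extension to an internal partial function, for then no unsimple theory can be $\kappa^+$-pseudosaturated.

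I would split on the completeness of $\mathcal{U}$. If $\mathcal{U}$ is $\aleph_1$-incomplete, then $V^{\mathcal{B}}/\mathcal{U}$ is $\omega$-nonstandard, so Theorem~\ref{SummaryTheorem}(B) is available; moreover, every unsimple theory is nonlow (low implies simple in the sense of \cite{Buechler}), so the conclusion is already contained in Theorem~\ref{OptimalsTheorem}(D).

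The substantive case is when $\mathcal{U}$ is $\aleph_1$-complete and nonprincipal, with completeness $\mu>\aleph_0$. A standard refinement produces a strictly descending sequence $(b_i)_{i<\mu}\subseteq\mathcal{U}$ with $\bigwedge_{i<\mu} b_i=0$; the differences $b_i\setminus b_{i+1}$ then form (after routine removal of zero terms) an antichain of size $\mu$ in $\mathcal{B}$, so $\mu\le\kappa$. In this case $V^{\mathcal{B}}/\mathcal{U}$ is $\omega$-standard, so Theorem~\ref{SummaryTheorem}(B) does not apply directly; instead I would work inside $M^{\mathcal{B}}/\mathcal{U}$ for $M$ a sufficiently saturated model of $T_{rf}$ and use the names machinery of \cite{BVModelsUlrich} to produce $\mu$ distinct parameters $\hat{a}_i\in M^{\mathcal{B}}/\mathcal{U}$ and target bits $c_i\in\{0,1\}$ such that, for every candidate realizer $x$, the Boolean value $\|f(x,\hat{a}_i)=c_i\|$ is bounded above by $b_i$; any realization of the type $\{f(x,\hat{a}_i)=c_i:i<\mu\}$ would then force $\bigwedge_i b_i\in\mathcal{U}$, contradicting $\bigwedge_i b_i=0$.

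The main obstacle is this final $\aleph_1$-complete step, specifically making the parameter construction uniform over all candidate realizers $x$. I expect the resolution to exploit the random-binary-function structure of $T_{rf}$ (in which every finite consistent value prescription is realized and uniformly Boolean-valued) together with the $\mu$-completeness of $\mathcal{U}$, paralleling the classical Shelah argument that nonprincipal ultrafilters on $\mathcal{P}(\lambda)$ fail to $\lambda^+$-saturate unsimple theories, but lifted to arbitrary complete Boolean algebras via the framework of \cite{BVModelsUlrich}.
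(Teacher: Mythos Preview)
Your reduction in the $\aleph_1$-incomplete case is correct: unsimple implies nonlow in the paper's sense, so Theorem~\ref{OptimalsTheorem}(D) already covers it. But the $\aleph_1$-complete case is not a proof; you say so yourself (``the main obstacle is this final $\aleph_1$-complete step \ldots\ I expect the resolution to exploit \ldots''). Worse, the sketch you give cannot be made to work as stated. You want parameters $\hat{a}_i$ and bits $c_i$ such that for \emph{every} candidate $\mathbf{x}$, $\|f(\mathbf{x},\hat{a}_i)=c_i\|\le b_i$. In $T_{rf}$ this is impossible: for any fixed $\mathbf{a}_i$ and $c_i$ there are plenty of $\mathbf{x}\in M^{\mathcal{B}}$ with $\|f(\mathbf{x},\mathbf{a}_i)=c_i\|=1$ (just take $\mathbf{x}$ constant on a suitable element of $M$). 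So the uniformity you flag as the obstacle is not merely a technicality to be smoothed over; it is a genuine obstruction to this line of attack.

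The paper's proof (Theorem~\ref{SimpleNonSat}) is entirely different and does \emph{not} split on the completeness of $\mathcal{U}$. It takes $\sigma$ to be the completeness of $\mathcal{U}$ (which exists and satisfies $\sigma<\lambda:=\mbox{c.c.}(\mathcal{B})$ since $\mathcal{U}$ is nonprincipal), fixes a maximal antichain $(\mathbf{c}_\gamma:\gamma<\sigma)$ with all small joins outside $\mathcal{U}$, and for each $\alpha$ in the stationary set $S=\{\alpha<\lambda:\mbox{cof}(\alpha)=\sigma\}$ builds an element $\underline{\delta}_\alpha\in(\lambda,<)^{\mathcal{B}}$ satisfying $\|\underline{\delta}_\alpha<\alpha\|=1$ yet $\|\underline{\delta}_\alpha>\beta\|\in\mathcal{U}$ for all $\beta<\alpha$. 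The distribution $\mathbf{A}(s)=\bigwedge_{\alpha\neq\beta\in s}\|\underline{\delta}_\alpha\neq\underline{\delta}_\beta\|$ is a $(\lambda,\Delta(TP_2))$-distribution in $\mathcal{U}$, and a putative multiplicative refinement $\mathbf{B}$ yields a regressive function on $S$; Fodor's lemma then produces $\alpha<\beta$ in $S$ with $\mathbf{B}(\{\alpha,\beta\})\wedge\|\underline{\delta}_\alpha=\underline{\delta}_\beta\|\neq 0$, contradicting $\mathbf{B}(\{\alpha,\beta\})\le\mathbf{A}(\{\alpha,\beta\})$. The key idea you are missing is this pressing-down argument, which handles all completeness values $\sigma$ uniformly.
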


We also give a short proof of (D) above, making use of our new machinery.

\vspace{1 mm}

\noindent \textbf{Acknowledgements.} We would like to thank Vincent Guingona, Alexei Kolesnikov, Chris Laskowski, Pierre Simon and Jindrich Zapletal for several helpful conversations.

\section{Preliminaries}\label{Prelim}

We collect together some facts and terminology we will need.

\subsection{Model-Theoretic Preliminaries}
\begin{definition}
	Suppose $T$ is a complete countable theory and $\phi(\overline{x}, \overline{y})$ is a formula; for convenience, we write it as $\phi(x, y)$. Then: 
	
	\begin{itemize}
		\item $\phi(x, y)$ has the independence property ($IP$) if there are $(b_n: n < \omega)$ such that for all disjoint $u, v \subseteq \omega$, $\{\phi(x, b_n: n \in )\} \cup \{\lnot \phi(x, b_n): n \in v\}$ is consistent. Otherwise, $\phi(x, y)$ has $NIP$.
		\item $\phi(x, y)$ has the strict order property of the second kind ($SOP_2$) if there are $(b_s: s \in \omega^{<\omega})$, such that for each $\eta \in \omega^\omega$, $(\phi(x, b_{\eta \restriction_n}): n < \omega)$ is consistent, but whenever $s, t \in \omega^{<\omega}$ are incomparable, $\phi(x, b_s) \land \phi(x, b_t)$ is inconsistent. 
		\item $\phi(x, y)$ has the tree property of the second kind $(TP_2)$ if there are $(b_{n, m}: n, m < \omega)$ such that for all $n < \omega$ and for all $m < m' < \omega$, $\exists \overline{x}( \phi(x, b_{n, m}) \wedge \phi(x, b_{n, m'}))$ is inconsistent, but such that for all $\eta \in \omega^\omega$, $\{\phi(x, b_{n, \eta(n)})\}$ is consistent. Otherwise $\phi(x, y)$ has $NTP_2$.
		\item  $\phi(x, y)$ has the finite dividing property if for every $k$ there is some indiscernible sequence $(b_n: n < \omega)$ over the empty set such that $\{\phi(x, b_n): n < \omega\}$ is $k$-consistent but not consistent.
	\end{itemize}
\end{definition}

\begin{remark}
	The tree property of the first kind $TP_1$ is equivalent to the strict order property of the second kind $SOP_2$; we pick the term $SOP_2$ to use. See \cite{TP1} for a comparison. 
\end{remark}

We recall that in simple theories, forking (equivalently dividing) is a well-behaved independence relation.

\begin{definition}
	Suppose $T$ is a complete countable theory. Then $T$ is low if $T$ is simple and does not have the finite dividing property.
\end{definition}

\begin{remark}
There are multiple definitions of low in use. Our definition is equivalent to the original definition of Buechler \cite{Buechler}, and is also how the low is defined in \cite{KimForking}, for instance. In other places in the literature, the hypothesis that $T$ is simple is dropped, e.g. as in \cite{DividingLine}.
\end{remark}

We recall two dichotomy theorems of Shelah. The following is Theorem 0.2 of \cite{ShelahSimple}:

\begin{theorem}\label{NonSimpleDichotomy}
	$T$ is unsimple if and only if either $T$ has $TP_2$ or else $T$ has $SOP_2$. 
\end{theorem}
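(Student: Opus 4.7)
The plan is to establish the nontrivial direction: if $T$ is unsimple then $T$ has $TP_2$ or $SOP_2$. The converse is immediate since both configurations directly witness the tree property. So suppose $\phi(x, y)$ has the tree property, with a witness $(b_s : s \in \omega^{<\omega})$ and a constant $k < \omega$ such that $\{\phi(x, b_{\eta \restriction_n}) : n < \omega\}$ is consistent for every $\eta \in \omega^\omega$, while $\{\phi(x, b_{s^\frown i}) : i < \omega\}$ is $k$-inconsistent for every $s$. I aim to show that, under the assumption that $T$ lacks $TP_2$, a slight variant of $\phi$ must witness $SOP_2$.

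First I would invoke a tree-modeling theorem --- Shelah's original extraction of a strongly indiscernible tree, or equivalently one of the Chernikov--Ramsey modeling results --- to replace $(b_s)$ with a strongly indiscernible tree $(b'_s : s \in \omega^{<\omega})$: the type of any finite tuple $(b'_{s_1}, \ldots, b'_{s_n})$ depends only on the quantifier-free type of $(s_1, \ldots, s_n)$ in the language of tree order $\trianglelefteq$, meet $\wedge$, and lexicographic order $<_{\mathrm{lex}}$. The original tree-property configuration survives the modeling step.

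Next I would split into cases according to whether two distinct siblings are $2$-inconsistent. In the first case, $\phi(x, b'_{s^\frown 0}) \wedge \phi(x, b'_{s^\frown 1})$ is inconsistent for some (equivalently, by indiscernibility, every) $s$. Then for any incomparable $s, t \in \omega^{<\omega}$ with meet $u = s \wedge t$, say $s \trianglerighteq u^\frown i$ and $t \trianglerighteq u^\frown j$ with $i \neq j$, strong indiscernibility yields an automorphism of the enriched language sending $(u^\frown 0, u^\frown 1)$ to $(s, t)$; hence $\phi(x, b'_s) \wedge \phi(x, b'_t)$ is inconsistent. Combined with path-consistency, this is exactly $SOP_2$ for $\phi$.

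In the remaining case, every pair of distinct siblings is jointly consistent; let $k' \geq 3$ be least with $k'$-many siblings inconsistent, and set $\phi'(x; \overline{y}) := \bigwedge_{i < k'-1} \phi(x, y_i)$. Any two disjoint $(k'-1)$-tuples of siblings contain at least $k'$ distinct siblings, hence give an inconsistent conjunction of $\phi'$-instances --- providing row-inconsistency. A careful selection of sub-branches, using strong indiscernibility to route column-elements through a common branch, yields a $TP_2$-array for $\phi'$, contradicting the hypothesis and so forcing the previous sibling-$2$-inconsistent case. The main obstacle is this last combinatorial step: one must simultaneously arrange row-inconsistency from siblings and column-consistency from paths, and it is precisely strong tree-indiscernibility that makes the requisite bookkeeping tractable.
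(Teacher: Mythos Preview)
The paper does not prove this theorem; it is simply quoted as Theorem~0.2 of \cite{ShelahSimple}, so there is nothing in the paper to compare against---you are supplying an argument where the author gives only a citation.

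Your outline follows the standard strategy, but Case~1 contains a concrete error. You claim strong indiscernibility yields an automorphism of the tree language sending the sibling pair $(u^\frown 0, u^\frown 1)$ to an arbitrary incomparable pair $(s,t)$ with $s \wedge t = u$. This is false: in $(\omega^{<\omega}, \trianglelefteq, \wedge, <_{\mathrm{lex}})$ the relation ``immediate successor'' is definable, so any automorphism sends children of $u$ to children of $u$; there is no automorphism taking $u^\frown 0$ to an $s$ properly above $u^\frown i$. Hence $2$-inconsistency of siblings does not transfer to arbitrary incomparable pairs by the mechanism you describe. The conclusion you want in Case~1 is correct, but it needs the separate (nontrivial) fact that $2$-$TP$, or more generally $k$-$TP_1$, already implies $SOP_2$---not a one-line automorphism argument.

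Case~2 has a matching gap: knowing that pairs of siblings are consistent and that single paths are consistent does not make a column of the form $\{b'_{0^n \cdot l} : n < \omega,\ 1 \leq l < k'\}$ consistent---these nodes are pairwise incomparable but not siblings, and nothing in your hypotheses controls such configurations. The textbook proof avoids both difficulties by using a different dichotomy: either \emph{every} finite pairwise-incomparable set is $\phi$-consistent (whence column-consistency is immediate and one reads off $TP_2$), or some such set is inconsistent (whence one has $k$-$TP_1$ for some $k$, and then $SOP_2$). Your sibling-based split is too coarse for either branch to close as written.
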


The following is also well-known:

\begin{theorem}\label{NonStableDichotomy}
	$T$ is unstable if and only if either $T$ has $IP$ or $SOP_2$.
\end{theorem}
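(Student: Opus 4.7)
The plan is to handle the two directions separately, with the forward direction being the substantive half.

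For the easy direction ($IP$ or $SOP_2$ implies unstable), both hypotheses independently contradict stability. A formula with $IP$ realizes $2^\lambda$ distinct complete types over suitable sets of size $\lambda$, so $T$ is not stable. A witness to $SOP_2$ is, by the definition given in the text, simultaneously a witness to $TP_1$; hence $T$ has the tree property, is unsimple, and therefore unstable (since stable implies simple).

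For the forward direction (unstable implies $IP$ or $SOP_2$), I would appeal to Shelah's classical order/instability dichotomy: an unstable theory has either $IP$ or the strict order property $SOP$. Concretely, instability yields some $\phi(x, \bar y)$ with the order property; by Ramsey and compactness extract an indiscernible sequence $(\bar b_i : i < \omega)$ realizing this pattern. If $\phi$ has $IP$ we are done. Otherwise, the Boolean combination $\psi(\bar y_1, \bar y_2) := \forall x \bigl(\phi(x, \bar y_1) \to \phi(x, \bar y_2)\bigr)$ defines, along the indiscernibles, a strict preorder with an infinite chain; here the failure of $IP$ is used to rule out the symmetric behavior that would prevent antisymmetry, yielding $SOP$ for some formula. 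Finally, I would deduce $SOP_2$ from $SOP$ by a compactness-plus-density argument: a definable strict order with an infinite chain contains, after passing to an indiscernible refinement, a densely-ordered subset, so the two-parameter formula ``$x$ lies in the open interval $(\bar y_1, \bar y_2)$'' can be used to index a tree $(\bar b_s)_{s \in \omega^{<\omega}}$ by intervals that are nested along each branch and pairwise disjoint across incomparable nodes.

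The main obstacle is the $NIP \Rightarrow SOP$ step of the classical dichotomy: showing that along the indiscernible sequence witnessing the order property, the preorder defined by $\psi$ has an infinite strictly-increasing chain (and is not simply a weakly increasing pattern of consistency relations) requires carefully exploiting the failure of $IP$ to preclude a configuration that would collapse antisymmetry. The $SOP \Rightarrow SOP_2$ step, while genuinely needed, reduces to the combinatorial observation that a countable branching tree embeds into a dense linear order as a system of nested intervals, so the heart of the proof really lies in the Shelah dichotomy and can otherwise be quoted from the literature.
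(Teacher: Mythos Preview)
Your proposal is correct and follows essentially the same route as the paper: cite Shelah's classical dichotomy (unstable $\Leftrightarrow$ $IP$ or $SOP$), observe $SOP \Rightarrow SOP_2$, and for the converse note that each of $IP$ and $SOP_2$ separately implies instability. The paper's own proof is even terser than yours---it simply quotes Theorem~II.4.7 of Shelah's \emph{Classification Theory} for the dichotomy and states $SOP \Rightarrow SOP_2$ and $SOP_2 \Rightarrow$ unstable as bare facts, without the sketches you provide.
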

\begin{proof}
	Theorem II.4.7 of \cite{ShelahIso} states that $T$ is unstable if and only if either $T$ has $IP$ or else $T$ has $SOP$. But if $T$ has $SOP$ then $T$ has $SOP_2$, and if $T$ has $SOP_2$ then $T$ is unstable, so the theorem follows.
\end{proof}

\subsection{Pseudosaturation}

We recall some facts from \cite{InterpOrdersUlrich}.

The following is a key cardinal characteristic of models of set theory; in the context of ultrapower embeddings, the definition is due to Malliaris and Shelah \cite{pEqualsTref}.

\begin{definition}
	
	Suppose $(L, <)$ is a linear order with proper initial segment $\omega$. If $\kappa, \theta$ are infinite regular cardinals, then a $(\kappa, \theta)$-pre-cut in $L$ is a pair of sequences $(\overline{a}, \overline{b}) = (a_\alpha: \alpha < \kappa)$, $(b_\beta: \beta < \theta)$ from $L$, such that for all $\alpha < \alpha'$, $\beta < \beta'$, we have $a_\alpha < a_{\alpha'} < b_{\beta'} < b_{\beta}$. $(\overline{a}, \overline{b})$ is a cut if there is no $c \in L$ with $a_\alpha < c < b_\beta$ for all $\alpha, \beta$. Let the cut spectrum of $(L, <)$ be $\mathcal{C}(L, <) := \{(\kappa, \theta): L \mbox{ admits a } (\kappa, \theta) \mbox{ cut}\}$. Define $\mbox{cut}(L, <) = \mbox{min}\{\kappa + \theta:  (\kappa, \theta) \in \mathcal{C}(L, <)\}$.
	
		Suppose $\hat{V} \models ZFC^-$ is nonstandard. Define $\mathcal{C}_{\hat{V}}= \mathcal{C}(\hat{\omega}, \hat{<})$, and define  $\mathfrak{p}_{\hat{V}} = \mbox{cut}(\hat{\omega}, \hat{<})$.
\end{definition}

$\mathfrak{p}_{\hat{V}}$ is the smallest cardinal characteristic of models of set theory that is relevant for pseudosaturation. The following two theorems are mild generalizations of results of Malliaris and Shelah \cite{pEqualsTref} \cite{pEqualsT2} to the context of $\trianglelefteq^\times_1$. The first is Theorem 5.3 in \cite{InterpOrdersUlrich}, the second is the conjunction of Corollary 5.4 and Theorem 5.7.

\begin{theorem}\label{localSaturation2}
	Suppose $\hat{V} \models ZFC^-$ is $\omega$-nonstandard and $\mathfrak{p}_{\hat{V}} \geq \aleph_1$. Suppose $p(x)  = \{\phi_\alpha(x, \hat{a}_\alpha): \alpha < \lambda\}$ is a type over $\hat{V}$ of cardinality $\lambda < \mathfrak{p}_{\hat{V}}$, and suppose $\{\hat{a}_\alpha: \alpha < \lambda\}$ is pseudofinite. Then $p(x)$ is realized in $\hat{V}$, provided either of the following conditions are met.
	
	\begin{itemize}
		\item[(A)] There is some $n < \omega$ such that each $\phi_\alpha(x, a_\alpha)$ is $\Sigma_n$.
		
		\item[(B)] Every countable subset of $\hat{V}$ is pseudofinite.
	\end{itemize}
\end{theorem}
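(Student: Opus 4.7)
The plan is to use the cut hypothesis $\lambda < \mathfrak{p}_{\hat{V}}$ to realize $p$ via a tower-of-witnesses argument. My first step would be to reduce both (A) and (B) to the following uniform setting: an internal formula $\psi(x,y)$ in the language of $\hat{V}$, and pseudofinite parameters $\{\hat{c}_\alpha : \alpha < \lambda\} \subseteq \hat{X}$ for some pseudofinite $\hat{X} \in \hat{V}$, such that every finite subset of $\{\psi(x,\hat{c}_\alpha) : \alpha < \lambda\}$ is realized in $\hat{V}$. In case (A), $\Sigma_n$-satisfaction for standard $n$ is internally definable in $\hat{V}$ by Collection (which holds in $ZFC^-$); set $\psi(x,\langle e,y\rangle) := \mathrm{Sat}_n(e,x,y)$ and $\hat{c}_\alpha := \langle \ulcorner\phi_\alpha\urcorner, \hat{a}_\alpha\rangle$, noting that the new parameter set is pseudofinite because $\{\hat{a}_\alpha\}$ is, and the standard codes $\ulcorner\phi_\alpha\urcorner \in \omega \subseteq \hat{\omega}$ sit inside $\{0,\ldots,\hat{N}\}$ for any nonstandard $\hat{N}$. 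In case (B), the countable set $\{\ulcorner\phi_\alpha\urcorner : \alpha < \lambda\}$ is pseudofinite by hypothesis; enclose it in some internal finite $\hat{E} \in \hat{V}$, use Collection to pick a (possibly nonstandard) $\hat{n} \in \hat{\omega}$ bounding the internal $\Sigma$-complexity of every formula coded in $\hat{E}$, and use $\mathrm{Sat}_{\hat{n}}$.

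After the reduction, I would define the internal family
\[
\hat{B}_\alpha := \bigl\{\hat{S} \in \hat{\mathcal{P}}(\hat{X}) : \hat{c}_\alpha \in \hat{S} \text{ and } \hat{V} \models \exists \hat{x}\,\forall \hat{c} \in \hat{S}\,\psi(\hat{x},\hat{c})\bigr\}
\]
for each $\alpha < \lambda$. Each $\hat{B}_\alpha$ is internal, hence pseudofinite as a subset of the pseudofinite set $\hat{\mathcal{P}}(\hat{X})$, and the family $\{\hat{B}_\alpha : \alpha < \lambda\}$ has the finite intersection property: given finite $F \subseteq \lambda$ and a witness $\hat{x}_F \in \hat{V}$ to the subtype indexed by $F$, the internal set $\{\hat{c} \in \hat{X} : \hat{V} \models \psi(\hat{x}_F,\hat{c})\}$ belongs to $\bigcap_{\alpha \in F}\hat{B}_\alpha$. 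Any common element $\hat{S}^* \in \bigcap_{\alpha<\lambda}\hat{B}_\alpha$ then, via the internal existential quantifier in the definition of $\hat{B}_\alpha$, yields a realization $\hat{x}^* \in \hat{V}$ of $p$.

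The heart of the proof is extracting such an $\hat{S}^*$ from the assumption $\lambda < \mathfrak{p}_{\hat{V}}$. I would proceed by transfinite recursion on $\alpha < \lambda$, building a $\subseteq$-decreasing chain of internal nonempty sets $\hat{D}_\alpha \subseteq \hat{B}_\alpha$ with $\hat{D}_\alpha \subseteq \hat{D}_\beta$ for $\beta < \alpha$; successor stages use only the FIP. The essential obstacle is the limit stage: given a chain of length $|\alpha| < \lambda$ of internal nonempty pseudofinite sets, one must locate an internal nonempty $\hat{D}_\alpha$ contained in all of them. Adapting the Malliaris--Shelah tower-to-cut trick to the $ZFC^-$ setting, I would argue by contradiction: failure at the limit would produce a strictly decreasing sequence of internal cardinalities $|\hat{D}_\beta|^{\hat{V}} \in \hat{\omega}$ with no internal lower bound above the standard integers, and some combinatorial massaging turns this into a $(\kappa,\theta)$-cut of $(\hat{\omega},\hat{<})$ with $\kappa + \theta \leq |\alpha| + \aleph_0 < \mathfrak{p}_{\hat{V}}$, contradicting the definition of $\mathfrak{p}_{\hat{V}}$. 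Arranging the $\hat{D}_\alpha$'s at each stage so that their internal cardinalities are controllable and genuinely force a cut upon failure is where essentially all of the technical work lies, and mirrors the Malliaris--Shelah analysis in the ultrapower setting.
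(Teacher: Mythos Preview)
The paper does not prove this theorem; it is quoted as Theorem 5.3 of \cite{InterpOrdersUlrich}, so there is no in-paper argument to compare against. Your overall strategy---reduce to a single internal relation, recast the type as a finite-intersection-property family of internal subsets of a pseudofinite set, then run a tower-and-cut argument in the style of Malliaris--Shelah---is the right shape. Two points need flagging.

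First, your reduction for case (B) invokes $\mathrm{Sat}_{\hat n}$ for a nonstandard $\hat n$. There is no such formula in the language of set theory: the partial satisfaction predicates $\mathrm{Sat}_n$ have complexity growing with $n$, and by Tarski there is no single formula $\mathrm{Sat}(n,e,x,a)$ uniform in $n$. What $\hat V$ possesses for nonstandard $\hat n$ is only a nonstandard \emph{formula code}, which $\hat V$ cannot evaluate on itself. One repair: for each standard $n$, use the case-(A) mechanism to produce a pseudofinite $\hat Y_n$ containing realizations of every consistent $\Sigma_n$-conjunction over the parameter set; the countable family $\{\hat Y_n : n<\omega\}$ is then pseudofinite by hypothesis (B), so $\hat Y := \bigcup \hat Z$ for an internal finite $\hat Z \supseteq \{\hat Y_n\}$ is a single pseudofinite set in which every finite subtype of $p$ is realized. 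From there you can define $\hat A_\alpha = \{\hat y\in\hat Y : \phi_\alpha(\hat y,\hat a_\alpha)\}$ directly by Separation, with no uniform satisfaction predicate needed.

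Second, in your recursion you assert that successor stages ``use only the FIP.'' This is correct while $\hat D_\alpha$ is literally the finite intersection $\bigcap_{\beta\le\alpha}\hat B_\beta$, but the moment you pass a limit ordinal, $\hat D_\alpha$ becomes some internal set merely \emph{contained in} each earlier $\hat B_\beta$, and the FIP of $\{\hat B_\gamma\}$ no longer tells you that $\hat D_\alpha \cap \hat B_{\alpha+1}$ is nonempty. The invariant you must actually propagate is stronger---e.g.\ ``$\{\hat D_\alpha\}\cup\{\hat B_\gamma:\gamma<\lambda\}$ retains the FIP''---and preserving \emph{that} at limit stages is exactly where the Malliaris--Shelah treetops argument (and the cut hypothesis $\lambda<\mathfrak p_{\hat V}$) does its real work. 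Your closing paragraph rightly flags the limit step as the crux, but as written the sketch already breaks at the first successor-after-limit.
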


\begin{theorem}\label{SOP2max}
	Suppose $V \models ZFC^-$ is transitive, $T \in V$ is a complete countable theory, and $\mathbf{j}: V \preceq \hat{V}$ is $\omega$-nonstandard. Then $\lambda_{\hat{V}}(T) \geq \mathfrak{p}_{\hat{V}}$. If $T$ has $SOP_2$ then equality is attained.
\end{theorem}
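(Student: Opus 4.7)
The inequality $\lambda_{\hat V}(T) \ge \mathfrak p_{\hat V}$ for an arbitrary complete countable $T$ should reduce to Theorem~\ref{localSaturation2}. Fix $\lambda < \mathfrak p_{\hat V}$ and a pseudofinite partial type $p(x) = \{\phi_\alpha(x, \hat a_\alpha) : \alpha < \lambda\}$ over $\mathbf{j}_{\std}(M)$, with each $\phi_\alpha \in \mathcal L$ standard. The parameter set $\{\hat a_\alpha : \alpha < \lambda\}$ inherits pseudofiniteness from $p$. In the simplest case, where the quantifier complexity of the $\phi_\alpha$ is bounded by a standard integer, Theorem~\ref{localSaturation2}(A) realizes $p$ directly. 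The general case requires stratifying $p$ by complexity into countably many pieces, realizing each stratum by (A), and combining the realizations inside $\mathfrak p_{\hat V}$ via a pseudo-intersection argument (available because $\aleph_0 < \mathfrak p_{\hat V}$); alternatively, if condition (B) of Theorem~\ref{localSaturation2} applies, it gives the realization directly.

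For the reverse inequality when $T$ has $SOP_2$, I fix a $(\kappa,\theta)$-cut $(\hat a_\alpha)_{\alpha<\kappa}, (\hat b_\beta)_{\beta<\theta}$ in $\hat\omega$ with $\kappa + \theta = \mathfrak p_{\hat V}$, a formula $\phi(x,y)$ witnessing $SOP_2$ for $T$, and a model $M \in V$ carrying an $SOP_2$-tree $(c_s)_{s \in 2^{<\omega}}$; pushing forward through $\mathbf j$ yields an internal tree $(c_{\hat s})_{\hat s \in 2^{<\hat\omega}}$ inside $\mathbf j(M)$. In $\hat V$ I construct internal nodes $\hat s_\alpha$ at level $\hat a_\alpha$ forming an increasing chain, and internal nodes $\hat t_\beta$ at level $\hat b_\beta$ chosen to diagonalize against candidate internal extensions of that chain. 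The partial type
\[
p(x) \;=\; \{\phi(x, c_{\hat s_\alpha}) : \alpha < \kappa\} \cup \{\lnot\phi(x, c_{\hat t_\beta}) : \beta < \theta\}
\]
has cardinality $\mathfrak p_{\hat V}$, is pseudofinite (all parameters come from the pseudofinite subtree of internal height $\hat b_0$), and is finitely realizable in $\mathbf j(M)$, because any finite subset involves only finitely many obstructions that an internal branch can dodge. No global realization can exist, however: any realizer $x$ would single out an internal branch $\hat\eta \in 2^{\hat\omega}$ with $\hat\eta$ extending each $\hat s_\alpha$ and avoiding each $\hat t_\beta$, and the $\hat t_\beta$'s have been chosen precisely to make this impossible.

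The main obstacle is the design of the $\hat t_\beta$'s: they must genuinely diagonalize against every internal branch extending the lower chain. The construction parallels the Malliaris--Shelah argument that $SOP_2$-theories are Keisler-maximal, transported from the ultrafilter setting to $ZFC^-$-models: the cut property of $(\hat a_\alpha, \hat b_\beta)$ --- the absence of an internal interpolant in $\hat\omega$ --- is precisely what prevents any internal $\hat\eta$ from simultaneously meeting every lower-cut chain condition and avoiding every upper-cut obstruction, so that $p(x)$ witnesses the failure of $\mathfrak p_{\hat V}^+$-pseudosaturation and gives $\lambda_{\hat V}(T) \le \mathfrak p_{\hat V}$.
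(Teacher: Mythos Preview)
The paper does not prove this theorem; it is quoted as the conjunction of Corollary~5.4 and Theorem~5.7 of \cite{InterpOrdersUlrich}, so there is no in-paper argument to compare against directly.

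Your first paragraph is essentially correct, but the stratification worry is a red herring. Each $\mathcal{L}$-formula $\phi_\alpha(x,\hat a_\alpha)$ translates to an instance of the single set-theoretic formula ``$\mathbf{j}(M) \models \psi(x,\overline{a})$'' with parameters $\mathbf{j}(M)$, the code $\phi_\alpha \in HF$, and $\hat a_\alpha$. Satisfaction in a set-sized structure is $\Delta_1$, so all instances are uniformly $\Sigma_1$ and Theorem~\ref{localSaturation2}(A) applies in one shot; the parameter set is pseudofinite because $\{\hat a_\alpha\}$ is by hypothesis, the formula codes lie in $\omega \subseteq \hat n$ for any nonstandard $\hat n$, and $\mathbf{j}(M)$ is a single point.

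Your second paragraph has a genuine gap. Two problems. First, building the increasing chain $(\hat s_\alpha)_{\alpha<\kappa}$ requires, at each limit $\alpha$, an internal node above the external chain $(\hat s_\gamma:\gamma<\alpha)$; this is a treetops statement and is precisely where the nontrivial Malliaris--Shelah machinery enters, not something you can assume. Second, and more seriously, your claim that a realizer $d$ ``singles out an internal branch $\hat\eta \in 2^{\hat\omega}$'' is unjustified. $SOP_2$ only gives that $\{\hat s : \mathbf{j}(M)\models \phi(d,c_{\hat s})\}$ is an internal \emph{chain}; nothing forces it to have a node at every level, so it can simply skip the levels $\hat b_\beta$ and never see your obstructions $\hat t_\beta$ at all. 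You have not said how to choose the $\hat t_\beta$, and no choice at level $\hat b_\beta$ can block a chain that is undefined there. The argument in \cite{InterpOrdersUlrich} (following \cite{pEqualsTref}) must do real work at this point, and your sketch does not indicate how.
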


\subsection{Full Boolean-Valued Models}

We recall the setup of \cite{BVModelsUlrich}. This won't be used until Section~\ref{FullBVModelsSec}, and the reader may wish to defer reading this subsection until then.

As a convention, if $X$ is a set and $\mathcal{L}$ is a language, then $\mathcal{L}(X)$ is the set of formulas of $\mathcal{L}$ with parameters taken from $X$.

Suppose $\mathcal{B}$ is a complete Boolean algebra. A $\mathcal{B}$-valued structure is a pair $(\mathbf{M}, \| \cdot \|_{\mathbf{M}})$ where:

\begin{enumerate}
	\item $\mathbf{M}$ is a set;
	\item  $\phi \mapsto \|\phi\|_{\mathbf{M}}$ is a map from $\mathcal{L}( \mathbf{M})$ to $\mathcal{B}$;
	\item If $\phi$ is a logically valid sentence then $\|\phi\|_{\mathbf{M}} = 1$;
	\item For every formula $\phi \in \mathcal{L}( \mathbf{M})$, we have that $\|\lnot \phi\|_{\mathbf{M}}= \lnot\|\phi\|_{\mathbf{M}}$;
	\item For all $\phi, \psi$, we have that $\|\phi \land \psi\|_{\mathbf{M}} = \|\phi\|_{\mathbf{M}} \land \|\psi\|_{\mathbf{M}}$;
	\item For every formula $\phi(x)$ with parameters from $\mathbf{M}$, $\|\exists x \phi(x)\|_{\mathbf{M}} = \bigvee_{a \in \mathbf{M}} \|\phi(a)\|_{\mathbf{M}}$;
	\item For all $a, b \in \mathbf{M}$ distinct, $\|a = b\|_{\mathbf{M}} < 1$.
\end{enumerate}

We are only interested in the case when $\mathbf{M}$ is full, i.e. when in fact  $\|\exists x \phi (x, \overline{a}) \|_{\mathbf{M}} = \mbox{max}_{a \in \mathbf{M}} \|\phi(a, \overline{a})\|_{\mathbf{M}}$. If $T$ is a theory, then we write $\mathbf{M} \models^{\mathcal{B}} T$, and say that $\mathbf{M}$ is a full $\mathcal{B}$-valued model of $T$, if $\|\phi\|_{\mathbf{M}} = 1$ for all $\phi\in T$.

For example, (ordinary) $\mathcal{L}$-structures are the same as full $\{0, 1\}$-valued $\mathcal{L}$-structures, which can thus be viewed as full $\mathcal{B}$-valued structures for any $\mathcal{B}$. Also, if $M$ is an $\mathcal{L}$-structure and $\lambda$ is a cardinal, then $M^\lambda$ is a $\mathcal{P}(\lambda)$-valued $\mathcal{L}$-structure; moreover, we have the canonical elementary embedding $\mathbf{i}: M \preceq M^\lambda$, given by the diagonal map. We call this the pre-{\L}o{\'s} embedding. More generally, for any complete Boolean algebra $\mathcal{B}$ we can define the $\mathcal{B}$-valued structure $M^{\mathcal{B}}$.

If $\mathbf{M}$ is a full $\mathcal{B}$-valued model of $T$ and $\mathcal{U}$ is an ultrafilter on $\mathcal{B}$, then we can form the specialization $\mathbf{M}/\mathcal{U} \models T$, which comes equipped with a canonical surjection $[\cdot]_{\mathcal{U}}: \mathbf{M}\to \mathbf{M}/\mathcal{U}$, satisfying that for all $\phi(\overline{a}) \in \mathcal{L}(\mathbf{M})$, $\mathbf{M}/\mathcal{U}\models \phi([\overline{a}]_{\mathcal{U}})$ if and only if $\|\phi(\overline{a})\|_{\mathbf{M}} \in \mathcal{U}$. This generalizes the ultrapower construction $M^\lambda/\mathcal{U}$; note that the {\L}o{\'s} embedding of $M$ into $M^\lambda/\mathcal{U}$ is the composition of the pre-{\L}o{\'s} embedding with $[\cdot]_{\mathcal{U}}$.

In \cite{BVModelsUlrich}, we prove the following compactness theorem for  full Boolean-valued models:

\begin{theorem}\label{Compactness}
	Suppose $\mathcal{B}$ is a complete Boolean algebra, $X$ is a set, $\Gamma \subseteq \mathcal{L}( X)$, and $F_0, F_1: \Gamma \to  \mathcal{B}$ with $F_0(\phi(\overline{a})) \leq F_1(\phi(\overline{a}))$ for all $\phi(\overline{a}) \in \Gamma$. Then the following are equivalent:
	
	\begin{itemize}
		\item[(A)] There is some full $\mathcal{B}$-valued structure $\mathbf{M}$ and some map $\tau: X \to \mathbf{M}$, such that for all $\phi(\overline{a}) \in \Gamma$, $F_0(\phi(\overline{a})) \leq \|\phi(\tau(\overline{a}))\|_{\mathbf{M}} \leq F_1(\phi(\overline{a}))$;
		
		\item[(B)] For every finite $\Gamma_0 \subseteq \Gamma$ and for every $\mathbf{c} \in \mathcal{B}_+$, there is some $\{0, 1\}$-valued $\mathcal{L}$-structure $M$ and some map $\tau: X \to M$, such that for every $\phi(\overline{a}) \in \Gamma$, if $\mathbf{c} \leq F_0(\phi(\overline{a}))$ then $M \models \phi(\tau(\overline{a}))$, and if $\mathbf{c} \leq \lnot F_1(\phi(\overline{a}))$ then $M \models \lnot \phi(\tau(\overline{a}))$. 
	\end{itemize}
\end{theorem}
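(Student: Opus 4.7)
The direction (A)$\Rightarrow$(B) is straightforward. Given $\mathbf{M},\tau$ as in (A) and a pair $(\Gamma_0,\mathbf{c})$, use Zorn's lemma to extend $\{\mathbf{c}\}$ to an ultrafilter $\mathcal{U}$ on $\mathcal{B}$, and take $M := \mathbf{M}/\mathcal{U}$ with the composed assignment $[\cdot]_{\mathcal{U}}\circ\tau : X\to M$. For $\phi(\overline{a})\in\Gamma_0$, if $\mathbf{c}\leq F_0(\phi(\overline{a}))$ then $F_0(\phi(\overline{a}))\in\mathcal{U}$, and because $F_0(\phi(\overline{a}))\leq \|\phi(\tau(\overline{a}))\|_{\mathbf{M}}$ we get $M\models\phi([\tau(\overline{a})]_{\mathcal{U}})$; the case $\mathbf{c}\leq\lnot F_1(\phi(\overline{a}))$ is dual.

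For (B)$\Rightarrow$(A), the plan is a Boolean-valued Henkin construction, carried out in three steps. First, I totalize the data. Consider the poset of tuples $(X',\Gamma',F_0',F_1')$ with $X\subseteq X'$, $\Gamma\subseteq\Gamma'\subseteq\mathcal{L}(X')$, and $F_0\leq F_0'\leq F_1'\leq F_1$ on $\Gamma$, subject to the analogue of (B); apply Zorn's lemma to get a maximal element. A maximal element has $\Gamma'=\mathcal{L}(X')$ (else append any missing $\phi$ with interval $[0,1]$, which trivially preserves (B)) and $F_0'=F_1'=:F$ on $\Gamma'$. For the equality, if some $\phi$ had $F_0'(\phi)<F_1'(\phi)$, pick $\mathbf{d}\in\mathcal{B}_+$ with $\mathbf{d}\leq F_1'(\phi)\wedge\lnot F_0'(\phi)$ and consider the two refinements "raise $F_0'(\phi)$ by $\mathbf{d}$'' and "lower $F_1'(\phi)$ by $\mathbf{d}$''. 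Any failure of (B) under the first refinement at parameter $\mathbf{c}_A$ must have $\mathbf{c}_A\wedge\mathbf{d}\neq 0$ (else the refined constraint coincides with the original), and likewise for the second at $\mathbf{c}_B$; the meets with $\mathbf{d}$ and $\lnot\mathbf{d}$ then combine a finite failure for the original data, contradicting (B). Hence at least one refinement survives, contradicting maximality.

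Second, I enlarge $X'$ to furnish Henkin witnesses. For each existential formula $\exists x\,\phi(x,\overline{a})$ over the current parameter set, adjoin a fresh constant $c$ and declare $F(\phi(c,\overline{a})):=F(\exists x\,\phi(x,\overline{a}))$, then re-totalize. To check (B) is preserved when adding $c$, given any finite set of constraints involving $c$ and any $\mathbf{c}\in\mathcal{B}_+$, apply (B) to the un-extended data (with $c$ deleted from the new constraints); in the resulting $\{0,1\}$-valued model $M$, interpret $c$ as an element realizing $\exists x\,\phi(x,\tau(\overline{a}))$, using the fact that $\mathbf{c}\leq F(\exists x\,\phi)$ already forces $M\models\exists x\,\phi(x,\tau(\overline{a}))$. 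Iterate through an ordinal long enough to close $X^*$ under Henkin witnesses for every existential formula in $\mathcal{L}(X^*)$, interleaving with totalization at limit stages.

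Third, I extract the structure. Define $\mathbf{M}:=X^*/{\sim}$, where $a\sim b$ iff $F(a=b)=1$, and set $\|\phi\|_{\mathbf{M}}:=F(\phi)$ (well-defined on equivalence classes by logical validity of the equality axioms applied via (B)). The Boolean-valued axioms---$\|\lnot\phi\|=\lnot\|\phi\|$, $\|\phi\wedge\psi\|=\|\phi\|\wedge\|\psi\|$, and truth of logical validities---each reduce to the corresponding ordinary facts by testing against every $\mathbf{c}\in\mathcal{B}_+$ through (B); for example, if $F(\phi\wedge\psi)\neq F(\phi)\wedge F(\psi)$, then some $\mathbf{c}\in\mathcal{B}_+$ lies below the symmetric difference, and applying (B) at $\mathbf{c}$ produces an ordinary $M$ in which the Boolean identity $\phi\wedge\psi\leftrightarrow\phi\wedge\psi$ fails, a contradiction. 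Fullness, $\|\exists x\,\phi\|_{\mathbf{M}}=\max_{a\in\mathbf{M}}\|\phi(a)\|_{\mathbf{M}}$, is witnessed by the Henkin constants. The main obstacle is the totalization step, specifically the bookkeeping showing that intervals can be narrowed to points; the key insight is that (B) decomposes branchwise over $\mathcal{B}_+$, so obstructions on the two sides of a split can be meet-combined into a single obstruction to the original data.
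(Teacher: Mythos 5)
You should know at the outset that this paper does not prove Theorem~\ref{Compactness} at all; it is quoted from \cite{BVModelsUlrich}, so I can only judge your argument on its own terms. Your (A)$\Rightarrow$(B) direction (specialize $\mathbf{M}$ at an ultrafilter containing $\mathbf{c}$) is fine, and the Henkin architecture for (B)$\Rightarrow$(A) is reasonable in outline. But there is a genuine gap exactly at the step you yourself flag as the key insight: the claim that, for a \emph{fixed} nonzero $\mathbf{d}\leq F_1'(\phi)\wedge\lnot F_0'(\phi)$, at least one of the two refinements (raise $F_0'(\phi)$ by $\mathbf{d}$, lower $F_1'(\phi)$ by $\mathbf{d}$) preserves (B). This is false. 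Take $\mathcal{B}=\mathcal{P}(\{1,2\})$ with atoms $\mathbf{b}_1,\mathbf{b}_2$, and $\Gamma=\{P(a),\ P(a)\wedge Q(a),\ \lnot P(a)\wedge Q(a)\}$ with $F_0=F_1$ equal to $\mathbf{b}_1$ on $P(a)\wedge Q(a)$, equal to $\mathbf{b}_2$ on $\lnot P(a)\wedge Q(a)$, and $F_0(P(a))=0$, $F_1(P(a))=1$. This data satisfies (B), indeed (A) (set $\|P(a)\|=\mathbf{b}_1$, $\|Q(a)\|=1$). Now take $\phi=P(a)$ and $\mathbf{d}=1$: raising $F_0(\phi)$ by $\mathbf{d}$ fails (B) at $\mathbf{c}_A=\mathbf{b}_2$, and lowering $F_1(\phi)$ by $\mathbf{d}$ fails (B) at $\mathbf{c}_B=\mathbf{b}_1$, so both refinements die. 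Note also that your proposed repair mechanism cannot work as described: both obstructions lie \emph{below} $\mathbf{d}$ (your phrase ``the meets with $\mathbf{d}$ and $\lnot\mathbf{d}$'' misdescribes the situation; here $\lnot\mathbf{d}=0$), and since $\mathbf{c}_A\wedge\mathbf{c}_B=0$ they cannot be meet-combined into a single nonzero $\mathbf{c}$ witnessing a failure of the original data --- and indeed no such failure exists.

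The totalization lemma you want (a non-total element of your poset is not maximal) is still true, but the choices must be nested rather than made in parallel at one $\mathbf{d}$. Use the fact that a failure of (B) at $(\Gamma_0,\mathbf{c})$ persists at every nonzero $\mathbf{c}'\leq\mathbf{c}$ (shrinking $\mathbf{c}$ only activates more constraints). Start with $\mathbf{d}_0:=F_1'(\phi)\wedge\lnot F_0'(\phi)$; if raising by $\mathbf{d}_0$ fails at $(\Gamma_0^A,\mathbf{c}_A)$, then (as you correctly observe) $\mathbf{d}_1:=\mathbf{c}_A\wedge\mathbf{d}_0\neq 0$, and below $\mathbf{d}_1$ the original constraints together with $\phi$ are unsatisfiable. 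If lowering by $\mathbf{d}_1$ also fails, its obstruction yields a nonzero $\mathbf{c}^*\leq\mathbf{d}_1$ below which the original constraints are unsatisfiable together with $\lnot\phi$ as well; hence they are unsatisfiable outright at $\mathbf{c}^*$, contradicting (B). So for some choice of $\mathbf{d}$ one of the two refinements survives, which is what maximality actually requires (equivalently, one can decide $\phi$ along a maximal antichain of ``deciding'' conditions and use completeness of $\mathcal{B}$). Two smaller points: your Zorn application ranges over a proper class of tuples $(X',\Gamma',F_0',F_1')$ unless you fix in advance a set of potential Henkin constants (easily arranged); and in the axiom-verification step, from $\mathbf{c}\leq\lnot(F(\phi)\wedge F(\psi))$ you must first shrink $\mathbf{c}$ below one of $\lnot F(\phi)$, $\lnot F(\psi)$ before invoking (B). With the nested-obstruction fix the overall Henkin strategy does go through, but as written the crucial step is not just unproved --- it is refuted by the example above.
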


Here is a first application: given $\mathcal{B}$-valued models $\mathbf{M}\subseteq \mathbf{N}$, say that $\mathbf{M} \preceq \mathbf{N}$ if $\|\cdot\|_{\mathbf{M}} \subseteq \| \cdot \|_{\mathbf{N}}$. Say that $\mathbf{N}$ is $\lambda^+$-saturated if for every $\mathbf{M}_0 \preceq \mathbf{N}$ with $|\mathbf{M}_0| \leq \lambda$ and for every $\mathbf{M}_1 \succeq \mathbf{M}_0$ with $|\mathbf{M}_1| \leq \lambda$, there is some elementary embedding $f: \mathbf{M}_1 \preceq \mathbf{N}$ extending the inclusion from $\mathbf{M}_0$ into $\mathbf{N}$. Then in \cite{BVModelsUlrich}, we show that for every $\mathcal{B}$-valued structure $\mathbf{M}$ and for every $\lambda$, there is an elementary extension $\mathbf{N} \succeq \mathbf{M}$ such that $\mathbf{N}$ is full and moreover $\lambda^+$-saturated.

Suppose $T$ is a complete countable theory, and $\mathcal{U}$ is an ultrafilter on the complete Boolean algebra $\mathcal{B}$. We observe in \cite{BVModelsUlrich} that if there is some $\lambda^+$-saturated $\mathbf{M} \models^{\mathcal{B}} T$ with $\mathbf{M}/\mathcal{U}$ $\lambda^+$-saturated, then for every $\lambda^+$-saturated $\mathbf{M} \models^{\mathcal{B}} T$, $\mathbf{M}/\mathcal{U}$ is $\lambda^+$-saturated. We define that $\mathcal{U}$ $\lambda^+$-saturates $T$ in this case. 

Finally, in \cite{BVModelsUlrich} we give the following convenient characterization of Keisler's order:

\begin{theorem}\label{KeislerChar}
	Suppose $T_0, T_1$ are theories. Then $T_0 \trianglelefteq T_1$ if and only if for every $\lambda$, for every complete Boolean algebra $\mathcal{B}$ with the $\lambda^+$-c.c., and for every ultrafilter $\mathcal{U}$ on $\mathcal{B}$, if $\mathcal{U}$ $\lambda^+$-saturates $T_1$, then $\mathcal{U}$ $\lambda^+$-saturates $T_0$.
\end{theorem}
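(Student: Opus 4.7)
The plan is to prove both directions by translating between Boolean-valued saturation over $\lambda^+$-c.c.\ complete algebras and classical saturation of $\lambda$-regular ultrapowers, using Theorem~\ref{Compactness} as the main technical tool together with the fact from \cite{BVModelsUlrich} that the saturation status of $\mathbf{M}/\mathcal{U}$ depends only on $\mathcal{U}$ for $\lambda^+$-saturated full $\mathbf{M} \models^{\mathcal{B}} T$.

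For the backward direction, I would assume the Boolean-valued characterization holds for all $\lambda$, fix a $\lambda$-regular ultrafilter $\mathcal{U}$ on $\lambda$ that $\lambda^+$-saturates $T_1$ in Keisler's sense, and produce a $\lambda^+$-c.c.\ reduct. Given a regularizing family $\{A_\alpha : \alpha < \lambda\} \subseteq \mathcal{U}$, let $\mathcal{B}_0 \subseteq \mathcal{P}(\lambda)$ be the complete subalgebra generated by the $A_\alpha$'s. A counting argument using $\bigcap_{\alpha \in I} A_\alpha = \emptyset$ for every infinite $I$ forces antichains in $\mathcal{B}_0$ to have size at most $\lambda$, so $\mathcal{B}_0$ has the $\lambda^+$-c.c. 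Setting $\mathcal{U}_0 = \mathcal{U} \cap \mathcal{B}_0$, I would show that for any full $\lambda^+$-saturated $\mathbf{M} \models^{\mathcal{B}_0} T$ one can use Theorem~\ref{Compactness} to extend $\mathbf{M}$ to a full $\mathcal{P}(\lambda)$-valued $\mathbf{M}'$ with $\mathbf{M}'/\mathcal{U} \cong \mathbf{M}/\mathcal{U}_0$, so saturation by $\mathcal{U}_0$ and by $\mathcal{U}$ coincide on countable theories. Hence $\mathcal{U}_0$ $\lambda^+$-saturates $T_1$, so by hypothesis $\mathcal{U}_0$ $\lambda^+$-saturates $T_0$, and therefore so does $\mathcal{U}$.

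For the forward direction, assume $T_0 \trianglelefteq T_1$, let $\mathcal{B}$ be complete with the $\lambda^+$-c.c., and let $\mathcal{U}$ on $\mathcal{B}$ $\lambda^+$-saturate $T_1$. Suppose toward contradiction that a type $p(x) = \{\phi_\alpha(x, \overline{a}_\alpha) : \alpha < \lambda\}$ over $\mathbf{M}_0/\mathcal{U}$ is unrealized, for some $\lambda^+$-saturated full $\mathbf{M}_0 \models^{\mathcal{B}} T_0$. Using the $\lambda^+$-c.c., choose a single maximal antichain $\{\mathbf{b}_\beta : \beta < \lambda\} \subseteq \mathcal{B}$ that simultaneously refines all Boolean values $\|\phi_\alpha(a, \overline{a}_\alpha)\|_{\mathbf{M}_0}$ for $\alpha < \lambda$ and $a$ ranging over a $\lambda$-sized subset of $\mathbf{M}_0$ chosen densely enough to detect realization of $p$. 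Define $\mathcal{V} = \{X \subseteq \lambda : \bigvee_{\beta \in X} \mathbf{b}_\beta \in \mathcal{U}\}$; the antichain structure makes $\mathcal{V}$ a $\lambda$-regular ultrafilter on $\lambda$. Apply Theorem~\ref{Compactness} to produce $\{0,1\}$-valued $M_1 \models T_1$ with $M_1^\lambda/\mathcal{V}$ $\lambda^+$-saturated (since $\mathcal{V}$ is a coarsening of $\mathcal{U}$ preserving the positive $T_1$-data) and $M_0 \models T_0$ with $M_0^\lambda/\mathcal{V}$ omitting a translate of $p$. This contradicts $T_0 \trianglelefteq_\lambda T_1$.

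The main obstacle is the forward direction: the antichain $\{\mathbf{b}_\beta\}$ must be chosen so that $\mathcal{V}$ simultaneously captures the positive saturation of $\mathcal{U}$ on $T_1$ and the negative saturation on $T_0$, and this is exactly where $\lambda^+$-c.c.\ becomes essential, since we need a single antichain of size at most $\lambda$ refining $\lambda$ many prescribed Boolean values. A secondary technical point is the precise application of Theorem~\ref{Compactness}: it must be invoked to synthesize $M_0, M_1$ from the coherent partial Boolean-valued data recorded by $\{\mathbf{b}_\beta\}$ and $\mathcal{U}$, with careful bookkeeping to ensure that the failure of saturation in $\mathbf{M}_0/\mathcal{U}$ actually transfers to a failure of saturation in $M_0^\lambda/\mathcal{V}$.
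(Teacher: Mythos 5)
There is a genuine gap, and it sits exactly where you flagged your ``main obstacle'': the forward direction. Two steps fail. First, the maximal antichain $\{\mathbf{b}_\beta:\beta<\lambda\}$ you want does not exist in general: an element of the antichain would have to decide each of the $\lambda$ prescribed Boolean values, and in an atomless algebra (e.g.\ the Cohen or random algebra, which are $\aleph_1$-c.c., so $\lambda^+$-c.c.\ with $\lambda=\aleph_0$) there is typically no nonzero element deciding even countably many independent values; the $\lambda^+$-c.c.\ bounds the size of antichains but gives no such simultaneous refinement. Second, even if such an antichain existed, the projected ultrafilter $\mathcal{V}=\{X\subseteq\lambda:\bigvee_{\beta\in X}\mathbf{b}_\beta\in\mathcal{U}\}$ need not be $\lambda$-regular: if $\mathcal{U}$ is $\aleph_1$-complete (which the theorem allows), every such projection is $\aleph_1$-complete or principal, and then the Keisler hypothesis $T_0\trianglelefteq_\lambda T_1$, which quantifies only over regular ultrafilters on $\mathcal{P}(\lambda)$, says nothing about $\mathcal{V}$. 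This is the whole difficulty of the forward direction: one cannot extract a regular ultrafilter from $\mathcal{U}$ itself. The proof in \cite{BVModelsUlrich} (this paper only quotes the result) runs the construction the other way, in the spirit of Malliaris--Shelah's separation of variables: one fixes in advance a $\lambda$-regular, excellent/good filter $\mathcal{D}_0$ on $\mathcal{P}(\lambda)$ together with a surjective homomorphism from $\mathcal{P}(\lambda)/\mathcal{D}_0$ onto $\mathcal{B}$ (this is where the $\lambda^+$-c.c.\ is really used, after a L\"owenheim--Skolem reduction to a complete subalgebra of size at most $2^\lambda$), pulls $\mathcal{U}$ back to a regular ultrafilter $\mathcal{D}$ on $\lambda$, and proves that $M^\lambda/\mathcal{D}$ is $\lambda^+$-saturated for $M\models T$ exactly when $\mathcal{U}$ $\lambda^+$-saturates $T$ in the Boolean-valued sense. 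Your sketch contains no substitute for this morphism construction, and the direct ``coarsen $\mathcal{U}$ along an antichain'' route cannot be repaired.

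The backward direction, by contrast, is essentially right but overbuilt: $\mathcal{P}(\lambda)$ itself has the $\lambda^+$-c.c.\ (pairwise disjoint nonempty subsets of $\lambda$ number at most $\lambda$), so there is no need to pass to the complete subalgebra $\mathcal{B}_0$ generated by a regularizing family, and the unproved claim that saturation transfers between $\mathcal{U}$ and $\mathcal{U}_0=\mathcal{U}\cap\mathcal{B}_0$ can simply be dropped. What you do need, and should cite rather than re-derive, is the bridge fact that for a $\lambda$-regular ultrafilter on $\mathcal{P}(\lambda)$ the Boolean-valued notion of ``$\mathcal{U}$ $\lambda^+$-saturates $T$'' agrees with $\lambda^+$-saturation of the honest ultrapower $M^\lambda/\mathcal{U}$ (this is where regularity, via pseudofiniteness of small subsets, enters, as in Corollary~\ref{JustifyDefOfSat2} and the surrounding discussion).
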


\noindent \textbf{Convention.} $\mathbf{V}$ will denote a full $\mathcal{B}$-valued model of $ZFC^-$ for some complete Boolean algebra $\mathcal{B}$, often associated with an elementary embedding $\mathbf{i}: V \preceq \mathbf{V}$ for some transitive $V$ (for example, if $\mathbf{V} = V^{\mathcal{B}}$ is the Boolean ultrapower, then $\mathbf{i}$ would be the pre-{\L}o{\'s} embedding).

\begin{definition}
	Suppose $\mathcal{B}$ is a complete Boolean algebra, $V \models ZFC^-$ is transitive, and $\mathbf{i}: V \preceq \mathbf{V} \models^{\mathcal{B}} ZFC^-$. Given $X \subseteq V$, let $\mathbf{i}_{\std}(X) = \{\mathbf{a}\in V: \|\mathbf{a} \in \mathbf{i}(X)\|_{\mathcal{B}}\} = 1$. Suppose $M \in V$ is a structure in the countable language $\mathcal{L}$, with domain $\mbox{dom}(M)$. Then let $\mathbf{i}_{\std}(M)$ be the full $\mathcal{B}$-valued $\mathcal{L}$-structure defined as follows. Its domain is $\mathbf{i}_{\std}(\mbox{dom}(M))$. Given a formula $\phi(\mathbf{a}_i: i < n) \in \mathcal{L}(\mathbf{i}_{\std}(\mbox{dom}(M)))$, let $\|\phi(\mathbf{a}_i: i < n)\|_{\mathbf{i}_{\std}(M)} = \|\mathbf{i}(M) \models \phi(\mathbf{a}_i: i < n)\|_{\mathbf{V}}$. (Note that in practice, we usually denote $M$ and $\mbox{dom}(M)$ by the same symbol $M$.)
\end{definition}

The following corollaries are proven in \cite{InterpOrdersUlrich}.

\begin{corollary}\label{JustifyDefOfSat2}
	Suppose $\mathcal{B}$ is a complete Boolean algebra, $\mathcal{U}$ is an ultrafilter on $\mathcal{B}$, $\lambda$ is a cardinal, and $T$ is a complete countable theory. Then the following are equivalent:
	
	\begin{itemize}
		\item[(A)] $\mathcal{U}$ $\lambda^+$-saturates $T$.
		\item[(B)] For some or every transitive $V \models ZFC^-$, and for some or every $\mathbf{i}: V \preceq \mathbf{V}$ with $\mathbf{V}$ $\lambda^+$-saturated, and for some or every $M \models T$ with $M \in V$, $\mathbf{i}_{\std}(M)/\mathcal{U}$ is $\lambda^+$-saturated.
	\end{itemize}
\end{corollary}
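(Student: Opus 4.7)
The plan is to reduce both directions of the corollary to a single structural claim: whenever $V \models ZFC^-$ is transitive, $M \in V$ is a structure in a countable language, and $\mathbf{i}: V \preceq \mathbf{V}$ is an elementary embedding into a full $\lambda^+$-saturated $\mathcal{B}$-valued model of $ZFC^-$, the induced $\mathcal{B}$-valued structure $\mathbf{i}_{\std}(M)$ is itself full and $\lambda^+$-saturated as a $\mathcal{B}$-valued model of $\Th(M)$. Once this claim is in hand, (A)$\Rightarrow$(B) is immediate: for any $V$, $M$, $\mathbf{i}$, $\mathbf{V}$ as in (B), $\mathbf{i}_{\std}(M)$ is a full $\lambda^+$-saturated $\mathcal{B}$-valued model of $T$, so the defining property of $\mathcal{U}$ $\lambda^+$-saturating $T$ from \cite{BVModelsUlrich} directly gives that $\mathbf{i}_{\std}(M)/\mathcal{U}$ is $\lambda^+$-saturated. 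Conversely, (B)$\Rightarrow$(A) follows by producing \emph{some} such $V, M, \mathbf{i}, \mathbf{V}$ (any countable transitive $V$ containing $M \models T$ works, and $\mathbf{V}$ is then obtained from the existence theorem of \cite{BVModelsUlrich} for $\lambda^+$-saturated full $\mathcal{B}$-valued elementary extensions) and then invoking the ``some implies every'' half of the definition of $\mathcal{U}$ $\lambda^+$-saturating $T$.

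For fullness of $\mathbf{i}_{\std}(M)$: given any formula $\exists x\,\phi(x,\overline{\mathbf{a}})$ with $\overline{\mathbf{a}}$ from $\mathbf{i}_{\std}(M)$, set $\mathbf{c} = \|\exists x\,\phi(x,\overline{\mathbf{a}})\|_{\mathbf{i}_{\std}(M)} = \|\mathbf{i}(M) \models \exists x\,\phi(x,\overline{\mathbf{a}})\|_{\mathbf{V}}$. Fullness of $\mathbf{V}$ yields $\mathbf{b}^* \in \mathbf{V}$ with $\|\mathbf{b}^* \in \mathbf{i}(M) \wedge \mathbf{i}(M) \models \phi(\mathbf{b}^*, \overline{\mathbf{a}})\|_{\mathbf{V}} \geq \mathbf{c}$. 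Fix any $\mathbf{b}_0 \in \mathbf{i}_{\std}(M)$ and use a standard mixing argument (available by Theorem~\ref{Compactness} applied inside $\mathbf{V}$, or directly by computing that internally definable functions can be glued over an antichain) to produce $\mathbf{b}' \in \mathbf{V}$ agreeing with $\mathbf{b}^*$ on $\mathbf{c}$ and with $\mathbf{b}_0$ on $\lnot \mathbf{c}$. Then $\|\mathbf{b}' \in \mathbf{i}(M)\|_{\mathbf{V}} = 1$, so $\mathbf{b}' \in \mathbf{i}_{\std}(M)$, and $\|\phi(\mathbf{b}', \overline{\mathbf{a}})\|_{\mathbf{i}_{\std}(M)} \geq \mathbf{c}$, giving fullness.

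The main obstacle is the $\lambda^+$-saturation of $\mathbf{i}_{\std}(M)$. Suppose $\mathbf{M}_0 \preceq \mathbf{i}_{\std}(M)$ with $|\mathbf{M}_0| \leq \lambda$ and $\mathbf{M}_0 \preceq \mathbf{M}_1$ with $|\mathbf{M}_1| \leq \lambda$. Each element $\mathbf{d} \in \mathbf{M}_1 \setminus \mathbf{M}_0$ must be sent to some $\tau(\mathbf{d}) \in \mathbf{i}_{\std}(M)$ so that the resulting map is elementary. Translating the elementary diagram of $\mathbf{M}_1$ over $\mathbf{M}_0$ into the set-theoretic language, this amounts to finding, for each $\mathbf{d}$, an element of $\mathbf{V}$ lying in $\mathbf{i}(M)$ with value $1$ and satisfying, with the prescribed Boolean values, all formulas $\|\mathbf{i}(M) \models \phi(\overline{x}, \overline{\mathbf{a}})\|_{\mathbf{V}} = \|\phi(\overline{\mathbf{d}}, \overline{\mathbf{a}})\|_{\mathbf{M}_1}$ where $\overline{\mathbf{a}} \in \mathbf{M}_0$. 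Finite consistency of this set of constraints over parameters of size $\leq \lambda$ is exactly the content of the assumption that the inclusion $\mathbf{M}_0 \preceq \mathbf{M}_1$ is elementary combined with the (already established) fullness of $\mathbf{i}_{\std}(M)$. Then $\lambda^+$-saturation of $\mathbf{V}$, in the sense of \cite{BVModelsUlrich}, produces simultaneous realizations in $\mathbf{V}$; a final mixing against a base element of $\mathbf{i}_{\std}(M)$ pulls these realizations into $\mathbf{i}_{\std}(M)$, as in the fullness step, completing the construction.
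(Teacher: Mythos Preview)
The paper does not prove this corollary in-text; it simply records that it is proven in \cite{InterpOrdersUlrich}. Your approach---reducing both implications to the single structural claim that $\mathbf{i}_{\std}(M)$ is full and $\lambda^+$-saturated whenever $\mathbf{V}$ is---is correct and is almost certainly the intended argument. The fullness step via mixing is standard. For the saturation step, once you set up the type over a small $\mathbf{V}_0 \preceq \mathbf{V}$ and invoke Theorem~\ref{Compactness}, the finite-consistency check does go through: passing to a quotient by any ultrafilter $\mathcal{U}'$ containing the given $\mathbf{c}$, the finitely many constraints on the new elements become a finite partial $\mathcal{L}$-type over $\mathbf{M}_0/\mathcal{U}'$ which is realized in $\mathbf{M}_1/\mathcal{U}'$ and hence, by elementarity $\mathbf{M}_0/\mathcal{U}' \preceq \mathbf{M}_1/\mathcal{U}'$, already realized in $\mathbf{M}_0/\mathcal{U}' \subseteq \mathbf{V}_0/\mathcal{U}'$, giving the required $\{0,1\}$-valued witness structure.

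One small point of precision: your phrase ``finite consistency \ldots\ is exactly the content of the assumption that the inclusion $\mathbf{M}_0 \preceq \mathbf{M}_1$ is elementary combined with fullness'' is slightly incomplete. You also need the hypothesis $\mathbf{M}_0 \preceq \mathbf{i}_{\std}(M)$, which is what lets you translate between $\|\phi(\overline{\mathbf{a}})\|_{\mathbf{M}_0}$ and the set-theoretic value $\|\mathbf{i}(M) \models \phi(\overline{\mathbf{a}})\|_{\mathbf{V}}$, so that witnesses found in $\mathbf{M}_0/\mathcal{U}'$ really do satisfy the set-theoretic constraints in $\mathbf{V}_0/\mathcal{U}'$. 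This is immediate from the setup, but worth making explicit. Otherwise the argument is complete.
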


\begin{corollary}\label{KeislerAndInterp2}
	Suppose $\mathcal{B}$ is a complete Boolean algebra, $\mathcal{U}$ is an ultrafilter on $\mathcal{B}$, $\lambda$ is a cardinal, and $T$ is a complete countable theory. Then the following are equivalent:
	
	\begin{itemize}
		\item[(A)] $\mathcal{U}$ $\lambda^+$-saturates $T$.
		\item[(B)] For some or every transitive $V \models ZFC^-$ with $T \in V$, and for some or every $\mathbf{i}: V \preceq \mathbf{V}$ with $\mathbf{V}$ $\lambda^+$-saturated, and for some or every $M \models T$ with $M \in V$, $\mathbf{i}_{\std}(M)/\mathcal{U}$ is $\lambda^+$-saturated.
		\item[(C)] For some or every transitive $V \models ZFC^-$, for some or every $\mathbf{i}: V \preceq \mathbf{V}$ with $\mathbf{V}$ $\lambda^+$-saturated, $\mathbf{V}/\mathcal{U}$ $\lambda^+$-pseudosaturates $T$.
	\end{itemize}
\end{corollary}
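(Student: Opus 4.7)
The plan is to deduce the three-way equivalence from Corollary~\ref{JustifyDefOfSat2} together with a single structural lemma: if $\mathbf{V}$ is a $\lambda^+$-saturated full $\mathcal{B}$-valued model of $ZFC^-$ with $\mathbf{i}: V \preceq \mathbf{V}$ and $V$ transitive, then every subset of $\mathbf{V}/\mathcal{U}$ of cardinality at most $\lambda$ is pseudofinite in $\mathbf{V}/\mathcal{U}$.

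First, (A) $\Leftrightarrow$ (B) is essentially Corollary~\ref{JustifyDefOfSat2}; the added requirement $T \in V$ is harmless, since $T$ is countable and can always be placed into a suitable transitive $V$ in the ``for some'' direction, while the ``for every'' clause is merely restricted to such $V$. Next, (B) $\Rightarrow$ (C) is immediate: identify $\mathbf{i}_{\std}(M)/\mathcal{U}$ with $\mathbf{j}_{\std}(M)$ where $\mathbf{j} := [\cdot]_{\mathcal{U}} \circ \mathbf{i}: V \preceq \mathbf{V}/\mathcal{U}$, and note that $\lambda^+$-saturation of this structure trivially entails $\lambda^+$-pseudosaturation. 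For (C) $\Rightarrow$ (B), take any partial type $p(x)$ over $\mathbf{j}_{\std}(M)$ of cardinality $\leq \lambda$; code $p$ as a subset of $\mathbf{V}/\mathcal{U}$ of cardinality $\leq \lambda$ (each formula is an internally hereditarily finite object, hence an element of $\mathbf{V}/\mathcal{U}$); by the key lemma $p$ is pseudofinite, and by Separation we may suppose its pseudofinite cover consists of $\mathbf{j}(\mathcal{L})$-formulas over $\mathbf{j}(M)$; hence $p$ is realized by the assumed $\lambda^+$-pseudosaturation.

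To prove the key lemma, fix $\{\mathbf{a}_\alpha : \alpha < \lambda\} \subseteq \mathbf{V}$; it suffices to produce $\mathbf{X} \in \mathbf{V}$ satisfying $\|\mathbf{X}\mbox{ is finite}\|_{\mathbf{V}} = 1$ and $\|\mathbf{a}_\alpha \in \mathbf{X}\|_{\mathbf{V}} = 1$ for every $\alpha$, since then $[\mathbf{X}]_{\mathcal{U}}$ witnesses pseudofiniteness of $\{[\mathbf{a}_\alpha]_{\mathcal{U}}: \alpha < \lambda\}$. Let $\mathbf{M}_0 \preceq \mathbf{V}$ be a full $\mathcal{B}$-valued elementary substructure of cardinality $\leq \lambda$ containing every $\mathbf{a}_\alpha$, and apply Theorem~\ref{Compactness} to construct an elementary extension $\mathbf{M}_1 \succeq \mathbf{M}_0$, again of cardinality $\leq \lambda$, adjoining a new element $\mathbf{X}$ with the prescribed Boolean values. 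The hypothesis of Theorem~\ref{Compactness} is easily verified: any finite subfamily of the constraints is realized in the classical sense by the genuine finite set $\{\mathbf{a}_{\alpha_1}, \dots, \mathbf{a}_{\alpha_n}\}$ in the transitive base $V$. Finally, $\lambda^+$-saturation of $\mathbf{V}$ yields an elementary embedding $f: \mathbf{M}_1 \preceq \mathbf{V}$ fixing $\mathbf{M}_0$, and $f(\mathbf{X})$ is the required element.

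The main obstacle is cardinal bookkeeping in the $\mathcal{B}$-valued Löwenheim--Skolem and compactness steps: one must verify that $\mathbf{M}_0$ and $\mathbf{M}_1$ can be produced with underlying sets of cardinality $\leq \lambda$, without a factor of $|\mathcal{B}|$ intruding, so that the saturation hypothesis on $\mathbf{V}$ actually applies. Modulo this, the role of $\lambda^+$-saturation of $\mathbf{V}$ here is precisely the role played by $\lambda$-regularity of $\mathcal{U}$ in the classical ultrapower setting, where every subset of $M^\lambda/\mathcal{U}$ of size $\leq \lambda$ is automatically pseudofinite.
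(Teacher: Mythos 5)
Your route is essentially the intended one; note that the paper does not prove Corollary~\ref{KeislerAndInterp2} in the text at all but cites \cite{InterpOrdersUlrich}, and the decomposition you use --- (A)$\Leftrightarrow$(B) from Corollary~\ref{JustifyDefOfSat2}, the identification of $\mathbf{i}_{\std}(M)/\mathcal{U}$ with $\mathbf{j}_{\std}(M)$ for $\mathbf{j}=[\cdot]_{\mathcal{U}}\circ\mathbf{i}$, and the reduction of (C)$\Rightarrow$(B) to the fact that $\lambda^+$-saturation of $\mathbf{V}$ makes every subset of $\mathbf{V}/\mathcal{U}$ of size at most $\lambda$ pseudofinite --- is exactly how the cited framework proceeds. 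Your ``key lemma'' is precisely Lemma 4.8 of \cite{InterpOrdersUlrich}, which this paper itself quotes inside the proof of Theorem~\ref{CardInvarTheoremUlt}, so you could simply cite it rather than reprove it. Two small corrections to your sketch of that lemma: the finite approximations demanded by Theorem~\ref{Compactness}(B) should be realized in a specialization $\mathbf{M}_0/\mathcal{U}'$ for an ultrafilter $\mathcal{U}'$ containing the given $\mathbf{c}$, interpreting $\mathbf{X}$ as the finite set, internal to that model of $ZFC^-$, of the finitely many relevant elements $[\mathbf{a}_{\alpha_i}]_{\mathcal{U}'}$ --- not ``in the transitive base $V$,'' since the $\mathbf{a}_\alpha$ are elements of $\mathbf{V}$, not of $V$; and the cardinality bookkeeping you flag (producing $\mathbf{M}_0\preceq\mathbf{V}$ and $\mathbf{M}_1$ of size at most $\lambda$ with no factor of $|\mathcal{B}|$ intruding) is supplied by the machinery of \cite{BVModelsUlrich} and is used in exactly the same way, without further comment, in Lemmas~\ref{CompactnessPatternsLemma1} and~\ref{CompactnessPatternsLemma2}, so it is not an obstacle you need to resolve here. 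Finally, in (C)$\Rightarrow$(B) it is marginally cleaner to apply the pseudofiniteness lemma to the parameter set $A$ of the type (so that $\lambda^+$-pseudosaturation applies directly by its definition) rather than to the set of formulas of $p$, which additionally invokes the $\kappa>\aleph_0$ equivalence of Lemma 3.10 of \cite{InterpOrdersUlrich}; both versions are correct.
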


%
%
%

\subsection{Boolean Ultrapowers}

We will also need Boolean ultrapowers (starting in Section~\ref{FullBVModelsSec}). These are implicit in the work of Scott and Solovay \cite{Scott},  and made explicit by Vopenka \cite{Vopenka}. We follow the notation of Mansfield \cite{BooleanUltrapowers} and Hamkins and Sebald \cite{Hamkins}.

\begin{definition}\label{DefOfBoolUlt}
	Suppose $M \models T$. Let the set of all partitions of $\mathcal{B}$ by $M$, denoted $M^{\mathcal{B}}$, be the set of all functions $\mathbf{a}: M \to \mathcal{B}$, such that for all $a, b \in M$, $\mathbf{a}(a) \wedge \mathbf{a}(b) = 0$, and such that $\bigvee_{a \in M} \mathbf{a}(a) = 1$. Given $\phi(\mathbf{a}_i: i < n) \in \mathcal{L}(M^{\mathcal{B}})$, put $\|\phi(\mathbf{a}_i: i < n)\|_{\mathcal{B}} = \bigvee_{M \models \phi(a_i: i < n)}\bigwedge_{i < n} \mathbf{a}_i(a_i)$. (One must check that this does not change if we add dummy parameters to $\phi$, but this is straightforward.)
	
	Let $\mathbf{i}: M \to M^{\mathcal{B}}$ be the embedding sending $a \in M$ to the function $\mathbf{i}(a): M \to \mathcal{B}$ which takes the value $1$ on $a$, and $0$ elsewhere. We call this the pre-{\L}o{\'s} embedding.
\end{definition}

The following theorem is the conjunction of Corollary 1.2 and Theorem 1.4 of \cite{BooleanUltrapowers} (in the special case of a relational language).

\begin{theorem}\label{preLosThm}
	Suppose $M$ is a $\{0, 1\}$-valued structure and $\mathcal{B}$ is a complete Boolean algebra (so $M$ is also a full $\mathcal{B}$-valued structure). Then $M^{\mathcal{B}}$ is a full $\mathcal{B}$-valued $\mathcal{L}$-structure, and $\mathbf{i}: M \preceq M^{\mathcal{B}}$.
\end{theorem}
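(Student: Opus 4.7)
The goal is to verify the seven defining axioms of a full $\mathcal{B}$-valued structure for $M^{\mathcal{B}}$ together with the elementarity statement. Two preliminary reductions simplify the bookkeeping. First, for any tuple $(\mathbf{a}_i: i < n)$ of partitions, the distributivity of $\mathcal{B}$ gives
\[
\bigvee_{a \in M^n} \bigwedge_{i<n} \mathbf{a}_i(a_i) = \bigwedge_{i<n} \bigvee_{a_i \in M} \mathbf{a}_i(a_i) = 1,
\]
and the partition property makes this join pairwise disjoint. Second, I will adopt the convention (implicit in the definition) that we think of $\phi$ as carrying $n$ marked parameter slots; adding dummy parameters does not change $\|\phi\|_{\mathcal{B}}$ by the computation above. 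With these two observations in place, axioms (1)--(3) (set-hood, being a map into $\mathcal{B}$, and sending logical validities to $1$) and axioms (4)--(5) (negation and conjunction) follow by splitting the above partition-of-unity join according to whether $M \models \phi(a)$ and/or $M \models \psi(a)$. Axiom (7) ($\|\mathbf{a}=\mathbf{b}\|<1$ when $\mathbf{a} \neq \mathbf{b}$): if $\bigvee_{a} \mathbf{a}(a)\wedge \mathbf{b}(a) = 1$, then one checks from the partition property that $\mathbf{a}(a)=\mathbf{b}(a)$ for every $a$.

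The main obstacle is axiom (6) together with fullness: exhibiting for each $\phi(x,\mathbf{a})$ an element $\mathbf{b}\in M^{\mathcal{B}}$ with $\|\phi(\mathbf{b},\mathbf{a})\|_{\mathcal{B}} = \|\exists x\,\phi(x,\mathbf{a})\|_{\mathcal{B}}$. The upper bound $\|\phi(\mathbf{b},\mathbf{a})\|_{\mathcal{B}} \leq \|\exists x\,\phi(x,\mathbf{a})\|_{\mathcal{B}}$ for every candidate $\mathbf{b}$ is routine from the definitions and the fact that if $M \models \phi(b,a)$ then $M \models \exists x\,\phi(x,a)$. For the reverse inequality I will construct a witness explicitly by the Axiom of Choice. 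Let $S = \{a \in M^n : M \models \exists x\,\phi(x,a)\}$, fix a Skolem function $w : S \to M$ with $M \models \phi(w(a),a)$, and fix an arbitrary element $b_0 \in M$. For $b \in M$ define
\[
\mathbf{b}(b) \;=\; \bigvee_{a \in S,\; w(a)=b} \;\bigwedge_{i<n}\mathbf{a}_i(a_i) \;\vee\; [b=b_0]\cdot \mathbf{c},
\]
where $\mathbf{c} = \bigvee_{a \notin S} \bigwedge_{i<n}\mathbf{a}_i(a_i)$ is the residual mass and $[b=b_0]\cdot \mathbf{c}$ means we add $\mathbf{c}$ only when $b=b_0$. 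Then $\mathbf{b}$ is a partition of $1$ (using the disjointness and totality observation above), and evaluating $\|\phi(\mathbf{b},\mathbf{a})\|_{\mathcal{B}}$ picks up at least every joint event $\bigwedge_i \mathbf{a}_i(a_i)$ with $a \in S$, which is exactly $\|\exists x\,\phi(x,\mathbf{a})\|_{\mathcal{B}}$.

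Finally, to prove elementarity of the pre-{\L}o{\'s} embedding, I will compute directly. For $a_i \in M$, the partition $\mathbf{i}(a_i)$ is the characteristic function of $\{a_i\}$, so $\bigwedge_i \mathbf{i}(a_i)(a_i') = 1$ iff $a_i' = a_i$ for all $i$ (else $0$). Therefore
\[
\|\phi(\mathbf{i}(a_i): i<n)\|_{\mathcal{B}} \;=\; \bigvee_{a' : M \models \phi(a')} \bigwedge_{i<n} \mathbf{i}(a_i)(a_i') \;=\; \begin{cases} 1 & \text{if } M \models \phi(a), \\ 0 & \text{otherwise}, \end{cases}
\]
which coincides with $\|\phi(a)\|_M$. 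I expect the partition/choice construction for the existential to be the only non-bookkeeping step; once fullness is in hand, axioms (1)--(5), (7) and the elementarity claim all fall out of the same distributive identity applied in different ways.
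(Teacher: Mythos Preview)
Your proof is correct and is essentially the standard argument. Note, however, that the paper does not give its own proof of this theorem: it is stated as a quotation from the literature, attributed to Mansfield (\cite{BooleanUltrapowers}, Corollary~1.2 and Theorem~1.4). What you have written is, up to cosmetic differences, the classical proof found there: verify axioms (1)--(5) and (7) via the partition-of-unity identity $\bigvee_{a\in M^n}\bigwedge_i \mathbf{a}_i(a_i)=1$ (using that finite meets distribute over arbitrary joins in any complete Boolean algebra), and establish fullness by gluing a Skolem witness along the partition. The elementarity computation for $\mathbf{i}$ is likewise the standard one. So there is nothing to compare against in the paper itself; your argument simply supplies the omitted details.
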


\section{Patterns}\label{SurveyLocalSec}

In this section,  we introduce the notion of patterns, which will give a method of computing $\lambda_{\hat{V}}(T)$ when $\hat{V}$ is $\aleph_1$-saturated.

We first recall a theorem due to Malliaris \cite{PhiTypes}, that says that ultrapowers are saturated if and only if they are locally saturated. We phrase it in the terminology of $\trianglelefteq^\times_{\aleph_1}$; Malliaris's proof translates to this context verbatim.

\begin{theorem}\label{KeislersOrderIsLocal}
	Suppose $V \models ZFC^-$ is transitive, $\mathbf{j}: V \preceq \hat{V}$ is $\aleph_1$-saturated, and $T \in V$ is a complete countable theory. Suppose $M \models T$ with $M \in V$, and $\lambda$ is a cardinal. Then the following are equivalent.
	\begin{itemize}
		\item[(A)] $\mathbf{j}_{\std}(M)$ is $\lambda^+$-pseudosaturated;
		\item[(B)] For every formula $\phi(x, \overline{y})$ and for every positive, pseudofinite $\phi$-type $p(x)$ over $M$ of cardinality at most $\lambda$, $p(x)$ is realized in $M$.
	\end{itemize}
\end{theorem}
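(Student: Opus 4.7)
The direction (A) $\Rightarrow$ (B) is immediate, since every positive $\phi$-type is a partial type. For (B) $\Rightarrow$ (A), let $q(x)$ be a pseudofinite partial type over $\mathbf{j}_{\std}(M)$ with $|q| \leq \lambda$, contained in some $\hat{X} \in \hat{V}$ finite in $\hat{V}$. I would first absorb negations into new formulas so all instances in $q$ are positive, fix an enumeration $\phi_0, \phi_1, \ldots$ of the standard $\mathcal{L}$-formulas in the variable $x$, and split $q = \bigsqcup_{n<\omega} q_n$ by which $\phi_n$ is instantiated. The plan is to follow Malliaris's locality proof for Keisler's order in \cite{PhiTypes}, which should translate to the $\trianglelefteq^\times_{\aleph_1}$ setting with $\aleph_1$-saturation of $\hat{V}$ playing the role of ultrafilter regularity.

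The first step invokes (B) on finite-level aggregations: for each $n$, the set $q_{\leq n} := \bigcup_{k \leq n} q_k$ corresponds to a positive $\Phi_n$-type for the standard formula $\Phi_n(x, \bar{z}) = \bigwedge_{k \leq n} \phi_k(x, \bar{z}_k)$; this recoded type is pseudofinite of cardinality $\leq \lambda$ and thus realized in $\mathbf{j}_{\std}(M)$ by some element $c_n$ via (B). The second step uses $\aleph_1$-saturation of $\hat{V}$ to extend the external sequence $(c_n)_{n<\omega}$ to an internal function $\hat{c}: \hat{\omega} \to \mathbf{j}(M)$ with $\hat{c}(n) = c_n$ for every standard $n$: the defining type is countable and finitely realized (any finite partial assignment extends to an internal function on $\hat{\omega}$), so $\aleph_1$-saturation yields such $\hat{c}$. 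For each $\psi \in q$ the set $W_\psi := \{\hat{m} < \hat{\omega} : \mathbf{j}(M) \models \psi[\hat{c}(\hat{m})]\}$ is internal in $\hat{V}$ and contains every sufficiently large standard integer; the aim is to produce a single nonstandard $\hat{m}^* \in \bigcap_{\psi \in q} W_\psi$, for then $\hat{c}(\hat{m}^*)$ realizes $q$ in $\mathbf{j}_{\std}(M)$ (since $q$ consists of standard formulas and the standard reduct agrees with $\mathbf{j}(M)$ on them).

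The hard part will be this final joint-intersection step, since $|q|$ may exceed $\aleph_0$ and so $\aleph_1$-saturation of $\hat{V}$ cannot be applied directly to the whole family $\{W_\psi : \psi \in q\}$. My strategy is to exploit the formula-wise structure of $q$: regroup the constraints by the instantiated $\phi_n$, use (B) to handle each $\phi_n$-block uniformly (producing internal pseudofinite witnesses inside $\hat{X}$ that absorb all $\psi \in q_n$ simultaneously), and then combine the resulting countable family of internal conditions across the standard indices $n < \omega$ using $\aleph_1$-saturation of $\hat{V}$ together with overspill on $\hat{\omega}$. This matches the structure of Malliaris's original argument essentially verbatim, with $\aleph_1$-saturation of $\hat{V}$ substituted for regularity of the ultrafilter and internal finiteness of $\hat{X}$ substituted for the index set $\lambda$.
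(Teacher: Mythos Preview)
The paper gives no proof here; it simply cites Malliaris \cite{PhiTypes} and asserts that her argument carries over verbatim. So there is no paper proof to compare against beyond that pointer, and your closing sentence is in effect doing the same thing the paper does.

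Read as an attempted self-contained argument, however, your Step~6 is where all the content lies, and the strategy you outline does not obviously close. After building $c_n\models q_{\le n}$ and extending to an internal $\hat c:\hat\omega\to\mathbf j(M)$, you need a nonstandard $\hat m^*$ in $\bigcap_{\psi\in q}W_\psi$, an intersection of $\lambda$-many internal sets. Your plan is to collapse each $\phi_n$-block to a single internal condition by applying (B) to $\phi_n$ to obtain an internal $\hat A_n\supseteq A_n$ with $\{\phi_n(x,\bar a):\bar a\in\hat A_n\}$ consistent, and then demand that $\hat c(\hat m)$ satisfy $\phi_n(x,\bar a)$ for all $\bar a\in\hat A_n$. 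But the $c_m$ you already built only realize $q_{\le m}$, which constrains parameters in $A_n$, not in the typically much larger (nonstandard-sized) set $\hat A_n$; so there is no reason any standard $m$ satisfies this stronger internal condition, and the countable type you would feed to $\aleph_1$-saturation and overspill need not be finitely satisfiable. Going the other way --- fixing the $\hat A_n$'s first and then trying to pick $c_m$ realizing $\bigcup_{k\le m}\{\phi_k(x,\bar a):\bar a\in\hat A_k\}$ --- fails too, since that is a pseudofinite type of possibly nonstandard (hence $>\lambda$) external cardinality, to which (B) does not apply. Breaking this circularity is precisely the work a complete proof must do; your sketch has not done it, so at this point you are, like the paper, deferring to \cite{PhiTypes}.
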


The following is interchangeable with the terminology of characteristic sequences, arrays and diagrams of Malliaris \cite{CharSequence}, although we find patterns to be more convenient. 
\begin{definition}\label{PatternsDef}
	If $I$ is an index set, then a pattern on $I$ is some $\Delta \subseteq [I]^{<\aleph_0}$ which is closed under subsets. A $\Delta$-clique is a subset $X \subseteq I$ with $[X]^{<\aleph_0} \subseteq \Delta$. If $\Delta$ is a pattern on $I$ and $\Delta'$ is a pattern on $J$, then say that $\Delta'$ is an instance of $\Delta$ if for all $s \in [J]^{<\aleph_0}$ there is some map $f: s \to I$ such that for all $t \subseteq s$, $t \in \Delta'$ if and only if $f[t] \in \Delta$. Say that two patterns $\Delta, \Delta'$ are equivalent if they are instances of each other.

\end{definition}

Note that every pattern is equivalent to one on $\omega$.

\begin{definition}
Suppose $V \models ZFC^-$ is transitive, $\mathbf{j}: V \preceq \hat{V}$, and suppose $\Delta \in V$ is a pattern on $I$ (so $I \in V$). Then let $\lambda_{\hat{V}}(\Delta)$ be the least $\lambda$ such that there is some pseudofinite $X \subseteq \mathbf{j}(I)$ with $|X| \leq \lambda$, such that $[X]^{<\aleph_0} \subseteq \mathbf{j}(\Delta)$, and such that there is no $\hat{X} \in \mathbf{j}(\Delta)$ with $X \subseteq \hat{X}$. If there is no such $\lambda$ then let $\lambda_{\hat{V}}(\Delta) = \infty$.
\end{definition}

For any pattern $\Delta$ on $\omega$, and for any $\mathbf{j}: V \preceq \hat{V}$, we have that $\lambda_{\hat{V}}(\Delta) \geq \mathfrak{p}_{\hat{V}}$ by Theorem 5.3(A) of \cite{InterpOrdersUlrich}.

\begin{remark}
	Technically we should refer to the pair $(I, \Delta)$ in the definition of equivalence, but $I$ will always be clear from context.
	
	Also we should refer to $\lambda_{\hat{V}, \mathbf{j}}(\Delta)$ but $\mathbf{j}$ will be clear from context.
\end{remark}

\begin{lemma}\label{EquivLemma}
	Suppose $V \models ZFC^-$ is transitive, and $\mathbf{j}: V \preceq \hat{V}$. Suppose $\Delta$ is a pattern on $I$ and $\Delta'$ is a pattern on $I'$, with $\Delta, I, \Delta', I' \in V$. If $\Delta$ is an instance of $\Delta'$, then $\lambda_{\hat{V}}(\Delta) \geq \lambda_{\hat{V}}(\Delta')$. Hence, if $\Delta$ and $\Delta'$ are equivalent, then $\lambda_{\hat{V}}(\Delta) = \lambda_{\hat{V}}(\Delta')$.
\end{lemma}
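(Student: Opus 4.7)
The second assertion (equivalent patterns) follows from the first by applying it twice with the roles reversed, so the task is to show $\lambda_{\hat{V}}(\Delta) \geq \lambda_{\hat{V}}(\Delta')$ when $\Delta$ is an instance of $\Delta'$. Fix $\lambda < \lambda_{\hat{V}}(\Delta')$ and a pseudofinite $X \subseteq \mathbf{j}(I)$ with $|X| \leq \lambda$ and $[X]^{<\aleph_0} \subseteq \mathbf{j}(\Delta)$; I need to produce $\hat{X} \in \mathbf{j}(\Delta)$ with $X \subseteq \hat{X}$.

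The first step is to internalize the instance witnesses. Working in $V$ and using the well-ordering principle, I would choose a function $F \in V$ with domain $[I]^{<\aleph_0}$ such that for every finite $s \subseteq I$, $F(s)$ is a map $s \to I'$ witnessing $t \in \Delta \iff F(s)[t] \in \Delta'$ for all $t \subseteq s$. Elementarity of $\mathbf{j}$ then transfers this: for every internally finite $\hat{s} \subseteq \mathbf{j}(I)$ and every internally finite $t \subseteq \hat{s}$, $t \in \mathbf{j}(\Delta) \iff \mathbf{j}(F)(\hat{s})[t] \in \mathbf{j}(\Delta')$.

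Next, since $X$ is pseudofinite, pick an internally finite $\hat{X}_0 \in \hat{V}$ with $X \subseteq \hat{X}_0$, set $\hat{f} = \mathbf{j}(F)(\hat{X}_0)$, and put $X' = \hat{f}[X]$. Then $X' \subseteq \hat{f}[\hat{X}_0]$ is pseudofinite of cardinality at most $\lambda$, and the transferred instance property immediately gives $[X']^{<\aleph_0} \subseteq \mathbf{j}(\Delta')$. Since $\lambda < \lambda_{\hat{V}}(\Delta')$, there is some $\hat{X}' \in \mathbf{j}(\Delta')$ with $X' \subseteq \hat{X}'$.

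The delicate final step is to cover $X$ itself in $\mathbf{j}(\Delta)$. I would set $\hat{X} = \hat{f}^{-1}[\hat{X}'] \cap \hat{X}_0$: this is internal, internally finite, and contains $X$. Applying the transferred instance property with $\hat{s} = \hat{X}_0$ and $t = \hat{X}$, we get $\hat{X} \in \mathbf{j}(\Delta) \iff \hat{f}[\hat{X}] \in \mathbf{j}(\Delta')$. By construction $\hat{f}[\hat{X}] \subseteq \hat{X}' \in \mathbf{j}(\Delta')$, and since $\Delta'$ is closed under subsets in $V$, elementarity makes $\mathbf{j}(\Delta')$ internally closed under subsets, so $\hat{f}[\hat{X}] \in \mathbf{j}(\Delta')$ and hence $\hat{X} \in \mathbf{j}(\Delta)$. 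The main obstacle is exactly this last maneuver: a naive preimage $\hat{f}^{-1}[\hat{X}']$ has no reason to lie in $\mathbf{j}(\Delta)$ because outside $\hat{X}_0$ the witness $\hat{f}$ is not defined, so one must intersect with $\hat{X}_0$ and then use subset-closure of $\mathbf{j}(\Delta')$ to discharge the forward direction of the instance biconditional.
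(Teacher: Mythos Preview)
Your proof is correct and follows essentially the same strategy as the paper's: transfer the instance relationship to $\hat V$, choose an internally finite set containing $X$, obtain an internal instance witness $\hat f$, push $X$ forward, cover it using $\lambda<\lambda_{\hat V}(\Delta')$, and pull back via $\hat f^{-1}$. The only cosmetic differences are that you package the instance witnesses into a single choice function $F\in V$ (whereas the paper just observes that ``$\Delta$ is an instance of $\Delta'$'' is a first-order fact that transfers by elementarity and then picks $\hat f$ for the single $\hat X_0$ at hand), and you are more explicit than the paper about invoking subset-closure of $\mathbf{j}(\Delta')$ at the pullback step---a point the paper leaves implicit. One minor omission: before applying $\mathbf{j}(F)$ you should intersect $\hat X_0$ with $\mathbf{j}(I)$ so that it lies in the domain of $\mathbf{j}(F)$; the paper does this explicitly.
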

\begin{proof}
	It suffices to prove the first part. Note that $V \models (\Delta$ is an instance of $\Delta'$), since this is a finitary condition.
	
	Suppose $X \subseteq \mathbf{j}(I)$ is pseudofinite with $|X| < \lambda_{\hat{V}}(\Delta')$; choose $X \subseteq \hat{X}$ with $\hat{X} \in \hat{V}$ finite in the sense of $\hat{V}$; by replacing $\hat{X}$ with $\hat{X} \cap \mathbf{j}(I)$ we can suppose $\hat{X} \subseteq \mathbf{j}(I)$. Since $\mathbf{j}(\Delta)$ is an instance of $\mathbf{j}(\Delta')$ in $\hat{V}$, we can find some $\hat{X}' \subseteq \mathbf{j}(I')$ finite in the sense of $\hat{V}$ and some map $\hat{f}: \hat{X} \to \hat{X}'$ in $\hat{V}$, such that for all $\hat{s} \subseteq \hat{x}$ in $\hat{V}$, $\hat{s} \in \mathbf{j}(\Delta)$ if and only if $\hat{f}[\mathbf{s}] \in \mathbf{j}(\Delta')$. Define $X' = \hat{f}[X]$ (as computed in $\mathbb{V}$). Then by definition of $\lambda_{\hat{V}}(\Delta')$ we can find some $\hat{s}' \in \mathbf{j}(\Delta')$ with $X' \subseteq \hat{s}'$. Let $\hat{s} = \hat{f}^{-1}(\hat{s}')$; then $\hat{s} \in \mathbf{j}(\Delta)$ and $X \subseteq \hat{s}$, as desired.
\end{proof}

We now connect this with pseudosaturation:

\begin{definition}
	
	Suppose $T$ is a complete countable theory, $\phi(\overline{x}, \overline{y})$ is a formula of $T$ and $\Delta$ is a pattern on $I$. Then say that $\phi(\overline{x}, \overline{y})$ admits $\Delta$ if we can choose $(\overline{a}_i: i \in I)$ from $\mathfrak{C}^{|\overline{y}|}$ (where $\mathfrak{C}$ is the monster model of $T$, or just any $|I|^+$-universal model), such that for every $s \in [I]^{<\aleph_0}$, $\exists \overline{x} \bigwedge_{i \in s} \phi(\overline{x}, \overline{a}_i)$ is consistent if and only if $s \in \Delta$ (whether or not $\phi$ admits $\Delta$ depends on $T$; if there is confusion, we will say that $(T, \phi)$ admits $\Delta$). Say that $T$ admits $\Delta$ if some formula of $T$ does.

	Suppose $T$ is a complete countable theory, $M \models T$ and $\phi(\overline{x}, \overline{y})$ is a formula of $T$. Then we can form a pattern $\Delta_{M, \phi} := \{s \in [M^{|\overline{y}|}]^{<\aleph_0}: M \models \exists \overline{x} \bigwedge_{\overline{a} \in s} \phi(\overline{x}, \overline{a})\}$. Then for all patterns $\Delta$, $\phi$ admits $\Delta$ if and only if $\Delta$ is an instance of $\Delta_{M, \phi}$. Hence for all $M, N \models T$, $\Delta_{M, \phi}$ and $\Delta_{N, \phi}$ are equivalent. Write $\Delta_\phi = \Delta_{M, \phi}$, for some arbitrary choice of $M \models T$. We will only refer to $\Delta_\phi$ in contexts where we just need its equivalence class.

If $T$ is a complete countable theory in $V$ and $\phi(\overline{x}, \overline{y})$ is a formula of $T$, then let $\lambda_{\hat{V}}(T, \phi) = \lambda_{\hat{V}}(\Delta_\phi)$. By Lemma~\ref{EquivLemma} this only depends on the equivalence class of $\Delta_\phi$, and so is well-defined.

For each $n$, let $\lambda_{\hat{V}}^{loc, n}(T)$ be the minimum over all formulas $\phi(\overline{x}, \overline{y})$ of $T$ with $|\overline{x}| \leq n$ of $\lambda_{\hat{V}}(T, \phi)$; so this is a descending sequence of cardinals (which necessarily stabilizes). Let $\lambda_{\hat{V}}^{loc}(T) = \min_n \lambda_{\hat{V}}^{loc, n}(T)$.
\end{definition}

\begin{theorem}\label{CardInvarTheoremInterp}
	Suppose $V \models ZFC^-$ is transitive and $\mathbf{j}: V \preceq \hat{V}$ is $\omega$-nonstandard and $T \in V$ is a complete countable theory. Then $\lambda_{\hat{V}}(T) \leq \lambda_{\hat{V}}^{loc}(T,\phi)$. If $\hat{V}$ is $\aleph_1$-saturated, then $\lambda_{\hat{V}}(T) = \lambda_{\hat{V}}^{loc}(T) = \lambda_{\hat{V}}^{loc, 1}(T)$.
\end{theorem}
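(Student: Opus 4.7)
The strategy is to prove the two parts separately: the first inequality is unconditional, and the second uses $\aleph_1$-saturation only through Theorem~\ref{KeislersOrderIsLocal} (Malliaris's locality theorem).

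For the first inequality (which I read as $\lambda_{\hat{V}}(T) \leq \lambda_{\hat{V}}(T, \phi)$ for every formula $\phi$, hence $\lambda_{\hat{V}}(T) \leq \lambda_{\hat{V}}^{loc}(T)$), fix a formula $\phi(\overline{x}, \overline{y})$ of $T$ and some $M \models T$ in $V$, and assume $\hat{V}$ $\lambda^+$-pseudosaturates $T$. Given a pseudofinite $\mathbf{j}(\Delta_\phi)$-clique $X \subseteq \mathbf{j}(M)^{|\overline{y}|}$ of size $\leq \lambda$, first choose internal $\hat{X}_0 \in \hat{V}$ finite in $\hat{V}$ with $X \subseteq \hat{X}_0 \subseteq \mathbf{j}(M)^{|\overline{y}|}$. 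Form the partial $\phi$-type $p(\overline{x}) = \{\phi(\overline{x}, \overline{a}) : \overline{a} \in X\}$; this is pseudofinite (it sits inside the internal finite set $\{\phi(\overline{x}, \overline{a}): \overline{a} \in \hat{X}_0\}$ of $\mathbf{j}(\mathcal{L})$-formulas) and externally consistent, since by the clique hypothesis every finite subset is satisfiable in $\mathbf{j}_{\std}(M)$. By Remark 1.3 applied to the $\lambda^+$-pseudosaturation of $\mathbf{j}_{\std}(M)$, $p(\overline{x})$ is realized by some $\overline{a}^* \in \mathbf{j}_{\std}(M)$. Setting $\hat{X} := \{\overline{b} \in \hat{X}_0 : \mathbf{j}(M) \models \phi(\overline{a}^*, \overline{b})\}$, the Separation schema makes $\hat{X}$ internal and finite in $\hat{V}$, it contains $X$, and $\overline{a}^*$ serves as an internal witness to the existential defining $\mathbf{j}(\Delta_\phi)$, so $\hat{X} \in \mathbf{j}(\Delta_\phi)$ as required.

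For the reverse direction under $\aleph_1$-saturation, the chain $\lambda_{\hat{V}}^{loc,1}(T) \geq \lambda_{\hat{V}}^{loc}(T) \geq \lambda_{\hat{V}}(T)$ is already in hand, so it suffices to show $\lambda_{\hat{V}}^{loc,1}(T) \leq \lambda_{\hat{V}}(T)$. Suppose $\hat{V}$ fails to $\lambda^+$-pseudosaturate $T$. Then Theorem~\ref{KeislersOrderIsLocal} produces a formula $\phi(x, \overline{y})$ with $x$ a single variable together with a positive, pseudofinite, consistent $\phi$-type $p(x) = \{\phi(x, \overline{a}_\alpha) : \alpha < \lambda\}$ over $\mathbf{j}_{\std}(M)$ that is not realized. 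Then $X = \{\overline{a}_\alpha : \alpha < \lambda\}$ is a pseudofinite $\mathbf{j}(\Delta_\phi)$-clique of size $\leq \lambda$ admitting no internal extension in $\mathbf{j}(\Delta_\phi)$, because any such $\hat{X}$ would supply, via its internal defining existential witness, an $a^* \in \mathbf{j}(M) = \mathbf{j}_{\std}(M)$ realizing $p$. Hence $\lambda_{\hat{V}}(T,\phi) \leq \lambda$, giving $\lambda_{\hat{V}}^{loc,1}(T) \leq \lambda$.

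The only conceptually nontrivial input is Theorem~\ref{KeislersOrderIsLocal} itself; once localization to single-variable positive $\phi$-types is granted, everything else is routine manipulation of the Separation schema and of pseudofiniteness, and I do not anticipate a serious obstacle.
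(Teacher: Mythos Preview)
Your proposal is correct and follows essentially the same approach as the paper. The paper's own proof is terse---it simply says ``This is basically Theorem~\ref{KeislersOrderIsLocal}, restated,'' and then records the key correspondence: positive $\phi$-types $p(\overline{x})$ over $\mathbf{j}_{\std}(M)$ correspond exactly to $\mathbf{j}(\Delta_{M,\phi})$-cliques $X$, and (for pseudofinite $p$) realization of $p$ is equivalent to the existence of $\hat{X} \in \mathbf{j}(\Delta_{M,\phi})$ with $X \subseteq \hat{X}$. Your argument spells out both directions of this biconditional explicitly (using Separation to extract $\hat{X}$ from a realization $\overline{a}^*$, and conversely using an internal witness for the existential defining $\mathbf{j}(\Delta_\phi)$ to realize the type), and then invokes Theorem~\ref{KeislersOrderIsLocal} for the reduction to single-variable $\phi$-types, exactly as the paper does. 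Your reading of the first inequality as ranging over all $\phi$ is also the intended one.
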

\begin{proof}
	This is basically Theorem~\ref{KeislersOrderIsLocal}, restated. The point is the following: suppose $M \models T$ with $M \in V$, and let $\phi(\overline{x}, \overline{y})$ be a formula of $T$. Let $\Delta = \Delta_{M, \phi} = \{s \in [M^{|\overline{y}|}]^{<\aleph_0}: M \models \exists \overline{x} \bigwedge_{\overline{b} \in s} \phi(\overline{x}, \overline{b})\}$ and let $\hat{\Delta} = \mathbf{j}(\Delta)$. Then positive $\phi$-types $p(\overline{x})$ over $\mathbf{j}_{\std}(M)$ correspond exactly to subsets $X$ of $\mathbf{j}_{\std}(M)^{|\overline{y}|}$ with $[X]^{<\aleph_0} \subseteq \hat{\Delta}$, and, assuming $p(\overline{x})$ is pseudofinite, $p(\overline{x})$ is realized in $\mathbf{j}_{\std}(M)$ if and only if there is some $\hat{X} \in \hat{\Delta}$ with $X \subseteq \hat{X}$. Moreover, it suffices to consider types in a single variable $x$.
\end{proof}

Malliaris proved the following corollary as Lemma 5.14 of \cite{MalliarisFlex} for Keisler's order $\trianglelefteq$, under the terminology of characteristic sequences:

\begin{corollary}
Suppose $T_0, T_1$ are complete countable theories. Suppose that for every pattern $\Delta$, if $T_0$ admits $\Delta$ then $T_1$ admits $\Delta$. Then $T_0 \trianglelefteq^\times_{\aleph_1} T_1$, and hence $T_0 \trianglelefteq^*_{\aleph_1} T_1$ and $T_0 \trianglelefteq T_1$.
\end{corollary}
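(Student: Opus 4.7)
The plan is to prove $T_0 \trianglelefteq^\times_{\aleph_1} T_1$; the ``hence'' clauses then follow from the chain of inclusions $\trianglelefteq^\times_{\aleph_1} \subseteq \trianglelefteq^*_{\aleph_1} \subseteq \trianglelefteq$, via Theorem 10.6 of \cite{InterpOrdersUlrich} together with the remarks following Lemma~\ref{SatAndPseudo0}. Fixing an infinite cardinal $\lambda$, I will pick any countable transitive $V \models ZFC^-$ with $T_0, T_1 \in V$, and containing witnessing models $M_0 \models T_0$ and $M_1 \models T_1$, and consider any $\mathbf{j}: V \preceq \hat{V}$ that is $\aleph_1$-saturated, $\omega$-nonstandard, and $\lambda^+$-pseudosaturates $T_1$. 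The task then reduces to showing $\lambda_{\hat{V}}(T_0) > \lambda$.

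The key leverage is that the $\aleph_1$-saturation of $\hat{V}$ localizes the characteristic: Theorem~\ref{CardInvarTheoremInterp} gives $\lambda_{\hat{V}}(T_i) = \lambda_{\hat{V}}^{loc}(T_i) = \min_{\phi} \lambda_{\hat{V}}(\Delta_\phi)$, ranging over formulas of $T_i$, for $i = 0, 1$. I would therefore fix an arbitrary formula $\phi_0(\overline{x},\overline{y})$ of $T_0$ and argue $\lambda_{\hat{V}}(T_0, \phi_0) > \lambda$. Since $T_0$ trivially admits $\Delta_{\phi_0}$, the hypothesis of the corollary yields a formula $\phi_1(\overline{x},\overline{y})$ of $T_1$ such that $\Delta_{\phi_0}$ is an instance of $\Delta_{\phi_1}$. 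Lemma~\ref{EquivLemma} will then give $\lambda_{\hat{V}}(\Delta_{\phi_0}) \geq \lambda_{\hat{V}}(\Delta_{\phi_1}) \geq \lambda_{\hat{V}}^{loc}(T_1) = \lambda_{\hat{V}}(T_1) > \lambda$, and minimizing over $\phi_0$ completes the argument.

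The one subtlety I will need to handle carefully is that Lemma~\ref{EquivLemma} requires the ``instance of'' relation between $\Delta_{\phi_0}$ and $\Delta_{\phi_1}$ to be recognized inside $V$, whereas the hypothesis of the corollary is stated in the meta-theory. This will be dealt with by an absoluteness observation: choosing the concrete representatives $\Delta_{\phi_0} := \Delta_{M_0, \phi_0}$ and $\Delta_{\phi_1} := \Delta_{M_1, \phi_1}$ places both patterns in $V$, and the definition of ``instance of'' quantifies only over finite subsets of, and finite maps between, sets already in $V$, so the relation is absolute and $V$ agrees that it holds. Beyond this bookkeeping I do not anticipate additional difficulty, since the heavy lifting is done by Theorem~\ref{CardInvarTheoremInterp} and Lemma~\ref{EquivLemma}.
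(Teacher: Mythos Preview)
Your proposal is correct and follows exactly the route the paper intends: the corollary is an immediate consequence of Theorem~\ref{CardInvarTheoremInterp} (localization under $\aleph_1$-saturation) together with Lemma~\ref{EquivLemma} (monotonicity of $\lambda_{\hat{V}}(\Delta)$ under the instance relation), and your handling of the absoluteness of ``instance of'' between $\Delta_{M_0,\phi_0}$ and $\Delta_{M_1,\phi_1}$ is the right way to pin down the one bookkeeping point. The paper gives no further argument beyond placing the corollary directly after Theorem~\ref{CardInvarTheoremInterp}.
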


Although this is not the line of investigation of the current work, this theorem suggests a natural ordering on theories (first proposed by Shelah \cite{SH702} under notation similar to Malliaris's), namely: put $T_0 \leq T_1$ if and only if for every pattern $\Delta$, if $T_0$ admits $\Delta$ then so does $T_1$ (it is enough to consider just formulas $\phi(x, \overline{y})$ where $x$ is a single variable; this may be preferable). $\leq$ detects many more properties than do the interpretability orders; for instance, it follows from our work that if $T_0$ has $IP$ and $T_1$ is $NIP$, then $T_0 \not \leq T_1$, and so $DLO$ is not maximal in $\leq$. Shelah defines the ``straightly maximal" theories to be the maximal class of $\leq$; one simple example is $\mbox{Th}(\mathcal{P}(\omega), \subseteq)$.

\section{A Minimal Unstable Theory}\label{RandGraphSec}

Let $T_{rg}$ be the theory of the random graph. Malliaris proved in \cite{HypSeq} that $T_{rg}$ is a $\trianglelefteq$-minimal unstable theory. Malliaris and Shelah prove in \cite{InterpOrders} that $T_{rg}$ is a $\trianglelefteq^{*}_1$-minimal unstable theory, although that proof has some additional complications. In fact, Malliaris's proof goes through  to show that $T_{rg}$ is a $\trianglelefteq^{\times}_1$-minimal theory; the argument is even simplified.

The following essentially describes an $(\omega, 2)$-array as in \cite{CharSequence}.

\begin{definition}
	Let $\Delta({{IP}})$ be the pattern on $\omega \times 2$, defined to be the set of all $s \in [\omega \times 2]^{<\aleph_0}$ such that for all $n < \omega$, $\{(n, 0), (n, 1)\} \not \subseteq s$, i.e. $s$ is a partial function from $\omega$ to $2$. (Think of $(n, 0)$ as being $\lnot (n, 1)$.)
\end{definition}

And the following is Claim 3.7 from \cite{CharSequence}.

\begin{lemma}\label{IPLemma0}
	Suppose $T$ is a complete countable theory and $\phi(\overline{x}, \overline{y})$ is a formula of $T$. Define $\theta(\overline{x}, \overline{y}_0, \overline{y}_1) = \phi(\overline{x}, \overline{y}_0) \land \lnot \phi(\overline{x}, \overline{y}_1)$. Then $\theta(\overline{x}, \overline{y}_0 \overline{y}_1)$ admits $\Delta({IP})$ if and only if $\theta(\overline{x}, \overline{y}_0 \overline{y}_1)$ has the independence property, which is the case if and only if $\phi(\overline{x}, \overline{y})$ has the independence property.
	
	Thus $T$ has the independence property if and only if it admits $\Delta({IP})$.
\end{lemma}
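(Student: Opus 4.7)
My plan is to establish the chain $\theta$ admits $\Delta(IP) \iff \theta$ has IP $\iff \phi$ has IP; the final clause (that $T$ has IP iff $T$ admits $\Delta(IP)$) then follows, since each condition is witnessed by a single formula.

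For $\phi$ IP $\iff \theta$ IP: the direction $\theta$ IP $\Rightarrow \phi$ IP is preservation of NIP under boolean combinations---$\theta$ is a boolean combination of $\phi$-instances in disjoint parameter variables, so $\phi$ NIP forces $\theta$ NIP. For the reverse, given IP-witnesses $(\overline{b}_n : n < \omega)$ for $\phi$ together with an additional parameter $\overline{b}_\ast$ in the same IP configuration (by compactness), set $\overline{a}_n = (\overline{b}_n, \overline{b}_\ast)$. For any disjoint finite $u, v \subseteq \omega$, IP of $\phi$ yields $\overline{x}$ realizing $\phi(\overline{x}, \overline{b}_n)$ for $n \in u$ and $\lnot \phi(\overline{x}, \overline{b}_n)$ for $n \in v$, together with $\lnot \phi(\overline{x}, \overline{b}_\ast)$, so $\theta(\overline{x}, \overline{a}_n)$ holds iff $n \in u$---the desired IP pattern for $\theta$.

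For $\theta$ admits $\Delta(IP) \Rightarrow \theta$ has IP: I claim that the subsequence $(\overline{a}_{n, 0} : n < \omega)$ witnesses IP for $\theta$. Given disjoint finite $u, v \subseteq \omega$, the set $s = \{(n, 0) : n \in u\} \cup \{(n, 1) : n \in v\}$ is a partial function from $\omega$ to $2$ and hence belongs to $\Delta(IP)$, so admission produces some $\overline{c}$ realizing $\theta$ at each index of $s$. Since $\{(n, 0), (n, 1)\} \notin \Delta(IP)$ for any $n$, the satisfaction of $\theta(\overline{c}, \overline{a}_{n, 1})$ for $n \in v$ forces $\lnot \theta(\overline{c}, \overline{a}_{n, 0})$; combined with $\theta(\overline{c}, \overline{a}_{n, 0})$ for $n \in u$, this is the required $\theta$-IP pattern.

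For $\phi$ IP $\Rightarrow \theta$ admits $\Delta(IP)$: with witnesses $(\overline{b}_n : n < \omega)$, set $\overline{a}_{n, 0} = (\overline{b}_{2n}, \overline{b}_{2n+1})$ and $\overline{a}_{n, 1} = (\overline{b}_{2n+1}, \overline{b}_{2n})$. Any $s \subseteq \omega \times 2$ containing both $(n, 0)$ and $(n, 1)$ for some $n$ forces $\phi(\overline{x}, \overline{b}_{2n}) \land \lnot \phi(\overline{x}, \overline{b}_{2n})$ and is therefore inconsistent; conversely, for any partial function $f : S \to 2$ on finite $S$, each $n \in S$ contributes one positive and one negative $\phi$-requirement at distinct indices in $\{2n, 2n+1\}$, so the whole conjunction encodes a consistent $\phi$-IP pattern. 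The delicate point---and the main obstacle---is precisely this matching in both directions at once: the witnesses must force inconsistency on every non-partial-function set while simultaneously allowing consistency on every partial-function set, which the swapped-pair construction is designed to deliver.
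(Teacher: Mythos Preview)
Your proof is correct. The paper does not actually prove this lemma; it merely cites it as Claim~3.7 of Malliaris \cite{CharSequence}. Your argument supplies the details cleanly: the swapped-pair construction $\overline{a}_{n,0} = (\overline{b}_{2n}, \overline{b}_{2n+1})$, $\overline{a}_{n,1} = (\overline{b}_{2n+1}, \overline{b}_{2n})$ is exactly the right way to realize $\Delta(IP)$ from an IP witness, and your observation that $\{(n,0),(n,1)\} \notin \Delta(IP)$ forces mutual exclusion of $\theta(\overline{c}, \overline{a}_{n,0})$ and $\theta(\overline{c}, \overline{a}_{n,1})$ is the key to recovering IP from admission. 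One small remark: your argument for ``$\theta$ admits $\Delta(IP) \Rightarrow \theta$ has IP'' uses nothing about the special form of $\theta$, so it in fact shows that \emph{any} formula admitting $\Delta(IP)$ has IP---which is what makes the final clause about $T$ go through in both directions.
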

%
%
%

The following lemma is helpful in understanding the invariant $\lambda_{\hat{V}}(\Delta(IP))$. It is a translation of remarks in \cite{HypSeq} into the context of models of $ZFC^-$, for instance see the discussion in Example 2 in Section 3.2.

\begin{lemma}\label{IPLemma1}
	Suppose $V \models ZFC^-$ is transitive, and $\mathbf{j}: V \preceq \hat{V}$. Then $\lambda_{\hat{V}}(\Delta(IP))$ is the least $\lambda$ such that for some $\hat{n} < \hat{\omega}$, there are disjoint, pseudofinite $X_0, X_1 \subseteq \hat{V}$ each of size at most $\lambda$, such that there are no disjoint $\hat{X}_0, \hat{X}_1 \in \hat{V}$ with each $X_i \subseteq \hat{X}_i$.
\end{lemma}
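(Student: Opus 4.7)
The plan is to unwind the definition of $\lambda_{\hat{V}}(\Delta(IP))$ in terms of partial functions and match it against the formulation in the lemma. By definition, $\lambda_{\hat{V}}(\Delta(IP))$ is the least $\lambda$ admitting a pseudofinite $X \subseteq \mathbf{j}(\omega \times 2) = \hat{\omega} \times 2$ of size at most $\lambda$ with $[X]^{<\aleph_0} \subseteq \mathbf{j}(\Delta(IP))$ but no superset $\hat{X} \in \mathbf{j}(\Delta(IP))$. Unwinding $\mathbf{j}(\Delta(IP))$, an element of $\mathbf{j}(\Delta(IP))$ is exactly a $\hat{V}$-finite partial function from $\hat{\omega}$ to $2$, and the condition $[X]^{<\aleph_0} \subseteq \mathbf{j}(\Delta(IP))$ is equivalent to saying that $X$ itself is a (possibly external) partial function $\hat{\omega} \to 2$. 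So such an $X$ corresponds bijectively to a pair of disjoint pseudofinite subsets $X_0, X_1 \subseteq \hat{\omega}$ (with $X_i = \{\hat{n} : (\hat{n}, i) \in X\}$), and an extension $\hat{X} \in \mathbf{j}(\Delta(IP))$ corresponds to a pair of disjoint $\hat{V}$-finite internal subsets $\hat{X}_0, \hat{X}_1 \in \hat{V}$ with $X_i \subseteq \hat{X}_i$.

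For the forward direction, any witness $X$ to $\lambda_{\hat{V}}(\Delta(IP)) \leq \lambda$ immediately produces the required $X_0, X_1 \subseteq \hat{\omega}$, and any disjoint internal superset pair would give back an extension $\hat{X}$; here $\hat{n}$ is any nonstandard bound on the support of $X$ (which exists because $X$ is pseudofinite). One subtle point is that in the lemma, $\hat{X}_0, \hat{X}_1$ are allowed to be arbitrary internal sets rather than $\hat{V}$-finite, but since $X_0 \cup X_1 \subseteq \hat{n}$, intersecting $\hat{X}_i$ with $\hat{n}$ makes them $\hat{V}$-finite while preserving disjointness and the containment $X_i \subseteq \hat{X}_i$, so the two notions of ``disjoint internal extension'' coincide.

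For the reverse direction, suppose disjoint pseudofinite $X_0, X_1 \subseteq \hat{V}$ of size at most $\lambda$ are given, with no disjoint internal extensions. Since $X_0, X_1$ are pseudofinite, $X_0 \cup X_1 \subseteq \hat{Y}$ for some $\hat{V}$-finite $\hat{Y} \in \hat{V}$. Working inside $\hat{V}$, pick an internal bijection $\hat{f}: \hat{Y} \to \hat{n}$ where $\hat{n} = |\hat{Y}|^{\hat{V}} < \hat{\omega}$. Then $X_0' := \hat{f}[X_0]$ and $X_1' := \hat{f}[X_1]$ are disjoint pseudofinite subsets of $\hat{\omega}$, still of size at most $\lambda$, and any disjoint $\hat{V}$-finite extensions of $X_0', X_1'$ inside $\hat{n}$ would pull back under $\hat{f}^{-1}$ to disjoint internal extensions of $X_0, X_1$, contradicting our hypothesis. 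Setting $X := (X_0' \times \{0\}) \cup (X_1' \times \{1\}) \subseteq \hat{\omega} \times 2$, we see $X$ is pseudofinite of size $\leq \lambda$ with $[X]^{<\aleph_0} \subseteq \mathbf{j}(\Delta(IP))$ and no extension in $\mathbf{j}(\Delta(IP))$, witnessing $\lambda_{\hat{V}}(\Delta(IP)) \leq \lambda$.

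The main thing to watch is the translation between the two notions of ``internal extension'' (arbitrary internal versus $\hat{V}$-finite internal) and the passage from an external ambient set $\hat{V}$ to the internal $\hat{\omega}$; both are handled cleanly by using the internal bijection $\hat{f}$ and the observation that any pseudofinite set lies inside a $\hat{V}$-finite superset, so no deeper obstacle arises.
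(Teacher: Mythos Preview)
Your proof is correct and follows essentially the same approach as the paper. Both arguments translate between $\Delta(IP)$-cliques $X \subseteq \hat{\omega} \times 2$ and pairs of disjoint pseudofinite sets $X_0, X_1$, use an internal bijection $\hat{f}: \hat{Y} \to \hat{n}$ to reduce arbitrary pseudofinite subsets of $\hat{V}$ to subsets of $\hat{\omega}$, and handle the ``arbitrary internal versus $\hat{V}$-finite internal'' issue by intersecting with $\hat{n}$; the only cosmetic difference is that you phrase the two inequalities by transferring bad witnesses, whereas the paper transfers good ones (i.e.\ works contrapositively).
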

\begin{proof}
	Let $\lambda$ be the least cardinal as in the statement of the lemma (possibly $\infty$).
	
	We first show that $\lambda_{\hat{V}}(\Delta(IP)) \leq \lambda$. So suppose $X \subseteq \hat{n} \times 2$ is given, with $|X| < \lambda$ and with $[X]^{<\aleph_0} \subseteq \mathbf{j}(\Delta({IP}))$; i.e. $X$ is a partial function from $\hat{n}$ to $2$. For each $i < 2$, define $X_i = \{\hat{m} < \hat{n}: (\hat{m}, i) \in X\}$; by hypothesis, there exist disjoint $\hat{X}_i: i < 2$ in $\hat{V}$  with each $X_i \subseteq \hat{X}_i$; by replacing $\hat{X}_i$ by $\hat{X}_i \cap \hat{n}$, we can suppose $\hat{X}_i \subseteq \hat{n}$. Then $X \subseteq \hat{X}_0 \times \{0\} \cup \hat{X}_1 \times \{1\} \in \mathbf{j}(\Delta(IP))$. 
	
	Conversely, we show that $\lambda \leq \lambda_{\hat{V}}(\Delta(IP))$. So suppose we are given disjoint, pseudofinite $X_0, X_1 \subseteq \hat{V}$ each of cardinality less than $\lambda$. Choose $\hat{Y} \in \hat{V}$, finite in the sense of $\hat{V}$, with $X_0, X_1 \subseteq \hat{Y}$, and choose a bijection $\hat{f}: \hat{Y} \to |\hat{Y}|$ in $\hat{V}$. In $\mathbb{V}$, define $X = \hat{f}[X_0] \times \{0\} \cup \hat{f}[X_1] \times \{1\}$. Note that $|X| \leq \lambda$ and $[X]^{<\aleph_0} \subseteq \mathbf{j}(\Delta(IP))$ and $X$ is pseudofinite. By hypothesis, there existss $\hat{X} \in \mathbf{j}(\Delta(IP))$ with $X \subseteq \hat{X}$. Let $\hat{X}_i = \hat{f}^{-1}[\{\hat{n}: (\hat{n}, i) \in \hat{X}\}]$ for each $i < 2$, then $\hat{X}_0, \hat{X}_1 \in \hat{V}$ are disjoint and each $X_i \subseteq \hat{X}_i$.
\end{proof}

Thus:
\begin{theorem}\label{RandGraphInterp}
	Suppose $V \models ZFC^-$ is transitive, and $\mathbf{j}: V \preceq \hat{V}$ is $\omega$-nonstandard, and $\lambda$ is an infinite cardinal. Then the following are equivalent:
	
	\begin{itemize}
		\item[(A)] $\hat{V}$ $\lambda^+$-pseudosaturates $T_{rg}$;
		\item[(B)] $\hat{V}$ $\lambda^+$-pseudosaturates some unstable theory;
		\item[(C)] $\lambda < \lambda_{\hat{V}}(\Delta(IP))$.
	\end{itemize}
\end{theorem}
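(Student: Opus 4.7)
The plan is to establish the chain $(A) \Rightarrow (B) \Rightarrow (C) \Rightarrow (A)$. The first implication is immediate since $T_{rg}$ is itself unstable. For $(B) \Rightarrow (C)$, I would split on the dichotomy in Theorem~\ref{NonStableDichotomy}: any unstable $T$ with $\lambda_{\hat{V}}(T) > \lambda$ has either $IP$ or $SOP_2$. In the $IP$ case, if $\phi$ witnesses $IP$, then the formula $\theta(x,\overline{y}_0\overline{y}_1) := \phi(x,\overline{y}_0) \wedge \lnot \phi(x,\overline{y}_1)$ admits $\Delta(IP)$ by Lemma~\ref{IPLemma0}; chaining Lemma~\ref{EquivLemma} with Theorem~\ref{CardInvarTheoremInterp} yields
$$\lambda_{\hat{V}}(\Delta(IP)) \geq \lambda_{\hat{V}}(\Delta_\theta) = \lambda_{\hat{V}}(T,\theta) \geq \lambda_{\hat{V}}(T) > \lambda.$$
In the $SOP_2$ case, Theorem~\ref{SOP2max} identifies $\lambda_{\hat{V}}(T)$ with $\mathfrak{p}_{\hat{V}}$, and (after transferring $\Delta(IP)$ to an equivalent pattern on $\omega$ via Lemma~\ref{EquivLemma}) the remark following the definition of $\lambda_{\hat{V}}(\Delta)$ gives $\mathfrak{p}_{\hat{V}} \leq \lambda_{\hat{V}}(\Delta(IP))$.

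For $(C) \Rightarrow (A)$, fix $M \models T_{rg}$ in $V$ and a pseudofinite partial type $p(x)$ over $\mathbf{j}_{\std}(M)$ with $|p| \leq \lambda$. Using the internal quantifier elimination of $\mathbf{j}(T_{rg})$ in $\hat{V}$, together with the existence of an external realizer of $p$ in some elementary extension of $\mathbf{j}_{\std}(M)$, one may replace $p$ by a pseudofinite set $p^*$ of literal constraints of the form $xE\hat{a}$, $\lnot xE\hat{a}$, $x = \hat{a}$, and $x \neq \hat{a}$ that implies $p$ (the equality case $x = \hat{a} \in p^*$ reduces the problem to a consistency check at $\hat{a}$). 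Let $X_0, X_1 \subseteq \mathbf{j}_{\std}(M)$ be the positive- and negative-edge parameters of $p^*$ and $Y$ the disequality parameters; then $X_0, X_1$ are disjoint and pseudofinite, each of size $\leq \lambda < \lambda_{\hat{V}}(\Delta(IP))$, so Lemma~\ref{IPLemma1} supplies disjoint internal $\hat{X}_0, \hat{X}_1 \in \hat{V}$ with $X_i \subseteq \hat{X}_i \subseteq \mathbf{j}(M)$, while $Y$ is absorbed into some pseudofinite $\hat{Y} \in \hat{V}$. Since $\mathbf{j}(M)$ is internally the random graph, $\hat{V}$ finds some $\hat{x} \in \mathbf{j}(M) \setminus \hat{Y}$ adjacent to all of $\hat{X}_0$ and non-adjacent to all of $\hat{X}_1$; this $\hat{x}$ realizes $p^*$ and hence $p$.

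The delicate step is the reduction to literals in $(C) \Rightarrow (A)$: the formulas of a pseudofinite partial type may be nonstandardly long Boolean combinations in $\hat{V}$'s sense, so passing to atomic literals requires the combinatorial rigidity of $T_{rg}$---each element's type over a parameter set is determined by its equalities and adjacencies---together with a globally coherent choice of disjunct for each formula, furnished by an external realizer of $p$ in some elementary extension of $\mathbf{j}_{\std}(M)$. Once this normal form is achieved, every remaining step is a formal consequence of the patterns machinery of Section~\ref{SurveyLocalSec} and the combinatorial characterization in Lemma~\ref{IPLemma1}.
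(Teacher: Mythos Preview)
Your argument is correct and follows the same cycle $(A)\Rightarrow(B)\Rightarrow(C)\Rightarrow(A)$ as the paper, with the same use of the $IP$/$SOP_2$ dichotomy and of Lemma~\ref{IPLemma1}.

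The only place where you diverge from the paper is in the mechanics of $(C)\Rightarrow(A)$, and there you work harder than necessary. The paper simply invokes the \emph{definition} of $\lambda^+$-pseudosaturation: one must realize every complete $1$-type over a pseudofinite parameter set $A$ with $|A|\le\lambda$. Since $T_{rg}$ has quantifier elimination, such a complete type is already determined by $\{R(x,a):a\in X_0\}\cup\{\lnot R(x,a):a\in X_1\}\cup\{x\neq a:a\in A\}$ for the obvious partition $X_0\dot\cup X_1=A$, and one concludes by Lemma~\ref{IPLemma1}. Your route via the equivalent ``pseudofinite partial type'' formulation and an external realizer is valid, but the reduction-to-literals step it forces on you is avoidable. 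Note also that your concern about ``nonstandardly long Boolean combinations'' is misplaced: the formulas in a partial type over $\mathbf{j}_{\std}(M)$ are genuine standard $\mathcal{L}$-formulas (only the \emph{witnessing internal set} for pseudofiniteness may contain nonstandard formulas), so ordinary quantifier elimination for $T_{rg}$ already handles each of them.
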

\begin{proof}
	(A) implies (B) is trivial.
	
	(B) implies (C): suppose $T \in V$ is unstable such that $\hat{V}$ $\lambda^+$-pseudosaturates $T$, i.e. $\lambda < \lambda_{\hat{V}}(T)$. Now $T$ either has $SOP_2$ or else $IP$, by Theorem~\ref{NonStableDichotomy}. If $T$ has $SOP_2$ then $\lambda_{\hat{V}}(T) \leq \mathfrak{p}_{\hat{V}} \leq \lambda_{\hat{V}}(\Delta(IP))$. If on the other hand $T$ has $IP$, then $T$ admits $\Delta({IP})$ so we get $\lambda_{\hat{V}}(T) \leq \lambda_{\hat{V}}(\Delta(IP))$ in any case.
	
	(C) implies (A): let $M \models T_{rg}$ with $M \in V$, and let $p(x)$ be a pseudofinite partial type over $\mathbf{j}_{\std}(M)$ of cardinality at most $\lambda$; say $p(x)$ is a complete type over $A \subseteq \mathbf{j}_{\std}(M))$, where $A$ is pseudofinite. Let $X_0 = \{a \in A: R(x, a) \in p(x)\}$ (defined in $\mathbb{V}$) and let $X_1 = \{a \in A: \lnot R(x, a) \in p(x)\}$, and conclude by Lemma~\ref{IPLemma1}.
\end{proof}

We immediately get the following corollary.

\begin{corollary}\label{RandGraphMin}
	$T_{rg}$ is a $\trianglelefteq^\times_{1}$-minimal unstable theory. That is, if $T$ is unstable then $T_{rg} \trianglelefteq^\times_{1} T$. Thus, this also holds for $\trianglelefteq^\times_\kappa$ and $\trianglelefteq^*_\kappa$ for every $\kappa$, and also for $\trianglelefteq$.
\end{corollary}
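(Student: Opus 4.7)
The proof plan is essentially a direct unpacking of Theorem~\ref{RandGraphInterp}, which already did the real work by showing that for any $\omega$-nonstandard $\mathbf{j}:V\preceq \hat{V}$, the property ``$\hat{V}$ $\lambda^+$-pseudosaturates $T_{rg}$'' is \emph{equivalent} to ``$\hat{V}$ $\lambda^+$-pseudosaturates some unstable theory.'' So the strategy is just to witness the definition of $\trianglelefteq^\times_1$ by fixing a countable transitive $V \models ZFC^-$ containing both $T_{rg}$ and the given unstable $T$, and then invoke the equivalence.

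In more detail: let $T$ be unstable, and fix any countable transitive $V \models ZFC^-$ with $\{T_{rg}, T\} \subseteq V$ (such a $V$ exists by downward L\"owenheim--Skolem applied to any transitive model of $ZFC^-$ large enough to contain both theories; both theories are coded as elements of $HF \subseteq V$ by our convention on countable theories). I claim this $V$ witnesses $T_{rg} \trianglelefteq^\times_1 T$. Indeed, $1$-saturation is automatic, so suppose $\mathbf{j}:V\preceq \hat{V}$ is $\omega$-nonstandard and $\hat{V}$ $\lambda^+$-pseudosaturates $T$, for some infinite $\lambda$. Since $T$ is unstable, condition (B) of Theorem~\ref{RandGraphInterp} is satisfied, and so by the equivalence (B) $\Leftrightarrow$ (A) in that theorem, $\hat{V}$ also $\lambda^+$-pseudosaturates $T_{rg}$. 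As $\lambda$ was arbitrary, we have $T_{rg} \trianglelefteq^\times_1 T$.

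For the final sentence, once $T_{rg} \trianglelefteq^\times_1 T$ is established, the analogous statement for $\trianglelefteq^\times_\kappa$ (for any infinite $\kappa$) follows immediately, since the clause in the definition of $\trianglelefteq^\times_\kappa$ only restricts attention to $\kappa$-saturated $\hat{V}$, which is a subset of the class we already handled. The statement for $\trianglelefteq^*_\kappa$ then follows from the inclusion $\trianglelefteq^\times_\kappa \subseteq \trianglelefteq^*_\kappa$ (Theorem 10.6 of \cite{InterpOrdersUlrich}), and the statement for Keisler's order $\trianglelefteq$ follows since $\trianglelefteq$ coincides with $\trianglelefteq^*_{\aleph_1}$.

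There is no genuine obstacle here: all the work has been absorbed into Theorem~\ref{RandGraphInterp}, whose key ingredients were the dichotomy (Theorem~\ref{NonStableDichotomy}) splitting unstable theories into the $IP$ and $SOP_2$ cases, the bound $\lambda_{\hat{V}}(T) \leq \mathfrak{p}_{\hat{V}} \leq \lambda_{\hat{V}}(\Delta(IP))$ for $SOP_2$ theories via Theorem~\ref{SOP2max}, and the combinatorial identification in Lemma~\ref{IPLemma1} of $\lambda_{\hat{V}}(\Delta(IP))$ with the pseudosaturation cardinal of $T_{rg}$. So the corollary itself is really just a restatement.
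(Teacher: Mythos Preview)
Your argument is correct and matches the paper's approach exactly: the paper simply states that the corollary follows immediately from Theorem~\ref{RandGraphInterp}, and you have faithfully unpacked that implication. One tiny imprecision: you write that $\trianglelefteq$ \emph{coincides} with $\trianglelefteq^*_{\aleph_1}$, whereas the paper only asserts that $\trianglelefteq^*_{\aleph_1}$ \emph{refines} $\trianglelefteq$; but that is the direction you need, so the conclusion is unaffected.
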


\section{A Minimal Unsimple Theory}\label{StarRandGraphSec}

In \cite{HypSeq}, Malliaris proved the existence of a $\trianglelefteq$-minimal $TP_2$ theory (namely, $T_{feq}$). In view of Theorem~\ref{NonSimpleDichotomy} and Theorem~\ref{SOP2max} (due to Malliaris and Shelah \cite{pEqualsTref} for Keisler's order), $T_{feq}$ must also be a $\trianglelefteq$-minimal unsimple theory. More recently in \cite{InterpOrders}, Malliaris and Shelah prove that $T_{feq}$ is a $\trianglelefteq^*_1$-minimal unsimple theory.

 We perform the routine translations into the language of $\trianglelefteq^\times_1$. However, we prefer to use the following as our flagship unsimple theory:

\begin{definition}
	Let $T_{rf}$ be the theory of the random binary function. That is, $T_{rf}$ is the model completion of the empty theory in the language containing a single binary function symbol $F$.
\end{definition}

$T_{rf}$ is shown to be $NSOP_1$ and to admit quantifier elimination in \cite{SkolemNSOP1}. In particular, $T_{rf}$ is $NSOP_2$. Further, $T_{rf}$ is $TP_2$ via the formula $f(x, y_0) = y_1$.

We now proceed as in the previous section. The following definition is equivalent to the notion of $(\omega, \omega, 1)$-arrays from \cite{CharSequence}.

\begin{definition}
	Let $\Delta({TP}_2)$ be the pattern on $\omega \times \omega$ consisting of all $s \in [\omega \times \omega]^{<\aleph_0}$ satisfying: for all $n < \omega$, $|s \cap \{n\} \times \omega| \leq 1$, i.e. $s$ is a partial function from $\omega$ to $\omega$.
\end{definition}

The following is then trivial. (This is Claim 3.8 of \cite{CharSequence}, although our choice of definition of $TP_2$ absorbs all of the work.)

\begin{lemma} \label{TP2Lemma0}
	Suppose $\phi(\overline{x}, \overline{y})$ is a formula of $T$. Then $\phi(\overline{x}, \overline{y})$ has $TP_2$ if and only if $\phi(\overline{x}, \overline{y})$ admits $\Delta({TP}_2)$. Thus $T$ has $TP_2$ if and only if $T$ admits $\Delta({TP}_2)$.
\end{lemma}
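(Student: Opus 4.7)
The plan is to unpack the definitions and observe that the same sequence of parameters witnesses both properties; the design of the pattern $\Delta(TP_2)$ has been chosen precisely so that its closed-under-subsets condition, namely that $s$ be a partial function from $\omega$ to $\omega$, mirrors the two clauses in the definition of $TP_2$.

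For the forward direction, I would assume $\phi(\overline{x}, \overline{y})$ admits $\Delta(TP_2)$, with witnesses $(\overline{a}_{n,m} : (n,m) \in \omega \times \omega)$, and set $b_{n,m} := \overline{a}_{n,m}$. The column-inconsistency clause of $TP_2$ is immediate: for $m < m'$, the set $\{(n,m), (n,m')\}$ violates the partial-function condition and hence is not in $\Delta(TP_2)$, so $\exists \overline{x}\,(\phi(\overline{x}, b_{n,m}) \wedge \phi(\overline{x}, b_{n, m'}))$ is inconsistent. For the path-consistency clause, fix $\eta \in \omega^\omega$; each finite subset of $\{(n, \eta(n)) : n < \omega\}$ is a partial function and hence lies in $\Delta(TP_2)$, so each finite subset of $\{\phi(\overline{x}, b_{n, \eta(n)}) : n < \omega\}$ is consistent, and compactness gives full consistency.

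For the backward direction, I would assume $(b_{n,m})$ witnesses $TP_2$ and set $\overline{a}_{n,m} := b_{n,m}$. Given finite $s \subseteq \omega \times \omega$, I would treat the two cases of admissibility separately. If $s \in \Delta(TP_2)$, then $s$ is a partial function on its finite domain $F \subseteq \omega$, so $s = \{(n, \eta(n)) : n \in F\}$ for some $\eta \colon F \to \omega$; extending arbitrarily to $\eta \in \omega^\omega$, the path-consistency clause of $TP_2$ gives consistency of $\{\phi(\overline{x}, b_{n, \eta(n)}) : n < \omega\}$, hence of its subcollection indexed by $s$. If $s \notin \Delta(TP_2)$, then some column has two elements $(n,m), (n,m') \in s$ with $m < m'$, and the column-inconsistency clause of $TP_2$ rules out $\exists \overline{x} \bigwedge_{(n,m) \in s} \phi(\overline{x}, b_{n,m})$.

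The second sentence of the lemma then follows at once: $T$ has $TP_2$ iff some formula of $T$ has $TP_2$ iff some formula of $T$ admits $\Delta(TP_2)$, which is the definition of $T$ admitting $\Delta(TP_2)$. I do not anticipate any real obstacle here; as the author notes, the definition of $TP_2$ has been set up so that it absorbs all of the work, and the proof is a bookkeeping exercise.
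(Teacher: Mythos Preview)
Your proposal is correct and matches the paper's approach: the paper gives no proof at all, simply noting the claim is trivial because the definition of $TP_2$ has been chosen to absorb all the work, and your argument is exactly the straightforward unpacking of definitions one would write out if asked. The only quibble is that you have swapped the labels ``forward'' and ``backward'' relative to the order in the statement, but both directions are handled correctly.
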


The following is essentially Theorem 6.9 of \cite{HypSeq}. To explain the terminology: we are viewing a partial function $f: \hat{V} \to \hat{V}$ as a subset of $\hat{V}^2 \subseteq \hat{V}$, so the statement that $f$ is pseudofinite just means that the domain and range of $f$ are pseudofinite. 

\begin{lemma}\label{TP2Lemma1}
	Suppose $V \models ZFC^-$ is transitive, and $\mathbf{j}: V \preceq \hat{V}$ is $\omega$-nonstandard, and $\lambda$ is a cardinal. Then $\lambda_{\hat{V}}(\Delta(TP_2))$ is the least $\lambda$ such that there is a pseudofinite partial function $f: \hat{V} \to \hat{V}$ which cannot be extended to an internal partial function $\hat{f}$ from $\hat{V}$ to $\hat{V}$.
\end{lemma}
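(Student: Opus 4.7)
The plan is to emulate Lemma~\ref{IPLemma1}, with partial functions on $\omega \times \omega$ playing the role that disjoint pairs $(X_0, X_1)$ did there. Write $\mu$ for the cardinal on the right-hand side of the statement; I will prove both $\lambda_{\hat{V}}(\Delta(TP_2)) \leq \mu$ and $\mu \leq \lambda_{\hat{V}}(\Delta(TP_2))$.

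For the first inequality, suppose $f: \hat{V} \to \hat{V}$ is a pseudofinite partial function of size at most $\mu$ admitting no internal extension. Since $f$ is pseudofinite, $\mbox{dom}(f) \cup \mbox{ran}(f)$ sits inside some internally finite $\hat{C} \in \hat{V}$; pick a bijection $\hat{g}: \hat{C} \to |\hat{C}|$ in $\hat{V}$, where $|\hat{C}| \in \hat{\omega}$. Form (externally, in $\mathbb{V}$) the set $X := \{(\hat{g}(a), \hat{g}(b)): (a, b) \in f\}$; this is a pseudofinite partial function $\hat{\omega} \to \hat{\omega}$ of size $\leq \mu$, and its finite subsets lie in $\mathbf{j}(\Delta(TP_2))$. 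If some $\hat{X} \in \mathbf{j}(\Delta(TP_2))$ contained $X$, pulling $\hat{X}$ back along $\hat{g}$ (first intersecting with $|\hat{C}| \times |\hat{C}|$) would yield an internally finite partial function $\hat{f}' \in \hat{V}$ containing $f$, contradicting the hypothesis. Hence $\lambda_{\hat{V}}(\Delta(TP_2)) \leq \mu$.

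For the reverse inequality, suppose $X \subseteq \hat{\omega} \times \hat{\omega}$ is a pseudofinite partial function of size $\leq \lambda_{\hat{V}}(\Delta(TP_2))$ with $[X]^{<\aleph_0} \subseteq \mathbf{j}(\Delta(TP_2))$ admitting no extension inside $\mathbf{j}(\Delta(TP_2))$. Then $X$ itself is already a pseudofinite partial function $\hat{V} \to \hat{V}$ of the same size, and I claim it cannot be extended to an internal one. Indeed, if $\hat{f} \in \hat{V}$ were such an extension, choose an internally finite $\hat{Y} \in \hat{V}$ with $X \subseteq \hat{Y} \subseteq \hat{\omega} \times \hat{\omega}$ (shrink if necessary); then $\hat{f} \cap \hat{Y} \in \hat{V}$ is internally finite, still a partial function (inherited from $\hat{f}$), sits in $\hat{\omega} \times \hat{\omega}$, and contains $X$, producing an element of $\mathbf{j}(\Delta(TP_2))$ above $X$ — a contradiction.

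The only delicate point is the distinction between ``internal'' and ``internally finite'': elements of $\mathbf{j}(\Delta(TP_2))$ are necessarily internally finite, whereas the right-hand side of the statement only demands internal extensions on the $\hat{V}$-to-$\hat{V}$ side. Both directions bridge this gap by intersecting with a pseudofinite set, relying on the trivial fact that any subset of a partial function is still a partial function. Beyond this bookkeeping, the argument is a direct analogue of the encoding used in Lemma~\ref{IPLemma1}, and I foresee no genuine obstacle.
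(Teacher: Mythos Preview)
Your proof is correct and follows essentially the same approach as the paper. The paper argues each direction by showing ``everything of size less than the relevant cardinal extends,'' whereas you argue the contrapositive by translating a witness for one cardinal into a witness for the other; the underlying moves (relabeling via an internal bijection onto an initial segment of $\hat{\omega}$, and intersecting with an internally finite set to pass between ``internal'' and ``internally finite'' partial functions) are identical.
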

\begin{proof}
	Let $\lambda$ be the least such cardinal as in the statement of the lemma.
	
	First we show that $\lambda_{\hat{V}}(\Delta(TP_2)) \leq \lambda$. Given some pseudofinite $f \subseteq \hat{\omega} \times \hat{\omega}$ with $[f]^{<\aleph_0} \subseteq \mathbf{j}(\Delta(TP_2))$, and of cardinality less than $\lambda$, note that $f$ is a partial function from $\hat{V}$ to $\hat{V}$, and so we can find some internal partial function $\hat{f}$ from $\hat{V}$ to $\hat{V}$ extending $f$. Choose $\hat{n}< \hat{\omega}$ large enough so that $f \subseteq \hat{n} \times \hat{n}$; let $\hat{X} = \{\hat{m} < \hat{n}: \hat{f}(\hat{m}) \mbox{ is defined and } < \hat{n}\}$. Then $\hat{f} \restriction_{\hat{X}} \in \mathbf{j}(\Delta(TP_2))$.
	
	Conversely, we show that $\lambda \leq \lambda_{\hat{V}}(\Delta(TP_2))$. Suppose $f$ is a pseudofinite partial function from $\hat{V}$ to $\hat{V}$ of cardinality less than $\lambda_{\hat{V}}(\Delta(TP_2))$. We can find some $\hat{X} \in \hat{V}$, finite in the sense of $\hat{V}$, such that $f$ is a partial function from $\hat{X}$ to $\hat{X}$. By relabeling, we can suppose $\hat{X} = \hat{n} < \hat{\omega}$. Then $[f]^{<\aleph_0} \subseteq \mathbf{j}(\Delta(TP_2))$, and thus we can find $\hat{f} \in \mathbf{j}(\Delta(TP_2))$ with $f \subseteq \hat{f}$. Then $\hat{f}$ is as desired.
\end{proof}

Putting it all together:

\begin{theorem}\label{StarRandGraphInterp}
	Suppose $V \models ZFC^-$ is transitive, and $\mathbf{j}: V \preceq \hat{V}$ is $\omega$-nonstandard, and $\lambda$ is a cardinal. Then the following are equivalent:
	
	\begin{itemize}
		\item[(A)] $\hat{V}$ $\lambda^+$-pseudosaturates $T_{rf}$;
		\item[(B)] $\hat{V}$ $\lambda^+$-pseudosaturates some unsimple theory;
		\item[(C)] $\lambda < \lambda_{\hat{V}}(\Delta(TP_2))$.
	\end{itemize}
\end{theorem}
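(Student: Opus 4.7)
The proof will mirror Theorem~\ref{RandGraphInterp}, with $\Delta(TP_2)$ replacing $\Delta(IP)$ and $T_{rf}$ replacing $T_{rg}$. Part (A) implies (B) is immediate since $T_{rf}$ is unsimple (witnessed by $TP_2$ via the formula $F(y_0, x) = y_1$).

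For (B) implies (C), take any unsimple $T \in V$ with $\lambda < \lambda_{\hat{V}}(T)$ and split by Theorem~\ref{NonSimpleDichotomy}. If $T$ has $TP_2$, then Lemma~\ref{TP2Lemma0} yields a formula $\phi(\overline{x}, \overline{y})$ of $T$ admitting $\Delta(TP_2)$, i.e., $\Delta(TP_2)$ is an instance of $\Delta_\phi$; Lemma~\ref{EquivLemma} then gives $\lambda_{\hat{V}}(\Delta_\phi) \leq \lambda_{\hat{V}}(\Delta(TP_2))$, while Theorem~\ref{CardInvarTheoremInterp} gives $\lambda_{\hat{V}}(T) \leq \lambda_{\hat{V}}(\Delta_\phi)$. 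If instead $T$ has $SOP_2$, Theorem~\ref{SOP2max} gives $\lambda_{\hat{V}}(T) \leq \mathfrak{p}_{\hat{V}}$, and the remark following the definition of $\lambda_{\hat{V}}(\Delta)$ gives $\mathfrak{p}_{\hat{V}} \leq \lambda_{\hat{V}}(\Delta(TP_2))$. In either case $\lambda < \lambda_{\hat{V}}(\Delta(TP_2))$.

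For (C) implies (A), the substantive direction: fix $M \models T_{rf}$ with $M \in V$, and a pseudofinite partial type $p(x)$ over $\mathbf{j}_{\std}(M)$ of cardinality $\leq \lambda$. By Lemma~\ref{TP2Lemma1}, hypothesis (C) says exactly that every pseudofinite partial function $\hat{V} \to \hat{V}$ of cardinality $\leq \lambda$ extends to an internal partial function. The plan is to use QE in $T_{rf}$ to reduce $p(x)$ to a partial quantifier-free type, and then to encode it as such a pseudofinite partial function $g$: for each equation $F(u_1, u_2) = u_3$ appearing in $p(x)$ (after first unfolding nested subterms involving $x$ by adjoining auxiliary names), set $g((u_1, u_2)) = u_3$. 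Applying (C), extend $g$ to an internal partial function $\hat{g} \in \hat{V}$, which corresponds to a pseudofinite, internally consistent quantifier-free type $\hat{p}(x) \in \hat{V}$ extending $p(x)$. Then $\mathbf{j}$-elementarity applied to the genericity axioms of $T_{rf}$ (which in $\hat{V}$ hold for $\mathbf{j}(M)$) yields $\hat{b} \in \mathbf{j}_{\std}(M)$ realizing $\hat{p}(x)$, hence realizing $p(x)$.

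The main obstacle will be the bookkeeping in the encoding of general quantifier-free types as pseudofinite partial functions. For ``flat'' types of the form $\{F(a_\alpha, x) = b_\alpha : \alpha < \lambda\}$, the encoding $g(a_\alpha) = b_\alpha$ is direct and gives a clean analog of the $X_0, X_1$ step in Theorem~\ref{RandGraphInterp}'s proof. For types involving nested terms such as $F(F(a, x), a') = b$, or mixed $F(a, x),\, F(x, a)$-equations, one must carefully organize the data so that it genuinely forms a pseudofinite partial function and so that its internal extension corresponds to an internally consistent quantifier-free type; this uses the model completion property of $T_{rf}$ to reduce the general case to the flat one via auxiliary parameters naming each subterm involving $x$.
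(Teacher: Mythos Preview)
Your treatment of (A)$\Rightarrow$(B) and (B)$\Rightarrow$(C) matches the paper and is fine.

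For (C)$\Rightarrow$(A) there is a real gap. Your encoding ``for each equation $F(u_1,u_2)=u_3$ set $g((u_1,u_2))=u_3$'' does not yet give a pseudofinite partial function from $\hat V$ to $\hat V$: the $u_i$ you need include $x$ itself and your ``auxiliary names'' for nested subterms, none of which are elements of $\hat V$. You say you will ``adjoin auxiliary parameters naming each subterm involving $x$'', but you never explain where these parameters live or why there are enough of them. A quantifier-free type over $X$ of size $\lambda$ can involve $\lambda$ many subterms whose values are forced to lie outside $X$ (and it can contain inequalities $F(a,x)\neq b$ that your encoding simply ignores). Without a supply of $\lambda$ many fresh pseudofinitely-bounded elements to absorb these values, the reduction to a partial function breaks down.

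The paper resolves this with a single clean move. First it proves that if $X\subseteq\hat n$ then $|\hat n|>\lambda$: otherwise an external bijection $(\hat n-1)\to\hat n$ would extend (by (C) and Lemma~\ref{TP2Lemma1}) to an internal one, which is absurd. This gives room to enlarge $X$ to $Y\subseteq\hat n$ with $|Y\setminus X|=\lambda$, and then to extend $p(x)$ to a \emph{complete} nonalgebraic type $q(x)$ over $Y$ that is induced by a total function $f:(Y\cup\{x\})^2\to Y\cup\{x\}$. Now all subterm values and all inequalities are handled at once, and $f$ (after coding $x$ as a single extra point) is a genuine pseudofinite partial function of size $\le\lambda$; extending it internally to $\hat f$ on $(\hat n\cup\{x\})^2$ and invoking genericity of $T_{rf}$ in $\hat V$ produces the realization. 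Your sketch is morally aiming at this, but the ``enlarge the parameter set'' step and the cardinality claim $|\hat n|>\lambda$ that makes it possible are the missing ideas.
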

\begin{proof}
	(A) implies (B) is trivial.
	
	(B) implies (C): suppose $T \in V$ is unsimple and $\hat{V}$ $\lambda^+$-pseudosaturates $T$, i.e. $\lambda < \lambda_{\hat{V}}(T)$. By Theorem~\ref{NonSimpleDichotomy}, $T$ either has $SOP_2$ or else $TP_2$; if $T$ has $SOP_2$ then $\lambda_{\hat{V}}(T) = \mathfrak{p}_{\hat{V}} \leq \lambda_{\hat{V}}(\Delta(TP_2))$. If on the other hand $T$ has $TP_2$, then $T$ admits $\Delta(TP_2)$ so we get $\lambda_{\hat{V}}(T) \leq \lambda_{\hat{V}}(\Delta(TP_2))$ in any case.
	
	(C) implies (A): Let $F: \omega^2 \to \omega$ be such that $(\omega, F) \models T_{rf}$, and write $\hat{F}= \mathbf{j}(F)$ (we also use $F$ to denote the symbol in the language). Let $p(x)$ be a pseudofinite partial type over $(\hat{\omega}, \hat{F})$, say $p(x)$ is over $X \subseteq \hat{n}$ with $|X| \leq \lambda$. We need to show $p(x)$ is realized in $(\hat{\omega}, \hat{F})$. We can suppose $p(x)$ is nonalgebraic. 
	
	Let $\kappa = |\hat{n}|$ as computed in $\mathbb{V}$; i.e. $\kappa$ is the cardinality of $\{\hat{m}  \in \hat{V}: \hat{m}< \hat{n}\}$ in $\mathbb{V}$. I claim that $\kappa> \lambda$. Suppose towards a contradiction that $\kappa \leq \lambda$. In $\mathbb{V}$, choose a bijection $f: (\hat{n}-1)\to \hat{n}$. By Lemma~\ref{TP2Lemma1}, we can find some internal partial function $\hat{f}$ extending $f$. But then $\hat{f} \restriction_{\hat{n}-1} = f$, and $f$ cannot be internal, contradiction.
	
	Thus we can find some $Y \subseteq \hat{n}$ such that $X \subseteq Y$ and $|Y \backslash X| = \lambda$. Extend $p(x)$ to a complete type $q(x)$ over $Y$ such that $q(x)$ is induced by some function $f: (Y \cup \{x\})^2 \to Y \cup \{x\}$. More precisely, $q(x)$ is nonalgebraic, and for every $a,b \in Y \cup \{x\}$, if we write $c = f(a, b)$, then $q(x) \models F(a, b) = c$. Note that $q(x)$ is logically implied by $\{``F(a, b) = c": a, b \in Y \cup \{x\}, c = f(a, b)\}$. 
	
	Since $\lambda < \lambda_{\hat{V}}(\Delta(TP_2))$, we can find some function $\hat{f}: (\hat{n} \cup \{x\})^2 \to \hat{n} \cup \{x\}$ extending $f$. Thus we can find $a_* \geq \hat{n}$ such that $\hat{F} \restriction_{\hat{n} \cup \{a_*\}}$ is given by $\hat{f}$; then $a_*$ realizes $q(x)$.
\end{proof}

We immediately get the following corollary.

\begin{corollary}\label{StarRandGraphMin}
	$T_{rf}$ is a $\trianglelefteq^\times_{1}$-minimal unsimple theory. That is, if $T$ is unsimple then $T_{rf} \trianglelefteq^\times_{1} T$. Thus, this also holds for $\trianglelefteq^\times_\kappa$ and $\trianglelefteq^*_\kappa$ for every $\kappa$, and also for $\trianglelefteq$.
\end{corollary}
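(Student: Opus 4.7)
The plan is to obtain this corollary as an essentially one-line consequence of Theorem~\ref{StarRandGraphInterp}. Fix any countable transitive $V \models ZFC^-$ with $T_{rf}, T \in V$; such $V$ exists since both theories are countable sets. I claim this $V$ witnesses $T_{rf} \trianglelefteq^\times_{\lambda 1} T$ for every infinite $\lambda$. Indeed, suppose $\mathbf{j}: V \preceq \hat{V}$ is $\omega$-nonstandard (the $\kappa = 1$ saturation hypothesis is vacuous), and suppose $\hat{V}$ $\lambda^+$-pseudosaturates $T$. Since $T$ is unsimple, clause (B) of Theorem~\ref{StarRandGraphInterp} is satisfied, so by the implication (B)$\Rightarrow$(A) of that theorem, $\hat{V}$ also $\lambda^+$-pseudosaturates $T_{rf}$. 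This is exactly the required implication, so $T_{rf} \trianglelefteq^\times_{\lambda 1} T$ for all $\lambda$, hence $T_{rf} \trianglelefteq^\times_1 T$.

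For the ``thus'' clause, I would chain together the inclusions of orders already recorded in the introduction. First, $\trianglelefteq^\times_1 \subseteq \trianglelefteq^\times_\kappa$ for every $\kappa$: enlarging the saturation requirement on $\hat{V}$ only restricts the class of $\hat{V}$ over which the pseudosaturation implication must be checked, so a witness for $\trianglelefteq^\times_1$ is automatically a witness for $\trianglelefteq^\times_\kappa$. Next, $\trianglelefteq^\times_\kappa \subseteq \trianglelefteq^*_\kappa$ by Theorem 10.6 of \cite{InterpOrdersUlrich}. Finally, $\trianglelefteq^*_{\aleph_1}$ refines Keisler's order $\trianglelefteq$ (as recalled after Lemma~\ref{SatAndPseudo0}), so $T_{rf} \trianglelefteq^*_{\aleph_1} T$ yields $T_{rf} \trianglelefteq T$. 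Altogether, the minimality we proved in $\trianglelefteq^\times_1$ propagates to each of the orders mentioned.

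There is no substantial obstacle here: the corollary is a direct repackaging of Theorem~\ref{StarRandGraphInterp}. All of the real content, namely the combinatorial identification of $\lambda_{\hat{V}}(\Delta(TP_2))$ in Lemma~\ref{TP2Lemma1} together with the $SOP_2$ bound $\lambda_{\hat{V}}(T) \leq \mathfrak{p}_{\hat{V}}$ from Theorem~\ref{SOP2max}, was already absorbed in establishing that theorem, where the dichotomy (Theorem~\ref{NonSimpleDichotomy}) split the unsimple case into the $SOP_2$ and $TP_2$ subcases.
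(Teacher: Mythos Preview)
Your proposal is correct and matches the paper's approach exactly: the paper simply states that the corollary follows immediately from Theorem~\ref{StarRandGraphInterp}, and you have spelled out precisely the one-line deduction (B)$\Rightarrow$(A) together with the routine order inclusions for the ``thus'' clause. There is nothing to add.
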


\section{A Minimal Nonlow Theory}\label{TCasSec}

In this section, we proceed similarly to Sections~\ref{RandGraphSec} and ~\ref{StarRandGraphSec} to show that there is a minimal nonlow theory in $\trianglelefteq^\times_1$, namely $T_{nlow}$. In \cite{LowDividingLine} I proved that a different theory, $T_{Cas}$ (introduced by Casanovas in \cite{Casanovas}) is a minimal nonlow theory in Keisler's order. The arguments there show that $T_{Cas}$ is in fact a $\trianglelefteq^\times_{\aleph_1}$-minimal nonlow theory; however, Malliaris and Shelah show in \cite{InterpNew} that supersimplicity is a dividing line in $\trianglelefteq^*_1$, and hence $T_{Cas}$ is not a $\trianglelefteq^*_1$-minimal nonlow theory (seeing as it is not supersimple). $T_{nlow}$ is the first example of a supersimple nonlow theory \cite{SupersimpleNonlow}, due to Casanovas and Kim.

We now describe $T_{nlow}$. The language $\mathcal{L}_{nlow}$ is $(R, E, P, Q, U_n, P_n, Q_n^i, F_n: 1 \leq n < \omega, i < 2)$, where $P, Q, U_n, P_n, Q_n^i$ are each unary relation symbols, and $R, E$ are binary relation symbols, and $F$ is an $n$-ary relation symbol. (Our notation differs from \cite{SupersimpleNonlow}: our $P$ is their $Q^0$, our $Q$ is their $Q^1$, our $U_n$ is their $P_n$, our $P_n$ is their $Q_n^0$, and our $Q_n^i$ is their $Q_n^{i+1}$.)

 $T_{nlow}$ is the model completion of the following axioms:

\begin{enumerate}
	\item The universe is the disjoint union of $P$ and $Q$, both infinite;
	\item $E$ is an equivalence relation on the universe;
	\item $R \subseteq P \times Q$ and $R \subseteq E$;
	\item Each $U_n$ is an equivalence class of $E$; 
	\item Each $P_n = U_n \cap P$;
	\item $U_n \cap Q$ is the disjoint union of $Q_n^0$ and $Q_n^1$;
	\item For each $a \in P_n$, the set $u_a := \{b \in Q_n^0: R(a, b)\}$ has exactly $n$ elements; moreover $a \mapsto u_a$ is a bijection from $P_n$ to $[Q_n^0]^n$;
	\item $F_n$ induces the bijection (also denoted $F_n$) from $[Q_n^0]^n$ to $P_n$ given by $F_n(u_a) = a$. More formally, for all $(b_0, \ldots, b_{n-1})$, if some $b_i \not \in Q_n^0$ or else there are $i < j$ with $b_i = b_j$, then $F_n(b_0, \ldots, b_{n-1}) = b_0$. Otherwise, $F_n(b_0, \ldots, b_{n-1}) \in P_n$, and $u_{F_n(b_0, \ldots, b_{n-1})} = \{b_0, \ldots, b_{n-1}\}$.
\end{enumerate}

As notation, we will let $U_\omega$ be the complement of $\bigcup_{1 \leq n < \omega} U_n$, and we will let $P_\omega = P \backslash \bigcup_{1 \leq n < \omega} P_n$, and we will let $Q_\omega = Q \backslash \bigcup_{1 \leq n < \omega, i < 2} Q_n^i$. These are type-definable sets, and so we can view them also as partial types.

In \cite{SupersimpleNonlow} it is shown that $T_{nlow}$ is well-defined. Moreover, $X$ is algebraically closed if and only if $X$ is closed under each $F_n$, and for all $a \in P_n$, $u_a \subseteq X$. Further, $X$ is supersimple, with forking relation given by: $A \forkindep_C B$ if and only if $\acl(AC) \cap \acl(BC) \subseteq \acl(C)$, and further, if $a \in A \backslash \bigcup_n P_n$ and $a$ is not $E$-related to any element of $C$, then $a$ is not $E$-related to any element of $B$. (This is equivalent to $\acl(AC) \cap \acl(BC) \subseteq \acl(C)$ once we eliminate the imaginaries $[a/E]$.) Finally, the formula $R(x, y)$ visibly witnesses that $T_{nlow}$ has the finite dividing property, and hence is nonlow.

$T_{nlow}$ does not have quantifier elimination (note that the axioms above are not universal). However, whenever $A$ is algebraically closed, then complete types over $A$ are determined by their quantifier-free part. The following lemma follows from the proof of Proposition 4.2 in \cite{SupersimpleNonlow}:

\begin{lemma}\label{TCasLemma1}
	Let $M \models T_{nlow}$ and let $C \subseteq M$ be algebraically closed. Write $C = A \cup B$ where $A = C \cap P^M$ and $B = C \cap Q^M$. Write $A_n = A \cap P_n$ for each $1 \leq n < \omega$, and write $B_n^i = B \cap Q_n^i$ for each $1 \leq n < \omega$ and $i < 2$.
	\begin{itemize} 
		\item[(I)] Suppose $1 \leq n < \omega$. Let $X \subseteq [B_{n}^0]^{n-1} \times B_{n}^1$ be given. Let $p^{I}_X(x)$ be the partial type over $C$ which asserts:
		\begin{itemize}
			\item $Q_{n}^0(x)$;
			\item $\lnot R(a, x)$ for each $a \in A_{n}$
			\item For every $v \in [B_{n}^0]^{n-1}$ and for every $b \in B_{n}^1$, $R(F_n(v \cup \{x\}), b)$ if and only if $(v, b) \in X$.
		\end{itemize}
	
	 Then $p^I_X(x)$ generates a complete type over $C$ that does not fork over $\emptyset$. Moreover, all nonalgebraic complete types over $C$ extending $Q_n^0(x)$ are of this form. 
	
	\item[(II)] Suppose $1 \leq n < \omega$. let $X \subseteq A$ be given. Let $p^{II}_X(x)$ be the partial type over $C$ that asserts:
	
	\begin{itemize}
		\item $Q_n^1(x)$;
		\item $x \not= b$ for each $b \in B_{n}^1$;
		\item For each $a \in A_n$, $R(a, x)$ holds if and only if $a \in X$.
	\end{itemize}
	
	Then $p_X^{II}(x)$ generates a complete type over $C$ that does not fork over $\emptyset$. Moreover, all nonalgebraic complete types over $C$ extending $\{Q(x)\} \cup Q_n^1(x)$ are of this form. 
	
	\item[(III)] Suppose $c \in  C \cap U_\omega$ and $X \subseteq A \cap [c]_E$. Then let $p^{III}_{X, c}(x)$ be the partial type over $C$ which asserts:
		\begin{itemize}
			\item $x \not= b$ for each $b \in B \cap [c]_E$;
			\item $x E c$;
			\item $Q(x)$;
			\item For every $a \in A \cap [c]_E$, $R(a, x)$ holds if and only if $a \in X$.
		\end{itemize}
	
	Then $p^{III}_{X, c}(x)$ generates a complete type over $C$ that does not fork over $\emptyset$. Moreover, all nonalgebraic complete types over $C$ extending $Q_\omega(x) \land  x E c$ for some $c \in C$ are of this form.
	
	\item[(IV)] Suppose $c \in  C \cap U_\omega$ and $X \subseteq B \cap [c]_E$. Then let $p^{IV}_{X, c}(x)$ be the partial type over $C$ which asserts:
	\begin{itemize}
		\item $x \not= a$ for each $a \in A \cap [c]_E$;
		\item $x E c$;
		\item $P(x)$;
		\item For every $b \in B \cap [c]_E$, $R(x, b)$ holds if and only if $b \in X$.
	\end{itemize}
	
	Then $p^{IV}_{X, c}(x)$ generates a complete type over $C$ that does not fork over $\emptyset$. Moreover, all nonalgebraic complete types over $C$ extending $P_\omega(x) \land  x E c$ for some $c \in C$ are of this form.
	
	\item[(V)] There are unique types over $C$ extending $P_\omega(x)$ and $Q_\omega(x)$ respectively, which additionally assert that $x$ is not $E$-related to any element of $C$. Further, for each $n < \omega$ and for each nonalgebraic type $p(x)$ over $C$ extending $P_n(x)$, there is a type $q(\overline{x}) \in S^n(C)$ extending $\bigwedge_{i < n} Q^0_n(x_i)$, such that $p(x)$ is realized in $M$ if and only if $q(\overline{x})$ is realized in $M$; further $q(\overline{x})$ can be chosen independently of the choice of $M \supseteq C$. Namely, for some or any realization of $p(x)$ in the monster model, let $\overline{b}$ be an enumeration of $u_a$ and let $q(\overline{x}) = tp(\overline{b})$; this works because $a$ and $\overline{b}$ are interdefinable.
	\end{itemize}
\end{lemma}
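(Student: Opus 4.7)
The strategy rests on the remark immediately preceding the lemma: because $C$ is algebraically closed, a complete type over $C$ in $T_{nlow}$ is determined by its quantifier-free part. Accordingly, for each partial type $p_X$ listed in (I)--(V), I plan to (a) enumerate the atomic $\mathcal{L}_{nlow}$-formulas $\phi(x, \overline{c})$ with $\overline{c} \in C$ that could hold of a nonalgebraic realization; (b) check that each such $\phi$ is either implied or refuted by $p_X$ together with the axioms of $T_{nlow}$ and the diagram of $C$; and (c) verify consistency by exhibiting a realization in some elementary extension. Together, (a)--(c) show that $p_X$ generates a complete type. Exhaustiveness is then organized by two pieces of data: the sort of $x$ (which of $P_n, Q_n^0, Q_n^1, P_\omega, Q_\omega$ it satisfies, forced by the partition axioms) and the $E$-class of $x$ relative to $C$ (either the unique $U_n$, or $[c]_E$ for some $c \in C \cap U_\omega$, or new); these choices together exhaust all the nonalgebraic possibilities and correspond exactly to the five cases.

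For the enumeration step in cases (I) and (II), where $x \in U_n$, the inclusion $R \subseteq E$ restricts the relevant $R$-facts to pairs with elements of $C \cap U_n = A_n \cup B_n^0 \cup B_n^1$, while axiom (8) restricts the nontrivial $F_m$-facts involving $x$ to $m = n$ with all other arguments in $B_n^0$ (in case (I)) or to nothing at all (in case (II)). In case (I), $R(a, x)$ for $a \in A_n$ is forced to fail, since otherwise $x \in u_a \subseteq C$; for each $v \in [B_n^0]^{n-1}$, axiom (7) forces $u_{F_n(v \cup \{x\})} = v \cup \{x\}$, which pins down $R(F_n(v \cup \{x\}), b)$ for $b \in B_n^0$, while the relations with $b \in B_n^1$ are exactly the free data encoded by $X$. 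Case (II) is simpler since $F_m$ never applies nontrivially to $x \in Q_n^1$, leaving only the $R(a, x)$ for $a \in A_n$ as free. Cases (III) and (IV) follow the same template with $U_\omega$ in place of $U_n$; because $U_\omega$-classes carry no $F_n$-structure, the only free atomic data are the $R$-relations with $C \cap [c]_E$, encoded by $X$. In (V), the type requiring $x$ to be $E$-unrelated to $C$ is unique because every atomic relation with $C$ beyond the sort predicate then fails. The reduction from $P_n(x)$-types to types over $Q_n^0$-tuples uses that, by axioms (7)--(8), any realization $a \in P_n$ and any enumeration $\overline{b}$ of $u_a$ are interdefinable.

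To verify nonforking I apply the given characterization: it suffices to check $\acl(x) \cap \acl(C) \subseteq \acl(\emptyset)$, plus the supplementary condition on elements of $U_\omega$ not $E$-related to the base. In every case the new algebraic material contributed by a realization of $p_X$---namely $x$ itself, the $F_n(v \cup \{x\})$ together with their $u$-sets in case (I), and so on---manifestly involves $x$ and therefore lies outside $C$; the $U_\omega$-condition holds vacuously in (I)--(IV) and by construction in the generic part of (V). The main technical obstacle is case (I): one must carefully confirm that the data encoded by $X \subseteq [B_n^0]^{n-1} \times B_n^1$ really covers every atomic formula $\phi(x, \overline{c})$ over $C$, including those mediated by the infinitely many new elements $F_n(v \cup \{x\}) \in P_n$ generated from $x$, and that no spurious atomic relations are introduced between these new elements and $C$. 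Once this bookkeeping is done the other cases fall out as strictly simpler variants.
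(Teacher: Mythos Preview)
Your proposal is correct and in fact supplies an actual argument where the paper supplies none: the paper simply states that the lemma ``follows from the proof of Proposition~4.2 in \cite{SupersimpleNonlow}'' and moves on. What you have outlined---using that quantifier-free types over algebraically closed sets determine complete types, then case-splitting on the sort of $x$ and its $E$-class relative to $C$, enumerating the relevant atomic formulas in each case, and finally reading off nonforking from the given description of independence---is exactly how one would unpack that citation. Your identification of case~(I) as the only place with nontrivial bookkeeping (tracking the new elements $F_n(v\cup\{x\})$ and their $R$-relations to $B_n^1$) is accurate; the other cases are indeed simpler specializations. So there is nothing to compare: you have written out what the paper defers to the literature.
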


We now introduce the relevant patterns.

\begin{definition}
	Given $I \subseteq \omega \backslash \{0\}$ infinite, let $\Delta_I(FDP)$ be the pattern on $I \times \omega$ defined by: $s \in \Delta_I(FDP)$ if $s \in \{m\} \times [\omega]^{\leq m}$ for some $m \in I$. Let $\Delta_I^*(FDP)$ be the pattern on $I \times \omega$, defined to be all $s$ with each $|s \cap \{m \} \times \omega| \leq m$.
	
	Write $\Delta(FDP) = \Delta_{\omega \backslash \{0\}}(FDP)$.
\end{definition}

Easily, $\lambda_{\hat{V}}(\Delta(TP_2)) \leq \lambda_{\hat{V}}(\Delta(FDP))$. Also, $T_{nlow}$ admits $\Delta(FDP)$. 

The following is straightforward.

\begin{lemma}\label{nlowLemma0}
	Suppose $\phi(\overline{x}, \overline{y})$ is a formula of $T$. Then $\phi(\overline{x}, \overline{y})$ has the finite dividing property if and only if for some infinite $I \subseteq \omega \backslash \{0\}$, $\phi(\overline{x}, \overline{y})$ admits some $\Delta$ with $\Delta_I(FDP) \subseteq \Delta \subseteq \Delta_I^*(FDP)$. Hence $T$ has the finite dividing property if and only if $T$ admits some such $\Delta$.
\end{lemma}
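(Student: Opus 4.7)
The plan is to match witnesses on both sides via the natural correspondence between an indiscernible sequence exhibiting FDP at level $k$ and a single row of a pattern at level $m \geq k$. In one direction the rows assemble directly into $\Delta$; in the other, each row of $\Delta$ yields, via Ramsey, an indiscernible sequence witnessing FDP. The second sentence of the lemma is then immediate, since $T$ has FDP iff some formula of $T$ does, and $T$ admits such a $\Delta$ iff some formula does.

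For the forward direction, assume $\phi(\overline{x}, \overline{y})$ has FDP. For each $k$, fix an indiscernible sequence over $\emptyset$, $(b^{(k)}_n : n < \omega)$, so that $\{\phi(\overline{x}, b^{(k)}_n) : n < \omega\}$ is $k$-consistent but not consistent. By compactness and indiscernibility there is a least $m_k$ with $m_k \geq k$ such that every $(m_k+1)$-subset of this sequence is $\phi$-inconsistent while every $m_k$-subset is consistent. Let $I := \{m_k : k < \omega\}$, an infinite subset of $\omega \setminus \{0\}$. Place all the rows $(b^m_n)_{m \in I,\, n < \omega}$ simultaneously in a monster model $\mathfrak{C}$, set $a_{(m,n)} := b^m_n$, and define
\[
\Delta := \{s \in [I \times \omega]^{<\aleph_0} : \mathfrak{C} \models \exists \overline{x} \bigwedge_{(m,n) \in s} \phi(\overline{x}, a_{(m,n)})\}.
\]
This $\Delta$ is subset-closed, so is a pattern, and $\phi$ admits it by construction. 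The $m$-consistency of row $m$ gives $\Delta_I(FDP) \subseteq \Delta$; and any $s$ whose row $m$ contains more than $m$ points is already $\phi$-inconsistent on that row, so $\Delta \subseteq \Delta_I^*(FDP)$.

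For the backward direction, assume $\phi$ admits a pattern $\Delta$ with $\Delta_I(FDP) \subseteq \Delta \subseteq \Delta_I^*(FDP)$ and infinite $I \subseteq \omega \setminus \{0\}$, witnessed by parameters $(a_{(m,n)})$. For each $m \in I$, the lower bound on $\Delta$ gives that $(a_{(m,n)} : n < \omega)$ is $\phi$-$m$-consistent, and the upper bound gives that it is $\phi$-$(m+1)$-inconsistent. Apply the standard extraction of indiscernibles (Ramsey plus compactness) to obtain an indiscernible sequence $(c^m_n : n < \omega)$ over $\emptyset$ whose EM-type is realized by arbitrarily long increasing tuples from the original row; since $m$-consistency and $(m+1)$-inconsistency are expressible by the truth or failure of a single existential statement on such tuples, both properties pass to $(c^m_n)$. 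Given $k$, pick $m \in I$ with $m \geq k$: the sequence $(c^m_n)$ is then $k$-consistent but not consistent, witnessing FDP at level $k$.

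The only non-routine step is the Ramsey extraction preserving the consistency data, but this is the standard Ehrenfeucht-Mostowski construction applied to a single pair of existential formulas in the appropriate arity, so no real obstacle arises.
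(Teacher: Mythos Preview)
Your proof is correct and follows essentially the same approach as the paper: in one direction you assemble the rows of indiscernible witnesses into a consistency pattern $\Delta$ sandwiched between $\Delta_I(FDP)$ and $\Delta_I^*(FDP)$, and in the other you use Ramsey/compactness to extract indiscernibles from each row while preserving the $m$-consistency and $(m{+}1)$-inconsistency. You supply a bit more detail than the paper---in particular, the threshold argument producing $m_k$ and the explicit verification that the consistency/inconsistency data passes through the Ramsey extraction---but the underlying ideas are identical.
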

\begin{proof}
	Suppose $\phi(\overline{x}, \overline{y})$ admits some such $\Delta$, via $(\overline{a}_{m, n}: (m, n) \in I \times \omega)$. Then by compactness and Ramsey's theorem, for each $m \in I$ we get an indiscernible sequence $(\overline{b}_{m, n}: n < \omega)$ such that $\{\phi(\overline{x}, \overline{b}_{m, n}): n < \omega\}$ is $m$-consistent but $m+1$-inconsistent. Hence $\phi(\overline{x}, \overline{y})$ has the finite dividing property.
	
	Conversely, suppose $\phi(\overline{x}, \overline{y})$ has the finite dividing property; choose $I \subseteq \omega \backslash \{0\}$ infinite, and indiscernible sequences $((\overline{b}^m_n: n < \omega): m \in I)$, so that each $\{\phi(\overline{x}, \overline{b}^m_n): m \in I\}$ is $m$-consistent but $m+1$-inconsistent. Let $\Delta$ be the set of all $s \in [\omega \times \omega]^{<\aleph_0}$ such that $\{\phi(\overline{x}, \overline{b}^m_n): (m, n) \in s\}$ is consistent. Clearly, $\Delta_I(FDP) \subseteq \Delta \subseteq \Delta_I^*(FDP)$.
\end{proof}

The following allows us to compare the various $\lambda_{\hat{V}}(\Delta)$'s from Lemma~\ref{nlowLemma0}; in particular $\lambda_{\hat{V}}(\Delta(FDP))$ is maximal among them. As convenient notation, if $\hat{V} \models ZFC^-$ and $\hat{n} < \hat{\omega}$, then let $[\hat{V}]^{\leq \hat{n}}$ denote $\{\hat{u} \in \hat{V}: \hat{V} \models |\hat{u}| \leq \hat{n}\}$. 

\begin{lemma}\label{nlowLemma1}
	Suppose $V$ is a transitive model of $ZFC^-$, and $\mathbf{j}: V \preceq \hat{V}$ is $\omega$-nonstandard. Suppose $I \subseteq \omega \backslash \{0\}$ is infinite, and $\Delta \in V$ is such that $\Delta_I(FDP) \subseteq \Delta \subseteq \Delta_I^*(FDP)$. If $\lambda < \lambda_{\hat{V}}(\Delta)$, then for every $\hat{m}_* < \hat{\omega}$ with $\hat{m}_*$ nonstandard, and for every pseudofinite $X \subseteq \hat{V}$ of cardinality at most $\lambda$, there is $\hat{X} \in [\hat{V}]^{\leq \hat{m}_*}$ with $X \subseteq \hat{X}$. If $\Delta = \Delta_I(FDP)$, then the converse is true as well.
	
	Hence $\lambda_{\hat{V}}(\Delta) \leq \lambda_{\hat{V}}(\Delta_I(FDP)) = \lambda_{\hat{V}}(\Delta(FDP))$.
\end{lemma}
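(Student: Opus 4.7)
The plan is to prove the two directions independently and then derive the stated inequalities. The forward direction uses $\Delta_I(FDP) \subseteq \Delta$ to park $X$ inside a single \emph{nonstandard} row so that every standard-finite subset automatically lies in $\mathbf{j}(\Delta)$; then $\Delta \subseteq \Delta_I^*(FDP)$ controls the size of the witnessing row. The converse exploits the fact that $\Delta_I(FDP)$-cliques live in a single row.

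For the forward direction, fix $\lambda < \lambda_{\hat{V}}(\Delta)$, a nonstandard $\hat{m}_* < \hat{\omega}$, and a pseudofinite $X \subseteq \hat{V}$ with $|X| \leq \lambda$. First I would use overspill to produce a nonstandard $\hat{m}' \in \mathbf{j}(I)$ with $\hat{m}' \leq \hat{m}_*$: fix in $V$ an increasing enumeration $f : \omega \to I$; since $\mathbf{j}(f)(n) \in I$ is standard for every standard $n$ while $\hat{m}_*$ is nonstandard, the $\hat{V}$-definable set $\{\hat{n} < \hat{\omega} : \mathbf{j}(f)(\hat{n}) \leq \hat{m}_*\}$ contains every standard natural and hence some nonstandard $\hat{n}$; take $\hat{m}' = \mathbf{j}(f)(\hat{n})$. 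Next, by pseudofiniteness, pick $\hat{Y} \in \hat{V}$ finite in $\hat{V}$ with $X \subseteq \hat{Y}$ and a bijection $\hat{g} : \hat{Y} \to \hat{k}$ internal to $\hat{V}$, and form $X' := \{\hat{m}'\} \times \hat{g}[X] \subseteq \mathbf{j}(I \times \omega)$. Then $X'$ is pseudofinite of cardinality $\leq \lambda$, and every standard-finite $s \subseteq X'$ is of the form $\{\hat{m}'\} \times T$ with $|T|$ a standard natural, hence $|T| \leq \hat{m}'$, so $s \in \mathbf{j}(\Delta_I(FDP)) \subseteq \mathbf{j}(\Delta)$. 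Applying the definition of $\lambda_{\hat{V}}(\Delta)$, choose $\hat{s} \in \mathbf{j}(\Delta) \subseteq \mathbf{j}(\Delta_I^*(FDP))$ with $X' \subseteq \hat{s}$; then the row $\hat{s} \cap (\{\hat{m}'\} \times \hat{\omega})$ has at most $\hat{m}'$ elements in $\hat{V}$, and pulling its second coordinate back through $\hat{g}$ yields the desired $\hat{X} \in [\hat{V}]^{\leq \hat{m}'} \subseteq [\hat{V}]^{\leq \hat{m}_*}$ containing $X$.

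For the converse (assuming $\Delta = \Delta_I(FDP)$ and the cardinality-extension hypothesis), take $X \subseteq \mathbf{j}(I \times \omega)$ pseudofinite of cardinality $\leq \lambda$ with $[X]^{<\aleph_0} \subseteq \mathbf{j}(\Delta_I(FDP))$; I may assume $|X| \geq 2$. Since pairs from $X$ lie in $\mathbf{j}(\Delta_I(FDP))$, all of $X$ lies in one row: $X \subseteq \{\hat{m}'\} \times X_0$ for some $\hat{m}' \in \mathbf{j}(I)$ and $X_0 \subseteq \hat{\omega}$. If $\hat{m}'$ is standard then $|X_0| \leq \hat{m}'$ is a standard natural, so $X \in \hat{V}$ and already $X \in \mathbf{j}(\Delta_I(FDP))$. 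If $\hat{m}'$ is nonstandard, apply the hypothesis with $\hat{m}_* := \hat{m}'$ to $X_0$ (pseudofinite, of size $\leq \lambda$) to obtain $\hat{T} \in \hat{V}$ with $|\hat{T}| \leq \hat{m}'$ and $X_0 \subseteq \hat{T}$; replacing $\hat{T}$ by $\hat{T} \cap \hat{\omega}$, the set $\hat{s} := \{\hat{m}'\} \times \hat{T} \in \mathbf{j}(\Delta_I(FDP))$ extends $X$.

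Finally, $\lambda_{\hat{V}}(\Delta) \leq \lambda_{\hat{V}}(\Delta_I(FDP))$ follows by chaining the forward direction for the given $\Delta$ with the converse applied to $\Delta_I(FDP)$; the equality $\lambda_{\hat{V}}(\Delta_I(FDP)) = \lambda_{\hat{V}}(\Delta(FDP))$ comes from applying this same chain in both directions with $\Delta(FDP) = \Delta_{\omega\setminus\{0\}}(FDP)$ and with the respective role of $I$. The main technical hurdle is the overspill step producing a nonstandard $\hat{m}' \in \mathbf{j}(I)$ below $\hat{m}_*$, which is what lets us absorb the size constraint of $\Delta_I(FDP)$ for free; the rest is careful bookkeeping between a pseudofinite set and its internal enumeration.
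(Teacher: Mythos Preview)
Your argument is correct and follows essentially the same route as the paper's proof: relabel the pseudofinite set into $\hat{\omega}$, drop $\hat{m}_*$ to a nonstandard element of $\mathbf{j}(I)$, place everything in the single row $\{\hat{m}'\}\times\hat{\omega}$, apply the definition of $\lambda_{\hat V}(\Delta)$, and read off the row bound from $\Delta\subseteq\Delta_I^*(FDP)$; the converse is likewise identical. You spell out the overspill step and the degenerate cases ($|X|\le 1$, $\hat{m}'$ standard) in the converse a bit more explicitly than the paper does, but there is no substantive difference in strategy.
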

\begin{proof}
	Suppose first $\lambda < \lambda_{\hat{V}}(\Delta)$, and $\hat{m}_*$, $X$ are as above. By relabeling, we can suppose $X \subseteq \hat{n}_*$ for some $\hat{n}_* < \hat{\omega}$. By decreasing $\hat{m}_*$, we can suppose $\hat{m}_* \in \mathbf{j}(I)$ while keeping $\hat{m}_*$ nonstandard. Write $Y = \{(\hat{m}_*, \hat{n}): \hat{n} \in X\}$. Since $\hat{m}_*$ is nonstandard we have $[Y]^{<\aleph_0} \subseteq \mathbf{j}(\Delta)$, thus we can find $\hat{Y} \in \mathbf{j}(\Delta)$ with $Y \subseteq \hat{Y}$. Let $\hat{X} = \{\hat{n} < \hat{n}_*: (\hat{m}_*, \hat{n}) \in \hat{Y}\}$; then $\hat{X} \in [\hat{n}_*]^{\leq \hat{m}_*}$ with $X \subseteq \hat{X}$.
	
	Next, suppose $\Delta = \Delta_I(FDP)$; let $Y \subseteq \hat{n}_* \times \hat{n}_*$ be of size at most $\lambda$ with $[Y]^{<\aleph_0} \subseteq \mathbf{j}(\Delta_I(FDP))$. Then $Y \subseteq  \{\hat{m}_*\} \times \hat{n}_*$ for some $\hat{m}_* < \hat{n}_*$ with $\hat{m}_* \in \mathbf{j}(I)$, so let $X = \{\hat{n} < \hat{n}_*: (\hat{m}_*, \hat{n}) \in Y\}$. By hypothesis we can find $\hat{X} \supseteq X$ with $\hat{X} \in [\hat{V}]^{\leq \hat{m}_*}$. Then $\hat{Y} := \{\hat{m}_*\} \times (\hat{X} \cap \hat{n}_*)$ is as desired.
\end{proof}

We can now wrap up the proof that $T_{nlow}$ is a minimal nonlow theory.  (B) implies (C) is due to Malliaris \cite{MalliarisFlex} in the context of regular ultrafilters on $\mathcal{P}(\lambda)$.

\begin{theorem}\label{nonlowInterp}
	Suppose $V \models ZFC^-$ is transitive, $\mathbf{j}: V \preceq \hat{V}$ is $\omega$-nonstandard, and $\lambda$ is given. Then the following are equivalent:
	
	\begin{itemize}
		\item[(A)] $\hat{V}$ $\lambda^+$-pseudosaturates $T_{nlow}$.
		\item[(B)] $\hat{V}$ $\lambda^+$-pseudosaturates some nonlow theory.
		\item[(C)] $\lambda < \lambda_{\hat{V}}(\Delta(IP))$ and $\lambda < \lambda_{\hat{V}}(\Delta(FDP))$.
	\end{itemize}
\end{theorem}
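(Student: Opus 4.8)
The plan is to follow the template of Theorems~\ref{RandGraphInterp} and~\ref{StarRandGraphInterp}: prove (A)$\Rightarrow$(B) trivially, (B)$\Rightarrow$(C) by a dichotomy argument, and (C)$\Rightarrow$(A) by a direct realization argument built on Lemma~\ref{TCasLemma1}. The implication (A)$\Rightarrow$(B) is immediate, since $T_{nlow}$ is nonlow.

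For (B)$\Rightarrow$(C), suppose $T\in V$ is nonlow with $\lambda<\lambda_{\hat V}(T)$. Since every stable theory is low, $T$ is unstable, so $\lambda<\lambda_{\hat V}(\Delta(IP))$ by Theorem~\ref{RandGraphInterp}. For the other inequality, use that ``nonlow'' means ``unsimple or having the finite dividing property.'' If $T$ is unsimple, then by Theorem~\ref{NonSimpleDichotomy} it has $TP_2$ or $SOP_2$, and either way $\lambda_{\hat V}(T)\le\lambda_{\hat V}(\Delta(TP_2))$: in the $SOP_2$ case because $\lambda_{\hat V}(T)=\mathfrak p_{\hat V}\le\lambda_{\hat V}(\Delta(TP_2))$ by Theorem~\ref{SOP2max}, and in the $TP_2$ case because $T$ admits $\Delta(TP_2)$ by Lemma~\ref{TP2Lemma0}. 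Since $\lambda_{\hat V}(\Delta(TP_2))\le\lambda_{\hat V}(\Delta(FDP))$, we get $\lambda<\lambda_{\hat V}(\Delta(FDP))$. If instead $T$ has the finite dividing property, then by Lemma~\ref{nlowLemma0} it admits some $\Delta$ with $\Delta_I(FDP)\subseteq\Delta\subseteq\Delta_I^*(FDP)$ for an infinite $I$, so $\lambda_{\hat V}(T)\le\lambda_{\hat V}(\Delta)\le\lambda_{\hat V}(\Delta(FDP))$ by Lemma~\ref{nlowLemma1}, again giving $\lambda<\lambda_{\hat V}(\Delta(FDP))$.

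The substance is (C)$\Rightarrow$(A). Fix $M\models T_{nlow}$ with $M\in V$ (by Theorem~\ref{ModKeislerOrder} the choice of $M$ does not matter, and since models of $T_{nlow}$ are existentially closed we may take $M$ with all of $Q_n^0,\dots$ infinite), and let $p(x)$ be a pseudofinite partial type over $\mathbf j_{\std}(M)$ of size $\le\lambda$; we must realize it. We may assume $p$ is a complete nonalgebraic type over a pseudofinite set $C$, and --- using the explicit description of $\acl$ in $T_{nlow}$ together with the fact that a $\hat V$-finite set meets only $\hat V$-finitely many $E$-classes --- that $C$ is algebraically closed, still pseudofinite, of size $\le\lambda$. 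By Lemma~\ref{TCasLemma1}, $p$ has one of the forms (I)--(V), and each hypothesis in (C) plays a specific role. Every such form carries a component prescribing which $R$-edges the realizing point bears to a pseudofinite set of old points; realizing that component by an internal point is exactly the task of separating two disjoint pseudofinite sets of size $\le\lambda$ by disjoint internal sets, which succeeds by Lemma~\ref{IPLemma1} since $\lambda<\lambda_{\hat V}(\Delta(IP))$, exactly as in (C)$\Rightarrow$(A) of Theorem~\ref{RandGraphInterp}. Types (I) (and the $P_n$-reductions in clause (V)) carry in addition an ``$F_n$-component'': the realizing point must equal $F_{\hat n}(\hat X)$ for an internal $\hat n$-element set $\hat X$ --- with $\hat n<\hat\omega$ possibly nonstandard, in which case $F_{\hat n}$ is an internal function of $\mathbf j(M)$ --- where $\hat X$ is required to contain a prescribed pseudofinite set $X_0$ of size $\le\lambda$; such an $\hat X$ exists by Lemma~\ref{nlowLemma1} applied with $\lambda<\lambda_{\hat V}(\Delta(FDP))$. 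The same lemma, run contrapositively as in the proof of Theorem~\ref{StarRandGraphInterp}, also shows the relevant ambient pseudofinite sets are externally of size $>\lambda$, so there is always room to choose the new point outside a forbidden set of size $\le\lambda$. Finally, the types in (V) asserting that $x$ is not $E$-related to $C$ are realized by a suitable point of $\mathbf j(M)$ in an $E$-class disjoint from $C$, using richness of $\mathbf j(M)$.

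I expect the main obstacle to be the bookkeeping in (C)$\Rightarrow$(A): correctly matching each of the five forms of Lemma~\ref{TCasLemma1} (and the auxiliary $n$-variable types) to the right hypothesis, and in particular verifying that the ``generic amalgamation'' implicit in the $F_n$-clause of type (I) genuinely reduces to the $\Delta(IP)$-separation property --- i.e.\ that once an internal $\hat X$ of the correct size is found, the $R$-edge pattern on it can be adjusted freely by the genericity of $T_{nlow}$ --- rather than requiring something strictly stronger. A more routine but still necessary point is verifying that $\acl$ of a pseudofinite subset of $\mathbf j_{\std}(M)$ is pseudofinite, since $\acl$ uses only the standard symbols $F_n$ whereas the set may involve nonstandard levels.
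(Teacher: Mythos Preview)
Your (A)$\Rightarrow$(B) and (B)$\Rightarrow$(C) match the paper exactly. The case analysis in (C)$\Rightarrow$(A) via Lemma~\ref{TCasLemma1} is also the paper's approach, but you have mislocated the use of $\Delta(FDP)$.

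Types (I), (II), and the $P_n$-reductions in (V) are all indexed by \emph{standard} $n<\omega$, since $Q_n^0$, $Q_n^1$, $P_n$ are symbols of $\mathcal{L}_{nlow}$; there is no ``$\hat n$ possibly nonstandard'' in these cases. Moreover, in type (I) the realizing point lies in $Q_n^0$, not in $P_n$, so it is not of the form $F_{\hat n}(\hat X)$ at all. All of these cases, together with type (III) and type (IV) when $c\in\hat U_{\hat\omega}$, are handled purely by $\Delta(IP)$ via the separation argument you describe.

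The case that actually requires $\Delta(FDP)$ is type (IV) with $c\in\hat U_{\hat m}$ for some \emph{nonstandard} $\hat m<\hat\omega$. From the standpoint of $\mathbf j_{\std}(M)$ such $c$ satisfies $U_\omega(c)$ (it lies in no standard $U_n$), so $p^{IV}_{X,c}(x)$ is a legitimate form-(IV) type; but any realization in $\mathbf j(M)$ lies in $\hat P_{\hat m}$ and hence equals $\hat F_{\hat m}(u_a)$ for some $u_a\in[\hat Q^0_{\hat m}]^{\hat m}$ containing $X_0:=X\cap\hat Q^0_{\hat m}$. This is precisely where Lemma~\ref{nlowLemma1} enters: one finds $\hat X_0\in[\hat Q^0_{\hat m}]^{\hat m-1}$ with $X_0\subseteq\hat X_0$ using $\lambda<\lambda_{\hat V}(\Delta(FDP))$, and then handles $X\cap\hat Q^1_{\hat m}$ with $\Delta(IP)$. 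So your description of the FDP step is correct in content but belongs under case (IV), not (I) or (V).

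Your concern about $\acl$ is resolved exactly as the paper does: take the algebraic closure of an internal bound $\hat n_0$ for the parameters \emph{inside $\mathbf j(M)$ as computed in $\hat V$}; since $T_{nlow}$ has finite $\acl$ of finite sets, this is $\hat V$-finite, and it contains $\acl_{\mathbf j_{\std}(M)}$ of the original set.
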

\begin{proof}
	(A) implies (B) is trivial.
	
	(B) implies (C): suppose $T \in V$ is nonlow; (B) is equivalent to $\lambda < \lambda_{\hat{V}}(T)$. Now $T$ is unstable, so $\lambda_{\hat{V}}(T) \leq \lambda_{\hat{V}}(\Delta(IP))$. If $T$ is unsimple, then $\lambda_{\hat{V}}(T) \leq \lambda_{\hat{V}}(\Delta(TP_2)) \leq \lambda_{\hat{V}}(\Delta(FDP))$, and if $T$ has the finite dividing property then $\lambda_{\hat{V}}(T) \leq \lambda_{\hat{V}}(\Delta(FDP))$ by Lemma~\ref{nlowLemma1}. Hence $\lambda_{\hat{V}}(T) \leq \lambda_{\hat{V}}(\Delta(FDP))$ in any case, and (C) holds.
	
	(C) implies (A): suppose $\lambda < \lambda_{\hat{V}}(\Delta(IP))$ and $\lambda < \lambda_{\hat{V}}(\Delta(FDP))$, and let $M \models T_{nlow}$ have universe $\omega$ (say), with $M \in V$. Write $M = (\omega, R, E, P, Q, U_n, P_n, Q_n^i, F_n: 1 \leq n < \omega, i < 2)$ and write $\mathbf{j}(M) = (\hat{\omega}, \hat{R}, \hat{E}, \hat{P}, \hat{Q}, \hat{U}_{\hat{n}}, \hat{Q}_{\hat{n}}^i, \hat{F}_{\hat{n}}:1 \leq \hat{n} < \hat{\omega})$ (so $\mathbf{j}_{\std}(M) = (\hat{\omega}, \hat{R}, \hat{E}, \hat{P}, \hat{Q}, \hat{U}_n, \hat{P}_n, \hat{Q}_n^i, \hat{F}_n: 1 \leq n < \omega, i < 2)$). In terms of our previous notation, we write, for instance, $\hat{U}_\omega = \hat{U} \backslash \bigcup_{n < \omega} \hat{U}_n$, a type-definable subset of $\mathbf{j}_{\std}(M)$ which is not definable in $\hat{V}$, and we write $\hat{U}_{\hat{\omega}} = \hat{U} \backslash \bigcup_{n < \omega} \hat{U}_{\hat{n}}$, a type-definable subset of $\mathbf{j}(M)$ in the sense of $\hat{V}$ which is not type-definable in $\mathbf{j}_{\std}(M)$.
	
	We show that $\mathbf{j}_{\std}(M)$ is $\lambda^+$-pseudosaturated.
	
	So let $p(x)$ be a pseudofinite partial type over $\mathbf{j}_{\std}(M)$ of cardinality at  most $\lambda$. We first of all claim that we can suppose $p(x)$ is a type over an algebraically closed set. Indeed, choose $\hat{n}_0 < \hat{\omega}$ such that $p(x)$ is over $\hat{n}_0$. Since algebraic closures of finite sets in $T_{nlow}$ are finite, we have that the algebraic closure of $\hat{n}_0$ in $\mathbf{j}(M)$, as computed in $\hat{V}$, is pseudofinite as desired. Let $C = \mbox{acl}_{\mathbf{j}_{\std}(M)}(\hat{n}_0)$; then $|C| \leq \lambda$, and further $C \subseteq \mbox{acl}_{\mathbf{j}(M)}(\hat{n}_0)$ is pseudofinite. Choose $\hat{n}_* < \hat{\omega}$ with $C \subseteq \hat{n}_*$. (If algebraic closures of finite sets were infinite, we would need to use overflow arguments instead.) Write $A = C \cap \hat{P}$ and write $B = C \cap \hat{Q}$. For each $1 \leq \hat{n} < \hat{\omega}$, let $A_{\hat{n}} = A \cap \hat{P}_{\hat{n}}$; for each $1 \leq \hat{n} < \hat{\omega}$ and for each $i < 2$, let $B_{\hat{n}}^i = B \cap \hat{Q}_{\hat{n}}^i$.
	
	We can suppose $p(x)$ is a complete nonalgebraic type over $C$. We can also suppose $p(x)$ is of one of the forms (I) through (IV) of Lemma~\ref{TCasLemma1}.
	
	Suppose first $p(x)$ is of form (I), say there are $1 \leq n < \omega$ and $X \subseteq [B_n^0]^{n-1} \times B^1_n$ such that $p(x) = p^I_X(x)$. Since $\lambda < \lambda_{\hat{V}}(\Delta(IP))$, we can find disjoint $\hat{Y}_0, \hat{Y}_1 \subseteq [\hat{Q}_n^0]^{n-1} \times \hat{Q}_n^1$ with $X \subseteq \hat{Y}_0$ and with $[B_n^0]^{n-1} \times B_n^1 \backslash X \subseteq \hat{Y}_1$. Let $\hat{p}(x) \in \hat{V}$ be the partial type asserting that $Q^0_n(x)$, and $\lnot R(a, x)$ for each $a \in \hat{P}_n \cap \hat{n}_*$, and for every $(v, b) \in \hat{Y}_0$, $R(F_n(v \cup \{x\}, b))$ holds, and for every $(v, b) \in \hat{Y}_1$, $R(F_n(v \cup \{x\}, b))$ fails. Easily, $\hat{V}$ believes $\hat{p}(x)$ is a consistent finite type, and hence $\hat{p}(x)$ is realized. But $p(x) \subseteq \hat{p}(x)$, so $p(x)$ is realized as desired.
	
	If $p(x)$ is of form (II) or of form (III), or of the form $p^{IV}_{X, c}$ for some $c \in \hat{U}_{\hat{\omega}}$, then a similar argument works.

	The crucial case is when $p(x)$ is of the form $p(x) = p^{IV}_{X, c}(x)$ for some $c \in C \cap \hat{U}_{\hat{m}}$, with $\hat{m}< \hat{\omega}$ nonstandard. Write $X_i = X \cap \hat{Q}_{\hat{m}}^i$ for each $i < 2$. Since $\lambda < \lambda_{\hat{V}}(\Delta(FDP))$ we can find $\hat{X}_0 \in [\hat{Q}^0_{\hat{m}}]^{\hat{m}-1}$ with $X_0 \subseteq \hat{X}_0$. Also, we can find disjoint $\hat{Y}_0, \hat{Y}_1 \subseteq \hat{Q}^1_{\hat{m}}$ with $X_1 \subseteq \hat{Y}_0$ and $B_{\hat{m}}^1 \backslash X_1 \subseteq \hat{Y}_1$. Let $\hat{p}(x) \in \hat{V}$ be the partial type asserting that $x E c$ and $R(x, a)$ for each $a \in \hat{X}_0 \cup \hat{Y}_0$ and $\lnot R(x, a)$ for each $a \in \hat{Y}_1$. Easily, $\hat{V}$ believes $\hat{p}(x)$ is a consistent finite type, and hence $\hat{p}(x)$ is realized. But $p(x) \subseteq \hat{p}(x)$, so $p(x)$ is realized as desired.
\end{proof}

We immediately get the following corollary.

\begin{corollary}\label{nonlowMin}
	$T_{nlow}$ is a minimal nonlow theory in $\trianglelefteq^\times_{1}$. Thus, this also holds for $\trianglelefteq^\times_\kappa$ and $\trianglelefteq^*_\kappa$ for every $\kappa$, and also for $\trianglelefteq$.
\end{corollary}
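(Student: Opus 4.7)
The corollary is an immediate unpacking of Theorem~\ref{nonlowInterp}, and my plan is to present it as such. Fix any nonlow complete countable theory $T$, and choose a countable transitive $V \models ZFC^-$ with $T, T_{nlow} \in V$ (the existence of such $V$ uses only that $T$ and $T_{nlow}$ are countable, which is built into the standing convention on coding symbols). I want to show $T_{nlow} \trianglelefteq^\times_{\lambda 1} T$ for every infinite $\lambda$, which is exactly the assertion that for every $\omega$-nonstandard $\mathbf{j}: V \preceq \hat{V}$, if $\hat{V}$ $\lambda^+$-pseudosaturates $T$ then $\hat{V}$ $\lambda^+$-pseudosaturates $T_{nlow}$. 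But $\hat{V}$ $\lambda^+$-pseudosaturating $T$ is exactly condition (B) of Theorem~\ref{nonlowInterp}, witnessed by $T$ itself, so (C) and hence (A) of that theorem hold, yielding precisely the desired conclusion. Ranging over $\lambda$ gives $T_{nlow} \trianglelefteq^\times_1 T$.

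Next, I would transfer to the other orders. For $\trianglelefteq^\times_\kappa$ with arbitrary infinite $\kappa$, the definition of $\trianglelefteq^\times_{\lambda \kappa}$ quantifies only over those $\mathbf{j}: V \preceq \hat{V}$ that are $\kappa$-saturated and $\omega$-nonstandard, which is a strictly stronger hypothesis than in the $\kappa = 1$ case; hence $\trianglelefteq^\times_1 \subseteq \trianglelefteq^\times_\kappa$ is automatic and gives $T_{nlow} \trianglelefteq^\times_\kappa T$. The containment $\trianglelefteq^\times_\kappa \subseteq \trianglelefteq^*_\kappa$ is Theorem~10.6 of \cite{InterpOrdersUlrich}, cited in the introduction, so $T_{nlow} \trianglelefteq^*_\kappa T$ as well. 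Finally, since $\trianglelefteq^*_{\aleph_1}$ refines Keisler's order $\trianglelefteq$, also $T_{nlow} \trianglelefteq T$.

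There is no genuine obstacle at the level of the corollary; all of the real work is already absorbed into Theorem~\ref{nonlowInterp}. In particular, the delicate part — the (C) implies (A) direction — is what handled the sensitive case of a type $p^{IV}_{X,c}(x)$ with $c$ living in a nonstandard class $\hat{U}_{\hat{m}}$, which is precisely the case that requires the $\Delta(FDP)$-invariant (and hence the finite dividing property witnessed by $R(x,y)$) in addition to the $\Delta(IP)$-invariant. Once that is in hand, the present corollary is no more than a formal rearrangement combined with the standard chain of inclusions between the interpretability orders.
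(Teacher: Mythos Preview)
Your proposal is correct and is exactly the unpacking the paper intends: the paper states the corollary with only the phrase ``We immediately get the following corollary'' and no further proof, and your argument---applying the equivalence (B)$\Leftrightarrow$(A) of Theorem~\ref{nonlowInterp} with $T$ as the witnessing nonlow theory, then invoking the standard inclusions $\trianglelefteq^\times_1 \subseteq \trianglelefteq^\times_\kappa \subseteq \trianglelefteq^*_\kappa$ and the refinement of $\trianglelefteq$ by $\trianglelefteq^*_{\aleph_1}$---is precisely that immediate deduction spelled out.
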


\section{Hypergraphs Omitting Cliques}\label{KeislerNewTNK}

In this section, we analyze the  major class of examples of simple theories with interesting amalgamation properties. 

\begin{definition}
	For each $2 \leq k < n < \omega$, let $T_{n, k}$ be the theory of the random $k$-ary, $n$-clique free hypergraph.
\end{definition}

These were introduced by Hrushovksi \cite{Hrush}, who proved that each $T_{n, 2}$ is unsimple, in fact it has $SOP_2$ and so is maximal in Keisler's order. We shall mainly be interested in the case of $T_{n, k}$ for $k \geq 3$; these are simple, with forking given by equality. In \cite{InfManyClass}, Malliaris and Shelah prove that for all $k < k'-1$, $T_{k+1, k} \not \trianglelefteq T_{k'+1, k'}$ (note that they subtract one from the indices). In \cite{AmalgKeislerUlrich}, we show that this holds for all $k < k'$.

The following are the relevant patterns:

\begin{definition}
	Suppose $S \subseteq [I]^k$ for some $k$, and suppose $n > k$. Then let $\Delta_{n, k}(S)$ be the pattern on $[I]^{k-1}$, consisting of all $s \in [[I]^{k-1}]^{<\aleph_0}$ such that there is no $v \in [I]^{n-1}$ with $[v]^{k-1} \subseteq s$ and $[v]^k \subseteq S$.
\end{definition}

Clearly, then, if $S \subseteq [I]^k$ is $n$-clique free, then $(T_{n, k}, R(x, \overline{y}))$ admits $\Delta_{n, k}(S)$.

\begin{definition}
	For each $k \geq 2$ and for each $n > k$, let $S_{k}$ be a random $k$-ary graph on $\omega$, and let $\Delta_{n, k} = \Delta_{n, k}(R_{k})$. (Up to equivalence, this does not depend on the choice of $S_k$.)
\end{definition}

Note that for every index set $I$ and for every $R \subseteq [I]^k$ and for every $n > k$, $\Delta{n,k}(R)$ is an instance of $\Delta_{n, k}$. Also note that admitting $\Delta_{n, 2}$ implies $SOP_3$.

It is not immediate that $T_{n, k}$ admits $\Delta_{n, k}$, since we did not require $S_k$ to be triangle-free. Towards this, the following fact will be helpful.

\begin{definition}
	Suppose $\Delta$ is a pattern on $I$. For each $n < \omega$, let $\Delta^n$ be the pattern on $[I]^{\leq n}$ consisting of all $s \in [[I]^{\leq n}]^{<\aleph_0}$ such that $\bigcup s \in \Delta$.
\end{definition}

\begin{theorem}\label{ConjunctInstances}
	Suppose $\Delta$ is a pattern on $I$ and $1 \leq n < \omega$.
	\begin{enumerate}
		\item If $T$ is a complete countable theory, then $T$ admits $\Delta$ if and only if $T$ admits $\Delta^n$.
		\item If $V \models ZFC^-$ is transitive with $\Delta, I \in V$ and if $\mathbf{j}: V \preceq \hat{V}$ with $\hat{V}$ not $\omega$-standard, then $\lambda_{\hat{V}}(\Delta) = \lambda_{\hat{V}}(\Delta^n)$.
		\item If $\mathcal{U}$ is an ultrafilter on $\mathcal{B}$, then $\lambda_{\mathcal{U}}(\Delta) = \lambda_{\mathcal{U}}(\Delta^n)$.
	\end{enumerate}
\end{theorem}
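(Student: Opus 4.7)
The plan is to verify each item by exhibiting explicit translations between $\Delta$-data and $\Delta^n$-data in both directions; the underlying intuition is that a $\Delta^n$-clique is essentially a covering of a $\Delta$-clique by pieces of size at most $n$, and since $n$ is standard this packaging is harmless. For (1), I would start from a witness to $T$ admitting $\Delta$ via some $\phi(\overline{x},\overline{y})$ and $(\overline{a}_i:i\in I)$, and define $\psi(\overline{x},\overline{y}_1\cdots\overline{y}_n):=\bigwedge_{j\leq n}\phi(\overline{x},\overline{y}_j)$. For each $u\in[I]^{\leq n}$, take $\overline{b}_u$ to be an enumeration of $(\overline{a}_i:i\in u)$ padded with a fixed parameter to length $n$; then $\{\psi(\overline{x},\overline{b}_u):u\in s\}$ is consistent iff $\{\phi(\overline{x},\overline{a}_i):i\in\bigcup s\}$ is consistent, i.e.\ iff $\bigcup s\in\Delta$. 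Conversely, if $T$ admits $\Delta^n$ via $(\overline{b}_u:u\in[I]^{\leq n})$, then restricting to singletons ($\overline{a}_i:=\overline{b}_{\{i\}}$) witnesses that $T$ admits $\Delta$.

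For (2), I would prove both inequalities by the same singleton/union correspondence. For $\lambda_{\hat V}(\Delta)\leq\lambda_{\hat V}(\Delta^n)$: given a pseudofinite $\mathbf{j}(\Delta)$-clique $X\subseteq\mathbf{j}(I)$ of cardinality less than $\lambda_{\hat V}(\Delta^n)$, let $Y=\{\{a\}:a\in X\}$. This is a pseudofinite $\mathbf{j}(\Delta^n)$-clique of the same cardinality, so it extends to some $\hat Y\in\mathbf{j}(\Delta^n)$, and then $\bigcup\hat Y\in\mathbf{j}(\Delta)$ extends $X$. For the reverse, take a pseudofinite $\mathbf{j}(\Delta^n)$-clique $Y\subseteq\mathbf{j}([I]^{\leq n})$ of cardinality less than $\lambda_{\hat V}(\Delta)$ and set $X=\bigcup Y$. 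Because $n<\omega$ is standard, $|X|\leq n\cdot|Y|$ remains $<\lambda_{\hat V}(\Delta)$ and $X$ is pseudofinite (enclose $Y$ in some $\hat Y_0\in\hat V$ finite in $\hat V$; then $\bigcup\hat Y_0\in\hat V$ is finite in $\hat V$ and contains $X$). The clique property for $X$ follows since for any finite $s\subseteq X$ we can select $u_i\in Y$ with $i\in u_i$ for each $i\in s$, giving $\bigcup_{i\in s}u_i\in\mathbf{j}(\Delta)$ by the $\Delta^n$-clique condition and hence $s\in\mathbf{j}(\Delta)$ by subset-closure. Extend $X$ to $\hat X\in\mathbf{j}(\Delta)$ and let $\hat Y=[\hat X]^{\leq n}$ as computed in $\hat V$; this is finite in $\hat V$ because $\hat X$ is, and $\bigcup\hat Y=\hat X\in\mathbf{j}(\Delta)$, so $\hat Y\in\mathbf{j}(\Delta^n)$ and visibly $Y\subseteq\hat Y$.

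For (3), the same translations transfer verbatim through the Boolean-valued framework via Corollary~\ref{KeislerAndInterp2}: pick a transitive $V$ and a $\lambda^+$-saturated $\mathbf{i}:V\preceq\mathbf{V}$, interpret $\lambda_{\mathcal U}(\Delta)$ and $\lambda_{\mathcal U}(\Delta^n)$ through pseudosaturation of $\mathbf{V}/\mathcal U$, and apply (2). Since all the manipulations in the proof of (2) are finite-combinatorial (singletons, unions of finite families, sets of size at most $n$) and respect the equivalence between pseudofiniteness and internal finiteness, no additional work is needed.

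The only genuinely delicate point, and thus the main obstacle to track carefully, is that the map $Y\mapsto\bigcup Y$ preserve pseudofiniteness and that $\hat X\mapsto[\hat X]^{\leq n}$ yield an element of $\hat V$ that is still finite in the sense of $\hat V$; both rely crucially on $n$ being a standard natural number, so that $[\hat X]^{\leq n}$ is a \emph{finite} power-set-like construction that is absolute between $\hat V$ and $V$ in its external structure. Once that is handled, the remaining verifications are routine bookkeeping.
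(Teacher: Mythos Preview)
Your argument is correct and essentially matches the paper's, with one cosmetic slip: in (2) you have the inequality labels reversed---your singleton argument actually proves $\lambda_{\hat V}(\Delta)\geq\lambda_{\hat V}(\Delta^n)$, and your union argument proves $\leq$. The paper is slightly more economical here: it observes once that $\Delta$ is an instance of $\Delta^n$ via the inclusion $[I]^1\subseteq[I]^{\leq n}$ and then invokes Lemma~\ref{EquivLemma} for the $\geq$ direction (both in (1) and in (2)), proving only the union direction by hand. For (3) the transfer principle you want is Theorem~\ref{CardInvarTheoremUlt}, which directly links $\lambda_{\mathcal U}(\Delta)$ with $\lambda_{\hat V}(\Delta)$ for $\hat V=\mathbf V/\mathcal U$; Corollary~\ref{KeislerAndInterp2} concerns saturation of theories rather than patterns and does not by itself give the statement you need.
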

\begin{proof}
	(1): Note that $\Delta$ is an instance of $\Delta^n$ (using $[I]^1 \subseteq [I]^{\leq n}$), so it suffices to show that if $T$ admits $\Delta$ then $T$ admits $\Delta^n$. Suppose $\phi(x, y)$ admits $\Delta$ (really $x, y$ could be tuples). Let $\overline{y} = (y_i: i < n)$ and let $\psi(x, \overline{y}) = \bigwedge_{i <n} \phi(x, y_i)$. Easily then $\psi(x, \overline{y})$ admits $\Delta^n$.
	
	(2): Since $\Delta$ is an instance of $\Delta^n$, by Lemma~\ref{EquivLemma} it suffices to show that $\lambda_{\hat{V}}(\Delta) \leq \lambda_{\hat{V}}(\Delta^n)$. Write $(\hat{I}, \hat{\Delta}, \hat{\Delta}^n) = \mathbf{j}(I, \Delta, \Delta^n)$, and suppose $X \subseteq \hat{\Delta}^n$ is pseudofinite and of cardinality less than $\lambda_{\hat{V}}(\Delta)$. Write $Y = \bigcup X$. Then $Y \subseteq \hat{\Delta}$ is pseudofinite and of cardinality less than $\lambda_{\hat{V}}(\Delta)$, so there is some $\hat{s} \in \hat{\Delta}$ with $Y \subseteq \hat{s}$. Then $\hat{s}^{\leq n} \in \hat{\Delta}^n$ satisfies $X \subseteq \hat{s}^{\leq n}$, as desired.
	
	(3): follows from (2) and Theorem~\ref{CardInvarTheoremUlt}.
\end{proof}

We then obtain the following:

\begin{lemma}\label{TnkLemma1}
	Suppose $k \geq 2$ and $n > k$. Let $S$ be any $k$-ary hypergraph on $I$; then there is $i_* < \omega$ and a $k$-ary, $n$-clique free hypergraph $S'$ on $I'$ such that $\Delta_{n, k}(S)$ is an instance of $\Delta_{n,k}(S')^{i_*}$. 
\end{lemma}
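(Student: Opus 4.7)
The plan is to construct $S'$ as a ``labeled blow-up'' of $S$ by $n-1$ colors, with edges of $S'$ required to use pairwise distinct colors. Concretely, I will set $I' = I \times [n-1]$ (where $[n-1] = \{0, 1, \ldots, n-2\}$), and let $S' \subseteq [I']^k$ consist of all $\{(a_1, i_1), \ldots, (a_k, i_k)\}$ such that $\{a_1, \ldots, a_k\} \in S$ and $i_1, \ldots, i_k$ are pairwise distinct. Taking $i_* = (n-1)(n-2)\cdots(n-k+1)$, I define the single map $f \colon [I]^{k-1} \to [[I']^{k-1}]^{\leq i_*}$ by $f(u) = \{\{(a, \sigma(a)) : a \in u\} : \sigma \colon u \hookrightarrow [n-1]\}$, noting $|f(u)| = i_*$. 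The claim is that $S'$ is $n$-clique-free and that $f$ witnesses $\Delta_{n, k}(S)$ being an instance of $\Delta_{n, k}(S')^{i_*}$, using the same map for every finite $s \subseteq [I]^{k-1}$ (which is permitted by the definition of instance).

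For $n$-clique-freeness, I would argue: if $\{(a_\ell, i_\ell) : \ell < n\}$ were an $n$-clique in $S'$, then every $k$-subset would need pairwise distinct second coordinates. Since $n \geq k$, if any two values $i_\ell, i_{\ell'}$ coincided then some $k$-subset of indices could be chosen containing both, contradicting distinctness; thus all $n$ values $i_0, \ldots, i_{n-1}$ would have to be pairwise distinct, which is impossible in a set of size $n - 1$.

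For the pattern correspondence, I would show that for each finite $t \subseteq [I]^{k-1}$, $t \in \Delta_{n, k}(S)$ holds if and only if $\bigcup f[t] \in \Delta_{n, k}(S')$. For the backward direction, given $v \in [I]^{n-1}$ with $[v]^{k-1} \subseteq t$ and $[v]^k \subseteq S$, I would pick any bijection $\tau \colon v \to [n-1]$ and set $w = \{(a, \tau(a)) : a \in v\}$; every $(k-1)$-subset of $w$ has the form $\{(a, \tau(a)) : a \in u\}$ for $u \in [v]^{k-1} \subseteq t$ and lies in $f(u)$, while every $k$-subset has the form $\{(a, \tau(a)) : a \in u''\}$ for $u'' \in [v]^k \subseteq S$ with distinct $\tau$-values, so it lies in $S'$. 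Hence $w$ witnesses $\bigcup f[t] \notin \Delta_{n, k}(S')$. For the forward direction, given a witness $w = \{(a_\ell, i_\ell) : \ell < n-1\}$ of $\bigcup f[t] \notin \Delta_{n, k}(S')$, the same pigeonhole argument forces the $i_\ell$'s to be a permutation of $[n-1]$; additionally, the $a_\ell$'s must be pairwise distinct, else some $k$-subset of $w$ would project to fewer than $k$ elements of $I$ and fail to lie in $S$. Setting $v = \{a_\ell : \ell < n-1\} \in [I]^{n-1}$, I obtain $[v]^k \subseteq S$ by projection. For $[v]^{k-1} \subseteq t$: each $u \in [v]^{k-1}$ corresponds to a $(k-1)$-subset of $w$ which lies in $f(u')$ for some $u' \in t$; but every element of $f(u')$ has first-coordinate projection equal to $u'$, so $u' = u$, giving $u \in t$. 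The main subtlety is this last identification -- the source $u'$ is uniquely recoverable from any element of $f(u')$, and this ``rigidity'' of $f$ is what makes the whole construction function.
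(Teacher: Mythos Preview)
Your proof is correct and follows essentially the same approach as the paper's. The paper sets $I' = I \times (n-1)$ and takes $S'$ to consist of the graphs of \emph{order-preserving} injections $v \to n-1$ for $v \in S$ (implicitly fixing a linear order on $I$), which yields the smaller value $i_* = \binom{n-1}{k-1}$; you instead use all injections, giving $i_* = (n-1)(n-2)\cdots(n-k+1)$ but avoiding any reliance on an ordering of $I$. Both constructions and the verification that $t \in \Delta_{n,k}(S) \iff \bigcup f[t] \in \Delta_{n,k}(S')$ are otherwise identical in spirit, and your write-up of the correspondence (in particular the ``rigidity'' point that $u'$ is recoverable from any element of $f(u')$) is more detailed than the paper's, which simply asserts the equivalence.
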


\begin{proof}
	Write $I'= \omega \times (n-1)$. Let $S'$ be the $k$-ary graph on $I'$ consisting of the set of all order-preserving injections from $v$ to $n-1$, for $v \in S$. Since $S'$ is $n$-clique free, it suffices to show that $\Delta_{n, k}(S)$ is an instance of $\Delta_{n, k}(S')^{i_*}$, for $i_* := \binom{n-1}{k-1}$. In fact, we will embed all of $\Delta_{n, k}(S)$ into $\Delta_{n, k}(S')^{i_*}$ at once.
	
	Given $v \in [\omega]^{k-1}$, define $F(v) \subseteq [\omega \times (n-1)]^{k-1}$ to be the set of all order-preserving injections from $v$ to $n-1$. Easily, then, for every $s \subseteq [I]^{k-1}$ finite, $s \in \Delta_{n, k}(S)$ if and only if $F[s] \in \Delta_{n, k}(S')^k$, i.e. $\bigcup F[s] \in \Delta_{n, k}(S')$.
	
\end{proof}

Thus each $T_{n, k}$ admits $\Delta_{n, k}$. Actually, more is true:

\begin{theorem}\label{PositiveHypergraph}
Suppose $2 \leq n \leq n'$. Then $T_{n', k}$ admits $\Delta_{n, k}$.
\end{theorem}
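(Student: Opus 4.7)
The plan is to reduce the statement, via Lemma~\ref{TnkLemma1} and Theorem~\ref{ConjunctInstances}, to showing that $T_{n', k}$ admits $\Delta_{n, k}(S')$ for some $k$-ary $n$-clique-free hypergraph $S'$. Indeed, applying Lemma~\ref{TnkLemma1} to $S_k$ yields an $n$-clique-free $S'$ on some $I'$ and an integer $i_*$ such that $\Delta_{n,k}$ is an instance of $\Delta_{n,k}(S')^{i_*}$; admission is preserved under ``instance-of'' by a standard compactness argument (local choices of witnesses amalgamate in any sufficiently saturated model), and by Theorem~\ref{ConjunctInstances}(1), $T_{n',k}$ admits $\Delta_{n,k}(S')$ iff it admits $\Delta_{n,k}(S')^{i_*}$. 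The case $n=n'$ is exactly the observation already used in the proof of Lemma~\ref{TnkLemma1} that $(T_{n,k}, R(x, \overline{y}))$ admits $\Delta_{n,k}(S')$ whenever $S'$ is $n$-clique-free, so I would assume $n' > n$.

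For $n' > n$, I would realize $S'$ on $\{b_i : i \in I'\}$ inside a monster model of $T_{n',k}$ together with a disjoint ``universal'' set $\{c_1, \ldots, c_{n'-n}\}$, declaring that every $k$-subset of $\{c_1, \ldots, c_{n'-n}\} \cup \{b_i : i \in I'\}$ meeting $\{c_j\}$ is a hyperedge of $R$. The configuration is $n'$-clique-free: any would-be $n'$-clique using $a \leq n'-n$ of the $c_j$'s would force an $(n'-a)$-clique among the corresponding $b$'s in $S'$, and $n'-a \geq n$ would then produce an $n$-clique in $S'$ -- contradiction. Hence this configuration embeds into the monster. Now consider the formula
\[
\phi(x, \overline{y}) \;:=\; \bigwedge_W R(x, W),
\]
where $W$ ranges over the $(k-1)$-subsets of $\{c_1, \ldots, c_{n'-n}, y_1, \ldots, y_{k-1}\}$, and take parameters $\overline{a}_v := (b_{i_1}, \ldots, b_{i_{k-1}})$ for $v = \{i_1 < \cdots < i_{k-1}\} \in [I']^{k-1}$ (treating the $c_j$'s as constants named inside $\phi$).

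The crux is to verify that $\exists x \bigwedge_{v \in t} \phi(x, \overline{a}_v)$ is consistent in $T_{n',k}$ exactly when $t \in \Delta_{n,k}(S')$. By quantifier elimination in $T_{n',k}$, inconsistency is equivalent to the existence of an $(n'-1)$-clique $U$ in the ambient structure for which every $R(x, W)$ with $W \in [U]^{k-1}$ is forced by the conjunction. A clique count parallel to the one above shows the $(n'-1)$-cliques are precisely the sets $\{c_1, \ldots, c_{n'-n}\} \cup \{b_i : i \in u\}$ for $u \in [I']^{n-1}$ with $[u]^k \subseteq S'$. The combinatorial step --- and the one delicate point --- is that the requirement ``all $R(x, W)$ for $W \in [U]^{k-1}$ are forced'' reduces exactly to $[u]^{k-1} \subseteq t$: the $W$'s entirely inside the $b$-part force every $(k-1)$-subset of $u$ to appear as some $\overline{a}_v$-index, while mixed $W$'s (with some $c$'s and fewer than $k-1$ $b$'s) are automatically covered since $|u| = n-1 \geq k-1$ (using $n > k$) lets any small $b$-subset of $u$ be extended to a $(k-1)$-subset of $u$ lying in $t$. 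Combining yields $t \in \Delta_{n,k}(S')$ iff no obstructing $U$ exists, completing the proof.
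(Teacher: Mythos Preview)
Your proof is correct and rests on the same combinatorial idea as the paper's: adjoin an $(n'-n)$-element ``universal'' set (your $\{c_1,\ldots,c_{n'-n}\}$, the paper's $u_*$) every $k$-subset of which is declared an edge, and observe that $(n'-1)$-cliques in the padded structure correspond exactly to $(n-1)$-cliques in the original one. The organization differs slightly: the paper first invokes Lemma~\ref{TnkLemma1} to get that $T_{n',k}$ admits $\Delta_{n',k}$, then works purely at the pattern level to show $\Delta_{n,k}$ is an instance of $\Delta_{n',k}^{i_*}$ via the map $F(v)=[v\cup u_*]^{k-1}$; you instead use Lemma~\ref{TnkLemma1} to reduce to an $n$-clique-free $S'$ and then verify directly inside the monster model of $T_{n',k}$ that a suitable conjunction formula admits $\Delta_{n,k}(S')$. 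Your formula $\phi(x,\overline{y})=\bigwedge_W R(x,W)$ with $W$ ranging over $[c_*\cup\overline{y}]^{k-1}$ is exactly the model-theoretic incarnation of the paper's map $F$, so the two arguments are translations of one another---yours concrete, the paper's abstract.

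One minor point: your justification that ``admission is preserved under instance-of'' by a compactness/amalgamation argument is correct but slightly roundabout. The cleanest way to see it is that $\phi$ admits $\Delta$ if and only if $\Delta$ is an instance of $\Delta_\phi$ (this is noted in Definition~\ref{PatternsDef} and the surrounding discussion), and ``instance-of'' is transitive by composing the local maps. This avoids any appeal to saturation.
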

\begin{proof}
By Lemma~\ref{TnkLemma1}, $T_{n', k}$ admits $\Delta_{n', k}$, so by Theorem~\ref{ConjunctInstances}, it suffices to note that $\Delta_{n, k}$ is an instance of $\Delta_{n', k}^{i_*}$ for $i_*$ large enough. For this it suffices to show that if $S \subseteq [I]^k$, then there is some $S' \subseteq [I']^k$ such that $\Delta_{n, k}(S)$ is an instance of $\Delta_{n', k}(S')^{i_*}$. Write $I'= I \cup u_*$, where $u_*$ is a new $n'-n$-element set. Put $S' = S \cup \{v' \in [I']^k: v' \cap u_* \not= \emptyset\}$. Write $i_* = \binom{k-1 + (n'-n)}{k-1}$. We claim this works.

Indeed, suppose $v \in [I]^{k-1}$; then define $F(v) \subseteq [I']^{k-1}$ via $F(v) = [v \cup u_*]^{k-1}$. Easily, for all $s \in [[I]^{k-1}]^{<\aleph_0}$, $s \in \Delta_{n, k}(S)$ if and only if $F[s] \in \Delta_{n', k}(S)^{i_*}$, so this works.
\end{proof}

We now aim to prove that $T_{n, k}$ is the $\trianglelefteq$-minimal theory admitting $\Delta_{n, k}$. As a preliminary case, we have to show that if $T$ admits $\Delta_{n, k}$ then $T$ is unstable (this should be clear but we spell out the details in our formalism). 

\begin{lemma}\label{TnkLemma2}
	Suppose $n > k \geq 2$. Then $\Delta(IP)$ is an instance of $(\Delta_{n, k})^{i_*}$ for some $i_*$.
\end{lemma}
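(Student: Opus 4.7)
The plan is to take $i_* = \binom{n-1}{k-1}$ and, for each finite $s \subseteq \omega \times 2$, to construct a map $f_s : s \to [[\omega]^{k-1}]^{\leq i_*}$ witnessing that $\Delta(IP)$ is an instance of $\Delta_{n,k}^{i_*}$. Write $I_s = \{i \in \omega : (i,0) \in s \text{ or } (i,1) \in s\}$. Using that $S_k$ is a random $k$-ary hypergraph on $\omega$, I first find pairwise disjoint $(n-1)$-subsets $(v_i : i \in I_s)$ of $\omega$ such that $[v_i]^k \subseteq S_k$ for each $i$ and such that the $v_i$'s are the \emph{only} $(n-1)$-subsets $v \subseteq \bigcup_{i \in I_s} v_i$ with $[v]^k \subseteq S_k$. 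Equivalently, the finite $k$-hypergraph on $\bigsqcup_i v_i$ whose edge set is $\bigsqcup_i [v_i]^k$ embeds into $S_k$ as an induced substructure, which is available because a random $k$-hypergraph is universal for finite $k$-hypergraphs (and since $\Delta_{n,k}$ is well-defined only up to equivalence, we may take $S_k$ as generic as needed). Then pick any $u_i \in [v_i]^{k-1}$ and define $f_s(i,0) = [v_i]^{k-1} \setminus \{u_i\}$ and $f_s(i,1) = \{u_i\}$; each has cardinality at most $\binom{n-1}{k-1} = i_*$.

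To check the instance property, for $t \subseteq s$ I must verify $t \in \Delta(IP)$ iff $\bigcup f_s[t] \in \Delta_{n,k}$. For the ``only if'' direction, if $t$ is not a partial function, choose $i$ with $\{(i,0),(i,1)\} \subseteq t$; then $f_s(i,0) \cup f_s(i,1) = [v_i]^{k-1}$ lies in $\bigcup f_s[t]$, and $[v_i]^k \subseteq S_k$ witnesses $\bigcup f_s[t] \notin \Delta_{n,k}$. For the ``if'' direction, suppose $t$ is a partial function and that some $(n-1)$-set $v$ satisfies $[v]^{k-1} \subseteq \bigcup f_s[t] \subseteq \bigcup_i [v_i]^{k-1}$ and $[v]^k \subseteq S_k$; the goal is a contradiction. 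When $k \geq 3$, if $v$ met two different $v_i, v_{i'}$ in points $a, b$, then any $(k-1)$-subset of $v$ containing $\{a,b\}$ (such exists since $|v \setminus \{a,b\}| = n-3 \geq k-3$) would not lie in any single $[v_j]^{k-1}$ by pairwise disjointness, contradicting $[v]^{k-1} \subseteq \bigcup_j [v_j]^{k-1}$; hence $v \subseteq v_i$ for some $i$. When $k = 2$ we directly obtain $v \subseteq \bigcup_i v_i$. Either way, our defining property of the $v_i$'s forces $v = v_i$ for some $i$. But since $t$ is a partial function, at most one of $(i,0), (i,1)$ is in $t$, so $\bigcup f_s[t] \cap [v_i]^{k-1}$ is either $[v_i]^{k-1} \setminus \{u_i\}$ or $\{u_i\}$, both proper subsets of $[v_i]^{k-1}$ (which has cardinality $\binom{n-1}{k-1} \geq 2$); this contradicts $[v_i]^{k-1} = [v]^{k-1} \subseteq \bigcup f_s[t]$.

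The main obstacle is the combinatorial step of choosing the $v_i$'s so that no ``spurious'' $(n-1)$-clique of $S_k$ appears inside $\bigcup_i v_i$, but this is precisely an induced-embedding statement for a very simple finite $k$-hypergraph (a disjoint union of copies of the complete $k$-hypergraph on $n-1$ vertices), which is immediate from the universality of the random $k$-hypergraph. Everything else is a routine verification.
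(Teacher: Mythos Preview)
Your argument is correct, and it proceeds along a genuinely different path from the paper's.

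The paper first reduces to the case $n=k+1$ by invoking the proof of Theorem~\ref{PositiveHypergraph} (which shows $\Delta_{k+1,k}$ is an instance of $(\Delta_{n,k})^{i_*}$ for suitable $i_*$), and then in that special case builds an auxiliary hypergraph $S$ on $(\omega\times 2)\cup u_*$ where $u_*$ is a fixed $(k-2)$-element set shared by all the edges: the $k$-edges of $S$ are exactly the sets $u_*\cup\{(j,0),(j,1)\}$. The map $F(j,i)$ sends to all $(k-1)$-subsets of $u_*\cup\{(j,0),(j,1)\}$ except $u_*\cup\{(j,i)\}$, so the forbidden $(n-1)$-sets \emph{overlap} in $u_*$ rather than being disjoint.

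Your approach instead handles all $n>k$ at once, choosing pairwise \emph{disjoint} $(n-1)$-cliques $v_i$ inside the random $k$-hypergraph, realised as an induced subhypergraph. This is more self-contained (no reduction step) and the verification is cleaner for $k\geq 3$, since disjointness alone forces any candidate $v$ into a single $v_i$. Your induced-embedding requirement is only really doing work in the $k=2$ case; for $k\geq 3$ you could drop it and just ask for disjoint $(n-1)$-cliques. The paper's overlapping construction is a bit more economical (it gets $i_*=k-1$ when $n=k+1$, versus your $\binom{n-1}{k-1}=k$), but since the lemma only asks for \emph{some} $i_*$, this buys nothing for the stated result.
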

\begin{proof}
	By Theorem~\ref{PositiveHypergraph} it suffices to consider the case $n = k+1$; in this case we will be able to set $i_* = k-1$.
	
	Let $u_*$ be a $k-2$ element set.
	
	Let $S$ be the $k$-ary graph on $\omega \times 2 \cup u_*$ consisting of all $w \in [(\omega \times 2) \cup u_*]^{k}$ of the form $u_* \cup \{(n, 0), (n, 1)\}$, for some $n < \omega$. Then $\Delta_{k+1, k}(S)$ is an instance of $\Delta_{k+1, k}$, so it suffices to show that $\Delta(IP)$ is an instance of $\Delta_{k+1, k}(S)^{k-1}$.
	
	Given $(j, i) \in \omega \times 2$, define $F(j, i)$ to be the set of all $v \in [u_* \cup \{(j,0), (j, 1)\}]^{k-1}$ other than $u_* \cup \{(j, i)\}$. Then clearly, for any $s \subseteq \omega \times 2$ finite, $s \in \Delta(IP)$ if and only if $F[s] \in \Delta(S)^{k-1}$. 
\end{proof}

We thus obtain the following easily.

\begin{theorem}\label{TnkLemma3}
	Suppose $2 \leq k < \omega$, suppose $V \models ZFC^-$ is transitive, and suppose $\mathbf{j}: V \preceq \hat{V}$. Then $\hat{V}$ $\lambda^+$-pseudosaturates $T_{n, k}$ if and only if $\lambda < \lambda_{\hat{V}}(\Delta_{n, k})$. In particular, $T_{n, k}$ is a $\trianglelefteq^\times_1$-minimal theory admitting $\Delta_{n, k}$ (so this is true for the other interpretability orders as well).
\end{theorem}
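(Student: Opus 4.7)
The plan is to establish $\lambda_{\hat{V}}(T_{n,k}) = \lambda_{\hat{V}}(\Delta_{n,k})$, following the template of Theorems~\ref{RandGraphInterp}, \ref{StarRandGraphInterp}, and~\ref{nonlowInterp}; both the stated iff and the minimality statement then drop out. The inequality $\lambda_{\hat{V}}(T_{n,k}) \leq \lambda_{\hat{V}}(\Delta_{n,k})$ is immediate from Theorem~\ref{PositiveHypergraph} combined with Lemma~\ref{EquivLemma} and Theorem~\ref{CardInvarTheoremInterp}: if $\phi$ witnesses that $T_{n,k}$ admits $\Delta_{n,k}$, then $\Delta_{n,k}$ is an instance of $\Delta_\phi$, so $\lambda_{\hat{V}}(T_{n,k}) \leq \lambda_{\hat{V}}(T_{n,k},\phi) = \lambda_{\hat{V}}(\Delta_\phi) \leq \lambda_{\hat{V}}(\Delta_{n,k})$. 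The same chain, applied to any $T$ admitting $\Delta_{n,k}$, yields $\lambda_{\hat{V}}(T) \leq \lambda_{\hat{V}}(\Delta_{n,k}) = \lambda_{\hat{V}}(T_{n,k})$, which is the minimality claim.

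For the substantive direction, assume $\lambda < \lambda_{\hat{V}}(\Delta_{n,k})$ and fix $M \models T_{n,k}$ with $M \in V$ of universe $\omega$ and hypergraph $R^M \subseteq [\omega]^k$. As in the earlier proofs of this section, it suffices to realize a nonalgebraic complete type $p(x)$ over a pseudofinite $A \subseteq \mathbf{j}_{\std}(M)$ of cardinality at most $\lambda$. Because $T_{n,k}$ has quantifier elimination and trivial algebraic closure, $p(x)$ is determined by $S^+ := \{v \in [A]^{k-1} : R(v \cup \{x\}) \in p\}$, with complement $S^- := [A]^{k-1} \setminus S^+$; consistency of $p$ is exactly the statement that $S^+$ is a $\mathbf{j}(\Delta_{n,k}(R^M))$-clique. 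Pick $\hat{n}_* < \hat{\omega}$ with $A \subseteq \hat{n}_*$. It will suffice to produce an internal $\hat{S}^+ \subseteq [\hat{n}_*]^{k-1}$ in $\mathbf{j}(\Delta_{n,k}(R^M))$ with $S^+ \subseteq \hat{S}^+$ and $\hat{S}^+ \cap S^- = \emptyset$: then, since $\mathbf{j}(M) \models \mathbf{j}(T_{n,k})$ in $\hat{V}$, the internally-satisfied genericity axiom produces $y \in \hat{\omega}$ whose $\hat{R}$-neighborhood on $[\hat{n}_*]^{k-1}$ equals $\hat{S}^+$, hence realizes $p$.

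The construction of $\hat{S}^+$ applies the hypothesis twice. First, by Lemma~\ref{TnkLemma2} and Theorem~\ref{ConjunctInstances}, $\lambda_{\hat{V}}(\Delta(IP)) \geq \lambda_{\hat{V}}(\Delta_{n,k}) > \lambda$, so Lemma~\ref{IPLemma1} yields disjoint internal $\hat{T}^+, \hat{T}^- \subseteq [\hat{n}_*]^{k-1}$ with $S^+ \subseteq \hat{T}^+$ and $S^- \subseteq \hat{T}^-$. Second, I claim that $\Delta_{n,k}(R^M)$ is an instance of $\Delta_{n,k}(S_k) = \Delta_{n,k}$: given finite $s \subseteq [\omega]^{k-1}$ with $B := \bigcup s$, the universality of the random $k$-ary hypergraph $S_k$ provides an induced embedding $g : (B, R^M \cap [B]^k) \hookrightarrow (\omega, S_k)$, and setting $f(v) := g[v]$ witnesses the instance relation for $s$. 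Then Lemma~\ref{EquivLemma} gives $\lambda_{\hat{V}}(\Delta_{n,k}(R^M)) \geq \lambda_{\hat{V}}(\Delta_{n,k}) > \lambda$, so the pseudofinite set $S^+$ extends to some internal $\hat{S}^+_0 \in \mathbf{j}(\Delta_{n,k}(R^M))$. Setting $\hat{S}^+ := \hat{S}^+_0 \cap \hat{T}^+$ preserves all three required properties (containment of $S^+$ because $S^+ \subseteq \hat{T}^+$, disjointness from $S^-$ because $S^- \subseteq \hat{T}^-$ is disjoint from $\hat{T}^+$, and membership in $\mathbf{j}(\Delta_{n,k}(R^M))$ by downward closure).

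The main obstacle I anticipate is the verification of the instance relation $\Delta_{n,k}(R^M)$ is an instance of $\Delta_{n,k}(S_k)$: the single induced embedding $g : B \hookrightarrow \omega$ must give rise to a map $f$ on $s$ that, \emph{simultaneously for every} $t \subseteq s$, preserves both the presence and the absence of an $(n{-}1)$-set $w$ with $[w]^{k-1} \subseteq t$ and $[w]^k \subseteq R^M$. Being an \emph{induced} hypergraph embedding (rather than merely edge-preserving) is what ensures that both directions of the biconditional transfer, and the randomness of $S_k$ is what guarantees such a $g$ exists for every finite $B$.
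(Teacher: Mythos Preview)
Your proof is correct and follows essentially the same approach as the paper's: split the type into positive and negative $R$-instances, use $\lambda < \lambda_{\hat V}(\Delta_{n,k}(R^M))$ (via the instance relation you spell out, which the paper states just after defining $\Delta_{n,k}$) to cover $S^+$ by an internal element of $\mathbf{j}(\Delta_{n,k}(R^M))$, use $\lambda < \lambda_{\hat V}(\Delta(IP))$ (via Lemma~\ref{TnkLemma2} and Theorem~\ref{ConjunctInstances}) to separate $S^+$ from $S^-$ internally, then intersect. The only differences are notational ($[A]^{k-1}$ versus tuples) and that you are more explicit about the citations and the verification of the instance relation.
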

\begin{proof}
	By Theorem~\ref{PositiveHypergraph}, if $\hat{V}$ $\lambda^+$-pseudosaturates $T_{n, k}$ then $\lambda < \lambda_{\hat{V}}(\Delta_{n, k})$.
	
	So suppose $\lambda < \lambda_{\hat{V}}(\Delta_{n, k})$. Let $M \models T_{n, k}$ have universe $\omega$, and let $p(x)$ be a pseudofinite partial type over $\mathbf{j}_{\std}(M)$ of cardinality at most $\lambda$. We can suppose $p(x)$ is a complete type over $A$, where $|A| \leq \lambda$ and $A \subseteq \hat{n}$ for some $\hat{n}< \hat{\omega}$.  Write $M = (\omega, R)$, write $\mathbf{j}_{\std}(M) = \mathbf{j}(M) = (\hat{\omega}, \hat{R})$. (We also use $R$ for the symbol in the language.)
	
	Let $X_0 = \{\overline{a} \in \hat{n}^{k-1}: R(x,  \overline{a}) \in p(x)\}$ and let $X_1 = \{\overline{a} \in \hat{n}^{k-1}: \lnot R(x, \overline{a}) \in p(x)\}$. Since $[X_0]^{<\aleph_0} \subseteq \mathbf{j}(\Delta_{n, k}(R))$ we can find $\hat{X}_0' \in \mathbf{j}(\Delta_{n, k}(R))$ with $X_0 \subseteq \hat{X}_0'$. By Lemma~\ref{TnkLemma2} we can find disjoint $\hat{X}_0, \hat{X}_1 \subseteq \hat{n}$ with $X_0 \subseteq \hat{X}_0$ and $X_1 \subseteq \hat{X}_1$; we can suppose $\hat{X}_0 = \hat{X}_0'$ (by replacing them with $\hat{X}_0 \cap \hat{X}_0'$).
	
	Let $q(x) \in \hat{V}$ be the pseudofinite partial type, defined in $\hat{V}$ via: $q(x) = \{R(x, \overline{a}): \overline{a} \in \hat{X}_0\} \cup \{\lnot R(x, \overline{a}): \overline{a} \in \hat{X}_1\}$. Clearly $p(x) \subseteq q(x)$ and $\hat{V}$ believes $q(x)$ is a consistent finite type, so $q(x)$ must be realized in $\mathbf{j}(M)$ and we are done.
\end{proof}

\begin{corollary}
	For all $n' \geq n \geq k$, $T_{n, k} \trianglelefteq^\times_1 T_{n', k}$.
\end{corollary}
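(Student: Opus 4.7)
The plan is to assemble the corollary from three ingredients already proved: Theorem~\ref{TnkLemma3}, which characterizes when $\hat{V}$ $\lambda^+$-pseudosaturates $T_{n,k}$ in terms of the cardinal invariant $\lambda_{\hat{V}}(\Delta_{n,k})$; Theorem~\ref{PositiveHypergraph} (more precisely its proof), which exhibits $\Delta_{n,k}$ as an instance of $\Delta_{n',k}^{i_*}$ for some finite $i_*$; and the comparison lemmas Lemma~\ref{EquivLemma} and Theorem~\ref{ConjunctInstances}(2), which allow us to convert this structural relationship between patterns into an inequality of cardinal invariants.

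Unwinding the definition of $\trianglelefteq^\times_1$, I would fix a countable transitive $V \models ZFC^-$ containing $T_{n,k}$ and $T_{n',k}$ (and the relevant patterns), and take an arbitrary $\omega$-nonstandard $\mathbf{j}: V \preceq \hat{V}$. I then need to show that if $\hat{V}$ $\lambda^+$-pseudosaturates $T_{n',k}$, then $\hat{V}$ also $\lambda^+$-pseudosaturates $T_{n,k}$. By Theorem~\ref{TnkLemma3}, these conditions translate respectively to $\lambda < \lambda_{\hat{V}}(\Delta_{n',k})$ and $\lambda < \lambda_{\hat{V}}(\Delta_{n,k})$, so it suffices to prove the inequality
\[
\lambda_{\hat{V}}(\Delta_{n',k}) \leq \lambda_{\hat{V}}(\Delta_{n,k}).
\]

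The key step is to chain together the comparison results. From the proof of Theorem~\ref{PositiveHypergraph}, $\Delta_{n,k}$ is an instance of $\Delta_{n',k}^{i_*}$ for $i_* = \binom{k-1 + (n'-n)}{k-1}$; Lemma~\ref{EquivLemma} therefore yields
\[
\lambda_{\hat{V}}(\Delta_{n,k}) \geq \lambda_{\hat{V}}(\Delta_{n',k}^{i_*}).
\]
Then Theorem~\ref{ConjunctInstances}(2) gives $\lambda_{\hat{V}}(\Delta_{n',k}^{i_*}) = \lambda_{\hat{V}}(\Delta_{n',k})$. Combining these two facts delivers the required inequality.

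There is no real obstacle here; the work was done in the preceding theorems, and this corollary is essentially a bookkeeping consequence of Theorem~\ref{TnkLemma3} together with the proof of Theorem~\ref{PositiveHypergraph}. The one thing worth being careful about is making sure that the existential quantifier in the definition of $\trianglelefteq^\times_1$ is satisfied: the $V$ we pick needs to contain $T_{n,k}$, $T_{n',k}$, the patterns $\Delta_{n,k}$ and $\Delta_{n',k}$, and also the concrete embedding witnessing that $\Delta_{n,k}$ is an instance of $\Delta_{n',k}^{i_*}$, but all of these objects are coded by finite data together with countable sets, so any countable transitive $V \models ZFC^-$ containing $T_{n,k}$ and $T_{n',k}$ will do.
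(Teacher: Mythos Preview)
Your proof is correct and takes essentially the same approach as the paper. The paper states this corollary without proof, intending it to follow immediately from Theorem~\ref{PositiveHypergraph} (that $T_{n',k}$ admits $\Delta_{n,k}$) together with the ``in particular'' clause of Theorem~\ref{TnkLemma3} (that $T_{n,k}$ is $\trianglelefteq^\times_1$-minimal among theories admitting $\Delta_{n,k}$); your argument simply unwinds this minimality via the cardinal-invariant characterization, using Lemma~\ref{EquivLemma} and Theorem~\ref{ConjunctInstances}(2) explicitly rather than through the packaged minimality statement.
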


Before moving on, we show that if $T$ admits $\Delta_{n, k}$ then it does so in a particularly nice way. Some definitions are in order:

\begin{definition}
Suppose $n > k \geq 2$. Let $\mathcal{L}^-_{n, k}$ be the language $\{<, S\}$, where $S$ is $k$-ary, and let $\mathbf{K}_{n, k}^-$ be the class of all finite $\mathcal{L}$-structures $(M, <^M, S^M)$ where $<^M$ is a linear order and where $S^M \subseteq [M]^k$ (i.e. is irreflixive and symmetric). Let $\mathcal{L}_{n, k} = \mathcal{L}^-_{n, k} \cup \{Q\}$, where $Q$ is $k-1$-ary, and let $\mathbf{K}_{n, k}$ be the class of all finite substructures $(M, <^M, S^M, Q^M)$, where $<^M$ is a linear order, and $S^M \subseteq [M]^k$, and $Q^M \subseteq [M]^{k-1}$, and there is no $w \in [M]^{n-1}$ with $[w]^k \subseteq S$ and $[w]^{k-1} \subseteq Q$.

Clearly, $\mathbf{K}_{n, k}$ and $\mathbf{K}_{n, k}^-$ are Fraisse classes, and hence we can let $\tilde{T}_{n, k}$ and $\tilde{T}_{n, k}^-$ be the limit theories.
\end{definition}

In the following theorem, given structures $M, N$, we let $\binom{N}{M}$ be the set of all substructures of $N$ which are isomorphic to $M$.

\begin{theorem}\label{NesetrilRodl}
$\mathbf{K}_{n, k}$ and $\mathbf{K}_{n, k}^-$ are both Ramsey classes---that is, letting $\mathbf{K}$ be either of these classes, then whenever $A \subseteq B \in \mathbf{K}$, and whenever $n < \omega$, there is some $C \in K$ such that whenever $c: \binom{C}{A} \to n$ is a coloring, there is some $B' \in \binom{C}{B}$ such that $c$ is constant on $\binom{B'}{A}$.
\end{theorem}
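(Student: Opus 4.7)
The plan is to reduce both claims to the Nešetřil--Rödl theorem on Ramsey classes of finite ordered relational structures omitting a family of forbidden irreducible substructures. For $\mathbf{K}_{n,k}^-$ no forbidden structures are needed: it is literally the class of finite ordered $k$-uniform hypergraphs in the language $\{<, S\}$, and the Ramsey property is the classical Nešetřil--Rödl theorem for ordered hypergraphs, proven by the partite construction over the ordered Ramsey theorem for $k$-tuples.

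For $\mathbf{K}_{n,k}$, I would take the forbidden family $\mathcal{F}$ to consist of all ordered $\mathcal{L}_{n,k}$-structures $W = (W, <^W, S^W, Q^W)$ with $|W| = n-1$, $S^W = [W]^k$, and $Q^W = [W]^{k-1}$. By definition $\mathbf{K}_{n,k}$ is exactly the class of finite ordered $\mathcal{L}_{n,k}$-structures omitting every member of $\mathcal{F}$. Each such $W$ is irreducible in the Nešetřil--Rödl sense: since $n - 1 \geq k \geq 2$, any partition of $W$ into two nonempty parts $W_1 \sqcup W_2$ admits a $k$-subset meeting both parts (pick one element of $W_1$ and $k-1$ further elements, distributing into $W_2$ first), producing an $S$-tuple crossing the partition. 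Given this, the version of the Nešetřil--Rödl theorem covering free amalgamation classes defined by forbidden irreducible substructures applies directly and yields the Ramsey property of $\mathbf{K}_{n,k}$.

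If a self-contained proof is preferred, I would run the partite construction: inductively, given $A \subseteq B \in \mathbf{K}$ and $r < \omega$, produce the Ramsey object $C$ as a sequence of partite amalgams of copies of $B$, using Hales--Jewett at each stage to reduce the coloring on $\binom{C}{A}$ to a constant value on a suitable copy. The main obstacle---and it is purely bookkeeping---is to check that these partite amalgams stay inside $\mathbf{K}_{n,k}$. This is exactly where irreducibility of each $W \in \mathcal{F}$ is invoked: the amalgamation introduces no new $S$- or $Q$-tuples connecting distinct copies of $B$, so any hypothetical new copy of some $W \in \mathcal{F}$ inside the amalgam would, by irreducibility, be forced to lie entirely inside a single copy of $B$, contradicting $B \in \mathbf{K}_{n,k}$. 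Once this verification is in place the Ramsey step is driven by Hales--Jewett exactly as in the hypergraph case, and $\mathbf{K}_{n,k}^-$ is obtained as the special case where $\mathcal{F} = \emptyset$.
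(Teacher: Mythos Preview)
Your proposal is correct and takes essentially the same approach as the paper, which simply cites the result as a special instance of the Ne\v{s}et\v{r}il--R\"{o}dl theorem. You supply more detail than the paper does---in particular, the explicit identification of the forbidden family $\mathcal{F}$ for $\mathbf{K}_{n,k}$ and the verification that each $W \in \mathcal{F}$ is irreducible---but the underlying reduction is the same.
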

\begin{proof}
This is a special instance of the Ne\v{s}it\v{r}il-R\"{o}dl theorem \cite{NesitrilRodl}.
\end{proof}

We can apply this to our situation in the following manner:

\begin{theorem}
Suppose $n > k \geq 2$. $T$ is a complete countable formula, and suppose $\phi(x, y)$ is a formula of $T$ (where possibly $x, y$ are tuples) which admits $\Delta_{n, k}$. Let $\mathfrak{C}$ be the monster model of $T_{n, k}$. 

Then for any $M \in \mathbf{K}_{n, k}$ we can find a sequence $(b_u: u \in [M]^{k-1})$ from $\mathfrak{C}$, and some $a \in \mathfrak{C}$, such that, writing $M^- := M \restriction_{\mathcal{L}^-_{n, k}} \in \mathbf{K}^-_{n, k}$:

\begin{itemize}
	\item For every $w \in [M]^{<\aleph_0}$, $tp_{\mathfrak{C}}(b_u: u \in [w]^{k-1})$ depends only on $tp_{M^-}(w)$ (for this to make sense, we are using that $w$ has a canonical enumeration from $<^M$, and hence also $(b_u: u \in [w]^{k-1})$ can be enumerated unambiguously by fixing an ordering of $[|w|]^{k-1}$);
	\item For every $w \in [M]^{<\aleph_0}$, $tp_{\mathfrak{C}}(a/(b_u: u \in [w]^{k-1}))$ depends only on $tp_M(w)$;
	\item For every $u \in Q^M$, $\mathfrak{C} \models \phi(a, b_u)$;
	\item For every $s \in [[M^{k-1}]^{<\aleph_0}]$, $\{\phi(x, b_u: u \in s)\}$ is consistent if and only if there is no $w \in [M]^{n-1}$ such that $[w]^k \subseteq S^M$ and $[w]^{k-1} \subseteq s$.
\end{itemize}
\end{theorem}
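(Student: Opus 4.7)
The plan is to combine the Ne\v{s}et\v{r}il-R\"{o}dl Ramsey theorem (Theorem~\ref{NesetrilRodl}) with a compactness argument in the monster model $\mathfrak{C}$.  I first realize the pattern on a large $\mathbf{K}_{n,k}$-structure: since $\phi$ admits $\Delta_{n,k}$, Lemma~\ref{TnkLemma1} together with Theorem~\ref{ConjunctInstances} imply that $\phi$ admits $\Delta_{n,k}(S^N)$ for every $N \in \mathbf{K}_{n,k}$, so there exist $(c_u^N : u \in [N]^{k-1})$ in $\mathfrak{C}$ such that for every finite $s \subseteq [N]^{k-1}$, $\bigwedge_{u \in s}\phi(x, c_u^N)$ is consistent iff there is no $w \in [N]^{n-1}$ with $[w]^k \subseteq S^N$ and $[w]^{k-1} \subseteq s$.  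The crucial observation is that this sequence can be chosen to depend only on the $\mathbf{K}_{n,k}^-$-reduct $N^-$ and not on $Q^N$.  Taking $s = Q^N$, which has no forbidden $(n-1)$-clique by the very definition of $\mathbf{K}_{n,k}$, I then pick $a^N \in \mathfrak{C}$ realizing $\{\phi(x, c_u^N) : u \in Q^N\}$; this already secures the third and fourth bullets.

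Next, fix a finite $\Phi \subseteq \mathcal{L}(T)$ and enumerate the isomorphism types $\tau_1^-, \ldots, \tau_p^-$ of finite substructures of $M$ in $\mathbf{K}_{n,k}^-$ and $\tau_1, \ldots, \tau_q$ in $\mathbf{K}_{n,k}$.  I iteratively apply Theorem~\ref{NesetrilRodl} to absorb, one at a time, the following finite list of $\{0,1\}$-colorings: for each $\psi \in \Phi$ and each $\tau_j^-$, the coloring on $\binom{N^-}{\tau_j^-}$ sending a copy $v$ to the truth value of $\psi\big((c_u^N : u \in [v]^{k-1})\big)$ (this controls the first bullet and uses Ramsey in $\mathbf{K}_{n,k}^-$); and for each $\psi \in \Phi$ and each $\tau_j$, the coloring on $\binom{N}{\tau_j}$ sending $v$ to $\psi\big(a^N, (c_u^N : u \in [v]^{k-1})\big)$ (this controls the second bullet and uses Ramsey in $\mathbf{K}_{n,k}$).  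Each $\mathbf{K}_{n,k}^-$-Ramsey step produces a witness which I expand back to $\mathbf{K}_{n,k}$ by propagating the existing $Q$-structure along the chosen monochromatic copy; the resulting structure stays in $\mathbf{K}_{n,k}$ because any forbidden clique it could introduce would be trapped inside the previous stage.  After finitely many iterations and a standard backward extraction I obtain a $\mathbf{K}_{n,k}$-embedding $\sigma : M \hookrightarrow N$ on which every coloring is constant; setting $b_u := c^N_{\sigma u}$ and $a := a^N$ then yields the $\Phi$-fragments of the first two bullets, while the third and fourth persist because $\sigma$ preserves $Q$.

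Finally, the conjunction of all four bullets defines a partial type $\Pi(a, (b_u)_{u \in [M]^{k-1}})$; each finite $\Pi_0 \subseteq \Pi$ mentions only finitely many formulas and is realized by the previous step, so compactness produces a realization of $\Pi$ in $\mathfrak{C}$.  The only non-routine point is that the first bullet demands constancy across $\mathbf{K}_{n,k}^-$-isomorphism classes of $w \subseteq M$, a strictly coarser partition than $\mathbf{K}_{n,k}$-isomorphism, so Ramsey in $\mathbf{K}_{n,k}$ alone cannot deliver it; the observation that the $c$-sequence depends only on $N^-$ is precisely what lets Ramsey in $\mathbf{K}_{n,k}^-$ genuinely control the first bullet, and the main technical delicacy is the $Q$-structure bookkeeping that keeps the two flavours of Ramsey compatible when interleaved.
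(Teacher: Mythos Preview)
Your proposal is correct and follows exactly the approach the paper indicates (its entire proof is the single sentence ``This follows easily from Theorem~\ref{NesetrilRodl} and compactness''), and you have correctly isolated the one genuine subtlety the paper suppresses: bullet 1 requires Ramsey in $\mathbf{K}_{n,k}^-$ while bullet 2 requires Ramsey in $\mathbf{K}_{n,k}$, and the $(c_u)$ depending only on $N^-$ is what makes the former applicable.

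One clarification worth recording: your interleaving can be made cleaner by separating into two phases rather than alternating. First perform all the $\mathbf{K}_{n,k}$-Ramsey iterations, starting from $M$ and ending at some $N \in \mathbf{K}_{n,k}$; then perform all the $\mathbf{K}_{n,k}^-$-Ramsey iterations, starting from $N^-$ and ending at some $C^- \in \mathbf{K}_{n,k}^-$. Define the $(c_u)$ on $C^-$, backward-extract a bullet-1-homogeneous copy $N'^- \cong N^-$ inside $C^-$, expand $N'^-$ to $N' \cong N$ by transporting $Q^N$ along the isomorphism, define $a^{N'}$, and finally backward-extract a bullet-2-homogeneous copy $M' \cong M$ inside $N'$; bullet-1 homogeneity is inherited since $M'^- \subseteq N'^-$. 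This is what your phrase ``propagating the existing $Q$-structure along the chosen monochromatic copy'' should mean, but as written it is ambiguous whether the $Q$ is placed before or after the monochromatic copy is located, and a genuinely interleaved version runs into the problem that a $\mathbf{K}_{n,k}^-$-extraction step destroys the $Q$-structure needed for any subsequent $\mathbf{K}_{n,k}$-extraction step.
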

\begin{proof}
This follows easily from Theorem~\ref{NesetrilRodl} and compactness.
\end{proof}

We deduce two interesting corollaries from this. The following corollary is likely not optimal:

\begin{corollary}
	Suppose $k, k' \geq 2$. Then $T_{k'+1, k'}$ admits $\Delta_{k+1, k}$ if and only if $k= k'$.
	
	More generally, suppose $n > k > 2$ and $n' > k' \geq 2$, and either $\binom{n'-1}{k'-1} < \binom{n-1}{k-1}$ or else $\binom{n-1}{k-1} < k'$. Then $T_{n', k'}$ does not admit $\Delta_{n, k}$.
\end{corollary}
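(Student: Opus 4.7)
I would assume for contradiction that $\phi(x,\bar y)$ witnesses $T_{n',k'}$ admitting $\Delta_{n,k}$ and apply the preceding structural theorem to produce, for arbitrarily large $M\in\mathbf{K}_{n,k}$, a family $(b_u:u\in[M]^{k-1})$ and an element $a$ in the monster $\mathfrak{C}\models T_{n',k'}$ with the stated uniformity and incidence properties. Since $T_{n',k'}$ is the model completion of its universal theory in the language $\{R\}$, it has quantifier elimination, so I may assume $\phi$ is quantifier-free; and by a further Ramsey pass inside $M$ I may assume that the list of positive $R$-atoms on $x$ appearing in $\phi(x,\bar y)$ is a fixed system of index tuples into $\bar y$ that respects the symmetries afforded by the structural theorem.

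\textbf{The forbidden clique and its essential atoms.} Fix $w\in[M]^{n-1}$ with $[w]^k\subseteq S^M$, so $\{\phi(x,b_u):u\in[w]^{k-1}\}$ is inconsistent and minimally so. Because the $\bar b_u$'s are generic modulo the structural qf-type determined by $\mathrm{tp}_{M^-}(w)$, the only way this set can be inconsistent in $T_{n',k'}$ is to force an $n'$-clique of $R$ through a would-be realisation of $x$; this produces an $(n'-1)$-element set $C\subseteq\bigcup_u \bar b_u$ with $[C]^{k'}\subseteq R$ in the structural type and with every $\bar c\in[C]^{k'-1}$ appearing as a positive $R$-atom on $x$ in some $\phi(x,\bar b_u)$. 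Minimality of the inconsistent family now gives: for each $u^*\in[w]^{k-1}$ there is some $\bar c_{u^*}\in[C]^{k'-1}$ provided only by $\phi(x,\bar b_{u^*})$, so $u^*\mapsto \bar c_{u^*}$ is an injection $[w]^{k-1}\hookrightarrow[C]^{k'-1}$. Under the Case A hypothesis $\binom{n'-1}{k'-1}<\binom{n-1}{k-1}$ this is an immediate contradiction.

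\textbf{Case B and the main obstacle.} In Case B, where $\binom{n-1}{k-1}<k'$, the injection alone is insufficient, and the argument must also use the $\mathrm{Sym}(w)$-equivariance of the whole configuration together with the fact that every $(k'-1)$-subset of $C$ must be \emph{contained in} some $\bar b_u$. Picking a $k'$-subset $\{c_{i_1},\dots,c_{i_{k'}}\}\subseteq C$ (available since $|C|=n'-1\ge k'$) and running the $k'$ associated $(k'-1)$-subsets against only $\binom{n-1}{k-1}<k'$ possible containers, pigeonhole forces two of them into a common $\bar b_u$, and hence a full $k'$-subset of $C$ into that single $\bar b_u$. Iterating and symmetrising across $\mathrm{Sym}(w)$ should force enough of $C$ into one parameter tuple that the fixed atomic pattern of $\phi$ makes $\phi(x,\bar b_u)$ assert, on its own, every $(k'-1)$-subset of an $(n'-1)$-clique, hence be inconsistent; this contradicts $\phi(a,b_u)$ holding for $u\in Q^M$. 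The step I expect to be the main obstacle is exactly this conversion from ``many elements of $C$ live inside a single $\bar b_u$'' to ``$\phi(x,\bar b_u)$ alone is inconsistent'': a $(k'-1)$-subset can sit inside $\bar b_u$ without being selected by $\phi$'s fixed atomic index list, so one must combine the rigidity of that list with the $\mathrm{Sym}(w)$-action on both indices and positions inside $\bar y$, and I expect a careful orbit-counting argument to be needed to ensure the pigeonhole survives this distinction.
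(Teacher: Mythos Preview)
Your Case~A is essentially the paper's argument, so that part is fine. The real divergence is in Case~B, and the obstacle you flag is genuine for \emph{your} setup but disappears under the paper's, because of one move you did not make.

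\medskip

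\textbf{Work with the full type, not with $\phi$.} The paper does not track which $R$-atoms literally occur in $\phi$. Instead, having produced $a$ and $(\bar b_u)$ from the structural theorem, it sets $p(\bar x,\bar y):=\mathrm{tp}(a,\bar b_u)$ for $u\in Q^M$ and works with the partial type $r(\bar x):=\bigcup_{u\in[w]^{k-1}} p(\bar x,\bar b_u)$ (for $w\in[M]^{n-1}$ with $[w]^k\subseteq S^M$). One first checks, by moving $s\subsetneq[w]^{k-1}$ into $Q^M$ using the $M^-$-indiscernibility, that every proper sub-union $\bigcup_{u\in X}p(\bar x,\bar b_u)$ is consistent; this is the minimality step you also want, but now phrased for types. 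Since $k>2$ gives $|[w]^{k-1}|\ge3$, any two pieces are pairwise consistent, so the inconsistency of $r$ must create an $n'$-clique: variables $\bar y\subseteq\bar x$ together with parameters $r=\{a_j:j<j_*\}$, and for every $t\in[r]^{<k'}$ there is $u(t)\in[w]^{k-1}$ with $t\subseteq\bar b_{u(t)}$ and $p(\bar x,\bar b_{u(t)})\vdash\text{``}\bar y\cup t\text{ is a clique''}$. The point is that once $t\subseteq\bar b_{u}$, the \emph{type} $p(\bar x,\bar b_u)$ decides every $R$-atom between $\bar x$ and $t$ automatically; your worry that ``a $(k'-1)$-subset can sit inside $\bar b_u$ without being selected by $\phi$'s fixed atomic index list'' simply does not arise.

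\medskip

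\textbf{The clean Case~B.} With this in hand the paper's Case~B is a two-line transversal argument, not a symmetrisation. For each $u\in[w]^{k-1}$ choose $a_{j_u}\in r\setminus\bar b_u$; this is possible since otherwise $r\subseteq\bar b_u$ and then $p(\bar x,\bar b_u)$ alone already forces the $n'$-clique, contradicting consistency of a single piece. The set $\{a_{j_u}:u\in[w]^{k-1}\}$ has size at most $\binom{n-1}{k-1}<k'$, hence lies in $[r]^{<k'}$ and must be contained in some single $\bar b_{u_0}$. But $a_{j_{u_0}}\notin\bar b_{u_0}$: contradiction. No pigeonhole on $k'$-subsets, no orbit counting, no iteration is needed; your proposed route is not wrong in spirit, but it is fighting a difficulty that the switch from $\phi$ to $p$ eliminates.
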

\begin{proof}
	Note that the first statement follows, since $T_{n, k}$ always admits $\Delta_{n, k}$ (by Lemmas~\ref{ConjunctInstances} and \ref{TnkLemma1}), and when $k = 2$ and $k' >2$ then $T_{k'+1, k'}$ has $NSOP_3$ and hence cannot admit $T_{k+1, k}$.

	Suppose towards a contradiction that $n > k > 2$ and $n' > k' \geq 2$, and either $\binom{n'-1}{k'-1} < \binom{n-1}{k-1}$ or else $\binom{n-1}{k-1} < k'$, and yet $T_{n', k'}$ admits $\Delta_{n, k}$.

	Let $\phi(\overline{x}, \overline{y})$ be a formula of $T_{n', k'}$ admitting $\Delta_{n, k}$. Let $(\mathfrak{C}, R^{\mathfrak{C}})$ be the monster model of $T_{n, k}$ and let $M \model \tilde{T}_{n, k}$; write $M^- = M \restriction_{\mathcal{L}^-_{n, k}} \model \tilde{T}_{n, k}^-$. Then we can find $(\overline{b}_u: u \in [M]^{k-1})$ from $\mathfrak{C}^{|\overline{y}|}$ and $\overline{a} \in \mathfrak{C}^{|\overline{x}|}$ as in Theorem~\ref{NesetrilRodl}.
	
	Let $q(\overline{y}) = tp(\overline{b}_u)$ for some or any $u \in [M]^{k-1}$. Let $p(\overline{x}, \overline{y}) = tp(\overline{a}, \overline{b}_u)$ for some or any $u \in Q^M$, so $p(\overline{x}, \overline{y})$ extends $q(\overline{y})$. Choose $w \in S^M$. Let $r(\overline{x}, \overline{b}_u: u \in [w]^{k-1}) := \bigcup \{p(\overline{x}, \overline{b}_u): u \in [w]^{k-1}\}$.

	Note that $r(\overline{x})$ is inconsistent, since $w \in S^M$ and $\phi(\overline{x}, \overline{b}_u) \in p(\overline{x}, \overline{b}_u)$ for each $u \in [w]^{k-1}$. Nonetheless, I claim that for every proper $X \subseteq [w]^{k-1}$, $\bigcup \{p(\overline{x}, \overline{b}_u): u \in X\}$ is consistent; suppose towards a contradiction this failed. Choose $(v(u): u \in X)$ from $Q^M$, such that $tp_{M^-}(v(u): u \in X) = tp_{M^-}(u: u \in X)$. Then $\bigcup \{p(\overline{x}, \overline{b}_{v(u)}): u \in X\}$ is inconsistent; but $\overline{a}$ realizes it.
	
	In particular, for all $u_0, u_1 \in [w]^{k-1}$, $p(\overline{x}, \overline{b}_{u_0}) \cup p(\overline{x}, \overline{b}_{u_1})$ is consistent (using $k > 2$). Hence the only way for $r(\overline{x})$ to be inconsistent is for it to create an $n'$-clique. In other words, we must be able to find $\overline{y} = (y_i: i < i_*)$ from $\overline{x}$ and $r = \{a_j: j < j_*\} \in [\mathfrak{C}]^{j_*}$, such that $i_* + j_* = n'+1$, and such that $r$ is an $R^{\mathfrak{C}}$-clique, and for every $t \in [r]^{<k'}$, there is some $u(t) \in [w]^{k-1}$ such that $t \subseteq u(t)$ and such that $p(\overline{x}, \overline{b}_{u(t)}) \models ``\overline{y}\cup t$ is an $R$-clique$''\}$. 
	
	Note that if $|r| < k'$ then this implies $p(\overline{x}, u(r))$ is inconsistent, a contradiction. So $|r| \geq k'$.

	Suppose first $\binom{n'-1}{k'-1} < \binom{n-1}{k-1}$. Let $X = \{u(t): t \in [r]^{k'-1}\}$. Then $X \subsetneq [w]^{k-1}$ and yet $\bigcup \{p(\overline{x}, b_u): u \in X\}$ is inconsistent, contradiction. 
	
	Finally, suppose $\binom{n-1}{k-1} < k'$. For each $u \in [w]^{k-1}$, choose $j_{u} < j_*$ such that $a_{j_{u}} \not \in b_u$ (possible as otherwise $p(\overline{x}, \overline{b}_u)$ would be inconsistent). But then $\{a_{j_u}: u \in [w]^{k-1}\}$ cannot be covered by any $\overline{b}_u$; since $\{a_{j_{u}}: u \in [w]^{k-1}\}$ has cardinality at most $k'-1$, this is a contradiction. 
	
	%
\end{proof}

Also, the following corollary will be helpful in \cite{AmalgKeislerUlrich}:

\begin{corollary}
Suppose $T$ is a countable simple theory and $\phi(x, y)$ is a formula of $T$ which admits $\Delta_{n, k}$ (so $n > k \geq 3$, since $T$ is simple). Let $\mathfrak{C}$ be the monster model of $T$. Then for every index set $I$ and for every $S \subseteq [I]^k$, we can find some countable $N \preceq \mathfrak{C}$ and some $(b_u: u \in [I]^{k-1})$ from $\mathfrak{C}$, such that for all $s \in [[I]^{k-1}]^{<\aleph_0}$:

\begin{itemize}
	\item If there is no $w \in [I]^{n-1}$ such that $[w]^{k} \subseteq S$ and $[w]^{k-1} \subseteq s$, then $\{\phi(x, b_u): u \in [w]^{k-1}\}$ does not fork over $N$;
	\item Otherwise, $\{\phi(x, b_u): u \in [w]^{k-1}\}$ is inconsistent.
\end{itemize}
\end{corollary}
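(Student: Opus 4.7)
The plan is to combine the Ne\v{s}et\v{r}il--R\"{o}dl construction from the previous theorem with compactness, and then invoke simplicity to extract the non-forking clause.

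Fix an arbitrary linear order $<$ on $I$, so that $(I, <, S)$ is an $\mathcal{L}^-_{n,k}$-structure whose finite substructures all lie in $\mathbf{K}^-_{n,k}$. For each finite $w \subseteq I$ and each $s \subseteq [w]^{k-1}$ satisfying the clique-free condition of $\Delta_{n,k}(S \cap [w]^k)$, setting $Q := s$ makes the induced structure on $w$ a member of $\mathbf{K}_{n,k}$, so the previous theorem supplies a sequence $(b_u^{w, s} : u \in [w]^{k-1})$ and an element $a^{w, s}$ realizing exactly the $\phi$-pattern prescribed by $s$, with the type of the sequence determined by the order-type of $w$. A routine compactness argument, amalgamating these over all finite $(w, s)$, yields a single sequence $(b_u : u \in [I]^{k-1})$ in $\mathfrak{C}$ satisfying: (i) $\tp((b_u)_{u \in [w]^{k-1}})$ depends only on the $\mathcal{L}^-_{n,k}$-type of $w$; and (ii) for every finite $s \subseteq [I]^{k-1}$, $\{\phi(x, b_u) : u \in s\}$ is consistent if and only if no $w \in [I]^{n-1}$ has $[w]^k \subseteq S$ and $[w]^{k-1} \subseteq s$, with inconsistency in the bad case forced by the fact that $\phi$ admits $\Delta_{n, k}$.

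To obtain non-forking over a countable model $N$, run the same construction on the enlarged index set $I' := J \sqcup I$, where $J$ is a copy of $(\QQ, <)$ placed below $I$ in the order and carrying no $S$-edges. Then the tuples $(b_u : u \in [J]^{k-1})$ have type depending only on the order-type, hence form a generalized order-indiscernible sequence of $(k-1)$-tuples over $\emptyset$. By the standard extraction procedure available in a simple theory, we may replace the whole system by one with the same EM-type so that $(b_u : u \in [J]^{k-1})$ is a Morley sequence of $(k-1)$-tuples over some countable model $N \preceq \mathfrak{C}$; the consistency pattern of (i)--(ii) is preserved, and type-determination upgrades from $\emptyset$ to $N$ because the spine generates $N$ indiscernibly.

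For the non-forking conclusion, fix a finite $s \subseteq [I]^{k-1}$ with $\{\phi(x, b_u) : u \in s\}$ consistent, and choose pairwise disjoint copies $s_m \subseteq [J]^{k-1}$ of $s$ ($m < \omega$) with the same order-type as $s$; by (i), each $(b_u : u \in s_m)$ has the same type over $N$ as $(b_u : u \in s)$, and the tuples $((b_u : u \in s_m))_{m < \omega}$ form a Morley sequence over $N$ inherited from the spine. Since $\{\phi(x, b_u) : u \in s_m\}$ is consistent for each $m$, the classical fact (in simple theories) that a consistent formula along a Morley sequence over a model does not fork over that model yields that $\{\phi(x, b_u) : u \in s\}$ does not fork over $N$. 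The main obstacle is the ``spine-then-Morley'' step: preserving the $\phi$-consistency pattern on $I$ while simultaneously upgrading to generalized indiscernibility over $N$; this is handled by keeping $J$ dense, $S$-edge-free, and order-below $I$, so that simplicity can be invoked on the spine without introducing any new clique obstruction on the $I$-side.
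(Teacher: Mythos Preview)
There is a concrete gap in your final step. You assert that copies $s_m \subseteq [J]^{k-1}$ of $s$ with the same \emph{order-type} have $(b_u : u \in s_m)$ of the same type over $N$ as $(b_u : u \in s)$, citing your property (i). But (i) says the type is determined by the $\mathcal{L}^-_{n,k}$-type of the underlying set, and $\mathcal{L}^-_{n,k} = \{<, S\}$ records the hypergraph relation as well as the order. You built $J$ to carry no $S$-edges, so $v_m := \bigcup s_m \subseteq J$ is $S$-free, whereas $v := \bigcup s \subseteq I$ may contain $S$-edges: the hypothesis on $s$ forbids only full $(n-1)$-cliques in $S$, not individual edges. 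Hence the $\mathcal{L}^-_{n,k}$-types of $v$ and $v_m$ differ in general, the corresponding $b$-tuples need not have the same type, and non-forking cannot be transferred back to $s$. (Your ``extraction to a Morley sequence over $N$'' step is also under-specified: even if the $J$-spine can be made Morley over some $N$, you give no argument that the type-determination of (i) upgrades from $\emptyset$ to $N$ for the full array on $I' = J \sqcup I$.)

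The paper's proof avoids both issues by working inside an $|I|^+$-saturated $M \models \tilde{T}_{n,k}$ (the Fra\"iss\'e limit carrying both $S$ and $Q$) and invoking simplicity via local character rather than Kim's lemma. One obtains $(b_u : u \in [M]^{k-1})$ together with a single element $a$ realizing $\phi(x, b_u)$ for all $u \in Q^M$; then chooses a countable $M_0 \preceq M$ so that $\tp(a / (b_u)_u)$ does not fork over $(b_u : u \in [M_0]^{k-1})$, and lets $N$ contain these parameters. After embedding $(I, S)$ into $M \setminus M_0$, for a given good $s$ one uses saturation to find $v'$ with $\tp_{M^-}(v'/M_0) = \tp_{M^-}(v/M_0)$ --- so the $S$-structure is preserved --- and with $f[u] \in Q^M$ for each $u \in s$. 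Then $\{\phi(x, b_{f[u]}) : u \in s\} \subseteq \tp(a/\dots)$ does not fork over the base, and this transfers to $\{\phi(x, b_u) : u \in s\}$ since the two $b$-tuples have the same type over $(b_u : u \in [M_0]^{k-1})$. The key point is that one moves within the same $S$-structure by varying $Q$, rather than retreating to an $S$-free spine.
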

\begin{proof}
Let $M \models \tilde{T}_{n, k}$ be $|I|^+$-saturated and let $M^- = M \restriction_{\mathcal{L}^-_{n, k}}$. Let $\mathfrak{C}$ be the monster model of $T_{n, k}$. Choose $(b_u: u \in [M]^{k-1})$ and $a$ from $\mathfrak{C}$ as in Theorem~\ref{NesetrilRodl}.

Choose $M_0 \preceq M$ countable such that $tp(a / b_u: u \in [M]^{k-1})$ does not fork over $(b_u: u \in [M_0]^{k-1})$, and let $N \preceq \mathfrak{C}$ be a countable elementary substructure containing $(b_u: u \in [M_0]^{k-1})$. By the saturation hypothesis on $M$, there is an embedding of $(I, S)$ into $(M \backslash M_0, S \cap [M \backslash M_0]^{k})$. Hence, it suffices to show that for all $s \in [[M \backslash M_0]^{k-1}]^{<\aleph_0}$, if there is no $w \in [M \backslash M_0]^{n-1}$ such that $[w]^k \subseteq S^M$ and $[w]^{k-1} \subseteq s$, then $\{\phi(x, b_u): u \in [w]^{k-1}\}$ does not fork over $N$. Suppose $w$ is given as such. Write $v = \bigcup s$. Since $M$ is $\aleph_1$-saturated, we can find some $v' \in [M \backslash M_0]^{<\aleph_0}$ such that $tp_{M^-}(v/ M_0) = tp_{M^-}(v'/M_0)$ and such that, if $f: v \to v'$ is the unique order-preserving bijection, then for all $u \in s$, $f[u] \in Q^M$. Then $\{\phi(x, b_{f[u]}): u \in s\}$ does not fork over $N$, since this is a subset of $tp(a/b_u: u \in [M]^{k-1})$. Hence $\{\phi(x, b_u): u \in s\}$ does not fork over $N$.
\end{proof}
\section{The Analysis in Terms of Ultrafilters}\label{FullBVModelsSec}

We phrase what we have done so far in terms of ultrafilters. 

\begin{definition}
	
	Given an index set $I$ and a complete Boolean algebra $\mathcal{B}$, an $I$-distribution in $\mathcal{B}$ is a function $\mathbf{A}: [I]^{<\aleph_0} \to \mathcal{B}_+$ (i.e. the positive elements of $\mathcal{B}$), such that $\mathbf{A}(\emptyset) = 1$, and $s \subseteq t$ implies $\mathbf{A}(s) \geq \mathbf{A}(t)$. If $\mathcal{D}$ is a filter on $\mathcal{B}$, we say that $\mathbf{A}$ is in $\mathcal{D}$ if $\mbox{im}(\mathbf{A}) \subseteq \mathcal{D}$.

	If $\mathbf{A}, \mathbf{B}$ are $I$-distributions in $\mathcal{B}$, then say that $\mathbf{B}$ refines $\mathbf{A}$ if $\mathbf{B}(s) \leq \mathbf{A}(s)$ for all $s \in [I]^{<\aleph_0}$.  Say that $\mathbf{A}$ is multiplicative if for all $s \in [I]^{<\aleph_0}$, $\mathbf{A}(s) = \bigwedge_{i \in s} \mathbf{A}(\{i\})$.
	
	Suppose $I, J$ are index sets, $\mathcal{B}$ is a complete Boolean algebra, $\mathbf{A}$ is a $J$-distribution and $\Delta$ is a pattern on $I$. Then say that $\mathbf{A}$ is a $(J, \Delta)$-distribution if for every $s \in [J]^{<\aleph_0}$ and for every $\mathbf{c} \in \mathcal{B}_+$ such that $\mathbf{c}$ decides $\mathbf{A}_{t}$ for all $t \subseteq s$, there is some $f: s \to I$ such that for all $t \subseteq s$, $\mathbf{c} \leq \mathbf{A}_t$ if and only if $f[t] \in \Delta$. If $T$ is a complete countable theory and $\phi(\overline{x}, \overline{y})$ is a formula of $T$, then say that $\mathbf{A}$ is a $(J, T, \phi)$-{\L}o{\'s} map if $\mathbf{A}$ is a $(J, \Delta_\phi)$-distribution.
	
	Suppose $\mathcal{U}$ is an ultrafilter on the complete Boolean algebra $\mathcal{B}$, and $\Delta$ is a pattern on $I$. Then let $\lambda_{\mathcal{U}}(\Delta)$ be the least $\lambda$ such that there is some $(\lambda, \Delta)$-distribution in $\mathcal{U}$ with no multiplicative refinement in $\mathcal{U}$. If $\phi(\overline{x}, \overline{y})$ is a formula of $T$ then let $\lambda_{\mathcal{U}}(T, \phi) = \lambda_{\mathcal{U}}(\Delta_\phi)$. In other words, $\lambda_{\mathcal{U}}(T,\phi)$ is the least $\lambda$ such that there is some $(\lambda, T, \phi)$-{\L}o{\'s} map $\mathbf{A}$ in $\mathcal{U}$ with no multiplicative refinement in $\mathcal{U}$. 
	
	Let $\lambda_{\mathcal{U}}(T)$ be the least infinite cardinal $\lambda$ such that $\mathcal{U}$ does not $\lambda^+$-saturates $T$. Then always $\lambda_{\mathcal{U}}(T) \geq \aleph_1$, and possibly $\lambda_{\mathcal{U}}(T) = \infty$. Further, $\mathcal{U}$ always $\lambda_{\mathcal{U}}(T)$-saturates $T$.
	
\end{definition}

\begin{remark}\label{LosRemark}
	In \cite{BVModelsUlrich}, we defined the notion of a $(\lambda,T, \overline{\phi})$-{\L}o{\'s} map for sequences of formulas $(\phi_\alpha(\overline{x}, \overline{y}_\alpha): \alpha < \lambda)$, where the $\overline{y}_\alpha$'s are all disjoint from each other and from $\overline{x}$. The above definition corresponds to the special case where each $\phi_\alpha(\overline{x}, \overline{y}_\alpha) = \phi(\overline{x}, \overline{y}_\alpha)$ for some fixed formula $\phi(\overline{x}, \overline{y})$. We also defined that a $\mathbf{A}$ is a $(\lambda, T)$-{\L}o{\'s} map if it is a $(\lambda, T, \overline{\phi})$-{\L}o{\'s} map for some $\overline{\phi}$.
\end{remark}

We have the following important but straightforward consequences of Theorem~\ref{Compactness}.
\begin{lemma}\label{CompactnessPatternsLemma1}
	Suppose $\mathcal{U}$ is an ultrafilter on the complete Boolean algebra $\mathcal{B}$, suppose $V \models ZFC^-$ is transitive, suppose $\mathbf{i}: V \preceq \mathbf{V}$ is $\lambda^+$-saturated, and suppose $\Delta \in V$ is a pattern on $I$. Finally, suppose $\mathbf{A}$ is a $J$-distribution in $\mathcal{B}$. Then $\mathbf{A}$ is a $(J, \Delta)$-distribution if and only if there are $(\mathbf{a}_j:j \in J)$ from $\mathbf{i}_{\std}(I)$, such that for all $s \in [J]^{<\aleph_0}$, $\|\{\mathbf{a}_j: j \in s\} \in \mathbf{i}(\Delta)\|_{\mathbf{V}} = \mathbf{A}(s)$. 
\end{lemma}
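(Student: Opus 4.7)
The plan is to combine the compactness theorem (Theorem~\ref{Compactness}) for full Boolean-valued models with the $\lambda^+$-saturation of $\mathbf{V}$.

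For the ``if'' direction, assume the $\mathbf{a}_j$'s exist. Fix $s \in [J]^{<\aleph_0}$ and $\mathbf{c} \in \mathcal{B}_+$ deciding every $\mathbf{A}(t)$ for $t \subseteq s$, and let $P_{\mathbf{c}} = \{t \subseteq s : \mathbf{c} \leq \mathbf{A}(t)\}$. Pick any ultrafilter $\mathcal{U}'$ on $\mathcal{B}$ with $\mathbf{c} \in \mathcal{U}'$ and let $\tilde{\mathbf{i}} : V \preceq \mathbf{V}/\mathcal{U}'$ be the composition of $\mathbf{i}$ with specialization. Since $\|\mathbf{a}_j \in \mathbf{i}(I)\|_{\mathbf{V}} = 1$, we have $[\mathbf{a}_j]_{\mathcal{U}'} \in \tilde{\mathbf{i}}(I)$, and by the hypothesis on Boolean values, $\{[\mathbf{a}_j]_{\mathcal{U}'} : j \in t\} \in \tilde{\mathbf{i}}(\Delta)$ iff $\mathbf{A}(t) \in \mathcal{U}'$ iff $t \in P_{\mathbf{c}}$. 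Since $s$ is standardly finite, these finitely many conditions form a single first-order sentence (a conjunction over $t \subseteq s$) witnessed inside $\mathbf{V}/\mathcal{U}'$; elementarity of $\tilde{\mathbf{i}}$ pulls it back to $V$, yielding $a_j \in I$ for $j \in s$ with $\{a_j : j \in t\} \in \Delta$ iff $t \in P_{\mathbf{c}}$. Setting $f(j) = a_j$ gives the required map.

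For the ``only if'' direction, pick a full elementary submodel $\mathbf{M}_0 \preceq \mathbf{V}$ of cardinality at most $\lambda$ containing $\mathbf{i}(I)$, $\mathbf{i}(\Delta)$, and $\mathbf{i}(J)$. The strategy is to build a full $\mathcal{B}$-valued elementary extension $\mathbf{M}_1 \succeq \mathbf{M}_0$ of size at most $\lambda$ with new elements $\mathbf{a}_j$ $(j \in J)$ satisfying $\|\mathbf{a}_j \in \mathbf{i}(I)\|_{\mathbf{M}_1} = 1$ and $\|\{\mathbf{a}_j : j \in s\} \in \mathbf{i}(\Delta)\|_{\mathbf{M}_1} = \mathbf{A}(s)$; then $\lambda^+$-saturation of $\mathbf{V}$ embeds $\mathbf{M}_1$ into $\mathbf{V}$ over $\mathbf{M}_0$, and the images of the $\mathbf{a}_j$ lie in $\mathbf{i}_{\std}(I)$ with the required Boolean values. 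To build $\mathbf{M}_1$, apply Theorem~\ref{Compactness} with $X = \mathbf{M}_0 \cup \{c_j : j \in J\}$ (fresh constants $c_j$) and with $\Gamma$ the union of the elementary diagram of $\mathbf{M}_0$ (targets $\|\phi(\bar a)\|_{\mathbf{M}_0}$) with the formulas $c_j \in \mathbf{i}(I)$ (target $1$) and $\{c_j : j \in s\} \in \mathbf{i}(\Delta)$ (target $\mathbf{A}(s)$), taking $F_0 = F_1$ equal to those target values.

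To verify condition (B), fix a finite $\Gamma_0 \subseteq \Gamma$ and $\mathbf{c} \in \mathcal{B}_+$; by refining $\mathbf{c}$ we may assume it decides every Boolean value that occurs in $\Gamma_0$, and we let $s \in [J]^{<\aleph_0}$ collect the indices of constants $c_j$ appearing in $\Gamma_0$. Specializing $\mathbf{M}_0$ at any ultrafilter containing $\mathbf{c}$ yields a $\{0,1\}$-valued model $M_0$ that satisfies the diagram portion of $\Gamma_0$ correctly. The $(J, \Delta)$-distribution hypothesis applied at $\mathbf{c}$ furnishes $f : s \to I$ with $f[t] \in \Delta$ iff $\mathbf{c} \leq \mathbf{A}(t)$; interpreting each $c_j$ in $M_0$ as the image of $f(j)$ under the composed embedding $V \to \mathbf{M}_0 \to M_0$ makes the new-constant portion of $\Gamma_0$ hold correctly at $\mathbf{c}$ as well. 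The main technical point is exactly this fusion step --- arranging that specialization of the diagram and interpretation coming from the distribution are carried simultaneously by a single $\{0,1\}$-model at $\mathbf{c}$; once it is in place the remaining steps are routine.
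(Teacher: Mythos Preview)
Your proof is correct and follows essentially the same approach as the paper: compactness (Theorem~\ref{Compactness}) plus $\lambda^+$-saturation for the ``only if'' direction, and elementarity (which you phrase via specialization at an ultrafilter through $\mathbf{c}$, while the paper argues directly with Boolean values) for the ``if'' direction; you even spell out the verification of condition (B) that the paper leaves as ``clearly Theorem~\ref{Compactness} applies.'' One harmless slip: $J$ is an arbitrary external index set and need not lie in $V$, so $\mathbf{i}(J)$ is meaningless---just drop it from the list of elements you put into $\mathbf{M}_0$, since you never use it.
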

\begin{proof}
	First, suppose $\mathbf{A}$ is a $(J, \Delta)$-distribution. Choose $\mathbf{V}_0 \preceq \mathbf{V}$ with $|\mathbf{V}_0| \leq \lambda$, such that $\mathbf{i}(I), \mathbf{i}(\Delta) \in \mathbf{V}_0$. 
	
	We claim that Theorem~\ref{Compactness} implies there is $\mathbf{V}_1 \succeq \mathbf{V}_0$ and $\mathbf{a}_j \in \mathbf{V}_1$ for each $j \in J$, such that each $\|\mathbf{a}_j \in \mathbf{i}(I)\|_{\mathbf{V}_1} = 1$, and such that for all $s \in [J]^{<\aleph_0}$, $\|\{\mathbf{a}_j: j \in s\} \in \mathbf{i}(\Delta)\|_{\mathbf{V}_1} = \mathbf{A}(s)$. Indeed, let $\Gamma = \mathcal{L}(\mathbf{V}_0) \cup \{``\mathbf{a}_j \in \mathbf{i}(I)": j \in J\} \cup \{``\{\mathbf{a}_j: j \in s \} \in \mathbf{i}(\Delta)": s \in [J]^{<\aleph_0}\}$. We define $F_0 = F_1 = F: \Gamma \to \mathcal{B}$ via $F \restriction_{\mathcal{L}(\mathbf{V}_0)} = \|\cdot\|_{\mathbf{V}_0}$, and $F(``\mathbf{a}_j \in \mathbf{i}(I)") = 1$, and $F(``\{\mathbf{a}_j: j \in s\} \in \mathbf{i}(\Delta)") = \mathbf{A}(s)$. Then clearly Theorem~\ref{Compactness} applies and gives $\mathbf{V}_1$ as desired.
	
	Since $\mathbf{V}$ is $\lambda^+$-saturated, we can in fact choose such $(\mathbf{a}_j: j \in J)$ from $\mathbf{V}$. 
	
	Conversely, suppose $(\mathbf{a}_j: j \in J)$ are given. Suppose $s \in [J]^{<\aleph_0}$ and $\mathbf{c} \in \mathcal{B}_{+}$ are given, such that $\mathbf{c}$ decides $\mathbf{A}_t$ for every $t \subseteq s$. Let $\Delta_0 = \{t \subseteq s: \mathbf{c} \leq \mathbf{A}_t\}$, a pattern on $s$. We need to find $\{i_j: j \in s\}$ such that for all $t \subseteq s$, $\{i_j: j \in t\} \in \Delta$ if and only if $t \in \Delta_0$. Suppose towards a contradiction this were impossible. Then by elementarity of $V \preceq \mathbf{V}$, we would have $\|$there exists $(\mathbf{b}_j: j \in J)$ from $\mathbf{i}(I)$ such that for all $t \subseteq s$, $\{\mathbf{b}_j: j \in t\} \in \mathbf{i}(\Delta)$ if and only if $t \in \Delta_0\|_{\mathbf{V}} = 0$. But $0 < \mathbf{c} \leq \|(\mathbf{a}_j: j \in J)$ is a sequence from $\mathbf{i}(I)$ and for all $t \subseteq s$, $\{\mathbf{a}_j: j \in t\}\in \mathbf{i}(\Delta)$ if and only if $t \in \Delta_0\|_{\mathbf{V}}$, contradiction.
\end{proof}
\begin{lemma}\label{CompactnessPatternsLemma2}
	Suppose $\mathcal{U}$ is an ultrafilter on the complete Boolean algebra $\mathcal{B}$, suppose $V \models ZFC^-$ is transitive, suppose $\mathbf{i}: V \preceq \mathbf{V}$ is $\lambda^+$-saturated, and suppose $\Delta \in V$ is a pattern on $I$. Suppose $\mathbf{A}$ is a $(J, \Delta)$-distribution in $\mathcal{B}$ as witnessed by $(\mathbf{a}_j: j \in J)$, i.e. each $\mathbf{a}_j \in \mathbf{i}_{\std}(I)$, and for all $s \in [J]^{<\aleph_0}$, $\|\{\mathbf{a}_j: j \in s\} \in \mathbf{i}(\Delta)\|_{\mathbf{V}} = \mathbf{A}(s)$. Finally, suppose $\mathbf{B}$ is a $J$-distribution in $\mathcal{B}$. Then $\mathbf{B}$ is a multiplicative refinement of $\mathbf{A}$ if and only if there is some $\mathbf{Y} \in \mathbf{i}_{\std}(\Delta)$ such that for all $s \in [J]^{<\aleph_0}$, $\|\{\mathbf{a}_j: j \in s\} \subseteq \mathbf{Y}\|_{\mathbf{V}} = \mathbf{B}(s)$.
\end{lemma}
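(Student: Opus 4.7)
\emph{Backward direction.} Suppose such $\mathbf{Y}$ exists. Multiplicativity of $\mathbf{B}$ will be immediate from the commutation of Boolean values with finite meets:
\[
\mathbf{B}(s) = \|\{\mathbf{a}_j : j \in s\} \subseteq \mathbf{Y}\|_{\mathbf{V}} = \bigwedge_{j \in s} \|\mathbf{a}_j \in \mathbf{Y}\|_{\mathbf{V}} = \bigwedge_{j \in s} \mathbf{B}(\{j\}).
\]
For refinement, I use that patterns are closed under subsets (Definition~\ref{PatternsDef}), so by elementarity $\mathbf{V}$ satisfies with value $1$ that every subset of an element of $\mathbf{i}(\Delta)$ lies in $\mathbf{i}(\Delta)$; together with $\|\mathbf{Y} \in \mathbf{i}(\Delta)\|_{\mathbf{V}} = 1$ this yields $\mathbf{B}(s) \leq \|\{\mathbf{a}_j : j \in s\} \in \mathbf{i}(\Delta)\|_{\mathbf{V}} = \mathbf{A}(s)$.

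\emph{Forward direction, strategy.} Assume $\mathbf{B}$ multiplicatively refines $\mathbf{A}$. Mirroring the proof of Lemma~\ref{CompactnessPatternsLemma1}, I will fix a small elementary submodel $\mathbf{V}_0 \preceq \mathbf{V}$ containing $\mathbf{i}(\Delta)$, $\mathbf{i}(I)$, and the $\mathbf{a}_j$'s, and apply Theorem~\ref{Compactness} to produce an elementary extension $\mathbf{V}_1 \succeq \mathbf{V}_0$ together with a fresh element $\mathbf{Y}$ satisfying $\|\mathbf{Y} \in \mathbf{i}(\Delta)\|_{\mathbf{V}_1} = 1$ and $\|\mathbf{a}_j \in \mathbf{Y}\|_{\mathbf{V}_1} = \mathbf{B}(\{j\})$ for each $j \in J$. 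Multiplicativity of $\mathbf{B}$ will then automatically upgrade these singleton conditions to $\|\{\mathbf{a}_j : j \in s\} \subseteq \mathbf{Y}\|_{\mathbf{V}_1} = \mathbf{B}(s)$, and $\lambda^+$-saturation of $\mathbf{V}$ (with $\lambda$ chosen large enough to dominate the relevant parameters) will let me realize such a $\mathbf{Y}$ already inside $\mathbf{V}$, as in Lemma~\ref{CompactnessPatternsLemma1}. I will also assume upstream that the witnesses have been chosen with $\|\mathbf{a}_j = \mathbf{a}_{j'}\|_{\mathbf{V}_0} = 0$ for $j \neq j'$; this can be arranged in Lemma~\ref{CompactnessPatternsLemma1} by adding the inequalities $\mathbf{a}_j \neq \mathbf{a}_{j'}$ to the diagram (harmless after expanding $I$ if necessary).

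\emph{Verifying condition (B) of Theorem~\ref{Compactness}.} Given a finite $\Gamma_0$ and $\mathbf{c} \in \mathcal{B}_+$, let $s \subseteq J$ be the (finite) set of $j$'s for which ``$\mathbf{a}_j \in \mathbf{Y}$'' appears in $\Gamma_0$. Refine $\mathbf{c}$ so that it decides $\mathbf{A}(t)$ for all $t \subseteq s$ and $\mathbf{B}(\{j\})$ for all $j \in s$, and put $s_0 := \{j \in s : \mathbf{c} \leq \mathbf{B}(\{j\})\}$. Multiplicativity and refinement then deliver
\[
\mathbf{c} \leq \mathbf{B}(s_0) = \bigwedge_{j \in s_0} \mathbf{B}(\{j\}) \leq \mathbf{A}(s_0) = \|\{\mathbf{a}_j : j \in s_0\} \in \mathbf{i}(\Delta)\|_{\mathbf{V}_0}.
\]
Since $(\mathbf{V}, \mathrm{id})$ witnesses (A) for the $\mathbf{V}_0$-diagram, (B) holds for that diagram, yielding a $\{0,1\}$-valued $M$ and $\tau : \mathbf{V}_0 \to M$ respecting the decisions of $\mathbf{c}$. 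Define $\tau(\mathbf{Y}) := \{\tau(\mathbf{a}_j) : j \in s_0\}^M$. Then $M \models \mathbf{Y} \in \mathbf{i}(\Delta)$ (from $\mathbf{c} \leq \mathbf{A}(s_0)$) and $M \models \mathbf{a}_j \in \mathbf{Y}$ for $j \in s_0$.

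\emph{The main obstacle} is the remaining requirement $M \models \mathbf{a}_j \notin \mathbf{Y}$ for $j \in s \setminus s_0$, which could only fail via a collision $\tau(\mathbf{a}_j) = \tau(\mathbf{a}_{j'})$ for some $j' \in s_0$. Such a collision would force $\mathbf{c} \leq F_0(\mathbf{a}_j = \mathbf{a}_{j'}) = \|\mathbf{a}_j = \mathbf{a}_{j'}\|_{\mathbf{V}_0}$, but the distinctness arrangement $\|\mathbf{a}_j = \mathbf{a}_{j'}\|_{\mathbf{V}_0} = 0$ rules this out; indeed under that hypothesis $\mathbf{c} \leq \lnot F_1(\mathbf{a}_j = \mathbf{a}_{j'}) = 1$ forces $M \models \mathbf{a}_j \neq \mathbf{a}_{j'}$ directly from (B). This is the only delicate point; with it cleared, condition (B) is verified, Theorem~\ref{Compactness} produces the desired $\mathbf{V}_1$ and $\mathbf{Y}$, and $\lambda^+$-saturation pulls $\mathbf{Y}$ down into $\mathbf{V}$, completing the proof.
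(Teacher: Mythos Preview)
Your approach mirrors the paper's exactly: the backward direction is argued the same way, and for the forward direction both fix a small $\mathbf{V}_0 \preceq \mathbf{V}$ containing $\mathbf{i}(I)$, $\mathbf{i}(\Delta)$ and the $\mathbf{a}_j$'s, set up $\Gamma$ as the diagram of $\mathbf{V}_0$ together with the constraints $\mathbf{Y}\in\mathbf{i}(\Delta)$ and $\{\mathbf{a}_j:j\in s\}\subseteq\mathbf{Y}$, apply Theorem~\ref{Compactness}, and then invoke $\lambda^+$-saturation. The paper simply asserts ``clearly Theorem~\ref{Compactness} applies,'' whereas you work through condition (B) in detail.

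You have correctly located the only delicate point, the collision problem, but your resolution does not prove the lemma as stated. The witnesses $(\mathbf{a}_j : j \in J)$ are given as part of the hypothesis; you are not free to rechoose them, and appealing to an ``upstream'' modification of Lemma~\ref{CompactnessPatternsLemma1} amounts to changing the statement. In fact the forward direction as literally written fails without a distinctness condition: if $0 < \|\mathbf{a}_j = \mathbf{a}_{j'}\|_{\mathbf{V}}$ for some $j \neq j'$ and one chooses a multiplicative refinement $\mathbf{B}$ with $\mathbf{B}(\{j\})$ and $\mathbf{B}(\{j'\})$ disagreeing below that value (nothing in ``$\mathbf{B}$ refines $\mathbf{A}$'' rules this out), then no $\mathbf{Y}$ can satisfy both $\|\mathbf{a}_j \in \mathbf{Y}\|_{\mathbf{V}} = \mathbf{B}(\{j\})$ and $\|\mathbf{a}_{j'} \in \mathbf{Y}\|_{\mathbf{V}} = \mathbf{B}(\{j'\})$. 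So the obstacle you flagged is genuine and is equally present behind the paper's ``clearly''; what is actually needed downstream (Theorem~\ref{CardInvarTheoremUlt}) is only the weaker statement that $\mathbf{A}$ has a multiplicative refinement in $\mathcal{U}$ iff some $\mathbf{Y}\in\mathbf{i}_{\std}(\Delta)$ has each $\|\mathbf{a}_j\in\mathbf{Y}\|_{\mathbf{V}}\in\mathcal{U}$, and for that version the collision issue evaporates.
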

\begin{proof}
	First, suppose $\mathbf{B}$ is a multiplicative refinement of $\mathbf{A}$. Choose $\mathbf{V}_0 \preceq \mathbf{V}$ with $|\mathbf{V}_0| \leq \lambda$, such that $\mathbf{i}(I), \mathbf{i}(\Delta)$ and each $\mathbf{a}_j \in \mathbf{V}_0$. 
	
	We claim that Theorem~\ref{Compactness} implies there is $\mathbf{V}_1 \succeq \mathbf{V}_0$ and $\mathbf{Y} \in \mathbf{V}_1$ such that $\| \mathbf{Y} \in \mathbf{i}(\Delta) \|_{\mathbf{V}} = 1$, and for all $s \in [J]^{<\aleph_0}$, $\|\{\mathbf{a}_j: j \in s\} \subseteq \mathbf{Y}\|_{\mathbf{V}} = \mathbf{B}(s)$. Indeed, let $\Gamma = \mathcal{L}(\mathbf{V}_0) \cup \{``\mathbf{Y} \in \mathbf{i}(\Delta)"\} \cup \{``\{\mathbf{a}_j: j \in s\} \subseteq \mathbf{Y}": s \in [J]^{<\aleph_0}\}$. We define $F_0 = F_1 = F: \Gamma \to \mathcal{B}$ via $F \restriction_{\mathcal{L}(\mathbf{V}_0)} = \|\cdot\|_{\mathbf{V}_0}$, and $F(\mathbf{Y} \in \mathbf{i}(\Delta)) = 1$, and $F(\{\mathbf{a}_j: j \in s\} \subseteq \mathbf{Y}) = \mathbf{B}(s)$. Then clearly Theorem~\ref{Compactness} applies and gives $\mathbf{V}_1$ as desired.
	
	Since $\mathbf{V}$ is $\lambda^+$-saturated, we can in fact choose such $\mathbf{Y} \in \mathbf{V}$.
	
	Conversely, suppose $\mathbf{Y}$ is given. Trivially, $\mathbf{B}$ is multiplicative. Furthermore, since each $\mathbf{B}(s) = \|\{\mathbf{a}_j: j \in s \} \subseteq \mathbf{Y}\|_{\mathbf{V}}$ and $\|\mathbf{Y} \in \mathbf{i}(\Delta)\|_{\mathbf{V}} = 1$, we have that $\mathbf{B}(s) \leq \|\{\mathbf{a}_j: j \in s\} \in \mathbf{i}(\Delta)\|_{\mathbf{V}} = \mathbf{A}(s)$, so $\mathbf{B}$ refines $\mathbf{A}$.
\end{proof}

As a first example of how these lemmas are used, we have the following.  It strengthens a theorem from \cite{BVModelsUlrich}, which states that $\mathcal{U}$ $\lambda^+$-saturates $T$ if and only if every $(\lambda, T)$-{\L}o{\'s} map in $\mathcal{U}$ has a multiplicative refinement in $\mathcal{U}$.

\begin{theorem}\label{CardInvarTheoremUlt}
	Suppose $\mathcal{U}$ is an ultrafilter on the complete Boolean algebra $\mathcal{B}$, suppose $V \models ZFC^-$ is transitive, suppose $\mathbf{i}: V \preceq \mathbf{V}$ is $\lambda^+$-saturated, and suppose $\Delta \in V$ is a pattern on $I$. Then $\lambda < \lambda_{\mathcal{U}}(\Delta) $ if and only if $\lambda < \lambda_{\mathbf{V}/\mathcal{U}}(\Delta)$. Thus, for all complete countable theories $T$, $\lambda_{\mathcal{U}}(T)$ is the minimum of all $\lambda_{\mathcal{U}}(T, \phi(\overline{x}, \overline{y}))$, for $\phi(\overline{x}, \overline{y})$ a formula of $T$; this is the same as the minimum of all $\lambda_{\mathcal{U}}(T, \phi(x, \overline{y}))$ for all formulas $\phi(x, \overline{y})$ of $T$ with $x$ a single variable.
\end{theorem}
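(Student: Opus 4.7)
The main task is the equivalence $\lambda < \lambda_{\mathcal{U}}(\Delta) \iff \lambda < \lambda_{\mathbf{V}/\mathcal{U}}(\Delta)$; the consequence for theories will be a formality once this is in hand. The plan is to use Lemmas~\ref{CompactnessPatternsLemma1} and~\ref{CompactnessPatternsLemma2} as a dictionary: a $(J,\Delta)$-distribution $\mathbf{A}$ in $\mathcal{B}$ is represented by a family $(\mathbf{a}_j : j \in J) \subseteq \mathbf{i}_{\std}(I)$ via $\mathbf{A}(s) = \|\{\mathbf{a}_j : j \in s\} \in \mathbf{i}(\Delta)\|_{\mathbf{V}}$, and a multiplicative refinement $\mathbf{B}$ corresponds to a covering $\mathbf{Y} \in \mathbf{i}_{\std}(\Delta)$ via $\mathbf{B}(s) = \|\{\mathbf{a}_j : j \in s\} \subseteq \mathbf{Y}\|_{\mathbf{V}}$. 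Applying $[\cdot]_{\mathcal{U}}$ turns the $\mathbf{V}$-side data into exactly the $\mathbf{V}/\mathcal{U}$-side data: families $(\mathbf{a}_j)$ become pseudofinite $X \subseteq \mathbf{j}(I)$ with $[X]^{<\aleph_0} \subseteq \mathbf{j}(\Delta)$, and covers $\mathbf{Y}$ become elements $\hat{X} \in \mathbf{j}(\Delta)$ containing $X$.

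For the forward direction, given a pseudofinite $X \subseteq \mathbf{j}(I)$ in $\mathbf{V}/\mathcal{U}$ with $|X| \leq \lambda$ and $[X]^{<\aleph_0} \subseteq \mathbf{j}(\Delta)$, I would pick representatives $(\mathbf{a}_j : j \in J)$ in $\mathbf{i}_{\std}(I)$ with $X = \{[\mathbf{a}_j]_{\mathcal{U}} : j \in J\}$ and $|J| \leq \lambda$, and define $\mathbf{A}(s) := \|\{\mathbf{a}_j : j \in s\} \in \mathbf{i}(\Delta)\|_{\mathbf{V}}$. By Lemma~\ref{CompactnessPatternsLemma1}, $\mathbf{A}$ is a $(J,\Delta)$-distribution, and the hypothesis on $X$ forces each $\mathbf{A}(s) \in \mathcal{U}$. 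Applying $\lambda < \lambda_{\mathcal{U}}(\Delta)$ yields a multiplicative refinement $\mathbf{B}$ of $\mathbf{A}$ in $\mathcal{U}$; Lemma~\ref{CompactnessPatternsLemma2} then gives $\mathbf{Y} \in \mathbf{i}_{\std}(\Delta)$ with $\|\mathbf{a}_j \in \mathbf{Y}\|_{\mathbf{V}} \geq \mathbf{B}(\{j\}) \in \mathcal{U}$ for every $j$, so $\hat{X} := [\mathbf{Y}]_{\mathcal{U}}$ lies in $\mathbf{j}(\Delta)$ and contains $X$.

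The reverse direction is where the work is, and the hard part will be arranging pseudofiniteness. Given a $(J,\Delta)$-distribution $\mathbf{A}$ in $\mathcal{U}$ with $|J| \leq \lambda$, Lemma~\ref{CompactnessPatternsLemma1} produces $(\mathbf{a}_j : j \in J) \subseteq \mathbf{i}_{\std}(I)$ realizing it, and I can set $X := \{[\mathbf{a}_j]_{\mathcal{U}} : j \in J\}$; the condition $[X]^{<\aleph_0} \subseteq \mathbf{j}(\Delta)$ follows from $\mathbf{A}(s) \in \mathcal{U}$. However, to invoke $\lambda < \lambda_{\mathbf{V}/\mathcal{U}}(\Delta)$ I need $X$ to be pseudofinite in $\mathbf{V}/\mathcal{U}$, which does not follow automatically. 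I plan to arrange this by a compactness/saturation argument: over a small $\mathbf{V}_0 \preceq \mathbf{V}$ containing the $\mathbf{a}_j$'s and $\mathbf{i}(I)$, use Theorem~\ref{Compactness} to build a full $\mathbf{V}_1 \succeq \mathbf{V}_0$ containing a new element $\mathbf{Z}$ with $\|\mathbf{Z} \subseteq \mathbf{i}(I)\|_{\mathbf{V}_1} = 1$, $\|\mathbf{Z} \text{ is finite}\|_{\mathbf{V}_1} = 1$ and $\|\mathbf{a}_j \in \mathbf{Z}\|_{\mathbf{V}_1} = 1$ for each $j$ (the finitary consistency check is trivial, taking $\mathbf{Z} = \{\mathbf{a}_j : j \in s\}$ for the relevant finite $s$), and then invoke the $\lambda^+$-saturation of $\mathbf{V}$ to pull $\mathbf{Z}$ back into $\mathbf{V}$. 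Then $[\mathbf{Z}]_{\mathcal{U}}$ witnesses pseudofiniteness of $X$. The hypothesis supplies $\hat{X} \in \mathbf{j}(\Delta)$ with $X \subseteq \hat{X}$; lifting $\hat{X}$ to some $\mathbf{Y}_0 \in \mathbf{V}$ and mixing $\mathbf{Y}_0$ with a default element of $\mathbf{i}_{\std}(\Delta)$ (available because $\emptyset \in \Delta$) produces $\mathbf{Y} \in \mathbf{i}_{\std}(\Delta)$ with $[\mathbf{Y}]_{\mathcal{U}} = \hat{X}$. Setting $\mathbf{B}(s) := \bigwedge_{j \in s} \|\mathbf{a}_j \in \mathbf{Y}\|_{\mathbf{V}}$ then gives, by Lemma~\ref{CompactnessPatternsLemma2}, a multiplicative refinement of $\mathbf{A}$ in $\mathcal{U}$.

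For the consequence about theories, specializing the equivalence to $\Delta = \Delta_\phi$ gives $\lambda_{\mathcal{U}}(T,\phi) = \lambda_{\mathbf{V}/\mathcal{U}}(T,\phi)$ for every formula $\phi$, and Corollary~\ref{KeislerAndInterp2} gives $\lambda_{\mathcal{U}}(T) = \lambda_{\mathbf{V}/\mathcal{U}}(T)$. A short compactness argument parallel to the one above (applied to a countable type over $\mathbf{V}/\mathcal{U}$) shows that the $\lambda^+$-saturation of $\mathbf{V}$ forces $\mathbf{V}/\mathcal{U}$ to be $\aleph_1$-saturated, so Theorem~\ref{CardInvarTheoremInterp} applies to $\mathbf{V}/\mathcal{U}$ and yields $\lambda_{\mathbf{V}/\mathcal{U}}(T) = \lambda^{loc,1}_{\mathbf{V}/\mathcal{U}}(T)$. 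Assembling these identifications gives $\lambda_{\mathcal{U}}(T) = \min_{\phi(x,\overline{y})} \lambda_{\mathcal{U}}(T,\phi)$, as required.
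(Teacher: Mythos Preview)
Your proposal is correct and follows essentially the same approach as the paper's proof: both use Lemmas~\ref{CompactnessPatternsLemma1} and~\ref{CompactnessPatternsLemma2} as the dictionary between $(J,\Delta)$-distributions and families in $\mathbf{i}_{\std}(I)$, both obtain pseudofiniteness of $X$ from the $\lambda^+$-saturation of $\mathbf{V}$ via Theorem~\ref{Compactness} (the paper cites this as Lemma~4.8 of \cite{InterpOrdersUlrich} rather than spelling it out), and both use the same mixing trick (replace $\mathbf{Y}'$ by $\emptyset$ off the event $\mathbf{Y}' \in \mathbf{i}(\Delta)$) to lift $\hat X$ to $\mathbf{Y} \in \mathbf{i}_{\std}(\Delta)$. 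The only cosmetic difference is that the paper argues each direction by contraposition/contradiction while you argue directly, and you explicitly justify the $\aleph_1$-saturation of $\mathbf{V}/\mathcal{U}$ where the paper simply asserts it.
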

\begin{proof}
	
	Write $\hat{V} = \mathbf{V}/\mathcal{U}$ and let $\mathbf{j}: V \preceq \hat{V}$ be the composition $[\cdot]_{\mathcal{U}} \circ \mathbf{i}$. We know that $\mathcal{U}$ $\lambda^+$-saturates $T$ if and only if $\hat{V}$ $\lambda^+$-pseudosaturates $T$ if and only if $\lambda < \lambda_{\hat{V}}(T)$, and since $\hat{V}$ is $\aleph_1$-saturated, we know by Theorem~\ref{CardInvarTheoremInterp} that $\lambda_{\hat{V}}(T) = \lambda_{\hat{V}}^{loc}(T) = \lambda_{\hat{V}}^{loc, 1}(T)$. Thus it suffices to show the first claim. So suppose $\Delta \in V$ is a pattern on $I$.
	
	Suppose first $\lambda \geq \lambda_{\mathcal{U}}(\Delta)$. Then we can find a $(\lambda, \Delta)$-distribution in $\mathcal{U}$ with no multiplicative refinement in $\mathcal{U}$; call it $\mathbf{A}$. By Lemma~\ref{CompactnessPatternsLemma1}, we can choose $(\mathbf{a}_\alpha: \alpha < \lambda)$ from $\mathbf{i}_{\std}(I)$, such that for all $s \in [\lambda]^{<\aleph_0}$, $\|\{\mathbf{a}_\alpha: \alpha \in s\} \in \mathbf{i}(\Delta)\|_{\mathbf{V}} = \mathbf{A}(s)$. Write $\hat{a}_\alpha = [\mathbf{a}_\alpha]_{\mathcal{U}}$ and write $X = \{\hat{a}_\alpha: \alpha < \lambda\} \subseteq \mathbf{j}(I)$. Since $\mathbf{V}$ is $\lambda^+$-saturated, an easy application of Theorem~\ref{Compactness} shows that $X$ is pseudofinite (this is Lemma 4.8 of \cite{InterpOrdersUlrich}). It suffices to show there is no $\hat{Y} \in \mathbf{j}(\Delta)$ with $\hat{X} \subseteq \hat{Y}$; suppose towards a contradiction there was some such $\hat{Y}$. Write $\hat{Y} = [\mathbf{Y}']_{\mathcal{U}}$. Choose $\mathbf{Y} \in \mathbf{V}$ such that $\|\mbox{if } \mathbf{Y}' \in \mathbf{i}(\Delta) \mbox{ then } \mathbf{Y} = \mathbf{Y}', \mbox{ and otherwise } \mathbf{Y} = \emptyset\|_{\mathbf{V}} = 1$; then $[\mathbf{Y}]_{\mathcal{U}} = \hat{Y}$ and further $\|\mathbf{Y} \in \mathbf{i}(\Delta)\|_{\mathbf{V}} = 1$. Define $\mathbf{B}(s) = \|\{a_\alpha: \alpha \in s\} \subseteq \mathbf{Y}\|_{\mathbf{V}}$; by Lemma~\ref{CompactnessPatternsLemma2} this is a multiplicative refinement of $\mathbf{A}$ in $\mathcal{U}$, contradicting the choice of $\mathbf{A}$. Thus no such $\hat{Y}$ exists, and this witnesses $\lambda \geq \lambda_{\hat{V}}(\Delta)$.
	
	Conversely, if $\lambda \geq \lambda_{\hat{V}}(\Delta)$, then we can find some pseudofinite $X \subseteq \mathbf{j}(I)$ of size at most $\lambda$ with $[X]^{<\aleph_0} \subseteq \mathbf{j}(\Delta)$, such that there is no $\hat{Y} \in \mathbf{j}(\Delta)$ with $X \subseteq \hat{Y}$. Enumerate $X = \{[\mathbf{a}_\alpha]_{\mathcal{U}}: \alpha < \lambda\}$; by a similar trick as above, we can arrange that each $\|\mathbf{a}_\alpha \in \mathbf{i}(I)\|_{\mathbf{V}} = 1$. For each $s \in [\lambda]^{<\aleph_0}$ define $\mathbf{A}(s) = \|\{\mathbf{a}_\alpha: \alpha \in s\} \in \mathbf{i}(\Delta)\|_{\mathbf{V}}$. Then $\mathbf{A}$ is an $(I, \Delta)$-distribution in $\mathcal{U}$, by Lemma~\ref{CompactnessPatternsLemma1}. Suppose towards a contradiction that $\mathbf{B}$ were a multiplicative refinement of $\mathbf{A}$ in $\mathcal{U}$. By Lemma~\ref{CompactnessPatternsLemma2}, we can choose $\mathbf{Y} \in \mathbf{i}_{\std}(\Delta)$ such that for  all $s \in [\lambda]^{<\aleph_0}$, $\|\{\mathbf{a}_\alpha: \alpha \in s\} \in \mathbf{Y}\|_{\mathbf{V}} = \mathbf{B}(s)$; then $\hat{Y} = [\mathbf{Y}]_{\mathcal{U}}$ contradicts our choice of $X$.
\end{proof}

We now give characterizations of $\lambda_{\mathcal{U}}(\Delta(IP))$, $\lambda_{\mathcal{U}}(\Delta(TP_2))$ and $\lambda_{\mathcal{U}}(\Delta(FDP))$.

\begin{lemma}\label{IPLemma2}
	Suppose $\mathcal{B}$ is a complete Boolean algebra and $\mathcal{U}$ is an ultrafilter on $\mathcal{B}$. Write $I = \lambda \times 2$. Then $\lambda_{\mathcal{U}}(\Delta(IP))$ is the least $\lambda$ such that there is a $I$-distribution $\mathbf{A}$ in $\mathcal{U}$ of the following form, with no multiplicative refinement in $\mathcal{U}$. Namely, for some $\mathbf{V} \models^{\mathcal{B}} ZFC^-$ and for some $(\mathbf{a}_{\alpha, i}: (\alpha, i) \in I)$ from $\mathbf{V}$, we have that each $\mathbf{A}(s) =\bigwedge\{\|\mathbf{a}_{\alpha, 0} \not= \mathbf{a}_{\beta, 1}\|_{\mathbf{V}}: (\alpha, 0), (\beta, 1) \in s\}$.
\end{lemma}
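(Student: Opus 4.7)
The plan is to combine Theorem~\ref{CardInvarTheoremUlt} with Lemma~\ref{IPLemma1} and Theorem~\ref{Compactness}. Fix a $\lambda^+$-saturated full $\mathbf{V} \models^{\mathcal{B}} ZFC^-$ with pre-{\L}o{\'s} embedding $\mathbf{i} : V \preceq \mathbf{V}$; write $\hat{V} = \mathbf{V}/\mathcal{U}$ and $\mathbf{j} = [\cdot]_{\mathcal{U}} \circ \mathbf{i}$. Since the ``no multiplicative refinement in $\mathcal{U}$'' condition is intrinsic to $\mathcal{B}$, we may replace any $\mathbf{V}_0$ witnessing a special-form distribution with such a $\mathbf{V}$ by elementary extension. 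By Theorem~\ref{CardInvarTheoremUlt}, $\lambda_{\mathcal{U}}(\Delta(IP)) = \lambda_{\hat{V}}(\Delta(IP))$, and by Lemma~\ref{IPLemma1} this equals the least $\lambda$ admitting disjoint pseudofinite $X_0, X_1 \subseteq \hat{V}$ of cardinality $\leq \lambda$ with no disjoint internal supersets. It therefore suffices to show that this cardinal coincides with the quantity $\lambda^*$ described in the lemma.

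For $\lambda^* \leq \lambda_{\hat{V}}(\Delta(IP))$: given such disjoint pseudofinite $X_0, X_1$, enumerate $X_i = \{[\mathbf{a}_{\alpha, i}]_{\mathcal{U}} : \alpha < \lambda\}$ (padding with repetitions if $|X_i| < \lambda$), and define $\mathbf{A}$ by the formula in the lemma. Since $X_0 \cap X_1 = \emptyset$ in $\hat{V}$, each $\|\mathbf{a}_{\alpha, 0} \neq \mathbf{a}_{\beta, 1}\|_{\mathbf{V}} \in \mathcal{U}$, so $\mathbf{A}$ lies in $\mathcal{U}$. If $\mathbf{B}$ were a multiplicative refinement in $\mathcal{U}$, with $\mathbf{b}_{\alpha, i} := \mathbf{B}(\{(\alpha, i)\})$, then I would apply Theorem~\ref{Compactness} to the assertions ``$\mathbf{Y}_0 \cap \mathbf{Y}_1 = \emptyset$'', ``$\mathbf{Y}_i$ is finite'', and ``$\mathbf{a}_{\alpha, i} \in \mathbf{Y}_i$'' (with values $1$, $1$, $\mathbf{b}_{\alpha, i}$ respectively), combined with the $\lambda^+$-saturation of $\mathbf{V}$, to produce $\mathbf{Y}_0, \mathbf{Y}_1 \in \mathbf{V}$ realizing these constraints. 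The finite satisfiability check is the algebraic heart of the argument: whenever $\mathbf{c} \leq \mathbf{b}_{\alpha, 0} \wedge \mathbf{b}_{\beta, 1}$, we have $\mathbf{c} \leq \mathbf{A}(\{(\alpha, 0), (\beta, 1)\}) \leq \|\mathbf{a}_{\alpha, 0} \neq \mathbf{a}_{\beta, 1}\|_{\mathbf{V}}$, so that the candidates $\mathbf{Y}_i := \{\mathbf{a}_{\alpha, i}: \mathbf{c} \leq \mathbf{b}_{\alpha, i}\}$ really are disjoint and finite in any specialization realizing $\mathbf{c}$. Then $[\mathbf{Y}_0]_{\mathcal{U}}, [\mathbf{Y}_1]_{\mathcal{U}}$ would be disjoint internal supersets of $X_0, X_1$, contradicting the hypothesis.

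For $\lambda_{\hat{V}}(\Delta(IP)) \leq \lambda^*$: given a special-form $\mathbf{A}$ in $\mathcal{U}$ witnessed by $(\mathbf{a}_{\alpha, i})$ from $\mathbf{V}$ with no multiplicative refinement in $\mathcal{U}$, set $X_i := \{[\mathbf{a}_{\alpha, i}]_{\mathcal{U}} : \alpha < \lambda\} \subseteq \hat{V}$. Disjointness follows from $\mathbf{A}(\{(\alpha, 0), (\beta, 1)\}) \in \mathcal{U}$, and pseudofiniteness from Lemma~4.8 of \cite{InterpOrdersUlrich} via the $\lambda^+$-saturation of $\mathbf{V}$. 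If $\hat{Y}_0, \hat{Y}_1$ were disjoint internal supersets with $\hat{Y}_i = [\mathbf{Y}_i]_{\mathcal{U}}$, then by replacing $\mathbf{Y}_0$ with a mixture equal to $\mathbf{Y}_0$ on $\|\mathbf{Y}_0 \cap \mathbf{Y}_1 = \emptyset\|_{\mathbf{V}} \in \mathcal{U}$ and $\emptyset$ elsewhere, I may assume $\|\mathbf{Y}_0 \cap \mathbf{Y}_1 = \emptyset\|_{\mathbf{V}} = 1$ without changing $[\mathbf{Y}_0]_{\mathcal{U}}$. Then $\mathbf{B}(s) := \bigwedge_{(\alpha, i) \in s} \|\mathbf{a}_{\alpha, i} \in \mathbf{Y}_i\|_{\mathbf{V}}$ is visibly multiplicative, lies in $\mathcal{U}$ (as each generating factor does), and refines $\mathbf{A}$ because $\|\mathbf{Y}_0 \cap \mathbf{Y}_1 = \emptyset\|_{\mathbf{V}} = 1$ forces $\|\mathbf{a}_{\alpha, 0} \in \mathbf{Y}_0 \wedge \mathbf{a}_{\beta, 1} \in \mathbf{Y}_1\|_{\mathbf{V}} \leq \|\mathbf{a}_{\alpha, 0} \neq \mathbf{a}_{\beta, 1}\|_{\mathbf{V}}$. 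This contradicts the hypothesis.

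The main obstacle is the compactness-plus-saturation step in the first direction: extracting $\mathbf{Y}_0, \mathbf{Y}_1 \in \mathbf{V}$ that are disjoint and internally finite with $\mathcal{B}$-value exactly $1$, from the data of a putative multiplicative refinement. The special algebraic shape of $\mathbf{A}$, via the inequality $\mathbf{b}_{\alpha, 0} \wedge \mathbf{b}_{\beta, 1} \leq \|\mathbf{a}_{\alpha, 0} \neq \mathbf{a}_{\beta, 1}\|_{\mathbf{V}}$, is precisely what makes this extraction possible.
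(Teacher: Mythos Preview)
Your proof is correct but takes a different route from the paper. The paper works entirely at the level of distributions: it first observes that any distribution of the special form is a $(\lambda\times 2,\Delta(IP))$-distribution, and then, for the converse, takes an arbitrary $(\lambda,\Delta(IP))$-distribution $\mathbf{A}$ in $\mathcal{U}$, realizes it via witnesses $\mathbf{x}_\alpha\in\mathbf{i}_{\std}(\omega\times 2)$ using Lemma~\ref{CompactnessPatternsLemma1}, splits each $\mathbf{x}_\alpha$ into its first coordinate $\mathbf{m}_\alpha$ and its side $\mathbf{k}_\alpha\in\{0,1\}$, passes to a conservative refinement $\mathbf{A}'$ that decides each $\mathbf{k}_\alpha$ according to $\mathcal{U}$, and then reorganizes the $\mathbf{m}_\alpha$'s into a special-form distribution $\mathbf{A}''$ whose multiplicative refinements visibly give multiplicative refinements of $\mathbf{A}'$.

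You instead pass through the specialization $\hat{V}=\mathbf{V}/\mathcal{U}$: you invoke Theorem~\ref{CardInvarTheoremUlt} and Lemma~\ref{IPLemma1} to identify $\lambda_{\mathcal{U}}(\Delta(IP))$ with the least $\lambda$ at which disjoint pseudofinite $X_0,X_1\subseteq\hat{V}$ cannot be internally separated, and then match that description directly against the special-form distributions via compactness plus saturation. This is a perfectly good argument, and is arguably closer in spirit to Lemmas~\ref{CompactnessPatternsLemma1}--\ref{CompactnessPatternsLemma2}. What the paper's approach buys is self-containment (no detour through $\hat{V}$) and a concrete recipe for turning any $(\lambda,\Delta(IP))$-distribution into a special-form one; what your approach buys is that it reuses the combinatorial content of Lemma~\ref{IPLemma1} rather than reproving it inside $\mathcal{B}$. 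Two small points to tighten: in your compactness step you should set $F_0(``\mathbf{a}_{\alpha,i}\in\mathbf{Y}_i")=\mathbf{b}_{\alpha,i}$ and $F_1=1$ (you only need $\geq\mathbf{b}_{\alpha,i}$, and asking for equality can fail finite satisfiability); and the finiteness constraint on $\mathbf{Y}_i$ is unnecessary since Lemma~\ref{IPLemma1} does not require the separating sets to be pseudofinite. Your ``WLOG replace $\mathbf{V}_0$ by a $\lambda^+$-saturated $\mathbf{V}$'' is fine: elementarily extend and embed a countable transitive $V\models ZFC^-$ by saturation.
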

\begin{proof}
	Easily, any such $\mathbf{A}$ is a $(\lambda \times 2, \Delta({IP}))$-distribution. Conversely, suppose $\mathbf{A}$ is a given $(\lambda, \Delta({IP}))$-distribution in $\mathcal{U}$. Choose some transitive $V \models ZFC^-$, choose $\mathbf{i}: V \preceq \mathbf{V}$ with $\mathbf{V}$ $\lambda^+$-saturated, and choose $(\mathbf{x}_\alpha: \alpha < \lambda)$ a pseudofinite sequence from $\mathbf{i}_{\std}(\omega \times \omega)$ such that for all $s \in [\lambda]^{<\aleph_0}$, $\|\{\mathbf{x}_\alpha: \alpha \in s\} \in \mathbf{i}(\Delta({IP}))\|_{\mathbf{V}} = \mathbf{A}(s)$; this is possible by Lemma~\ref{CompactnessPatternsLemma1}.  
	
	For each $\alpha < \lambda$, choose $\mathbf{m}_{\alpha}$ such that $\|\mathbf{x}_\alpha \in \{\mathbf{m}_\alpha\} \times 2\|_{\mathbf{V}} = 1$, and choose $\mathbf{k}_\alpha$ such that $\|\mathbf{x}_\alpha \in \omega \times \{\mathbf{k}_\alpha\}\|_{\mathbf{V}} = 1$; this is possible by fullness of $\mathbf{V}$ (note $\mathbf{k}_\alpha$ is determined by the pair $(\|\mathbf{k}_\alpha = 0\|_{\mathbf{V}}, \|\mathbf{k}_\alpha = 1\|_{\mathbf{V}})$. For each $\alpha < \lambda$, there is a unique $f(\alpha) < 2$ such that $\|\mathbf{k}_\alpha = f(\alpha)\|_{\mathbf{V}} \in \mathcal{U}$. For each $s \in [\lambda]^{<\aleph_0}$, let $\mathbf{A}'(s) = \mathbf{A}(s) \wedge \bigwedge_{\alpha \in s} \|\mathbf{k}_\alpha = f(\alpha)\|_{\mathbf{V}}$; then this is a conservative refinement of $\mathbf{A}$ in $\mathcal{U}$. Thus $\mathbf{A}'$ has a multiplicative refinement in $\mathcal{U}$ if and only if $\mathbf{A}$ does.
	
	For each $i < 2$, let $\{\mathbf{a}_{\alpha, i}: \alpha < \lambda\}$ list $\{\mathbf{m}_\beta: f(\beta) = i\}$, with repetitions if necessary. Let $\mathbf{A}''$ be the $\lambda \times 2$-distribution defined from $(\mathbf{a}_{\alpha, i}: \alpha < \lambda, i < 2)$ as in the statement of the lemma. Note that $\mathbf{A}''$ is in $\mathcal{U}$, since whenever $f(\beta) \not= f(\beta')$, we have $\|\mathbf{m}_\beta \not= \mathbf{m}_{\beta'}\|_{\mathbf{V}} \in \mathcal{U}$. Thus $\mathbf{A}''$ has a multiplicative refinement in $\mathcal{U}$, which easily gives a multiplicative refinement of $\mathbf{A}'$.
\end{proof}

\begin{corollary}\label{RandGraphUlt}
	Suppose $\mathcal{U}$ is an ultrafilter on the complete Boolean algebra $\mathcal{B}$. Then the following are equivalent:
	
	\begin{itemize}
		\item[(A)] $\mathcal{U}$ $\lambda^+$-saturates $T_{rg}$;
		\item[(B)] $\mathcal{U}$ $\lambda^+$-saturates some unstable theory;
		\item[(C)] $\lambda < \lambda_{\mathcal{U}}(\Delta(IP))$.
	\end{itemize}
\end{corollary}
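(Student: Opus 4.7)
The plan is to reduce everything to the pseudosaturation version, Theorem~\ref{RandGraphInterp}, via the bridge provided by Theorem~\ref{CardInvarTheoremUlt} and Corollary~\ref{KeislerAndInterp2}. The strategy is uniform: if we can translate each of (A), (B), (C) into a statement about a single specialization $\hat{V} = \mathbf{V}/\mathcal{U}$, where $\mathbf{i}: V \preceq \mathbf{V}$ is $\lambda^+$-saturated for a suitably chosen transitive $V \models ZFC^-$, then the equivalence is already on the table.

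First I would set up the dictionary. Fix a countable transitive $V \models ZFC^-$ containing $T_{rg}$ (and, when needed, whatever unstable theory $T$ is under consideration), pick $\mathbf{i}: V \preceq \mathbf{V}$ with $\mathbf{V}$ a full $\mathcal{B}$-valued model that is $\lambda^+$-saturated, and put $\hat{V} = \mathbf{V}/\mathcal{U}$ with $\mathbf{j} = [\cdot]_{\mathcal{U}} \circ \mathbf{i}$. Corollary~\ref{KeislerAndInterp2} then tells us that ``$\mathcal{U}$ $\lambda^+$-saturates $T$'' is the same as ``$\hat{V}$ $\lambda^+$-pseudosaturates $T$,'' for any complete countable $T \in V$. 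Theorem~\ref{CardInvarTheoremUlt} tells us that $\lambda < \lambda_{\mathcal{U}}(\Delta(IP))$ if and only if $\lambda < \lambda_{\hat{V}}(\Delta(IP))$. So (A), (B), (C) here translate, respectively, into (A), (B), (C) of Theorem~\ref{RandGraphInterp}, and applying that theorem finishes the argument.

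To spell out the three implications cleanly: (A) $\Rightarrow$ (B) is trivial, since $T_{rg}$ is unstable. For (A) $\iff$ (C), choose $V$ to contain $T_{rg}$; then by the translation above, (A) is equivalent to $\lambda < \lambda_{\hat{V}}(T_{rg})$, which by Theorem~\ref{RandGraphInterp} is equivalent to $\lambda < \lambda_{\hat{V}}(\Delta(IP))$, which by Theorem~\ref{CardInvarTheoremUlt} is exactly (C). For (B) $\Rightarrow$ (C), given an unstable $T$ that $\mathcal{U}$ $\lambda^+$-saturates, choose $V$ to contain $T$ and run the same translation: $\hat{V}$ $\lambda^+$-pseudosaturates $T$, so the implication (B)$\Rightarrow$(C) of Theorem~\ref{RandGraphInterp} gives $\lambda < \lambda_{\hat{V}}(\Delta(IP)) = \lambda_{\mathcal{U}}(\Delta(IP))$.

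There is really no obstacle of substance, only a bookkeeping point: the choice of $V$ depends on which theory we want to test, but each direction of the equivalence only invokes one theory at a time, so for each implication a suitable $V$ can be picked independently. One may worry that (C) is stated without reference to a particular $V$, but Theorem~\ref{CardInvarTheoremUlt} makes $\lambda_{\mathcal{U}}(\Delta(IP))$ manifestly independent of the choice of $\mathbf{V}$ (it is computed purely from $\mathcal{U}$ and $\mathcal{B}$), so the bridge is well-defined regardless of which $V$ we fix. Hence the proof is just an application of Theorem~\ref{RandGraphInterp} conjugated by the two bridge results.
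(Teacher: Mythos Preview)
Your proposal is correct and matches the paper's intended argument. The paper states Corollary~\ref{RandGraphUlt} without proof, leaving it as an immediate consequence of Theorem~\ref{RandGraphInterp} via the bridges Corollary~\ref{KeislerAndInterp2} and Theorem~\ref{CardInvarTheoremUlt}; your write-up spells out exactly this reduction, including the minor bookkeeping about choosing $V$ separately for each implication.
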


It is a major open problem in the subject, see e.g. Problem (1) in the list of open problems in \cite{InterpOrders}, to determine the Keisler class of the random graph model-theoretically. Examples of theories in this class are rather sparse; for instance, one can show $n$-ary random hypergraphs are equivalent to $T_{rg}$, but the following concrete question remains open. Let $ACFA$ be the theory of an algebraically closed field of with a generic automorphism. $ACFA$ is incomplete; one must specify the characteristic, and also the isomorphism type of the automorphism restricted to the algebraic closure of the emptyset. 

\vspace{2 mm}

\noindent \textbf{Question.} Suppose $T$ is a completion of $ACFA$. Is $T$ equivalent to the random graph in $\trianglelefteq^\times_{1}$, or at least in $\trianglelefteq$?

(A) $\leq$ (C) of the following lemma is Lemma 6.8 of \cite{HypSeq}, in the special case where $\mathcal{U}$ is an ultrafilter on $\mathcal{P}(\lambda)$. The other inequalities are implicit in Section 6 there, although they are formulated differently. In \cite{GoodForEq}, Malliaris defines $\mathcal{U}$ to be $\lambda^+$-good for equality if $\lambda$ is less than the value in (C).

\begin{lemma}\label{TP2Lemma2}
	Suppose $\mathcal{B}$ is a complete Boolean algebra and $\mathcal{U}$ is an ultrafilter on $\mathcal{B}$. Then the following cardinals are equal:
	
	\begin{itemize}
		\item[(A)] $\lambda_{\mathcal{U}}(\Delta(TP_2))$;
		\item[(B)] The least $\lambda$ such that there are $\mathbf{V} \models^{\mathcal{B}} ZFC^-$ and $(\mathbf{a}_\alpha: \alpha < \lambda)$ from $\mathbf{V}$, such that there is no multiplicative $\lambda$-distribution $\mathbf{B}$ in $\mathcal{U}$ such that each $\mathbf{B}(\{\alpha, \beta\})$ decides $\|\mathbf{a}_\alpha = \mathbf{a}_\beta\|_{\mathbf{V}}$ (necessarily as dictated by $\mathcal{U}$);
		\item[(C)] The least $\lambda$ such that there are $\mathbf{V} \models^{\mathcal{B}} ZFC^-$ and $(\mathbf{a}_\alpha: \alpha < \lambda)$ from $\mathbf{V}$, such that $[\mathbf{a}_\alpha]_{\mathcal{U}} \not= [\mathbf{a}_\beta]_{\mathcal{U}}$ for all $\alpha \not= \beta$, and such that there is no multiplicative $\lambda$-distribution $\mathbf{B}$ in $\mathcal{U}$ with $\mathbf{B}(\{\alpha, \beta\}) \leq \|\mathbf{a}_\alpha \not= \mathbf{a}_\beta\|_{\mathbf{V}}$ for all $\alpha \not= \beta$.
	\end{itemize}
\end{lemma}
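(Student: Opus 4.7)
The plan is to prove $(A)=(C)$ directly using Lemma~\ref{CompactnessPatternsLemma1}, and then observe $(B)=(C)$ by an almost tautological argument. Throughout we use that an ultrafilter $\mathcal{U}$ is \emph{prime}: if $\mathbf{a}\vee\mathbf{b}\in\mathcal{U}$ and $\mathbf{a}\notin\mathcal{U}$, then $\mathbf{b}\in\mathcal{U}$ (a one-line consequence of $\neg\mathbf{a}\in\mathcal{U}$). Note also that since $\Delta(TP_2)$ is determined by pairs, any $(\lambda,\Delta(TP_2))$-distribution $\mathbf{A}$ satisfies $\mathbf{A}(s)=\bigwedge_{\alpha\ne\beta\in s}\mathbf{A}(\{\alpha,\beta\})$.

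\emph{$(A)\le(C)$:} Given a witness $(\mathbf{a}_\alpha:\alpha<\lambda)$ for $(C)$ inside some $\mathbf{V}$, define $\mathbf{A}(s):=\bigwedge_{\alpha\ne\beta\in s}\|\mathbf{a}_\alpha\ne\mathbf{a}_\beta\|_{\mathbf{V}}$. This is in $\mathcal{U}$ because the $[\mathbf{a}_\alpha]_\mathcal{U}$ are distinct. To verify it is a $(\lambda,\Delta(TP_2))$-distribution, fix $s$ and $\mathbf{c}\in\mathcal{B}_+$ deciding each $\mathbf{A}(t)$ for $t\subseteq s$, and declare $\alpha\sim_\mathbf{c}\beta$ iff $\mathbf{c}\le\|\mathbf{a}_\alpha=\mathbf{a}_\beta\|_{\mathbf{V}}$; this is an equivalence relation (transitivity from the Boolean-valued equality axioms in $\mathbf{V}$). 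The map $f:s\to\omega\times\omega$ sending each $\alpha$ to (class-index, within-class-index) then realizes the instance property. Any multiplicative refinement $\mathbf{B}$ of $\mathbf{A}$ in $\mathcal{U}$ would satisfy $\mathbf{B}(\{\alpha,\beta\})\le\mathbf{A}(\{\alpha,\beta\})=\|\mathbf{a}_\alpha\ne\mathbf{a}_\beta\|_{\mathbf{V}}$, contradicting the hypothesis on $(\mathbf{a}_\alpha)$.

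\emph{$(C)\le(A)$:} Let $\mathbf{A}$ be a $(\lambda,\Delta(TP_2))$-distribution in $\mathcal{U}$ with no multiplicative refinement. Fix a transitive $V\models ZFC^-$ and $\mathbf{i}:V\preceq\mathbf{V}$ with $\mathbf{V}$ $\lambda^+$-saturated and full, and apply Lemma~\ref{CompactnessPatternsLemma1} to obtain $(\mathbf{x}_\alpha:\alpha<\lambda)$ from $\mathbf{i}_{\std}(\omega\times\omega)$ with $\|\{\mathbf{x}_\alpha:\alpha\in s\}\in\mathbf{i}(\Delta(TP_2))\|_{\mathbf{V}}=\mathbf{A}(s)$; by fullness decompose $\mathbf{x}_\alpha=(\mathbf{m}_\alpha,\mathbf{k}_\alpha)$. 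By a representatives-and-lifting argument (using $\mathbf{B}(\{\alpha\}):=\mathbf{B}_0(\{\gamma(\alpha)\})\wedge\|\mathbf{x}_\alpha=\mathbf{x}_{\alpha_{\gamma(\alpha)}}\|_{\mathbf{V}}$ to promote any multiplicative refinement of the reduced distribution to one of $\mathbf{A}$), we may pass to a subset of size $\mu\le\lambda$ on which the $[\mathbf{x}_\alpha]_\mathcal{U}$ are pairwise distinct. Now for $\alpha\ne\beta$, a two-element subset of $\omega\times\omega$ lies in $\Delta(TP_2)$ iff it is a singleton or has distinct first coordinates, so $\mathbf{A}(\{\alpha,\beta\})=\|\mathbf{x}_\alpha=\mathbf{x}_\beta\|_{\mathbf{V}}\vee\|\mathbf{m}_\alpha\ne\mathbf{m}_\beta\|_{\mathbf{V}}\in\mathcal{U}$, while $\|\mathbf{x}_\alpha=\mathbf{x}_\beta\|_{\mathbf{V}}\notin\mathcal{U}$, so primeness gives $\|\mathbf{m}_\alpha\ne\mathbf{m}_\beta\|_{\mathbf{V}}\in\mathcal{U}$. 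Thus the $[\mathbf{m}_\alpha]_\mathcal{U}$ are distinct, and any multiplicative $\mathbf{B}'$ in $\mathcal{U}$ with $\mathbf{B}'(\{\alpha,\beta\})\le\|\mathbf{m}_\alpha\ne\mathbf{m}_\beta\|_{\mathbf{V}}$ would satisfy $\mathbf{B}'(\{\alpha,\beta\})\le\mathbf{A}(\{\alpha,\beta\})$ and (pairwise-determination) be a multiplicative refinement of $\mathbf{A}$, contradicting its choice. Hence $(\mathbf{m}_\alpha:\alpha<\mu)$ witnesses $(C)$.

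\emph{$(B)=(C)$:} A witness for $(C)$ is automatically a witness for $(B)$: any multiplicative $\mathbf{B}\in\mathcal{U}$ deciding each $\|\mathbf{a}_\alpha=\mathbf{a}_\beta\|_{\mathbf{V}}$ must decide it as false (forcing $\mathbf{B}(\{\alpha,\beta\})\le\|\mathbf{a}_\alpha\ne\mathbf{a}_\beta\|_{\mathbf{V}}$) when the classes are distinct. Conversely, from a witness $(\mathbf{a}_\alpha:\alpha<\lambda)$ for $(B)$, take representatives $(\alpha_\gamma:\gamma<\mu)$ of the $\sim_\mathcal{U}$-classes; a counterexample $\mathbf{B}'$ at level $\mu$ for $(C)$ lifts, via the same formula as above, to a multiplicative distribution on $\lambda$ in $\mathcal{U}$ deciding all equalities, contradicting $(B)$. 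The main obstacle throughout is the lifting calculation: one must check in both the equal-class and distinct-class cases that $\mathbf{B}(\{\alpha,\beta\})\le\mathbf{A}(\{\alpha,\beta\})$ (respectively that $\mathbf{B}(\{\alpha,\beta\})$ decides $\|\mathbf{a}_\alpha=\mathbf{a}_\beta\|_{\mathbf{V}}$), and this is where the specific shape of the pattern $\Delta(TP_2)$---``a partial function''---enters crucially via the primeness step.
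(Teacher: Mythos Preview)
Your argument is correct, and the overall shape---the $A\le C$ direction, and the representatives/lifting manoeuvre relating $(B)$ and $(C)$---matches the paper. The one genuine difference is in closing the cycle. The paper proves $\lambda_B\le\lambda_A$ as the intermediate step: given a $(\lambda,\Delta(TP_2))$-distribution, it decomposes $\mathbf{x}_\alpha=(\mathbf{n}_\alpha,\mathbf{m}_\alpha)$ and applies condition $(B)$ \emph{twice}, once to decide equalities among the $\mathbf{n}_\alpha$'s and once among the $\mathbf{m}_\alpha$'s; the conjunction of the two resulting multiplicative distributions then refines $\mathbf{A}$. You instead prove $\lambda_C\le\lambda_A$ directly: after reducing to $\mathcal{U}$-distinct $\mathbf{x}_\alpha$'s, you use primeness of $\mathcal{U}$ on the identity $\mathbf{A}(\{\alpha,\beta\})=\|\mathbf{x}_\alpha=\mathbf{x}_\beta\|\vee\|\mathbf{m}_\alpha\ne\mathbf{m}_\beta\|$ to conclude the \emph{first} coordinates are already $\mathcal{U}$-distinct, and observe that $\|\mathbf{m}_\alpha\ne\mathbf{m}_\beta\|\le\mathbf{A}(\{\alpha,\beta\})$ so any $(C)$-type refinement already refines $\mathbf{A}$. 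Your route is a touch more economical (one coordinate suffices, no double application of $(B)$); the paper's route is more modular, treating $(B)$ as a natural halfway house and avoiding the primeness observation entirely.
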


\begin{proof}
	Let $\lambda_A, \lambda_B, \lambda_C$ be the cardinals defined in items (A), (B), (C). 
	
	$\lambda_C \leq \lambda_B$: suppose $\lambda < \lambda_C$, we show $\lambda < \lambda_B$. Let $\mathbf{V}, (\mathbf{a}_\alpha: \alpha < \lambda)$ be given. Let $E$ be the equivalence relation on $\lambda$ defined via: $E(\alpha, \beta)$ if $[\mathbf{a}_\alpha]_{\mathcal{U}} = [\mathbf{a}_\beta]_{\mathcal{U}}$ (i.e. $\| \mathbf{a}_\alpha = \mathbf{a}_\beta\|_{\mathbf{V}} \in \mathcal{U}$). Let $I \subseteq \lambda$ be a choice of representative for $\lambda/E$, i.e., such that each $\alpha < \lambda$ is $E$-related to exactly one $\beta \in I$. Let $f: \lambda \to I$ be the map witnessing this, so for all $\alpha < \lambda$ and for all $\beta \in I$, $[\mathbf{a}_\alpha]_{\mathcal{U}} = [\mathbf{a}_\beta]_{\mathcal{U}}$ if and only if $\beta = f(\alpha)$. Since $\lambda < \lambda_C$, we can find a multiplicative $I$-distribution $\mathbf{B}_0$ in $\mathcal{U}$ with each $\mathbf{B}_0(\alpha, \beta) \leq \|\mathbf{a}_\alpha \not= \mathbf{a}_\beta\|_{\mathbf{V}}$. Define $\mathbf{B}$, a $\lambda$-distribution in $\mathcal{U}$, via $\mathbf{B}(s) = \mathbf{B}_0(f[s]) \wedge \bigwedge_{\alpha \in s} \|\mathbf{a}_\alpha = \mathbf{a}_{f(\alpha)}\|_{\mathbf{V}}$. Then $\mathbf{B}$ is clearly witnesses $\lambda < \lambda_B$.
	
	$\lambda_B \leq \lambda_A$: suppose $\lambda < \lambda_B$, we show $\lambda < \lambda_A$. Suppose $\mathbf{A}$ is a given $(\lambda, \Delta({TP}_2))$-distribution in $\mathcal{U}$. Choose some transitive $V \models ZFC^-$, and let $\mathbf{i}: V \preceq \mathbf{V}$ with $\mathbf{V}$ $\lambda^+$-saturated, and choose $(\mathbf{x}_\alpha: \alpha < \lambda)$ a sequence from $\mathbf{i}_{\std}(\omega \times \omega)$ such that for all $s \in [\lambda]^{<\aleph_0}$, $\|\{\mathbf{x}_\alpha: \alpha \in s\} \in \mathbf{i}(\Delta(TP_2))\|_{\mathbf{V}} = \mathbf{A}(s)$; this is possible by Lemma~\ref{CompactnessPatternsLemma1}.
	
	For each $\alpha < \lambda$, choose $\mathbf{n}_\alpha, \mathbf{m}_\alpha$ such that $\|\mathbf{x}_\alpha = (\mathbf{n}_\alpha, \mathbf{m}_\alpha)\|_{\mathbf{V}} = 1$ (possible by fullness of $\mathbf{V}$). By two applications of $\lambda < \lambda_B$, we can find a multiplicative distribution $\mathbf{B}$ in $\mathcal{U}$ such that for all $\alpha < \beta < \lambda$, $\mathbf{B}(\{\alpha, \beta\})$ decides $\|\mathbf{n}_\alpha = \mathbf{n}_\beta\|_{\mathbf{V}}$ and decides $\|\mathbf{m}_\alpha = \mathbf{m}_\beta\|_{\mathbf{V}}$, from which it follows that $\mathbf{B}$ is a multiplicative refinement of $\mathbf{A}$.

	$\lambda_A \leq \lambda_C$: suppose $\lambda < \lambda_A$, we show $\lambda < \lambda_C$. So suppose $\mathbf{V}, (\mathbf{a}_\alpha: \alpha < \lambda)$ are given. Define a $\lambda$-distribution $\mathbf{A}$ in $\mathcal{U}$ via $\mathbf{A}(s) = \bigwedge_{\alpha < \beta \in s} \| \mathbf{a}_\alpha \not= \mathbf{a}_\beta \|_{\mathbf{V}}$. Easily this is a $(\lambda, \Delta(TP_2))$-distribution, and thus it has multiplicative refinement $\mathbf{B}$ in $\mathcal{U}$. $\mathbf{B}$ is as desired.
\end{proof}

\begin{corollary}\label{StarRandGraphUlt}
	Suppose $\mathcal{U}$ is an ultrafilter on the complete Boolean algebra $\mathcal{B}$. Then the following are equivalent:
	
	\begin{itemize}
		\item[(A)] $\mathcal{U}$ $\lambda^+$-saturates $T_{rf}$;
		\item[(B)] $\mathcal{U}$ $\lambda^+$-saturates some unsimple theory;
		\item[(C)] $\lambda < \lambda_{\mathcal{U}}(\Delta(TP_2))$.
	\end{itemize}
\end{corollary}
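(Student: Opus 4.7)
The plan is to reduce this to the interpretability-order version already established in Theorem~\ref{StarRandGraphInterp}, using the general dictionary from Corollary~\ref{KeislerAndInterp2} between saturation by $\mathcal{U}$ and pseudosaturation by $\mathbf{V}/\mathcal{U}$, together with the equality $\lambda_{\mathcal{U}}(\Delta) = \lambda_{\mathbf{V}/\mathcal{U}}(\Delta)$ supplied by Theorem~\ref{CardInvarTheoremUlt}.

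Concretely, I would first fix a transitive $V \models ZFC^-$ containing $T_{rf}$ and every other countable theory of interest, together with an elementary embedding $\mathbf{i}: V \preceq \mathbf{V}$ where $\mathbf{V}$ is full $\mathcal{B}$-valued and $\lambda^+$-saturated (this exists by the compactness/saturation result for full Boolean-valued structures from \cite{BVModelsUlrich}). Set $\hat{V} = \mathbf{V}/\mathcal{U}$ and let $\mathbf{j} = [\cdot]_{\mathcal{U}} \circ \mathbf{i}: V \preceq \hat{V}$; note $\hat{V}$ is $\omega$-nonstandard whenever $\mathcal{U}$ is nonprincipal (if $\mathcal{U}$ is principal the three conditions are vacuously true, so we may assume nonprincipality).

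Next, I would observe the three pointwise translations. For (A): by Corollary~\ref{KeislerAndInterp2}, $\mathcal{U}$ $\lambda^+$-saturates $T_{rf}$ if and only if $\hat{V}$ $\lambda^+$-pseudosaturates $T_{rf}$. For (B): the same corollary applied to an arbitrary unsimple $T \in V$ shows that $\mathcal{U}$ $\lambda^+$-saturates some unsimple theory if and only if $\hat{V}$ $\lambda^+$-pseudosaturates some unsimple theory; here one uses the ``for some or every transitive $V$'' clause of Corollary~\ref{KeislerAndInterp2} to move any given unsimple $T$ into our chosen $V$. For (C): Theorem~\ref{CardInvarTheoremUlt} gives $\lambda < \lambda_{\mathcal{U}}(\Delta(TP_2))$ if and only if $\lambda < \lambda_{\hat{V}}(\Delta(TP_2))$.

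Then the three translated conditions are precisely (A), (B), (C) of Theorem~\ref{StarRandGraphInterp} applied to this particular $\hat{V}$, and their equivalence is exactly the content of that theorem. There is no real obstacle here beyond being careful about the ``some or every'' phrasing of Corollary~\ref{KeislerAndInterp2}: the equivalence must hold for some choice of $V$, and the results quoted assure us that this is independent of the choice, so the whole argument goes through uniformly. Thus this corollary is just the ultrafilter translation of Theorem~\ref{StarRandGraphInterp} and inherits its proof without new ingredients.
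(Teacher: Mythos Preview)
Your approach is correct and is exactly the intended one: the paper states this corollary without proof, as it is meant to follow immediately from Theorem~\ref{StarRandGraphInterp} via the translation machinery of Corollary~\ref{KeislerAndInterp2} and Theorem~\ref{CardInvarTheoremUlt}, precisely as you outline. One small imprecision: $\hat{V} = \mathbf{V}/\mathcal{U}$ is $\omega$-nonstandard when $\mathcal{U}$ is $\aleph_1$-\emph{incomplete}, not merely nonprincipal; in the $\aleph_1$-complete case the equivalence still holds (all three conditions coincide with $\lambda < \lambda_{\mathcal{U}}(T_{rf})$ via Theorem~\ref{CardInvarTheoremUlt} and the definition of $\lambda_{\mathcal{U}}(T)$, and one can check (B)$\Leftrightarrow$(C) directly from the {\L}o{\'s}-map characterization without passing through pseudosaturation), but your parenthetical handling of the degenerate case should be adjusted accordingly.
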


The corresponding invariants for $\lambda_{\mathcal{U}}(\Delta(FDP))$ have been studied under various guises. $\lambda$-OK was first defined by Kunen \cite{KunenOK0}, and $\lambda$-flexibility was first defined by Malliaris in \cite{MalliarisFlex}. Previously these definitions were made only in the case of $\mathcal{B} = \mathcal{P}(\lambda)$.

\begin{definition}
	The ultrafilter $\mathcal{U}$ on the complete Boolean algebra $\mathcal{B}$ is $\lambda$-OK if whenever $\mathbf{A}$ is a $\lambda$-distribution in $\mathcal{U}$ such that for all $s, t \in [\lambda]^n$, $\mathbf{A}(s) = \mathbf{A}(t)$, we have that $\mathbf{A}$ has a multiplicative refinement in $\mathcal{U}$. (Note in this case that $\mathbf{A}$ is determined by the descending sequence $(\mathbf{A}(n): n < \omega)$.) $\mathcal{U}$ is $\lambda$-flexible if $\mathcal{U}$ is $\aleph_1$-incomplete and, for every $\mathcal{U}$-nonstandard $\mathbf{n} \in (\omega, <)^{\mathcal{B}}/\mathcal{U}$, there is some multiplicative $\lambda$-distribution $\mathbf{B}$ in $\mathcal{U}$, such that for all $s \in [\lambda]^n$, $\mathbf{B}(s) \leq \|\mathbf{n} \geq n\|_{(\omega, <)^{\mathcal{B}}}$.
\end{definition}

So clearly, if $\mathcal{U}$ is $\aleph_1$-complete, then $\mathcal{U}$ is $\lambda$-OK for all $\lambda$.

We first remark that this definition of $\lambda$-flexibility coincides with the one given by Malliaris.  We recall some definitions from \cite{BVModelsUlrich} and (independently) \cite{Parente1} \cite{Parente2}. Suppose $\mathcal{B}$ is a complete Boolean algebra, and $(\mathbf{a}_i: i \in I)$ is a sequence from $\mathcal{B}$. Then say that $(\mathbf{a}_i: i \in I)$ is $I$-regular if it has the finite intersection property, and infinite intersections are $0$, and the set of all $\mathbf{c} \in \mathcal{B}_+$ which decide $\mathbf{a}_i$ for every $i < I$ is dense in $\mathcal{B}_+$; this coincides with the usual definition when $\mathcal{B}$ is $\lambda^+$-distributive.  The $\lambda$-distribution $\mathbf{B}$ is $\lambda$-regular if $(\mathbf{B}(\{\alpha\}): \alpha < \lambda)$ is $\lambda$-regular, or equivalently $(\mathbf{B}(s): s \in [\lambda]^{<\aleph_0})$ is $[\lambda]^{<\aleph_0}$-regular. The filter $\mathcal{D}$ is $\lambda$-regular if it contains a $\lambda$-regular family.

The following is motivated by Mansfield's argument in Theorem 4.1 of \cite{BooleanUltrapowers}.

\begin{theorem}\label{FlexEquiv}
	Suppose $\mathcal{U}$ is an ultrafilter on $\mathcal{B}$, and $[\mathbf{n}]_{\mathcal{U}} \in (\omega, <)^{\mathcal{B}}/\mathcal{U}$ is nonstandard, and $\mathbf{B}$ is a multiplicative $\lambda$-distribution in $\mathcal{U}$. Then the following are equivalent:
	
	\begin{enumerate}
		\item[(A)] For every $s \in [\lambda]^n$, $\mathbf{B}(s) \leq \|\mathbf{n} \geq n\|_{(\omega,<)^{\mathcal{B}}}$.
		\item[(B)] $\mathbf{B}$ is $\lambda$-regular, and for every $\mathbf{c} \in \mathbf{B}$, if $\mathbf{c}$ decides $\mathbf{B}(s)$ for each $s \in [\lambda]^{<\aleph_0}$ (or equivalently, $\mathbf{c}$ decides $\mathbf{B}(\{\alpha\})$ for each $\alpha < \lambda$) and if we write $n := |\{\alpha < \lambda: \mathbf{c} \leq \mathbf{B}(\{\alpha\})|$, then $\mathbf{c} \leq \|\mathbf{n} \geq m\|_{(\omega, <)^{\mathcal{B}}}$.
	\end{enumerate}
\end{theorem}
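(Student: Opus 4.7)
The plan is to prove the two directions directly, with (A)$\Rightarrow$(B) being the substantive direction. A key fact I will use throughout is that since $\mathbf{n}$ is a partition of $\mathcal{B}$ by $\omega$ we have $\bigvee_{k < \omega} \mathbf{n}(k) = 1$, which by a straightforward complement calculation yields $\bigwedge_{n < \omega} \|\mathbf{n} \geq n\|_{(\omega,<)^{\mathcal{B}}} = 0$.

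For (B)$\Rightarrow$(A), I will argue by contradiction. Suppose $\mathbf{B}(s) \not\leq \|\mathbf{n} \geq n\|$ for some $s \in [\lambda]^n$. Then there is a nonzero $\mathbf{c}_0 \leq \mathbf{B}(s) \wedge \mathbf{n}(k)$ for some $k < n$. Using density of deciders (part of the $\lambda$-regularity clause of (B)), I refine to a nonzero $\mathbf{c} \leq \mathbf{c}_0$ deciding every $\mathbf{B}(\{\alpha\})$. Multiplicativity of $\mathbf{B}$ gives $\mathbf{c} \leq \mathbf{B}(\{\alpha\})$ for each $\alpha \in s$, so $|\{\alpha : \mathbf{c} \leq \mathbf{B}(\{\alpha\})\}| \geq n$; the second clause of (B) then forces $\mathbf{c} \leq \|\mathbf{n} \geq n\|$, contradicting $\mathbf{c} \leq \mathbf{n}(k)$ with $k < n$.

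For (A)$\Rightarrow$(B), the cardinality clause is easy: if $\mathbf{c}$ decides each $\mathbf{B}(\{\alpha\})$ and we set $s := \{\alpha : \mathbf{c} \leq \mathbf{B}(\{\alpha\})\}$, then $s$ is finite (else $\mathbf{c} \leq \bigwedge_{\alpha \in s} \mathbf{B}(\{\alpha\}) = 0$ by the infinite-intersection property verified below), and multiplicativity together with (A) gives $\mathbf{c} \leq \mathbf{B}(s) \leq \|\mathbf{n} \geq |s|\|$. So the task reduces to verifying $\lambda$-regularity of $\mathbf{B}$. The FIP is immediate since $\mathrm{im}(\mathbf{B}) \subseteq \mathcal{U}$, and the infinite-intersection clause follows because for any infinite $X \subseteq \lambda$, applying (A) to finite subsets of $X$ gives $\bigwedge_{\alpha \in X} \mathbf{B}(\{\alpha\}) \leq \bigwedge_{n} \|\mathbf{n} \geq n\| = 0$.

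The main obstacle is density of deciders, since for general (non-$\lambda^+$-distributive) $\mathcal{B}$ one cannot in general simultaneously decide $\lambda$-many independent conditions. Here is my plan: given $\mathbf{c}_0 \in \mathcal{B}_+$, first refine to $\mathbf{c}_1 := \mathbf{c}_0 \wedge \mathbf{n}(k) \neq 0$ for some $k < \omega$ (possible since $\bigvee_k \mathbf{n}(k) = 1$). Then (A) and multiplicativity yield $\mathbf{c}_1 \wedge \bigwedge_{i \leq k+1} \mathbf{B}(\{\alpha_i\}) = 0$ for any distinct $\alpha_1, \ldots, \alpha_{k+1}$. Take $k' \leq k$ maximal such that some distinct $\alpha_1, \ldots, \alpha_{k'}$ admit $\mathbf{c} := \mathbf{c}_1 \wedge \bigwedge_{i \leq k'} \mathbf{B}(\{\alpha_i\}) \neq 0$ (with $k' = 0$, $\mathbf{c} = \mathbf{c}_1$ as the base case). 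Maximality of $k'$ forces $\mathbf{c} \wedge \mathbf{B}(\{\alpha\}) = 0$ for every $\alpha$ outside the chosen tuple, so $\mathbf{c}$ decides each $\mathbf{B}(\{\alpha\})$ -- positively for the tuple, negatively otherwise. This $\mathbf{c}$ witnesses density below $\mathbf{c}_0$; the role of (A) is precisely to bound the number of positive decisions available below each $\mathbf{n}(k)$-fiber, which is exactly what makes this finite maximality argument terminate.
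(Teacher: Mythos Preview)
Your proof is correct and follows essentially the same approach as the paper's. Both directions match: for (B)$\Rightarrow$(A) you use density of deciders below $\mathbf{B}(s)$ (the paper phrases it directly, you by contradiction); for (A)$\Rightarrow$(B) you first localize to a fiber $\mathbf{c}_1 \leq \mathbf{n}(k)$ and then take a maximal-size tuple with nonzero intersection, which is exactly the paper's descending-sequence argument repackaged. You are slightly more explicit than the paper in separately verifying the FIP and infinite-intersection clauses of $\lambda$-regularity, which the paper leaves implicit under ``this clearly suffices.''
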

In particular, if $\mathcal{U}$ is $\lambda$-flexible, then $\mathcal{U}$ is $\lambda$-regular.
\begin{proof}
	(A) implies (B): We show that for every $\mathbf{c} \in \mathcal{B}$ nonzero, there is $\mathbf{c}' \leq \mathbf{c}$ nonzero such that $\mathbf{c}'$ decides each $\mathbf{B}(\{\alpha\})$, and there is $m_* < \omega$ such that $\mathbf{c}' \leq \|\mathbf{n} = m_*\|_{(\omega,<)^{\mathcal{B}}}$ and $|\{\alpha < \lambda: \mathbf{c} \leq \mathbf{B}(\{\alpha\})\}| \leq m_*$. This clearly suffices. 
	
	So let $\mathbf{c} \in \mathcal{B}$ be nonzero. Choose $\mathbf{c}_0 < \mathbf{c}$ nonzero, such that there is some $m_* < \omega$ with $\mathbf{c}_0 \leq \|\mathbf{m} = m_*\|_{(\omega, <)^{\mathcal{B}}}$. Try to find, by induction on $m \leq m_*+1$, a descending sequence $(\mathbf{c}_m: m \leq m_*+1)$ such that for each $m$, $|\{\alpha <\lambda: \mathbf{c}_m \leq \mathbf{B}(\{\alpha\})\}| \geq m$. There must be some stage $m < m_*+1$ at which we cannot continue, since for any $s \in [\lambda]^{m_*+1}$, $\mathbf{c}_0 \wedge \mathbf{B}(s) = 0$ (since $\mathbf{B}(s) \leq \|\mathbf{n} > m_*\|_{(\omega, <)^\mathcal{B}}$). So we get some $m < m_*+1$ such that if we set $s = \{\alpha < \lambda: \mathbf{c}_m \leq \mathbf{B}(\{\alpha\})\}$, then for all $\alpha \not \in s$, $\mathbf{c}_m \leq \lnot \mathbf{B}(\{\alpha\})$. So $\mathbf{c}_m$ is desired.
	
	(B) implies (A): suppose $s \in [\lambda]^n$. Then for any $\mathbf{c} \leq \mathbf{B}(s)$ nonzero such that $\mathbf{c}$ decides each $\mathbf{B}(\{\alpha\})$, we have that $\mathbf{c} \leq \|\mathbf{n} \geq n\|_{(\omega, <)^{\mathcal{B}}}$. Since the set of all such $\mathbf{c}$ is dense below $\mathbf{B}(s)$, we must have that $\mathbf{B}(s) \leq \|\mathbf{n} \geq n \|_{(\omega, <)^{\mathcal{B}}}$.
\end{proof}

We have the following theorem connecting all of these notions. It is a translation of Observation 9.9 of \cite{ConsUltrPOV} into our context. Parente \cite{Parente1} \cite{Parente2} has independently proven that if $\mathcal{U}$ is $\lambda$-OK and is $\aleph_1$-incomplete, then $\mathcal{U}$ is $\lambda$-regular.

\begin{theorem}\label{FlexToInterp}
	Suppose $\mathcal{U}$ is an ultrafilter on the complete Boolean algebra $\mathcal{B}$. Then $\lambda_{\mathcal{U}}(\Delta(FDP))$ is the least $\lambda$ such that $\mathcal{U}$ is not $\lambda$-OK. Additionally, if $\mathcal{U}$ is $\aleph_1$-incomplete, then this is the least $\lambda$ such that $\mathcal{U}$ is not $\lambda$-flexible.
\end{theorem}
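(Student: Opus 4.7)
The plan is to first show that $\lambda \geq \lambda_{\mathcal{U}}(\Delta(FDP))$ is equivalent to $\mathcal{U}$ failing to be $\lambda$-OK, and then reduce the $\lambda$-flexibility statement to this using $\aleph_1$-incompleteness. The key insight is that a $(\lambda, \Delta(FDP))$-distribution in $\mathcal{U}$ is, modulo multiplicative refinement, the same data as a descending sequence $(\mathbf{a}_n: n < \omega)$ in $\mathcal{U}$ encoding a single Boolean-valued integer $\mathbf{m}$ via $\mathbf{a}_n = \|\mathbf{m} \geq n\|$, together with $\lambda$ distinct standard labels witnessing that any $|s|$-tuple of labels fits into at most $\mathbf{m}$ slots.

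For the direction ``$\lambda$-OK fails implies $\lambda \geq \lambda_{\mathcal{U}}(\Delta(FDP))$'', take a descending $(\mathbf{a}_n: n<\omega)$ in $\mathcal{U}$ such that $\mathbf{A}'(s) := \mathbf{a}_{|s|}$ has no multiplicative refinement in $\mathcal{U}$. Fix transitive $V \models ZFC^-$ and sufficiently saturated full $\mathbf{i}: V \preceq \mathbf{V}$; apply Theorem~\ref{Compactness} to produce $\mathbf{m} \in \mathbf{V}$ with $\|\mathbf{m} \in \mathbf{i}(\omega \setminus \{0\})\|_{\mathbf{V}} = 1$ and $\|\mathbf{m} \geq n\|_{\mathbf{V}} = \mathbf{a}_n$ for every standard $n$ (finite consistency is immediate, as any finite subset of these conditions is realized by a concrete integer). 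Pick distinct standard $\mathbf{k}_\alpha := \mathbf{i}(k_\alpha)$ for $\alpha < \lambda$ and set $\mathbf{b}_\alpha := (\mathbf{m}, \mathbf{k}_\alpha)$. By Lemma~\ref{CompactnessPatternsLemma1}, the induced $(\lambda, \Delta(FDP))$-distribution computes directly to $\|\mathbf{m} \geq |s|\|_{\mathbf{V}} = \mathbf{a}_{|s|}$ (using that the standard labels $\mathbf{k}_\alpha$ satisfy $\|\mathbf{k}_\alpha \neq \mathbf{k}_\beta\|_{\mathbf{V}} = 1$), and so it inherits the non-refinement of $\mathbf{A}'$.

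Conversely, given a $(\lambda, \Delta(FDP))$-distribution $\mathbf{A}$ in $\mathcal{U}$ realized by $\mathbf{b}_\alpha = (\mathbf{m}_\alpha, \mathbf{k}_\alpha)$ via Lemma~\ref{CompactnessPatternsLemma1}, the constraint $\mathbf{A}(\{\alpha, \beta\}) \in \mathcal{U}$ forces $\|\mathbf{m}_\alpha = \mathbf{m}_\beta\|_{\mathbf{V}} \in \mathcal{U}$ for all $\alpha, \beta$; setting $\mathbf{m} := \mathbf{m}_0$, the sequence $\mathbf{a}_n := \|\mathbf{m} \geq n\|_{\mathbf{V}}$ is descending and contained in $\mathcal{U}$. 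If $\mathbf{B}$ were a multiplicative refinement of $\mathbf{A}'(s) := \mathbf{a}_{|s|}$ in $\mathcal{U}$, then $\mathbf{B}'(s) := \mathbf{B}(s) \wedge \bigwedge_{\alpha \in s}\|\mathbf{m}_\alpha = \mathbf{m}\|_{\mathbf{V}}$ would be a multiplicative refinement of $\mathbf{A}$ in $\mathcal{U}$---using the trivial bound $|\{\mathbf{k}_\alpha: \alpha \in s\}| \leq |s|$ in $\mathbf{V}$ to deduce $\mathbf{B}(s) \leq \|\mathbf{m} \geq |\{\mathbf{k}_\alpha: \alpha \in s\}|\|_{\mathbf{V}}$---contradicting the hypothesis on $\mathbf{A}$.

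For the second claim, assume $\mathcal{U}$ is $\aleph_1$-incomplete with witnessing $(\mathbf{c}_n)_{n<\omega}$ decreasing to $0$ in $\mathcal{U}$. The direction $\lambda$-OK $\Rightarrow$ $\lambda$-flexible is immediate: given nonstandard $[\mathbf{n}]_{\mathcal{U}} \in (\omega, <)^{\mathcal{B}}/\mathcal{U}$, apply $\lambda$-OK to the descending sequence $\|\mathbf{n} \geq n\|_{(\omega,<)^{\mathcal{B}}}$ and read off a multiplicative refinement of the required form. Conversely, given descending $(\mathbf{a}_n) \subseteq \mathcal{U}$, replace each $\mathbf{a}_n$ by $\mathbf{a}_n \wedge \mathbf{c}_n$ (still in $\mathcal{U}$, still descending, and any multiplicative refinement of the modified sequence refines the original); then $\|\mathbf{n} = n\|_{(\omega, <)^{\mathcal{B}}} := (\mathbf{a}_n \wedge \mathbf{c}_n) \setminus (\mathbf{a}_{n+1} \wedge \mathbf{c}_{n+1})$ defines a nonstandard element of $(\omega, <)^{\mathcal{B}}/\mathcal{U}$ to which $\lambda$-flexibility applies. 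The main technical obstacle is the compactness construction of $\mathbf{m}$ in the first step: when $\bigwedge_n \mathbf{a}_n > 0$ no suitable element exists in $(\omega, <)^{\mathcal{B}}$ itself, and it is precisely the flexibility of full Boolean-valued models of $ZFC^-$ (where $\mathbf{i}(\omega)$ may house genuinely nonstandard Boolean-valued integers) together with Theorem~\ref{Compactness} that makes the $\lambda$-OK version of the theorem work without any $\aleph_1$-incompleteness assumption.
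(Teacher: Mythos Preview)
Your argument is essentially correct and runs parallel to the paper's, but the converse direction is organized differently. The paper shows ``$\lambda$-OK $\Rightarrow \lambda < \lambda_{\mathcal{U}}(\Delta(FDP))$'' by passing through $\lambda_{\hat V}(\Delta(FDP))$ via Theorem~\ref{CardInvarTheoremUlt} and then quoting the set-theoretic characterization of Lemma~\ref{nlowLemma1}; you instead stay entirely at the level of distributions, realizing a given $(\lambda,\Delta(FDP))$-distribution by $(\mathbf{m}_\alpha,\mathbf{k}_\alpha)$ through Lemma~\ref{CompactnessPatternsLemma1} and extracting an OK-type sequence from $\mathbf{m}:=\mathbf{m}_0$. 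Both routes are clean; yours avoids the detour through $\hat V$, while the paper's route makes the case analysis below disappear automatically.

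There is one small gap you should patch. You assert that $\mathbf{a}_n := \|\mathbf{m}\geq n\|_{\mathbf V}\in\mathcal U$ for all $n$, but nothing in Lemma~\ref{CompactnessPatternsLemma1} forces $[\mathbf m]_{\mathcal U}$ to be nonstandard. If $[\mathbf m]_{\mathcal U}=k<\omega$, then $\mathbf a_{k+1}\notin\mathcal U$ and you cannot invoke $\lambda$-OK. This case is harmless but must be handled separately: since all $[\mathbf m_\alpha]_{\mathcal U}=k$ and any $k{+}1$ of the $[\mathbf k_\alpha]_{\mathcal U}$'s must collide (because $\mathbf A(s)\in\mathcal U$ for $|s|=k{+}1$), the $[\mathbf k_\alpha]_{\mathcal U}$'s take at most $k$ values, and one writes down a multiplicative refinement of $\mathbf A$ directly via Lemma~\ref{CompactnessPatternsLemma2} with $\mathbf Y=\{k\}\times\{\mathbf c_1,\dots,\mathbf c_k\}$ for suitable lifts $\mathbf c_i$. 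A symmetric wrinkle occurs in your first direction: your compactness construction of $\mathbf m$ needs $\mathbf a_1=1$ (since $\|\mathbf m\in\omega\setminus\{0\}\|=1$ forces $\|\mathbf m\geq 1\|=1$); reduce to this first by noting that a multiplicative refinement of the sequence $1,1,\mathbf a_2,\mathbf a_3,\dots$ yields one for $1,\mathbf a_1,\mathbf a_2,\dots$ after intersecting each $\mathbf B(\{\alpha\})$ with $\mathbf a_1$. The flexibility half matches the paper's argument.
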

\begin{proof}
	Choose some transitive $V \models ZFC^-$, and some $\mathbf{i}: V \preceq \mathbf{V}$ with $\mathbf{V}$ $\lambda^+$-saturated. Write $\hat{V} = \mathbf{V}/\mathcal{U}$ and let $\mathbf{j}: V \preceq \hat{V}$ be the usual embedding. 
	
	Suppose first that $\lambda < \lambda_{\mathcal{U}}(\Delta(FDP))$, and $\mathbf{A}$ is a $\lambda$-distribution with $\mathbf{A}(s) = \mathbf{A}(t)$ for all $|s| = |t|$. Then it is easy to see that $\mathbf{A}$ is a $(\lambda, \Delta(FDP))$-distribution, so by hypothesis $\mathbf{A}$ has a multiplicative refinement in $\mathcal{U}$; thus $\mathcal{U}$ is $\lambda$-OK. Conversely, suppose $\mathcal{U}$ is $\lambda$-OK; we show $\lambda < \lambda_{\hat{V}}(\Delta(FDP))$, using the characterization of Lemma \ref{nlowLemma1}. This suffices by Theorem~\ref{CardInvarTheoremUlt}. So suppose $\mathbf{m}_*,  \mathbf{n}_* \in \mathbf{i}(\omega)$ and $\{\mathbf{n}_\alpha: \alpha < \lambda\} \subseteq \mathbf{i}(\omega)$ are given with $[\mathbf{m}_*]_{\mathcal{U}}$ nonstandard and $[\mathbf{m}_*]_{\mathcal{U}} < [\mathbf{n}_*]_{\mathcal{U}}$, and each $[\mathbf{n}_\alpha]_{\mathcal{U}} < [\mathbf{n}_*]_{\mathcal{U}}$. We can suppose each $\|\mathbf{m}_* < \mathbf{n}_*\|_{\mathbf{V}}= \|\mathbf{n}_\alpha < \mathbf{n}_*\|_{\mathbf{V}} = 1$. Define a $\lambda$-distribution $\mathbf{A}$ in $\mathcal{U}$, via $\mathbf{A}(s) = \| \mathbf{m}_* \geq n\|_{\mathbf{V}}$ for each $s \in [\lambda]^n$. Since $\mathcal{U}$ is $\lambda$-OK, we can find a multiplicative refinement $\mathbf{B}$ of $\mathbf{A}$ in $\mathcal{U}$. By a similar argument to Lemma~\ref{CompactnessPatternsLemma2}, we can find $\mathbf{X} \in [\mathbf{n}_*]^{\leq \mathbf{m}_*}$ (i.e. $\mathbf{X} \in \mathbf{V}$ and $\|\mathbf{X} \in [\mathbf{n}_*]^{\leq \mathbf{m}_*}\|_{\mathbf{V}}= 1$) so that for all $\alpha < \lambda$, $\|\mathbf{n}_\alpha \in \mathbf{X}\|_{\mathbf{V}} = \mathbf{B}(\{\alpha\})$. Then $\hat{X} := [\mathbf{X}]_{\mathcal{U}}$ is as desired.

	Suppose next that $\mathcal{U}$ is $\lambda$-flexible; we show that $\mathcal{U}$ is $\lambda$-OK. So suppose $\mathbf{A}$ is a distribution in $\mathcal{U}$ such that for all $s, t \in [\lambda]^n$, $\mathbf{A}(s) = \mathbf{A}(t)$. If $\mathbf{a} := \bigwedge_n \mathbf{A}(n) \in \mathcal{U}$ then obviously the constant distribution with value $\mathbf{a}$ is a refinement in $\mathcal{U}$. Otherwise, we can suppose $\bigwedge_n \mathbf{A}(n) = 0$ (by intersecting each $\mathbf{A}(s)$ with $\lnot \mathbf{a}$). Define $\mathbf{m} \in (\omega,<)^{\mathcal{B}}$ via $\mathbf{m}(n) = \mathbf{A}(n) \wedge \lnot \mathbf{A}(n+1)$. The fact that $\mathbf{m} \in (\omega, <)^{\mathcal{B}}$ follows form  $\bigwedge_{n < \omega} \mathbf{A}(n) = 0$ and $\mathbf{A}(0) = \mathbf{A}(\emptyset) = 1$. $\mathbf{m}$ is $\mathcal{U}$-nonstandard since each $\|\mathbf{m} \geq n\|_{(\omega,<)^{\mathcal{B}}} = \mathbf{A}(n) \in \mathcal{U}$. Thus we can find a multiplicative distribution $\mathbf{B}$ in $\mathcal{U}$, such that for all $s \in [\lambda]^n$, $\mathbf{B}(s) \leq \mathbf{A}(s) = \|\mathbf{m} \geq n\|_{\mathbf{V}}$.
	
	Finally, suppose $\mathcal{U}$ is $\lambda$-OK and $\aleph_1$-incomplete, and let $[\mathbf{m}]_{\mathcal{U}}$ be a $\mathcal{U}$-nonstandard element of $(\omega, <)^{\mathcal{B}}/\mathcal{U}$. Define $\mathbf{A}(s) = \| \mathbf{m} \geq n\|_{(\omega, <)^{\mathcal{B}}}$ for each $s \in [\lambda]^{n}$ and let $\mathbf{B}$ be a multiplicative refinement of $\mathbf{A}$ in $\mathcal{U}$. Then for all $s \in [\lambda]^n$, $\mathbf{B}(s) \leq \|\mathbf{m} \geq n\|_{\mathbf{V}} = \mathbf{A}(s)$. 
\end{proof}

\begin{remark}\label{OKStableSat}
	It follows that if $\mathcal{U}$ is $\lambda$-OK, then $\mu_{\mathcal{U}} > \lambda$, and so $\mathcal{U}$ $\lambda^+$-saturates every stable theory. The special case where $\mathcal{B} = \mathcal{P}(\lambda)$ and $\mathcal{U}$ is $\aleph_1$-incomplete (i.e. $\lambda$-flexible) was proved similarly by Malliaris and Shelah in \cite{IndFunctUlts}. Hence: if $\mathcal{U}$ is $\aleph_1$-complete (and thus $\lambda$-OK for all $\lambda$) and if $T$ is stable, then $\lambda_{\mathcal{U}}(T) = \infty$, i.e. $\mathcal{U}$ $\lambda^+$-saturates $T$ for every $\lambda$.
\end{remark}

\begin{corollary}\label{nonlowUlt}
	Suppose $\mathcal{U}$ is an ultrafilter on $\mathcal{B}$ and $\lambda$ is given. Then the following are equivalent:
	
	\begin{itemize}
		\item[(A)] $\mathcal{U}$ $\lambda^+$-saturates $T_{nlow}$;
		\item[(B)] $\mathcal{U}$ $\lambda^+$-saturates some nonlow theory;
		\item[(C)] $\mathcal{U}$ $\lambda^+$-saturates $T_{rg}$ and $\mathcal{U}$ is $\lambda$-OK.
	\end{itemize}
\end{corollary}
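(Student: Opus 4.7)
The plan is to derive this corollary directly from Theorem~\ref{nonlowInterp}, which is the corresponding statement for models of $ZFC^-$, by passing through the cardinal characteristics $\lambda_{\mathcal{U}}(\Delta(IP))$ and $\lambda_{\mathcal{U}}(\Delta(FDP))$ and applying the dictionary established in Theorem~\ref{CardInvarTheoremUlt}, Corollary~\ref{RandGraphUlt}, and Theorem~\ref{FlexToInterp}.

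First, note that (A) implies (B) is trivial since $T_{nlow}$ is itself nonlow (as witnessed by $R(x, y)$). To set up the bridge to the pseudosaturation side, fix some countable transitive $V \models ZFC^-$ containing $T_{nlow}, T_{rg}$ and any other relevant theory, together with $\mathbf{i}: V \preceq \mathbf{V}$ with $\mathbf{V}$ full and $\lambda^+$-saturated, and let $\mathbf{j}: V \preceq \hat{V}$ be the composition $[\cdot]_{\mathcal{U}} \circ \mathbf{i}$ so that $\hat{V} = \mathbf{V}/\mathcal{U}$. By Corollary~\ref{JustifyDefOfSat2} and its analogues, $\mathcal{U}$ $\lambda^+$-saturates a given theory $T$ if and only if $\hat{V}$ $\lambda^+$-pseudosaturates $T$, i.e.\ $\lambda < \lambda_{\hat{V}}(T)$. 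Moreover, Theorem~\ref{CardInvarTheoremUlt} tells us that for the patterns $\Delta \in V$, we have $\lambda_{\mathcal{U}}(\Delta) > \lambda$ if and only if $\lambda_{\hat{V}}(\Delta) > \lambda$.

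For (B) implies (C): suppose $\mathcal{U}$ $\lambda^+$-saturates some nonlow theory $T$. Then $\lambda < \lambda_{\hat{V}}(T)$, so Theorem~\ref{nonlowInterp} gives both $\lambda < \lambda_{\hat{V}}(\Delta(IP))$ and $\lambda < \lambda_{\hat{V}}(\Delta(FDP))$. Transferring across by Theorem~\ref{CardInvarTheoremUlt}, we obtain $\lambda < \lambda_{\mathcal{U}}(\Delta(IP))$ and $\lambda < \lambda_{\mathcal{U}}(\Delta(FDP))$. The first inequality, by Corollary~\ref{RandGraphUlt}, means precisely that $\mathcal{U}$ $\lambda^+$-saturates $T_{rg}$; the second, by Theorem~\ref{FlexToInterp}, means precisely that $\mathcal{U}$ is $\lambda$-OK.

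For (C) implies (A): if $\mathcal{U}$ $\lambda^+$-saturates $T_{rg}$ then $\lambda < \lambda_{\mathcal{U}}(\Delta(IP))$ by Corollary~\ref{RandGraphUlt}, and if $\mathcal{U}$ is $\lambda$-OK then $\lambda < \lambda_{\mathcal{U}}(\Delta(FDP))$ by Theorem~\ref{FlexToInterp}. Transferring via Theorem~\ref{CardInvarTheoremUlt} back to $\hat{V}$, both $\lambda < \lambda_{\hat{V}}(\Delta(IP))$ and $\lambda < \lambda_{\hat{V}}(\Delta(FDP))$ hold, so Theorem~\ref{nonlowInterp} applied to $\hat{V}$ yields that $\hat{V}$ $\lambda^+$-pseudosaturates $T_{nlow}$. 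Equivalently, $\mathcal{U}$ $\lambda^+$-saturates $T_{nlow}$, completing the cycle. There is no substantive obstacle here, the work being already absorbed into Theorems~\ref{nonlowInterp}, \ref{CardInvarTheoremUlt} and \ref{FlexToInterp}; the only mild point to watch is to ensure $V$ is chosen large enough to contain $T_{nlow}$, $T_{rg}$, and whatever nonlow $T$ is invoked in (B), which is harmless since we may freely enlarge $V$.
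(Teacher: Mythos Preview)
Your proposal is correct and is exactly the argument the paper has in mind: the corollary is stated without proof immediately after Theorem~\ref{FlexToInterp}, and it is meant to follow by combining Theorem~\ref{nonlowInterp} with the ultrafilter/pseudosaturation dictionary (Corollary~\ref{KeislerAndInterp2}, Theorem~\ref{CardInvarTheoremUlt}), together with Corollary~\ref{RandGraphUlt} and Theorem~\ref{FlexToInterp}. Your only organizational wrinkle---fixing $V$ before knowing which nonlow $T$ appears in (B)---you already flag and resolve correctly.
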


\section{The Chain Condition and Saturation}\label{SurveyCCSec}

Malliaris and Shelah prove some special cases of the following in \cite{Optimals}, but their arguments do not generalize.

\begin{theorem}~\label{SimpleNonSat}
	Suppose $\mathcal{B}$ is a complete Boolean algebra; write $\lambda = \mbox{c.c.}(\mathcal{B})$. Suppose $\mathcal{U}$ is a nonprincipal ultrafilter on $\mathcal{B}$. Then $\mathcal{U}$ does not $\lambda^+$-saturate any nonsimple theory.  In fact, we can find a $(\lambda, \Delta(TP_2))$-distribution $\mathbf{A}$ in $\mathcal{U}$, such that if $\mathcal{B}$ is a complete subalgebra of $\mathcal{B}_*$ where $\mathcal{B}_*$ has the $\lambda$-c.c., then $\mathbf{A}$ has no multiplicative refinement in $\mathcal{B}_*$.
\end{theorem}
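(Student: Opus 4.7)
The plan is to reduce the main claim to producing a single $(\lambda, \Delta(TP_2))$-distribution $\mathbf{A}$ in $\mathcal{U}$ whose structure obstructs multiplicative refinement in any $\lambda$-c.c.\ extension $\mathcal{B}_*$ of $\mathcal{B}$. For the reduction, Theorem~\ref{NonSimpleDichotomy} tells us that every unsimple $T$ has either $TP_2$ or $SOP_2$: in the $TP_2$ case one has $\lambda_{\mathbf{V}/\mathcal{U}}(T) \leq \lambda_{\mathbf{V}/\mathcal{U}}(\Delta(TP_2))$ directly, and in the $SOP_2$ case Theorem~\ref{SOP2max} together with the general inequality $\lambda_{\hat{V}}(\Delta) \geq \mathfrak{p}_{\hat{V}}$ for patterns $\Delta$ on $\omega$ gives $\lambda_{\mathbf{V}/\mathcal{U}}(T) = \mathfrak{p}_{\mathbf{V}/\mathcal{U}} \leq \lambda_{\mathbf{V}/\mathcal{U}}(\Delta(TP_2))$. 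Via Theorem~\ref{CardInvarTheoremUlt} this reduces the first sentence of the theorem to $\lambda_\mathcal{U}(\Delta(TP_2)) \leq \lambda$. By Lemma~\ref{TP2Lemma2}(C), I would aim to exhibit a sequence $(\mathbf{a}_\alpha : \alpha < \lambda)$ in some $\mathbf{V} \models^{\mathcal{B}} ZFC^-$ with each $\mathbf{a}_\alpha$ taking values in a fixed $X \in V$ of size $|X| < \lambda$ (so $\|\mathbf{a}_\alpha \in \mathbf{i}(X)\|_\mathbf{V} = 1$) and with the $[\mathbf{a}_\alpha]_\mathcal{U}$ pairwise distinct; then $\mathbf{A}(s) := \bigwedge_{\alpha \neq \beta \in s}\|\mathbf{a}_\alpha \neq \mathbf{a}_\beta\|_\mathbf{V}$ is a $(\lambda, \Delta(TP_2))$-distribution in $\mathcal{U}$ by Lemma~\ref{CompactnessPatternsLemma1}.

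The heart of the proof is then a chain-condition argument ruling out multiplicative refinements in any complete-subalgebra extension $\mathcal{B}_* \supseteq \mathcal{B}$ with the $\lambda$-c.c. Suppose toward contradiction that $(\mathbf{d}_\alpha : \alpha < \lambda) \subseteq (\mathcal{B}_*)_+$ witnesses such a refinement, so $\mathbf{d}_\alpha \wedge \mathbf{d}_\beta \leq \|\mathbf{a}_\alpha \neq \mathbf{a}_\beta\|_\mathbf{V}$ for all $\alpha \neq \beta$. For each $x \in X$ and $\alpha < \lambda$, set $\mathbf{d}_{\alpha,x} := \mathbf{d}_\alpha \wedge \|\mathbf{a}_\alpha = \mathbf{i}(x)\|_\mathbf{V}$. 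Because $\mathbf{a}_\alpha$ partitions $1$ over $\mathbf{i}(X)$ in $\mathbf{V}$ and $\mathcal{B}$ sits as a \emph{complete} subalgebra of $\mathcal{B}_*$, the join $\bigvee_{x \in X} \mathbf{d}_{\alpha,x}$ equals $\mathbf{d}_\alpha$ computed in $\mathcal{B}_*$. For any $\alpha \neq \beta$ and any fixed $x \in X$,
\[
\mathbf{d}_{\alpha,x} \wedge \mathbf{d}_{\beta,x} \;\leq\; \mathbf{d}_\alpha \wedge \mathbf{d}_\beta \wedge \|\mathbf{a}_\alpha = \mathbf{a}_\beta\|_\mathbf{V} \;\leq\; \|\mathbf{a}_\alpha \neq \mathbf{a}_\beta\|_\mathbf{V} \wedge \|\mathbf{a}_\alpha = \mathbf{a}_\beta\|_\mathbf{V} \;=\; 0,
\]
so $(\mathbf{d}_{\alpha,x})_{\alpha<\lambda}$ is an antichain in $\mathcal{B}_*$ for each $x$. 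The $\lambda$-c.c.\ forces $|S_x| < \lambda$ where $S_x := \{\alpha : \mathbf{d}_{\alpha,x} \neq 0\}$, and since $\lambda$ is regular by Erd\H{o}s-Tarski and $|X| < \lambda$, we obtain $|\bigcup_{x \in X} S_x| < \lambda$. Any $\alpha_* \notin \bigcup_{x \in X} S_x$ then satisfies $\mathbf{d}_{\alpha_*} = \bigvee_x \mathbf{d}_{\alpha_*, x} = 0$, contradicting positivity.

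The main obstacle lies in the first step---arranging the $(\mathbf{a}_\alpha)$ to have values in a set of size $< \lambda$ while still realizing $\lambda$-many distinct classes modulo $\mathcal{U}$. In the $\aleph_1$-incomplete setting this is accomplished with $X = \omega$, by combining a descending $\omega$-sequence in $\mathcal{U}$ with meet $0$ and antichain data from $\mathcal{B}$ (antichains of every size below $\lambda$ exist by Erd\H{o}s-Tarski). In general, letting $\kappa$ be the completeness of $\mathcal{U}$, one takes $|X| = \kappa$; this is indeed $< \lambda$, because a nonprincipal $\lambda$-complete $\mathcal{U}$ on $\mathcal{B}$ would allow one to build an antichain of size $\lambda$ by iteratively selecting an element in $\mathcal{U}$ from a maximal antichain of each $\mathcal{B} \upharpoonright \mathbf{b}_\alpha$, contradicting $\mbox{c.c.}(\mathcal{B}) = \lambda$. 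The $\kappa^+$-incompleteness of $\mathcal{U}$ then furnishes enough nonstandard structure in $X^\mathbf{V}/\mathcal{U}$, combined with a sufficiently saturated $\mathbf{V}$, to produce the required $\lambda$-many distinct classes. Once this is in place, the chain-condition calculation applies uniformly, giving the strengthened statement and in particular the case $\mathcal{B}_* = \mathcal{B}$.
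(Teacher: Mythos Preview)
Your reduction in the first paragraph is fine and matches the paper. The antichain/pigeonhole argument in your second paragraph is clean and would succeed \emph{if} you could actually produce $(\mathbf{a}_\alpha:\alpha<\lambda)$ with the two properties you need simultaneously: (i) each $\mathbf{a}_\alpha$ satisfies $\bigvee_{x\in X}\|\mathbf{a}_\alpha=\mathbf{i}(x)\|_{\mathbf{V}}=1$ for a fixed $X$ with $|X|<\lambda$, and (ii) the classes $[\mathbf{a}_\alpha]_{\mathcal{U}}$ are pairwise distinct. But this is exactly where the gap lies. Condition (i) forces the $\mathbf{a}_\alpha$'s to live in the Boolean ultrapower $X^{\mathcal{B}}$, not in a saturated extension; saturation of $\mathbf{V}$ produces new elements $\mathbf{a}$ with $\|\mathbf{a}\in\mathbf{i}(X)\|=1$ but \emph{not} with $\bigvee_{x\in X}\|\mathbf{a}=\mathbf{i}(x)\|=1$, and for those your decomposition $\mathbf{d}_\alpha=\bigvee_x \mathbf{d}_{\alpha,x}$ fails. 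On the other hand, if you stay inside $X^{\mathcal{B}}$ with $|X|=\kappa$ the completeness of $\mathcal{U}$, the $\mathcal{U}$-classes you can manufacture via the antichain $\mathbf{C}$ correspond to functions $\kappa\to\kappa$ modulo agreement on a small set, and there is no reason this gives you $\lambda$ many: when $\lambda>2^\kappa$ (e.g.\ $\lambda$ inaccessible, $\kappa=\aleph_0$) your construction runs out. The phrase ``furnishes enough nonstandard structure \ldots\ to produce the required $\lambda$-many distinct classes'' is doing work that has not been done.

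The paper resolves this tension by abandoning the fixed $X$. It chooses, for each $\alpha$ in a stationary $S\subseteq\lambda$ of points of cofinality $\sigma$ (the completeness of $\mathcal{U}$), a cofinal increasing $L_\alpha:\sigma\to\alpha$, and defines $\underline{\delta}_\alpha\in(\lambda,<)^{\mathcal{B}}$ by $\|\underline{\delta}_\alpha=L_\alpha(\gamma)\|=\mathbf{c}_\gamma$. Each $\underline{\delta}_\alpha$ still ranges over a set of size $\sigma$, but the set $L_\alpha[\sigma]$ varies with $\alpha$, and crucially $\|\underline{\delta}_\alpha<\alpha\|=1$ while $\|\underline{\delta}_\alpha>\beta\|\in\mathcal{U}$ for all $\beta<\alpha$, so the $[\underline{\delta}_\alpha]_{\mathcal{U}}$ are automatically pairwise distinct. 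The obstruction to a multiplicative refinement then comes from Fodor's lemma rather than a straight pigeonhole: any refinement $\mathbf{B}$ gives a regressive $g(\alpha)<\alpha$ on $S$, hence constant equal to some $\gamma$ on a stationary $S'$, and $(\mathbf{B}(\{\alpha\})\wedge\|\underline{\delta}_\alpha=\gamma\|:\alpha\in S')$ is an antichain of size $\lambda$ in $\mathcal{B}_*$. Your pigeonhole idea is morally the same endgame, but the Fodor setup is what makes the construction of the $\lambda$-many elements go through.
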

\begin{proof}
	
	It suffices to show the second claim. Note that $\lambda > \aleph_0$, as otherwise $\mathcal{B}$ would be finite, and so would not admit any nonprincipal ultrafilters. Thus $\lambda$ is regular; this is a theorem due to Erd\"{o}s and Tarski \cite{ErdosTarski}, or see Theorem 7.15 of Jech \cite{Jech}.
	
	Let $\sigma$ be the completeness of $\mathcal{U}$, i.e. the least cardinal such that there is a descending sequence $(\mathbf{a}_\alpha: \alpha < \sigma)$ from $\mathcal{U}$ with $\bigwedge_{\alpha < \sigma} \mathbf{a}_\alpha = 0$. It is not hard to see that $\sigma < \lambda$, and moreover there is an antichain $\mathbf{C}$ of $\mathcal{B}$ of size $\sigma$ such that for every $X \in [\mathbf{C}]^{<\sigma}$, $\bigvee X \not \in \mathcal{U}$. Enumerate $\mathbf{C} = (\mathbf{c}_\gamma: \gamma < \sigma)$.
	
	Let $S \subseteq \lambda$ be the set of all $\alpha < \lambda$ with $\mbox{cof}(\alpha) = \sigma$, so $S$ is stationary in $\lambda$. For each $\alpha \in S$, let $L_\alpha: \sigma \to \alpha$ be a cofinal, increasing map, and let $\underline{\delta}_\alpha \in (\lambda, <)^{\mathcal{B}}$ be the element such that for all $\gamma < \sigma$, $\|\underline{\delta}_\alpha = L_\alpha(\gamma)\|_{(\lambda,<)^{\mathcal{B}}} = \mathbf{c}_\gamma$. This determines $\underline{\delta}_\alpha$, since $\mathbf{C}$ is a maximal antichain. In particular, we have that $\|\underline{\delta}_\alpha < \alpha\|_{(\lambda, <)^{\mathcal{B}}} = 1$, and for all $\beta < \alpha$, $\|\underline{\delta}_\alpha > \beta\|_{(\lambda,<)^{\mathcal{B}}} \in \mathcal{U}$. In particular, for all $\alpha < \beta$ both in $S$, $\|\underline{\delta}_\alpha < \underline{\delta}_\beta\|_{(\lambda,<)^{\mathcal{B}}} \in \mathcal{U}$.

	For each $s \in [\lambda]^{<\aleph_0}$, put $\mathbf{A}(s) = \bigwedge_{\alpha \not= \beta \in s} \| \underline{\delta}_\alpha \not= \underline{\delta}_\beta \|_{(\lambda,<)^{\mathcal{B}}}$; so $\mathbf{A}$ is a $(\lambda, \Delta(TP_2))$-distribution in $\mathcal{U}$. Suppose  $\mathcal{B}$ is a complete subalgebra of $\mathcal{B}_*$ where $\mathcal{B}_*$ has the $\lambda$-c.c. We show that $\mathbf{A}$ has no multiplicative refinement in $\mathcal{B}_*$, i.e. there is no multiplicative $\lambda$-distribution $\mathbf{B}$ in $\mathcal{B}_*$ such that for all $\alpha <\beta$, $\mathbf{B}(\{\alpha, \beta\}) \leq \|\underline{\delta}_\alpha \not= \underline{\delta}_\beta \|_{(\lambda,<)^{\mathcal{B}}}$.
	
	Suppose there were. For each $\alpha < \lambda$ there is some $f(\alpha) < \sigma$ with $\mathbf{B}(\{\alpha\}) \wedge \mathbf{c}_{f(\alpha)}$ nonzero, i.e. with $\mathbf{B}(\{\alpha\}) \wedge \|\underline{\delta}_\alpha = L_\alpha(f(\alpha))\|_{(\lambda, <)^{\mathcal{B}}/\mathcal{U}}$ nonzero. Write $g(\alpha) = L_\alpha(f(\alpha)) < \alpha$. By Fodor's Lemma (using that $\lambda$ is regular), we can find a stationary set $S' \subseteq S$ on which $g$ is constant, say with value $\gamma$. Since $\mathcal{B}_*$ has the $\lambda$-c.c., $(\mathbf{B}(\{\alpha\}) \wedge \|\underline{\delta}_\alpha = \gamma\|_{(\lambda, <)^{\mathcal{B}}/\mathcal{U}}: \alpha \in S')$ is not an antichain, so we can choose $\alpha < \beta$ both in $S'$ such that $\mathbf{B}(\{\alpha\}) \wedge \mathbf{B}(\{\beta\}) \wedge \|\underline{\delta}_\alpha = \gamma\|_{(\lambda, <)^{\mathcal{B}}/\mathcal{U}} \wedge \|\underline{\delta}_\beta = \gamma\|_{(\lambda, <)^{\mathcal{B}}/\mathcal{U}}$ is nonzero. But $\mathbf{B}(\{\alpha, \beta\}) \leq \|\underline{\delta}_\alpha \not= \underline{\delta}_\beta\|_{(\lambda, <)^{\mathcal{B}}/\mathcal{U}}$, a contradiction.
\end{proof}

I proved the following (minus the ``in fact" clause) in \cite{LowDividingLine}; that result in turn built off of some special cases proven by Malliaris and Shelah in \cite{DividingLine}.
\begin{theorem}\label{LowNonSat}
	Suppose $\mathcal{B}$ is a complete Boolean algebra; write $\lambda = \mbox{c.c.}(\mathcal{B})$. Suppose $\mathcal{U}$ is an $\aleph_1$-incomplete ultrafilter on $\mathcal{B}$. Then $\mathcal{U}$ does not $\lambda^+$-saturate any nonlow theory. In fact, there is a $(\lambda, \Delta(FDP))$-distribution $\mathbf{A}$ in $\mathcal{U}$ such that if $\mathcal{B}$ is a complete subalgebra of $\mathcal{B}_*$ and $\mathcal{B}_*$ has the $\lambda$-c.c., then $\mathbf{A}$ has no multiplicative refinement in $\mathcal{B}_*$.
\end{theorem}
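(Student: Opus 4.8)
The plan is to build the required distribution $\mathbf{A}$ directly out of the $\aleph_1$-incompleteness of $\mathcal{U}$, rather than imitating the ``random ordinal'' construction of Theorem~\ref{SimpleNonSat}; the chain condition on $\mathcal{B}_*$ will enter through a free-set counting argument rather than through Fodor's lemma. It suffices to prove the ``in fact'' clause: taking $\mathcal{B}_* = \mathcal{B}$ then yields a $(\lambda,\Delta(FDP))$-distribution in $\mathcal{U}$ with no multiplicative refinement in $\mathcal{U}$, so $\lambda_{\mathcal{U}}(\Delta(FDP)) \leq \lambda$; hence $\mathcal{U}$ is not $\lambda$-OK by Theorem~\ref{FlexToInterp}, and so $\mathcal{U}$ $\lambda^+$-saturates no nonlow theory by Corollary~\ref{nonlowUlt}.

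By the Erd\"{o}s--Tarski theorem (as in the proof of Theorem~\ref{SimpleNonSat}), $\lambda = \mbox{c.c.}(\mathcal{B})$ is regular, and it is uncountable since $\mathcal{B}$ is infinite. As $\mathcal{U}$ is $\aleph_1$-incomplete, fix a descending sequence $(\mathbf{b}_n : n < \omega)$ from $\mathcal{U}$ with $\bigwedge_n \mathbf{b}_n = 0$; after reindexing we may assume $\mathbf{b}_0 = \mathbf{b}_1 = 1$. Put $\mathbf{d}_n = \mathbf{b}_n \wedge \lnot \mathbf{b}_{n+1}$ for $n \geq 1$, so that $\{\mathbf{d}_n : n \geq 1\}$ is a maximal antichain of $\mathcal{B}$ and $\mathbf{d}_n \wedge \mathbf{b}_{n+1} = 0$ for all $n$. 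Define the $\lambda$-distribution $\mathbf{A}$ by $\mathbf{A}(s) = \mathbf{b}_{|s|}$. Its image lies in $\mathcal{U}$, and I claim it is a genuine $(\lambda,\Delta(FDP))$-distribution: given a finite $s \subseteq \lambda$ and $\mathbf{c} \in \mathcal{B}_+$ deciding each $\mathbf{A}_t = \mathbf{b}_{|t|}$ for $t \subseteq s$, the set $\{k \leq |s| : \mathbf{c} \leq \mathbf{b}_k\}$ is a nonempty initial segment $\{0,1,\dots,M\}$ with $M \geq 1$, and fixing any injection $g : s \to \omega$ and putting $f(\alpha) = (M, g(\alpha))$ one gets $f[t] \in \Delta(FDP)$ iff $|t| \leq M$ iff $\mathbf{c} \leq \mathbf{A}_t$, as required.

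It then remains to show $\mathbf{A}$ has no multiplicative refinement in $\mathcal{B}_*$, where $\mathcal{B} \subseteq \mathcal{B}_*$ is a complete subalgebra with the $\lambda$-c.c. Suppose $\mathbf{B}$ is such a refinement and set $\mathbf{e}_\alpha = \mathbf{B}(\{\alpha\}) \in (\mathcal{B}_*)_+$; then $\bigwedge_{\alpha \in s} \mathbf{e}_\alpha = \mathbf{B}(s)$ is a positive element of $\mathcal{B}_*$ with $\mathbf{B}(s) \leq \mathbf{A}(s) = \mathbf{b}_{|s|}$ for every finite $s \subseteq \lambda$. Since $\mathcal{B}$ is a complete subalgebra, $\{\mathbf{d}_n : n\geq 1\}$ is still a maximal antichain of $\mathcal{B}_*$, so each $\mathbf{e}_\alpha$ meets some $\mathbf{d}_{n(\alpha)}$ with $n(\alpha) \geq 1$; as $\lambda$ is regular and uncountable, there is $N \geq 1$ and a set $J$ of size $\lambda$ with $n(\alpha) = N$ for all $\alpha \in J$. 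Then $(\mathbf{e}_\alpha \wedge \mathbf{d}_N : \alpha \in J)$ is a family of $\lambda$ positive elements of $\mathcal{B}_*$ that is $(N{+}1)$-wise disjoint (every $N{+}1$ of its members have meet $0$), since for $s \in [J]^{N+1}$ we have $\bigwedge_{\alpha\in s}(\mathbf{e}_\alpha \wedge \mathbf{d}_N) = \mathbf{d}_N \wedge \bigwedge_{\alpha\in s}\mathbf{e}_\alpha \leq \mathbf{d}_N \wedge \mathbf{b}_{N+1} = 0$. This contradicts the following elementary lemma, which I would prove by induction on $k \geq 2$: in a Boolean algebra with the $\lambda$-c.c., $\lambda$ regular, there is no family of $\lambda$ positive elements all of whose $k$-fold sub-meets vanish. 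For $k = 2$ this is the definition of the $\lambda$-c.c.; for $k > 2$, either some element is compatible with $\lambda$-many others, and meeting it with those gives a $(k{-}1)$-wise disjoint family of size $\lambda$, contradicting the inductive hypothesis, or else the compatibility graph on $\lambda$ has all degrees $< \lambda$ and hence (by regularity) admits an independent set of size $\lambda$, i.e.\ an antichain of size $\lambda$.

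The main obstacle is pinning down the right distribution and establishing this free-set lemma; the rest is bookkeeping. The conceptual point is that with $\Delta(FDP)$ one cannot force the witnesses to be distinct, only to have bounded \emph{multiplicity}, so the obstruction to a multiplicative refinement has to be an antichain/counting argument rather than a Fodor collapse as in Theorem~\ref{SimpleNonSat}; correspondingly, the ``flat'' distribution $\mathbf{A}(s) = \mathbf{b}_{|s|}$ replaces the $\Delta(TP_2)$-style one used there. Phrased via Theorem~\ref{FlexToInterp}, the content is that an $\aleph_1$-incomplete ultrafilter on $\mathcal{B}$ fails to be $\lambda$-OK for $\lambda = \mbox{c.c.}(\mathcal{B})$ robustly, i.e.\ in a way that survives passage to any $\lambda$-c.c.\ complete extension of $\mathcal{B}$.
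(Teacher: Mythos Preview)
Your proof is correct and uses the same distribution as the paper: the paper likewise takes a descending sequence $(\mathbf{c}_n)$ from $\mathcal{U}$ with $\mathbf{c}_0 = 1$, $\bigwedge_n \mathbf{c}_n = 0$ and sets $\mathbf{A}(s) = \mathbf{c}_{|s|}$. Where you diverge is in the endgame. The paper argues that any multiplicative refinement $\mathbf{B}$ of $\mathbf{A}$ satisfies condition (A) of Theorem~\ref{FlexEquiv} (with $\|\mathbf{n} \geq n\| = \mathbf{c}_n$), hence is $\lambda$-regular by that theorem, and then cites an external fact from \cite{BVModelsUlrich} that no $\lambda$-c.c.\ algebra admits a $\lambda$-regular family. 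You instead unpack this directly: pigeonhole on which block $\mathbf{d}_N$ meets $\mathbf{B}(\{\alpha\})$ to obtain an $(N{+}1)$-wise disjoint family of size $\lambda$, and then prove the elementary lemma that $\lambda$-c.c.\ (with $\lambda$ regular) forbids any $k$-wise disjoint family of size $\lambda$, by induction on $k$. Your route is more self-contained and avoids the detour through the notion of $\lambda$-regularity; the paper's route is shorter on the page because it outsources the combinatorics. You also verify explicitly that $\mathbf{A}$ is a $(\lambda,\Delta(FDP))$-distribution, which the paper asserts without detail; your normalization $\mathbf{b}_0 = \mathbf{b}_1 = 1$ to ensure $M \geq 1$ (so that $f$ lands in $(\omega \setminus \{0\}) \times \omega$) is a nice touch.
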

\begin{proof}
	It suffices to show the second claim. Note that $\lambda > \aleph_0$, as otherwise $\mathcal{B}$ would be finite, and so would not admit any nonprincipal ultrafilters. 
	
	Let $\mathcal{U}$ be an $\aleph_1$-incomplete ultrafilter on $\mathcal{B}$; then we can choose a descending sequence $(\mathbf{c}_n: n < \omega)$ from $\mathcal{U}$ such that $\mathbf{c}_0 = 1$ and $\bigwedge_{n} \mathbf{c}_n = 0$. Let $\mathbf{A}$ be the distribution in $\mathcal{U}$, defined by $\mathbf{A}(s) = \mathbf{c}_{|s|}$. Then $\mathbf{A}$ is a $(\lambda, \Delta(FDP))$-distribution. Suppose $\mathcal{B}_*$ has the $\lambda$-c.c. and $\mathcal{B}$ is a complete subalgebra of $\mathcal{B}_*$. If $\mathbf{A}$ has a multiplicative refinement $\mathbf{B}$ in $\mathcal{U}$, then by Theorem~\ref{FlexEquiv}, $\mathbf{B}$ would be a $\lambda$-regular distribution. But we note in \cite{BVModelsUlrich} that no complete Boolean algebra with the $\lambda$-c.c. admits a $\lambda$-regular family. 
\end{proof}

It will be convenient for us in \cite{AmalgKeislerUlrich} if we phrase our results in terms of $(\lambda, T)$-{\L}o{\'s} maps instead of $(\lambda, \Delta)$-distributions. We recall from Remark~\ref{LosRemark} that if $T$ admits $\Delta$ and $\mathbf{A}$ is a $(\lambda, \Delta)$-distribution, then $\mathbf{A}$ is a $(\lambda ,T)$-{\L}o{\'s} map.

\begin{corollary}\label{nonsatLos}
	In Theorem~\ref{SimpleNonSat} and ~\ref{LowNonSat}, the distribution $\mathbf{A}$ is a $(\lambda, T_{rf})$-{\L}o{\'s} map or a $(\lambda, T_{nlow})$-{\L}o{\'s} map, respectively.
\end{corollary}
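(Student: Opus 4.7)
The plan is short. By definition a $(\lambda, T, \phi)$-{\L}o{\'s} map is a $(\lambda, \Delta_\phi)$-distribution, and being a $(\lambda, T)$-{\L}o{\'s} map requires this only for some (possibly multi-formula) $\overline{\phi}$, so it suffices to exhibit a single formula. Since the distributions $\mathbf{A}$ produced in Theorems~\ref{SimpleNonSat} and~\ref{LowNonSat} were shown in situ to be $(\lambda, \Delta(TP_2))$- and $(\lambda, \Delta(FDP))$-distributions in $\mathcal{U}$ respectively, all that is needed is to exhibit a formula $\phi$ of $T_{rf}$ admitting $\Delta(TP_2)$ and a formula $\psi$ of $T_{nlow}$ admitting $\Delta(FDP)$, and then to transfer the distribution property across the admission.

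The transfer step rests on a small general lemma: if $\Delta'$ (on $J$) is an instance of $\Delta$ (on $I$), then every $(K, \Delta')$-distribution is also a $(K, \Delta)$-distribution. Given $s \subseteq K$ finite and $\mathbf{c} \in \mathcal{B}_+$ deciding the values of $\mathbf{A}$ below $s$, pick a map $g : s \to J$ witnessing the $\Delta'$-condition, then apply the instance condition to $g[s]$ to obtain $h : g[s] \to I$; the composition $h \circ g : s \to I$ witnesses the $\Delta$-condition. This is the Boolean-valued analogue of Lemma~\ref{EquivLemma}. Since ``$\phi$ admits $\Delta$'' is by definition the statement that $\Delta$ is an instance of $\Delta_\phi$, any $(\lambda, \Delta)$-distribution becomes automatically a $(\lambda, \Delta_\phi)$-distribution once $\phi$ admits $\Delta$.

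For $T_{rf}$ I would take $\phi(x, y_0, y_1) := (F(x, y_0) = y_1)$, which was already observed in Section~\ref{StarRandGraphSec} to witness $TP_2$; by Lemma~\ref{TP2Lemma0} this is equivalent to $\phi$ admitting $\Delta(TP_2)$. For $T_{nlow}$ I would take $\psi(x, y) := R(x, y)$; the sequence $(a_{m, k})_{m \geq 1,\, k < \omega}$, where $a_{m, k}$ is the $k$-th element of $Q_m^0$, witnesses that $\psi$ admits $\Delta(FDP)$, because a finite conjunction $\bigwedge_{(m, k) \in s} R(x, a_{m, k})$ is consistent exactly when all indices share a common first coordinate $m$ (since $R \subseteq E$ forces $x$ into the single class $U_m$) and $|s| \leq m$ (since $u_x$ has exactly $m$ elements for $x \in P_m$).

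No real obstacle is expected; the statement is a matter of unwinding the definition of {\L}o{\'s} map and cashing in admissibility facts that are essentially already recorded. The only mildly subtle point is that the ``instance'' relation is the right functorial glue between the pattern-theoretic and formula-theoretic formulations, and that it preserves being a distribution in the obvious direction; once that is noted, both halves of the corollary drop out of the work already done in Sections~\ref{StarRandGraphSec} and~\ref{TCasSec}.
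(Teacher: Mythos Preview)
Your proposal is correct and follows essentially the same approach as the paper. The paper's proof is a two-line invocation of the transfer principle (recalled immediately before the corollary: if $T$ admits $\Delta$ then every $(\lambda,\Delta)$-distribution is a $(\lambda,T)$-{\L}o{\'s} map) together with the already-established facts that $T_{rf}$ admits $\Delta(TP_2)$ and $T_{nlow}$ admits $\Delta(FDP)$; you have simply unpacked that transfer principle and re-derived the admissibility facts explicitly, which is fine.
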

\begin{proof}
	Since $T_{rf}$ admits $\Delta(TP_2)$, every $(\lambda, \Delta(TP_2))$-distribution is a $(\lambda, T_{rf})$-{\L}o{\'s} map; and since $T_{nlow}$ admits $\Delta(FDP)$, every $(\lambda, \Delta(FDP))$-distribution is a $(\lambda, T_{nlow})$-{\L}o{\'s} map.
\end{proof}

\begin{remark}
	
	It follows from results in \cite{InterpOrdersUlrich} that if $T$ is a complete countable theory with the NFCP, then any ultrafilter $\mathcal{U}$ satisfies $\lambda_{\mathcal{U}}(T) = \infty$, but if $T$ has FCP and $\mathcal{U}$ is $\aleph_1$-incomplete, then $\lambda_{\mathcal{U}}(T) \leq |\mathcal{B}|$ (see Remark 9.9 and Corollary 9.12). Remark~\ref{OKStableSat} of the present work says that if $\mathcal{U}$ is $\aleph_1$-complete and if $T$ is stable, then $\lambda_{\mathcal{U}}(T) = \infty$, and Theorem~\ref{ExistenceConverseFirst} says that if $\mathcal{U}$ is nonprincipal and $T$ is unsimple, then $\lambda_{\mathcal{U}}(T) \leq \mbox{c.c.}(\mathcal{B})$. Malliaris and Shelah show in \cite{IndFunctUlts} that if $\mathcal{U}$ is an ultrafilter on $\mathcal{P}(\lambda)$ with $|\lambda^\lambda/\mathcal{U}| = 2^\lambda$ and if $T$ is unstable, then $\lambda_{\mathcal{U}}(T) \leq 2^\lambda$; in particular, this holds whenever $\lambda$ is inaccessible and $\mathcal{U}$ is a uniform, $\aleph_1$-complete ultrafilter on $\mathcal{P}(\lambda)$.
	
	But the following is open:
\end{remark}

\noindent \textbf{Conjecture.} Suppose $\mathcal{B}$ is a complete Boolean algebra and $\mathcal{U}$ is a nonprincipal ultrafilter on $\mathcal{B}$, and suppose $T$ is unstable. Then $\lambda_{\mathcal{U}}(T) \leq |\mathcal{B}|$.
%

\bibliography{mybib}

\end{document}